\newcommand{\klockan}{\the\hours:{\ifnum\minutes<10 0\fi}\the\minutes}
\newcommand{\tid}{\today\ \klockan}
\newcommand{\prtid}{\smash{\raise 10mm \hbox{\LaTeX ed \tid}}}
\renewcommand{\prtid}{}
\def\sectionmark#1{} 
\def\subsectionmark#1{}
\newcommand{\sectnr}{\ifnum \c@secnumdepth >\z@
                 \thesection.\hskip 1em\relax \fi}
\def\@evenhead{\footnotesize\rm\thepage\hfil\leftmark\hfil\llap{\prtid}}
\def\@oddhead{\footnotesize\rm\rlap{\prtid}\hfil\rightmark\hfil\thepage}
\def\tableofcontents{\section*{Contents} 
 \@starttoc{toc}}
\def\@biblabel#1{#1.}
\let\Thebibliography=\thebibliography
\renewcommand{\thebibliography}[1]{\def\@mkboth##1##2{}\Thebibliography{#1}
\addcontentsline{toc}{section}{References}
\frenchspacing 
\setlength{\@topsep}{0pt}
\setlength{\itemsep}{0pt}%
\setlength{\parskip}{0pt plus 2pt}%
}
\def\mdots@{\mathinner.\nonscript\!.%
 \ifx\next,.\else\ifx\next;.\else\ifx\next..\else
 \nonscript\!\mathinner.\fi\fi\fi}
\let\ldots\mdots@
\let\cdots\mdots@
\let\dotso\mdots@
\let\dotsb\mdots@
\let\dotsm\mdots@
\let\dotsc\mdots@
\def\vdots{\vbox{\baselineskip2.8\p@ \lineskiplimit\z@
    \kern6\p@\hbox{.}\hbox{.}\hbox{.}\kern3\p@}}
\def\ddots{\mathinner{\mkern1mu\raise8.6\p@\vbox{\kern7\p@\hbox{.}}%
    \raise5.8\p@\hbox{.}\raise3\p@\hbox{.}\mkern1mu}}
\let\Enumerate=\enumerate
\renewcommand{\enumerate}{\Enumerate%
\setlength{\@topsep}{0pt}
\setlength{\itemsep}{0pt}%
\setlength{\parskip}{0pt plus 1pt}%
\renewcommand{\theenumi}{\textup{(\alph{enumi})}}%
\renewcommand{\labelenumi}{\theenumi}%
}
\let\endEnumerate=\endenumerate
\renewcommand{\endenumerate}{\endEnumerate\unskip}
\def\@seccntformat#1{\csname the#1\endcsname.\quad}
\long\def\@makecaption#1#2{%
  \vskip\abovecaptionskip
  \sbox\@tempboxa{ #1. #2}%
  \ifdim \wd\@tempboxa >\hsize
    #1. #2\par
  \else
    \global \@minipagefalse
    \hb@xt@\hsize{\hfil\box\@tempboxa\hfil}%
  \fi
  \vskip\belowcaptionskip}
\newcommand{\authortitle}[2]{\author{#1}\title{#2}\markboth{#1}{#2}}
\newcommand{\art}[6]{{\sc #1, \rm #2, \it #3\/ \bf #4 \rm (#5), \mbox{#6}.}}
\newcommand{\auth}[2]{{#1, #2.}}
\newcommand{\artprep}[3]{{\sc #1, \rm #2, #3.}}
\newcommand{\artin}[3]{{\sc #1, \rm #2, in #3.}}
\newcommand{\book}[3]{{\sc #1, \it #2, \rm #3.}}
\newcommand{\AND}{{\rm and }}
\newtheoremstyle{descriptive}%
  {\topsep}   
  {\topsep}   
  {\rmfamily} 
  {}          
  {\bfseries} 
  {.}         
  { }         
  {}          
\newtheoremstyle{propositional}%
  {\topsep}   
  {\topsep}   
  {\itshape}  
  {}          
  {\bfseries} 
  {.}         
  { }         
  {}          
\newtheoremstyle{remarkstyle}%
  {\topsep}   
  {\topsep}   
  {\rmfamily}  
  {}          
  {\itshape} 
  {.}         
  { }         
  {}          
\theoremstyle{propositional}
\newtheorem{thm}{Theorem}[section]
\newtheorem{prop}[thm]{Proposition}
\newtheorem{lem}[thm]{Lemma}
\newtheorem{cor}[thm]{Corollary}
\theoremstyle{descriptive}
\newtheorem{deff}[thm]{Definition}
\newtheorem{example}[thm]{Example}
\newtheorem{remark}[thm]{Remark}
\newtheorem{openprob}[thm]{Open problem}
\renewenvironment{proof}[1][\proofname]{\par
  \pushQED{\qed}%
  \normalfont 
  \trivlist
  \item[\hskip\labelsep
        \itshape
    #1\@addpunct{.}]\ignorespaces
}{%
  \popQED\endtrivlist\@endpefalse
}
\newcommand{\setm}{\setminus}
\renewcommand{\subsetneq}{\varsubsetneq}
\renewcommand{\emptyset}{\varnothing}
\def\vint{\mathop{\mathchoice%
          {\setbox0\hbox{$\displaystyle\intop$}\kern 0.22\wd0%
           \vcenter{\hrule width 0.6\wd0}\kern -0.82\wd0}%
          {\setbox0\hbox{$\textstyle\intop$}\kern 0.2\wd0%
           \vcenter{\hrule width 0.6\wd0}\kern -0.8\wd0}%
          {\setbox0\hbox{$\scriptstyle\intop$}\kern 0.2\wd0%
           \vcenter{\hrule width 0.6\wd0}\kern -0.8\wd0}%
          {\setbox0\hbox{$\scriptscriptstyle\intop$}\kern 0.2\wd0%
           \vcenter{\hrule width 0.6\wd0}\kern -0.8\wd0}}%
          \mathopen{}\int}
\newcommand{\Cp}{{C_p}}
\newcommand{\CpE}{{C_p^E}}
\newcommand{\grad}{\nabla}
\DeclareMathOperator{\diam}{diam}
\DeclareMathOperator{\capp}{cap}
\newcommand{\cp}{\capp_p}
\DeclareMathOperator{\dist}{dist}
\DeclareMathOperator{\fineint}{fine-int}
\DeclareMathOperator*{\essinf}{ess\,inf}
\DeclareMathOperator*{\esssup}{ess\,sup}
\newcommand{\bdry}{\partial}
\newcommand{\bdy}{\bdry}
\newcommand{\loc}{_{\rm loc}}
{\catcode`p =12 \catcode`t =12 \gdef\eeaa#1pt{#1}}      
\def\accentadjtext#1{\setbox0\hbox{$#1$}\kern   
                \expandafter\eeaa\the\fontdimen1\textfont1 \ht0 }
\def\accentadjscript#1{\setbox0\hbox{$#1$}\kern 
                \expandafter\eeaa\the\fontdimen1\scriptfont1 \ht0 }
\def\accentadjscriptscript#1{\setbox0\hbox{$#1$}\kern   
                \expandafter\eeaa\the\fontdimen1\scriptscriptfont1 \ht0 }
\def\accentadjtextback#1{\setbox0\hbox{$#1$}\kern       
                -\expandafter\eeaa\the\fontdimen1\textfont1 \ht0 }
\def\accentadjscriptback#1{\setbox0\hbox{$#1$}\kern     
                -\expandafter\eeaa\the\fontdimen1\scriptfont1 \ht0 }
\def\accentadjscriptscriptback#1{\setbox0\hbox{$#1$}\kern 
                -\expandafter\eeaa\the\fontdimen1\scriptscriptfont1 \ht0 }
\def\itoverline#1{{\mathsurround0pt\mathchoice
        {\rlap{$\accentadjtext{\displaystyle #1}
                \accentadjtext{\vrule height1.593pt}
                \overline{\phantom{\displaystyle #1}
                \accentadjtextback{\displaystyle #1}}$}{#1}}
        {\rlap{$\accentadjtext{\textstyle #1}
                \accentadjtext{\vrule height1.593pt}
                \overline{\phantom{\textstyle #1}
                \accentadjtextback{\textstyle #1}}$}{#1}}
        {\rlap{$\accentadjscript{\scriptstyle #1}
                \accentadjscript{\vrule height1.593pt}
                \overline{\phantom{\scriptstyle #1}
                \accentadjscriptback{\scriptstyle #1}}$}{#1}}
        {\rlap{$\accentadjscriptscript{\scriptscriptstyle #1}
                \accentadjscriptscript{\vrule height1.593pt}
                \overline{\phantom{\scriptscriptstyle #1}
                \accentadjscriptscriptback{\scriptscriptstyle #1}}$}{#1}}}}
\def\cprime{{\mathsurround0pt$'$}}
\newcommand{\al}{\alpha}
\newcommand{\alp}{\alpha}
\newcommand{\ga}{\gamma}
\newcommand{\gat}{\tilde{\gamma}}
\newcommand{\Ga}{\Gamma}
\newcommand{\dmu}{d\mu}
\newcommand{\de}{\delta}
\newcommand{\eps}{\varepsilon}
\newcommand{\la}{\lambda}
\newcommand{\sig}{\sigma}
\newcommand{\Om}{\Omega}
\renewcommand{\phi}{\varphi}
\newcommand{\p}{{$p\mspace{1mu}$}}   
\newcommand{\N}{\mathbf{N}}
\newcommand{\R}{\mathbf{R}}
\newcommand{\Q}{\mathbf{Q}}
\newcommand{\eR}{{\overline{\R}}}
\newcommand{\Z}{\mathbf{Z}}
\newcommand{\ub}{\bar{u}}
\newcommand{\limminus}{{\mathchoice{\raise.17ex\hbox{$\scriptstyle -$}}
                {\raise.17ex\hbox{$\scriptstyle -$}}
                {\raise.1ex\hbox{$\scriptscriptstyle -$}}
                {\scriptscriptstyle -}}}
\newcommand{\limplus}{{\mathchoice{\raise.17ex\hbox{$\scriptstyle +$}}
                {\raise.17ex\hbox{$\scriptstyle +$}}
                {\raise.1ex\hbox{$\scriptscriptstyle +$}}
                {\scriptscriptstyle +}}}
\newcommand{\limpm}{{\mathchoice{\raise.17ex\hbox{$\scriptstyle \pm$}}
                {\raise.17ex\hbox{$\scriptstyle \pm$}}
                {\raise.1ex\hbox{$\scriptscriptstyle \pm$}}
                {\scriptscriptstyle \pm}}}
\newcommand{\Np}{N^{1,p}}
\newcommand{\Nploc}{N^{1,p}\loc}
\newcommand{\Lploc}{L^{p}\loc}
\DeclareMathOperator{\Mod}{Mod}
\newcommand{\Modp}{{\Mod_p}}
\newcommand{\K}{\mathcal{K}}%
\newcommand{\wt}{\widetilde{w}}
\newcommand{\ut}{\tilde{u}}
\newcommand{\uQm}{u_{Q,d\mu}}
\newcommand{\uQx}{u_{Q,dx}}
\newcommand{\uI}{u_{I,dx_1}}
\newcommand{\gt}{\tilde{g}}
\newcommand{\ft}{\tilde{f}}
\newcommand{\gb}{\bar{g}}
\newcommand{\Ct}{\widetilde{C}}
\newcommand{\Dp}{D^p}
\newcommand{\Dploc}{D^{p}\loc}
\newcommand{\fh}{{\hat{f}}}
\newcommand{\setcurrentlabel}[1]{\def\@currentlabel{#1}}
\numberwithin{equation}{section}
\newcommand{\imp}{\Rightarrow}
\newenvironment{ack}{\medskip{\it Acknowledgement.}}{}
\begin{document}

\authortitle{Anders Bj\"orn and Jana Bj\"orn}
{Obstacle and Dirichlet problems on arbitrary nonopen sets, and fine topology}
\title{Obstacle and Dirichlet problems on arbitrary nonopen sets
in metric spaces, and fine topology}
\author{
Anders Bj\"orn \\
\it\small Department of Mathematics, Link\"opings universitet, \\
\it\small SE-581 83 Link\"oping, Sweden\/{\rm ;}
\it \small anders.bjorn@liu.se
\\
\\
Jana Bj\"orn \\
\it\small Department of Mathematics, Link\"opings universitet, \\
\it\small SE-581 83 Link\"oping, Sweden\/{\rm ;}
\it \small jana.bjorn@liu.se
}

\date{}
\maketitle

\noindent{\small
{\bf Abstract}. 
We study the double obstacle problem for \p-harmonic functions
on arbitrary bounded nonopen
sets $E$ in quite general metric spaces. 
The Dirichlet and single obstacle problems
are included as special cases.
We obtain Adams' criterion for the solubility of the single obstacle problem
and establish  connections with fine potential theory.
We also study when the minimal \p-weak upper gradient of a function
remains minimal when restricted to a nonopen subset.
Most of the results are new for open $E$ (apart from those which are trivial
in this case) and also on $\R^n$.
} 

\bigskip
\noindent
{\small \emph{Key words and phrases}: 
Adams' criterion, Dirichlet problem,
doubling measure, fine potential theory, 
metric space, minimal upper gradient, nonlinear, 
obstacle problem, 
\p-harmonic, Poincar\'e inequality, potential theory,
upper gradient.
}

\medskip
\noindent
{\small Mathematics Subject Classification (2010): 
Primary: 31E05;
Secondary: 31C40, 31C45, 
35D30, 35J20, 35J25, 35J60, 47J20, 
49J40, 49J52, 49Q20, 
58J05, 58J32.
}

\section{Introduction}

Sobolev spaces $W^{1,p}(\Om)$ are usually defined for open sets
$\Om$, and it may be difficult to use the traditional approach to make
reasonable sense of $W^{1,p}(E)$ for nonopen sets $E$. 
One possibility is to let
$f \in W^{1,p}(E)$ if $f \in W^{1,p}(\Om)$
for some open set $\Om \supset E$ depending on $f$, 
but that defies the purpose of the definition a bit.
A more fruitful approach is to consider Sobolev spaces on finely open sets, 
as in Kilpel\"ainen--Mal\'y~\cite{KilMa92} and Mal\'y--Ziemer~\cite{MaZi}.
This is a part of fine potential theory in $\R^n$, which started in the
linear case by Cartan in 1940 and has been further developed also in the
nonlinear case by various authors. See the notes to Chapter~12 in 
Heinonen--Kilpel\"ainen--Martio~\cite{HeKiMa},
especially for the early nonlinear history.

In the 1990s there was a need for studying  Sobolev spaces
on metric measure spaces without any differentiable structure.
Earlier, Sobolev spaces had been extended
to manifolds, Heisenberg groups and other situations with 
a vector-field differentiable structure.
Haj\l asz~\cite{Haj-PA} was the first to give a definition of Sobolev spaces,
so called Haj\l asz spaces, on general metric spaces, while 
Shanmugalingam~\cite{Sh-rev} and Cheeger~\cite{Cheeg} 
a little later introduced  so-called Newtonian spaces.
We follow Shanmugalingam below 
but Cheeger's definition is more or less equivalent.
Let us point out that we only consider first-order Sobolev spaces in this 
discussion.

Since a measurable subset $E$ of a metric measure space $X$ 
can be considered as a metric measure
space on its own, these new definitions are well suited for defining
Sobolev spaces on arbitrary nonopen measurable sets,
e.g.\ of $\R^n$ and other smooth spaces.

In many situations, in particular on (unweighted) $\R^n$, 
both Haj\l asz and 
Newtonian spaces coincide with the usual Sobolev space, see \cite{Sh-rev}. 
However on general open subsets of $\R^n$ it is only the 
Newtonian space that coincides with the usual Sobolev space. 
The Haj\l asz space is in general smaller and the Haj\l asz
gradient is not local, i.e.\ it need not vanish on sets where 
the function is constant,
see e.g.\ Shanmugalingam~\cite{Sh-rev}, Haj\l asz~\cite{Haj03} 
and the discussion in Appendix~B.1 in
Bj\"orn--Bj\"orn~\cite{BBbook}.
It therefore seems that the Newtonian approach is the most suitable,
e.g.\ for solving partial differential equations
 and variational problems on metric spaces
and general subsets of e.g.\ $\R^n$.
Other advantages of Newtonian spaces are that the equivalence classes 
are up to sets of capacity zero and that                              
all Newtonian functions are 
absolutely continuous on \p-almost all curves.
Under suitable assumptions, they are also finely continuous outside
sets of zero capacity (see J.~Bj\"orn~\cite{JB-pfine} and Korte~\cite{korte08}),
which provides another connection to the fine potential theory mentioned above.

In this paper we study the double obstacle problem on general bounded
measurable subsets of a metric space $X$ with a Borel regular measure $\mu$, 
i.e.\ we minimize the \p-energy functional
\begin{equation}     \label{eq-p-energy}
     \int_E g_{u,E}^p \, d\mu,
\end{equation}
among all functions $u$ lying (up to sets of capacity zero)
between two obstacles 
$\psi_1,\psi_2 : E \to \eR:=[-\infty,\infty]$
and with  prescribed boundary values $f$ from the Newtonian space $\Np(E)$
on $E$.
The Dirichlet problem is included as a special case with $\psi_1\equiv -\infty$
and $\psi_2\equiv \infty$.

Here $g_{u,E}$ is the minimal \p-weak upper gradient of $u$ (with respect to
$E$), which is the metric space counterpart of the (modulus of) the
usual gradient.
It depends on the underlying metric space and it is therefore
important for us to understand when
a restriction of a minimal \p-weak upper gradient from 
the underlying metric space $X$ remains minimal on $E$.
This is studied in Section~\ref{sect-minimal-rest}.
In particular, we show that $g_{u,E}=g_{u,X}$ if $E$ is \p-path almost
open, which in unweighted $\R^n$ holds for all finely open sets $E$.
In that case we have $g_{u,E}=g_{u,X}=|\grad u|$ a.e., 
where $\grad u$ is the distributional gradient of $u$.
An interesting example of this phenomenon on a nowhere dense set
$E\subset[0,1]^n\subset\R^n$ with almost full measure in $[0,1]^n$ is
presented in Examples~\ref{ex-emental} and~\ref{ex-emental-p=n}.

Existence and uniqueness (up to sets of capacity zero) of solutions
to the above $\K_{\psi_1,\psi_2,f}(E)$-obstacle problem associated 
with~\eqref{eq-p-energy} is proved in Section~\ref{sect-obst}.
The assumptions under which these results hold, and possibilities to
relax them, are discussed in Section~\ref{sect-exist,uniq,ex}.
We have made an effort to consider the obstacle problem under least
possible assumptions.
In particular, we do not assume that the measure $\mu$ is doubling
and we only use a very weak version of Poincar\'e inequality, 
which moreover can be further relaxed in many situations.
Note that there are infinite-dimensional spaces with  nondoubling
measures supporting a Poincar\'e inequality, see e.g.\ Rajala~\cite{T-Rajala}.
One existence result that we obtain is the following theorem which
follows from Theorem~\ref{thm-obst-solve-E} 
and Remark~\ref{rmk-existence}.

\begin{thm}  \label{thm-ex-with-Lp}
Let $X$ be an arbitrary metric space,
$E\subset X$ be a bounded measurable set,
whose complement has positive capacity,
and $\psi_1,\psi_2\in L^p(E)$, $p>1$.
If $f\in\Np(E)$ is such that $\K_{\psi_1,\psi_2,f}(E)\ne\emptyset$, then
the $\K_{\psi_1,\psi_2,f}(E)$-obstacle problem is soluble.

Moreover, if the\/ $(p,p)$-Poincar\'e inequality for $\Np_0$
holds on $X$ then the assumption that $\psi_1,\psi_2\in L^p(E)$ can be 
omitted and the solution is unique\/ {\rm(}up to sets of capacity zero\/{\rm)}.
\end{thm}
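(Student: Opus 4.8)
The natural approach is the direct method of the calculus of variations, carried out within the Newtonian framework. Set
\[
   m:=\inf\Bigl\{\int_E g_{v,E}^p\,d\mu : v\in\K_{\psi_1,\psi_2,f}(E)\Bigr\}.
\]
Since $\K_{\psi_1,\psi_2,f}(E)\ne\emptyset$ and each of its members lies in $\Np(E)$, we have $0\le m<\infty$; pick a minimizing sequence $u_j\in\K_{\psi_1,\psi_2,f}(E)$ with $\int_E g_{u_j,E}^p\,d\mu\to m$. The first task is to show that $(u_j)$ is bounded in $\Np(E)$. Its \p-weak upper gradients $g_{u_j,E}$ are bounded in $L^p(E)$ by construction, so only the norms $\|u_j\|_{L^p(E)}$ need attention. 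Under the assumption $\psi_1,\psi_2\in L^p(E)$ this is immediate, since $\psi_1\le u_j\le\psi_2$ q.e.\ (hence a.e.) in $E$ gives $|u_j|\le|\psi_1|+|\psi_2|\in L^p(E)$. Under the Poincar\'e hypothesis instead, $u_j-f\in\Np_0(E)$, and applying the $(p,p)$-Poincar\'e inequality for $\Np_0$ to the zero extension of $u_j-f$ (legitimate since $E$ is bounded and $\Cp(X\setm E)>0$) gives $\|u_j-f\|_{L^p(E)}\le C\|g_{u_j-f,E}\|_{L^p(E)}\le C\bigl(\|g_{u_j,E}\|_{L^p(E)}+\|g_{f,E}\|_{L^p(E)}\bigr)$, which is bounded; hence so is $\|u_j\|_{L^p(E)}$.

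Next I would extract a limit. Since $p>1$, after passing to a subsequence $u_j\rightharpoonup u$ and $g_{u_j,E}\rightharpoonup g$ weakly in $L^p(E)$. By Mazur's lemma there are convex combinations $v_k=\sum_j\lambda_{k,j}u_j$ with $v_k\to u$ in $L^p(E)$ and $\sum_j\lambda_{k,j}g_{u_j,E}\to g$ in $L^p(E)$, each $\sum_j\lambda_{k,j}g_{u_j,E}$ being a \p-weak upper gradient of $v_k$ with respect to $E$. The standard closure property of Newtonian functions (if $f_j\to f$ in $L^p(E)$ and $g_j\to g$ in $L^p(E)$ with $g_j$ a \p-weak upper gradient of $f_j$, then $g$ is one of a representative of $f$; proved via Fuglede's lemma) then shows that, after modifying $u$ on a set of capacity zero, $g$ is a \p-weak upper gradient of $u$ with respect to $E$, so $u\in\Np(E)$, and
\[
   \int_E g_{u,E}^p\,d\mu\le\int_E g^p\,d\mu=\lim_{k\to\infty}\int_E\Bigl(\sum_j\lambda_{k,j}g_{u_j,E}\Bigr)^{\!p}\,d\mu\le m,
\]
using Minkowski's inequality, $\int_E g_{u_j,E}^p\,d\mu\to m$, and that the indices $j$ occurring in $v_k$ tend to infinity with $k$.

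It remains to verify that $u\in\K_{\psi_1,\psi_2,f}(E)$. The class is convex, so each $v_k$ belongs to it; in particular $v_k-f\in\Np_0(E)$, and since $\Np_0(E)$ is closed under limits of the above type, $u-f\in\Np_0(E)$. The obstacle inequalities require more care: $\psi_1\le v_k\le\psi_2$ q.e.\ in $E$ for every $k$, but $v_k\to u$ only in $L^p(E)$, which a priori yields $\psi_1\le u\le\psi_2$ merely a.e. To recover the q.e.\ inequality I would use that an $L^p(E)$-convergent sequence of Newtonian functions with $L^p$-bounded \p-weak upper gradients has a subsequence converging q.e.\ (again a Fuglede-type fact); passing to such a subsequence of $(v_k)$ gives $\psi_1\le u\le\psi_2$ q.e.\ in $E$. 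Hence $u\in\K_{\psi_1,\psi_2,f}(E)$, so $\int_E g_{u,E}^p\,d\mu\ge m$, and together with the previous display this shows that $u$ solves the $\K_{\psi_1,\psi_2,f}(E)$-obstacle problem. This proves the first assertion, and also the existence part of the second one, since in the Poincar\'e case the boundedness of $(u_j)$ was obtained without any integrability assumption on $\psi_1,\psi_2$.

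For uniqueness, suppose $u,v$ both solve the $\K_{\psi_1,\psi_2,f}(E)$-obstacle problem. By convexity $\tfrac12(u+v)\in\K_{\psi_1,\psi_2,f}(E)$, and using $g_{(u+v)/2,E}\le\tfrac12(g_{u,E}+g_{v,E})$ a.e.\ together with the convexity of $t\mapsto t^p$,
\[
   m\le\int_E g_{(u+v)/2,E}^p\,d\mu\le\int_E\Bigl(\tfrac12 g_{u,E}+\tfrac12 g_{v,E}\Bigr)^{\!p}\,d\mu\le\tfrac12\int_E g_{u,E}^p\,d\mu+\tfrac12\int_E g_{v,E}^p\,d\mu=m.
\]
Since the chain starts and ends at $m$, all inequalities are equalities, which by the strict convexity of $t\mapsto t^p$ for $p>1$ forces $g_{u,E}=g_{v,E}=g_{(u+v)/2,E}$ a.e.\ in $E$. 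A Clarkson-type inequality for minimal \p-weak upper gradients then gives $g_{u-v,E}=0$ a.e.\ in $E$, and since $u-v\in\Np_0(E)$ the $(p,p)$-Poincar\'e inequality for $\Np_0$ yields $\|u-v\|_{L^p(E)}\le C\|g_{u-v,E}\|_{L^p(E)}=0$; thus $u=v$ a.e., hence q.e.\ in $E$, as Newtonian functions agreeing a.e.\ agree q.e. The two points I expect to be the crux are the upgrade of the obstacle constraints from a.e.\ to q.e.\ when passing to the limit, and, in the uniqueness argument, the step from equality of the three minimal \p-weak upper gradients to $g_{u-v,E}=0$, which relies on a uniform convexity (Clarkson-type) property of minimal \p-weak upper gradients rather than on the scalar convexity used elsewhere.
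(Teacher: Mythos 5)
Your existence argument is essentially the paper's: direct method, $L^p$-bounds for the minimizing sequence coming either from $\psi_1\le u_j\le\psi_2$ with $\psi_1,\psi_2\in L^p(E)$ or from the $(p,p)$-Poincar\'e inequality for $\Np_0$ applied to $u_j-f$ (the latter implicitly uses that $g_{u_j-f,E}=g_{u_j-f}$ a.e.\ in $E$ for the zero-extended function, i.e.\ Proposition~\ref{prop-min-grad}), then Mazur's lemma plus a Fuglede-type closure result, with the q.e.\ convergence of a subsequence of the convex combinations used to upgrade the obstacle constraints from a.e.\ to q.e.\ and to see that the limit minus $f$ vanishes q.e.\ off $E$. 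This is exactly Theorem~\ref{thm-obst-solve-E} combined with Remark~\ref{rmk-existence}\,\ref{case-a}.

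The uniqueness part, however, has a genuine gap at the step you yourself flag as the crux. After obtaining $g_{u,E}=g_{v,E}=g_{(u+v)/2,E}$ a.e.\ from strict convexity of $L^p$, you invoke ``a Clarkson-type inequality for minimal \p-weak upper gradients'' to conclude $g_{u-v,E}=0$ a.e. No such inequality is available in this setting: the minimal \p-weak upper gradient is a scalar with no underlying linear differential, so there is no analogue of $\nabla(u-v)=\nabla u-\nabla v$ to feed into Clarkson's inequalities (already $u_1(x)=x$, $u_2(x)=-x$ on $\R$ show that equality of $g_{u_1}$ and $g_{u_2}$ says nothing about $g_{u_1-u_2}$). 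A Cheeger-type differentiable structure, which would restore a vector-valued gradient and make a Clarkson argument work, requires a doubling measure and a full $(1,p)$-Poincar\'e inequality --- precisely the hypotheses this theorem avoids. The paper instead closes this step with a lattice truncation argument: for each $c\in\R$ one checks that $\max\{u_1,\min\{u_2,c\}\}$ lies in $\K_{\psi_1,\psi_2,f}$ (using Lemma~\ref{lem-police}), notes that its minimal gradient vanishes a.e.\ on $V_c=\{u_1<c<u_2\}$, and uses the minimizing property of $u_1$ to deduce $g_{u_1,E}=g_{u_2,E}=0$ a.e.\ on $V_c$; taking the union over $c\in\Q$ (and the symmetric argument on $\{u_1>u_2\}$) gives $g_{u_1-u_2,E}=0$ a.e.\ on $\{u_1\ne u_2\}$ by the subadditivity and locality of minimal gradients. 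From there your appeal to the $(p,p)$-Poincar\'e inequality for $\Np_0$, and the passage from a.e.\ to q.e.\ equality, are correct. You should replace the Clarkson step by this truncation argument (or supply a proof of the convexity inequality you are using, which would itself be a nontrivial new result in this generality).
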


The $(p,p)$-Poincar\'e inequality for $\Np_0$ holds e.g.\ if there is an 
increasing sequence of balls $B_j$ covering $X$, such that for each
$j=1,2,\ldots$, and all $u\in\Np_0(B_j)$, 
\[
        \int_{B_j} |u|^p \,\dmu  
                \le C_j \int_{B_j} g_u^p \,\dmu.
\]
This is usually easier to verify than the classical Poincar\'e inequality,
see Example~\ref{ex-global-but-not-local-PI}.

Along the way, we also discuss alternative definitions of the obstacle problem
and relations between them.
In particular, we compare our obstacle problem with the obstacle problem
defined by means of the global minimal \p-weak upper gradient $g_{u,X}$ and
with the classical obstacle problem on open sets.
Another novelty here (apart from $E$ being nonopen)
is that we allow $f$ to merely belong to the 
Dirichlet space $\Dp(E)$ of measurable functions with an upper gradient 
in $L^p(E)$.
A useful application of our theory to condenser capacities is given
in Theorem~\ref{thm-condenser}.

In Section~\ref{sect-Adams} we establish Adams' criterion for the
solubility of the single obstacle problem with $\psi_2\equiv\infty$. 
We also show by examples that the situation is much more subtle for the 
double obstacle problem.

A natural question is when all the competing functions in $\K_{\psi_1,\psi_2,f}(E)$
coincide (up to sets of capacity zero). 
In this case they are of course all 
solutions of the obstacle problem. 
This happens e.g.\ if $\Np_0(E)$ is trivial (i.e.\ all functions vanish 
outside  a set of capacity zero).
In Section~\ref{sect-nontriv} we characterize those sets where 
this occurs.
It turns out that this problem  has close connections with 
fine potential theory
and that
\[
     \Np_0(E)=\Np_0(\fineint E).
\]
On (unweighted) $\R^n$, our theory comes
together in an elegant way, which we explain in Section~\ref{sect-Rn}.
In particular, we have the following result, which is a special case
of Theorem~\ref{thm-same-obst-pr}
(in view of the results in Section~\ref{sect-Rn}).

\begin{thm}  \label{thm-E-E0-probl-same}
Let $E\subset\R^n$ be a bounded measurable set and $p>1$.
Assume that  $f\in\Dp(E)$ and that  $\K_{\psi_1,\psi_2,f}(E) \ne \emptyset$.
Then the solutions of the $\K_{\psi_1,\psi_2,f}(E)$-problem
coincide with the solutions of the $\K_{\psi_1,\psi_2,f}(E_0)$-problem,
where $E_0$ is the fine interior of $E$.

Moreover, $g_{u,E_0}=g_{u,E}$ a.e.\ in $E_0$ and
if the Lebesgue measure of $E\setm E_0$ is zero,
then also 
the \p-energies~\eqref{eq-p-energy}
associated with these two problems coincide.

If $f\in\Dp(\Om)$ for some open set $\Om\supset E$, then
$g_{u,E_0}=g_{u,E}=|\grad u|$ a.e.\ in $E_0$ and
the above solutions coincide with the solutions of the 
$\K_{\psi_1',\psi_2',f}(\Om)$-problem, where $\psi_j'=\psi_j$ in $E$ and 
$\psi_j'=f$ on\/ $\Om\setm E$, $j=1,2$.
\end{thm}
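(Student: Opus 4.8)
The plan is to deduce this theorem as a special case of the general machinery developed in the earlier sections, specialised to unweighted $\R^n$, where finely open sets are \p-path almost open and hence the restriction of the minimal \p-weak upper gradient remains minimal. Concretely, I would proceed as follows.

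First I would reduce to the finely open case. Set $E_0=\fineint E$. The key input is the identity $\Np_0(E)=\Np_0(\fineint E)$ recorded in Section~\ref{sect-nontriv}; combined with the fact that $E_0$ is finely open and finely dense in $E$ (so that $E\setm E_0$ carries no fine interior), this should give that the competing classes $\K_{\psi_1,\psi_2,f}(E)$ and $\K_{\psi_1,\psi_2,f}(E_0)$ agree up to sets of capacity zero: a function $u$ admissible on $E$ restricts to one admissible on $E_0$, and conversely an admissible $u$ on $E_0$ extends (using an arbitrary admissible function on $E$, which exists since the class is nonempty, to fill in on $E\setm E_0$) to an admissible function on $E$ differing only on a capacity-null set. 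One then invokes the already-stated Theorem~\ref{thm-same-obst-pr} to conclude the solutions coincide. For the \p-energy equality I would note that on $\R^n$ finely open sets are \p-path almost open, so by the results of Section~\ref{sect-minimal-rest} we have $g_{u,E_0}=g_{u,X}$ a.e.\ in $E_0$ and $g_{u,E}=g_{u,X}$ a.e.\ in $E$; hence $g_{u,E_0}=g_{u,E}$ a.e.\ in $E_0$, and when $|E\setm E_0|=0$ the two integrals in~\eqref{eq-p-energy} are taken over sets differing by a null set, so they are equal.

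For the last paragraph, assume $f\in\Dp(\Om)$ with $\Om\supset E$ open. Here $g_{u,X}=|\grad u|$ a.e.\ by the standard identification of the Newtonian minimal \p-weak upper gradient with the modulus of the distributional gradient on unweighted $\R^n$, giving $g_{u,E_0}=g_{u,E}=|\grad u|$ a.e.\ in $E_0$. It remains to identify the solutions with those of the $\K_{\psi_1',\psi_2',f}(\Om)$-problem on the open set $\Om$. The natural route is: the solution $u$ of the $E$-problem, extended by $f$ on $\Om\setm E$ (which is legitimate since $\psi_j'=f$ there forces any competitor to equal $f$ on $\Om\setm E$ up to capacity zero), is a competitor for the $\Om$-problem; conversely a solution on $\Om$ restricts to a competitor on $E$. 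One then compares energies: since $\Om\setm E$ has the competitor pinned to $f$, minimising over $\Om$ is the same as minimising the energy over $\Om$ of functions that equal $f$ off $E$, and because $g_{v,\Om}=|\grad v|$ a.e., this energy splits as $\int_E|\grad u|^p\,dx+\int_{\Om\setm E}|\grad f|^p\,dx$ with the second term constant; minimising the first over the $E$-competitors is exactly the $E$-problem. Uniqueness up to capacity zero (from the previous section, since $\R^n$ supports the relevant Poincaré inequality) then forces the two solution sets to coincide.

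The main obstacle I expect is the extension/restriction bookkeeping in the first step: one must check carefully that the obstacle constraints $\psi_1\le u\le\psi_2$ "up to sets of capacity zero" behave well under passing between $E$ and $E_0$, i.e.\ that capacity-null subsets of $E\setm E_0$ genuinely do not obstruct extending a competitor from $E_0$ to $E$, and that the resulting extended function is still Newtonian with the right boundary data. This hinges on $\Np_0(E)=\Np_0(E_0)$ and on the fine density of $E_0$ in $E$, and it is where the fine-topological results of Sections~\ref{sect-nontriv} and~\ref{sect-Rn} do the real work; once that is in place, the energy and open-set comparisons are essentially formal consequences of the \p-path almost openness of finely open subsets of $\R^n$ and the identification $g_{u,X}=|\grad u|$.
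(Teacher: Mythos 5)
Your overall route is the paper's own: Theorem~\ref{thm-E-E0-probl-same} is obtained by specialising Theorem~\ref{thm-same-obst-pr} to $E_0=\fineint E$, once one knows that in unweighted $\R^n$ finely open sets are quasiopen (Mal\'y--Ziemer), hence \p-path open (Lemma~\ref{lem-quasiopen}) and measurable, so that the hypotheses of Theorem~\ref{thm-same-obst-pr} are met; the identification of the competing classes is exactly Corollary~\ref{cor-fineint}, which rests on $\Np_0(E)=\Np_0(\fineint E)$ and on the nonemptiness of $\K_{\psi_1,\psi_2,f}(E)$ (not on any ``fine density'' of $E_0$ in $E$ --- the mechanism is that every competitor equals $f$ q.e.\ on $E\setm E_0$ because $v-f\in\Np_0(E_0)$).

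Two local claims in your sketch are off and would fail as written. First, ``$g_{u,E}=g_{u,X}$ a.e.\ in $E$'' is false in general: $E$ itself need not be \p-path almost open (Example~\ref{ex-R-Q-1} gives $g_{u,E}=0\ne g_u$ a.e.), and Corollary~\ref{cor-guG-guE} only yields $g_{u,E_0}=g_{u,E}=g_u$ a.e.\ \emph{in $E_0$}. Fortunately that is all you need, both for $g_{u,E_0}=g_{u,E}$ a.e.\ in $E_0$ and for the energy identity when $|E\setm E_0|=0$. Second, in the first part of the theorem $f$ (hence $u$) lies only in $\Dp(E)$, so $g_{u,X}$ need not even be defined; the comparison $g_{u,E_0}=g_{u,E}$ a.e.\ in $E_0$ must be obtained from Corollary~\ref{cor-guG-guE} applied with $E$ as the ambient space (as noted in the remark following that corollary and carried out in the proof of Theorem~\ref{thm-same-obst-pr}), reserving the ambient spaces $\Om$ or $X$ and the identification $g_u=|\grad u|$ for the final paragraph where $f\in\Dp(\Om)$. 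With those corrections your argument matches the paper's.
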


These results in turn
justify the earlier studies of finely open sets and
the fine obstacle problem on $\R^n$ in the literature, 
as in Kilpel\"ainen--Mal\'y~\cite{KilMa92} and
Mal\'y--Ziemer~\cite{MaZi}.
We hope to use the results from this paper for further development
of fine potential theory in the setting of Newtonian spaces
on metric spaces (with $\R^n$ as an important
special case).
Fine potential theory in this setting has been studied
by Kinnunen--Latvala~\cite{KiLa03},
J.~Bj\"orn~\cite{JB-pfine} and Korte~\cite{korte08}.

\begin{ack}
We would like to thank Olli Martio for asking us the question
when $\Np_0(E)$ is nontrivial. 
We would also like to thank an anonymous referee of the book
Bj\"orn--Bj\"orn~\cite{BBbook} for
pointing out Adams' criterion in \cite{adams81}.

The  authors were supported by the Swedish Research Council
and belong to the
European Science
Foundation Networking Programme \emph{Harmonic and Complex Analysis and
Applications}
and to the Scandinavian Research Network \emph{Analysis and Application}.
\end{ack}

\section{Notation and preliminaries}
\label{sect-prelim}

We assume throughout the paper that $X=(X,d,\mu)$ is a 
metric space equipped
with a metric $d$ and a  measure $\mu$ such that
\[
     0 <  \mu(B)< \infty
\]
for all balls 
$B=B(x_0,r):=\{x\in X: d(x,x_0)<r\}$ in~$X$
(we make the convention that balls are nonempty and open).
We emphasize that the $\sigma$-algebra on which $\mu$ is defined
is obtained by completion of the Borel $\sigma$-algebra.
We also assume that $1 \le p<\infty$
and that $\Om \subset X$ is a nonempty open set.

The measure $\mu$ is \emph{doubling} if there exists
a constant $C>0$ such that 
\begin{equation*}
        0 < \mu(2B) \le C \mu(B) < \infty
\end{equation*}
for all balls $B\subset X$,
where $\lambda B=B(x_0,\lambda r)$.

A \emph{curve} is a continuous mapping from an interval.
We will only consider curves which are nonconstant, compact and rectifiable.
A curve can thus be parameterized by its arc length $ds$. 

We follow Heinonen and Koskela~\cite{HeKo98} in introducing
upper gradients as follows (they called them
very weak gradients).

\begin{deff} \label{deff-ug}
A nonnegative Borel function $g$ on $X$ is an \emph{upper gradient} 
of an extended real-valued function $f$
on $X$ if for all (nonconstant, compact and rectifiable) curves  
$\gamma: [0,l_{\gamma}] \to X$,
\begin{equation} \label{ug-cond}
        |f(\gamma(0)) - f(\gamma(l_{\gamma}))| \le \int_{\gamma} g\,ds,
\end{equation}
where we make the convention that the left-hand side is $\infty$ 
whenever both terms therein are infinite.
If $g$ is a nonnegative measurable function on $X$
and if (\ref{ug-cond}) holds for \p-almost every curve (see below), 
then $g$ is a \emph{\p-weak upper gradient} of~$f$. 
\end{deff}

Here and in what follows, we say that a property holds for 
\emph{\p-almost every curve}
if it fails only for a curve family $\Ga$ with zero \p-modulus, 
i.e.\ there exists $0\le\rho\in L^p(X)$ such that 
$\int_\ga \rho\,ds=\infty$ for every curve $\ga\in\Ga$.
It is easy to show that 
a countable union of curve families with zero \p-modulus also has
zero \p-modulus.
Moreover, if $\Modp(\Ga)=0$ and $\Ga'$ consists of all curves which have
a subcurve in $\Ga$, then $\Modp(\Ga')=0$.

Note that a \p-weak upper gradient need not be a Borel function,
only measurable.
It is implicitly assumed that $\int_{\gamma} g\,ds$ is
defined (with a value in $[0,\infty]$) for
\p-almost every  curve $\ga$,
although this is in fact a consequence of the measurability,
see Bj\"orn--Bj\"orn~\cite{BBpreprint}, Section~3
(which is not in Bj\"orn--Bj\"orn~\cite{BB}).

The \p-weak upper gradients were introduced in
Koskela--MacManus~\cite{KoMc}.
They also showed that if $g \in \Lploc(X)$ is a \p-weak upper gradient of $f$,
then one can find a sequence $\{g_j\}_{j=1}^\infty$
of upper gradients of $f$ such that $g_j-g \to 0$ in $L^p(X)$.
If $f$ has an upper gradient in $\Lploc(X)$, then
it has a \emph{minimal \p-weak upper gradient} $g_f \in \Lploc(X)$
in the sense
 that for every \p-weak upper gradient $g \in \Lploc(X)$ of $f$ we have
$g_f \le g$ a.e.,
see Shan\-mu\-ga\-lin\-gam~\cite{Sh-harm}
and Haj\l asz~\cite{Haj03}.
The minimal \p-weak upper gradient is well defined
up to an equivalence class in the cone of nonnegative functions in $\Lploc(X)$.

For proofs of various facts in this section 
we refer to Bj\"orn--Bj\"orn~\cite{BBbook}. 
(Some of the references we mention here may not 
provide a proof in the generality considered here, but 
such proofs are given in \cite{BBbook}.)

Note that upper gradients and in particular the minimal 
\p-weak upper gradient strongly
depend on the underlying space.
Any measurable $E\subset X$ can be considered as a metric space on its own,
thus giving rise to upper gradients with respect to $E$.
An upper gradient with respect to $X$ is always an upper gradient with 
respect to $E$, but the converse need not be true, see Example~\ref{ex-R-Q-1}.
We denote the minimal \p-weak upper gradient of $u$ with respect to $E$
by  $g_{u,E}$,
whereas $g_u$ always denotes the 
minimal \p-weak upper gradient
with respect to $X$ (also denoted $g_{u,X}$).

Following Shanmugalingam~\cite{Sh-rev}, 
we define a version of Sobolev spaces on the metric space $X$.

\begin{deff}
The \emph{Newtonian space} on $X$ is 
\[
        \Np (X) = \{u: \|u\|_{\Np(X)} <\infty \},
\]
where
\[
        \|u\|_{\Np(X)} = \biggl( \int_X |u|^p \, \dmu 
                + \int_X g_u^p \, \dmu \biggr)^{1/p},
\]
if $u:X\to\overline{\R}$ is an everywhere defined measurable function having an 
upper gradient
in $\Lploc(X)$.

We also say that an everywhere defined
 measurable function $u$ on $X$ belongs to 
the \emph{Dirichlet space} $\Dp(X)$ if it has an upper gradient in $L^p(X)$.
\end{deff}

The local spaces $\Np\loc(X)$ and $\Dp\loc(X)$ are defined by requiring
that for every $x\in X$ there is a ball $B_{x}\subset X$ such that
$u\in\Np(B_x)$ or $u\in\Dp(B_x)$, respectively.
For a measurable set $E \subset X$, the spaces $\Np(E)$, $\Dp(E)$ 
and the corresponding local spaces are defined
by considering $E$ as a metric space on its own.
Note a subtle point here 
(recall that $X$ is \emph{proper} if all closed and bounded sets are compact): 
If $X$ is not proper, then the above
definition of the local spaces need not be equivalent to requiring
that e.g.\ $u\in\Np(K)$ for all compact $K\subset X$.
(See A.~Bj\"orn--Marola~\cite{BMarola} for a related definition
on noncomplete spaces.)
Note that if $\mu$ is doubling then $X$ is proper 
if and only if it is complete.

The space $\Np(X)/{\sim}$, where  $u \sim v$ if and only if $\|u-v\|_{\Np(X)}=0$,
is a Banach space
and a lattice, see Shan\-mu\-ga\-lin\-gam~\cite{Sh-rev}.
Let us here point out that we assume that
functions in Newtonian and Dirichlet spaces are defined everywhere,
and not just up to an equivalence class in the corresponding function space.
This is needed e.g.\ for the definition of upper gradients to make sense.
Shan\-mu\-ga\-lin\-gam~\cite{Sh-rev}
also showed that every $u \in \Dploc(X)$ is absolutely continuous
on \p-almost every curve $\ga$  in $X$, in the sense that $u \circ \ga$
is a real-valued absolutely continuous function.

If $u, v \in \Dploc(X)$, then their minimal \p-weak upper gradients coincide
a.e.\ in the set $\{x \in X : u(x)=v(x)\}$,
in particular $g_{\min\{u,c\}}=g_u \chi_{\{u < c\}}$ a.e.\  for $c \in \R$.
Moreover, $g_{uv} \le |u|g_v + |v|g_u$. 

\begin{deff}
The
\emph{capacity} of a set $E \subset X$ 
is the number 
\begin{equation*} 
  \Cp (E) =\inf    \|u\|_{\Np(X)}^p,
\end{equation*}
where the infimum is taken over all $u\in \Np (X) $ such that
$u=1$ on $E$.

We say that a property 
holds \emph{quasieverywhere} (q.e.)\ 
if the set of points  for which it fails
has capacity zero.
\end{deff}

This capacity was introduced and used for Newtonian spaces
in Shanmugalingam~\cite{Sh-rev}.
It is countably subadditive and the correct gauge 
for distinguishing between two Newtonian functions. 
If $u \in \Nploc(X)$ and $v:X \to \eR$,
then $u \sim v$ if and only if $u=v$ q.e.
Moreover, 
if $u,v \in \Dploc(X)$ and
$u= v$ a.e., then $u=v$ q.e.
See also Appendix~\ref{app-capp}
where the variational capacity is defined.
Note that if $\Cp(E)=0$, then \p-almost every curve in $X$ avoids $E$,
by e.g.\ Lemma~3.6 in Shanmugalingam~\cite{Sh-rev}
or Proposition~1.48 in Bj\"orn--Bj\"orn~\cite{BBbook}.

To be able to compare the boundary values of Newtonian functions
we need a Newtonian space with zero boundary values.
We let
\[
\Np_0(E)=\{f|_{E} : f \in \Np(X) \text{ and }
        f=0 \text{ on } X \setm E\}.
\]
One can replace the assumption ``$f=0$ on $X \setm E$''
with ``$f=0$ q.e.\ on $X \setm E$''
without changing the obtained space
$\Np_0(E)$.
Functions from $\Np_0(E)$ can be extended by zero q.e.\ in $X\setm E$ and we
will regard them in that sense if needed.
Note that if $\Cp(X \setminus E) = 0$, then $\Np_0(E) = \Np(E) = \Np(X)$,
since \p-almost every curve in $X$ avoids $X\setm E$.

The following lemma is useful for proving that certain functions belong
to $\Np_0(E)$.
For open $E$, it was obtained in Bj\"orn--Bj\"orn~\cite{BB}.
The proof of the general case 
can be found in 
Bj\"orn--Bj\"orn~\cite{BBbook}.

\begin{lem} \label{lem-police}
Assume that $E \subset X$ is measurable. 
Let $u \in \Np(E)$ and $v,w \in \Np_0(E)$ be such that
$v \le u \le w$ q.e.\ in $E$.
Then $u \in \Np_0(E)$.
\end{lem}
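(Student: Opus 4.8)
The plan is to reduce to the case $v \equiv 0$ by considering $u - v$, and then to truncate so as to land inside $\Np_0(E) \cap \Np(X)$ where we can invoke the lattice structure and known truncation properties. First I would note that $v, w \in \Np_0(E)$ means there exist $\tilde v, \tilde w \in \Np(X)$ vanishing on $X \setm E$ with $\tilde v|_E = v$, $\tilde w|_E = w$; after modification on a set of capacity zero we may assume $v = w = 0$ q.e.\ on $X \setm E$ and (using that $u \in \Np(E)$, so $u$ is defined on $E$) we extend $u$ arbitrarily, say by $0$, to all of $X$. The hypothesis gives $v \le u \le w$ q.e.\ in $E$, and trivially $v = 0 = w$ q.e.\ on $X \setm E$, so in fact $v \le u \le w$ q.e.\ in $X$ after this bookkeeping. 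The real issue is that $u$ need not a priori be in $\Np(X)$ — it is only in $\Np(E)$ — so we cannot directly manipulate it as a global Newtonian function.

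To handle this, I would work with the function $h := \tilde w - \tilde v \in \Np(X)$, which satisfies $h \ge 0$ q.e.\ in $X$ (since $v \le w$ q.e.\ in $E$ and both vanish q.e.\ outside $E$), and set $\phi := \min\{\max\{u - v, 0\}, h\}$, interpreted as a function on $E$. The point is that $0 \le \phi \le u - v \le h$ q.e., and I claim $\phi$ agrees q.e.\ in $E$ with $u - v$: indeed $u - v \ge 0$ and $u - v = u - w + (w - v) \le h$ q.e.\ in $E$, so the truncations do nothing. Thus it suffices to prove $\phi \in \Np_0(E)$. Now extend $\phi$ by $0$ to $X \setm E$; since $h = 0$ q.e.\ on $X \setm E$, this extension $\tilde\phi$ satisfies $0 \le \tilde\phi \le h$ q.e.\ in all of $X$, with $\tilde\phi \in \Np(X)$ because it is sandwiched between $0$ and a Newtonian function and can be shown to have an upper gradient in $L^p$ — concretely $\tilde\phi = \min\{\max\{\tilde u - \tilde v, 0\}, h\}$ where $\tilde u$ is any Newtonian extension of $u$ that agrees with $u$ on $E$ q.e.; but here is the subtlety, such a global extension $\tilde u$ may not exist.

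The cleaner route, and the one I expect to be the crux, is to avoid extending $u$ and instead build $\tilde\phi$ directly on $X$ as a lattice combination of $\tilde v, \tilde w$ and a truncated version of $u$ \emph{viewed on $E$}, using that $E$ is measurable: define $\tilde\phi := \tilde v + \min\{\max\{\hat u - \tilde v, 0\}, \tilde w - \tilde v\}$ where $\hat u$ equals $u$ on $E$ and equals $0$ (or $\tilde v$) on $X \setm E$. One checks that on $E$ this equals $u$ q.e.\ (by the sandwich hypothesis) and on $X \setm E$ it equals $\tilde v = 0$ q.e. The membership $\tilde\phi \in \Np(X)$ then follows by showing $\tilde\phi$ has an $L^p(X)$ upper gradient: on the open-ish pieces where the min/max are ``active'' one uses $g_{\tilde v}$ or $g_{\tilde w}$, and the gluing across the boundary of $E$ works because $\tilde\phi$ is continuous along $p$-a.e.\ curve — this is where absolute continuity on $p$-a.e.\ curve and the fact that $p$-a.e.\ curve avoids sets of capacity zero (cited above from Shanmugalingam) do the work, together with the standard fact that minimal weak upper gradients of $u, v$ coincide a.e.\ on $\{u = v\}$. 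The main obstacle is precisely this gluing/upper-gradient estimate for $\tilde\phi$ across $\bdry E$; once $\tilde\phi \in \Np(X)$ with $\tilde\phi = 0$ q.e.\ on $X \setm E$ is established, we have $\tilde\phi|_E \in \Np_0(E)$ by definition, and since $\tilde\phi|_E = u$ q.e.\ in $E$, the remark after the capacity definition (that q.e.-equal Newtonian functions are $\sim$-equivalent, and the stability of $\Np_0(E)$ under q.e.-modification noted just before Lemma~\ref{lem-police}) gives $u \in \Np_0(E)$.
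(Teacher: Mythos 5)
Your first two paragraphs perform reductions that are harmless but do not advance the argument: since $v\le u\le w$ q.e.\ in $E$, the truncation $\min\{\max\{u-v,0\},w-v\}$ agrees with $u-v$ q.e., so the difficulty --- showing that the zero extension of $u$ (equivalently, of $u-v$) admits a \p-weak upper gradient in $L^p(X)$ --- is exactly the same before and after. You then locate this difficulty correctly (``the main obstacle is precisely this gluing/upper-gradient estimate \dots across $\bdry E$'') but you do not resolve it. Worse, the appeal to ``$\tilde\phi$ is continuous along \p-almost every curve'' is circular: absolute continuity on \p-almost every curve of $X$ is a property of functions already known to lie in $\Dploc(X)$, whereas your $\tilde\phi$ is built from $\hat u$, whose only known regularity is along \p-almost every curve \emph{in} $E$. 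Curves of $X$ that enter and leave $E$ are precisely those about which nothing is yet known, and ``using $g_{\tilde v}$ or $g_{\tilde w}$ on the pieces where the min/max are active'' says nothing on the set where $\tilde\phi=\hat u$. So the proposal has a genuine gap at its central step.

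The missing idea is elementary but is the whole content of the lemma. After modification on a set of capacity zero one may assume $v\le h\le w$ everywhere in $X$, where $h$ is the zero extension of $u$ and $v,w$ are extended by zero; take as candidate gradient $g=g_{u,E}+g_v+g_w$ on $E$ and $g=g_v+g_w$ on $X\setm E$. A curve lying entirely in $E$ is handled by $g_{u,E}$ (a zero-modulus family of curves in $E$ has zero modulus in $X$, since an admissible $\rho\in L^p(E)$ extends by zero). For a curve $\ga$ from $x\in E$ to $y\in X\setm E$ one uses $h(y)=v(y)=w(y)=0$ and the sandwich to get
\[
   h(x)-h(y)=h(x)\le w(x)-w(y)\le\int_\ga g_w\,ds
   \quad\text{and}\quad
   h(y)-h(x)\le v(y)-v(x)\le\int_\ga g_v\,ds,
\]
and an arbitrary curve meeting $X\setm E$ is split at a \emph{single} point $c$ with $\ga(c)\notin E$, each half being estimated as above. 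No continuity of $h$ across $\bdry E$, no decomposition of $\ga^{-1}(E)$, and no structure of the measurable set $E$ are needed. This pointwise estimate at points of $X\setm E$, which converts the q.e.\ sandwich into an upper-gradient inequality for the extension, is exactly what your proposal lacks.
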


The following Poincar\'e inequality is often assumed in the literature.
Because of the dilation $\la$ in the right-hand side, it is sometimes called
weak Poincar\'e inequality.

\begin{deff} \label{def-PI}
We say that $X$ supports a \emph{$(q,p)$-Poincar\'e inequality},
$q \ge 1$, if
there exist constants $C>0$ and $\lambda \ge 1$
such that for all balls $B \subset X$
and all integrable $u\in\Dp\loc(X)$,
\begin{equation} \label{PI-ineq}
        \biggl(\vint_{B} |u-u_B|^q \,\dmu\biggl)^{1/q}
        \le C (\diam B) \biggl( \vint_{\lambda B} g_u^{p} \,\dmu \biggr)^{1/p},
\end{equation}
where $ u_B 
 :=\vint_B u \,\dmu 
:= \int_B u\, d\mu/\mu(B)$.
\end{deff}

Using the above-mentioned results on \p-weak upper gradients from
Koskela--MacManus~\cite{KoMc},
it is easy to see that \eqref{PI-ineq} can equivalently be required for 
all upper gradients $g$ of $u$.
If $X$ supports a $(1,p)$-Poincar\'e inequality and $\mu$ is doubling,
then by Theorem~5.1 in Haj\l asz--Koskela~\cite{HaKo},
it supports a $(q,p)$-Poincar\'e inequality for some $q>p$,
and in particular a $(p,p)$-Poincar\'e inequality.
Moreover, under these assumptions, Lipschitz functions
are dense in $\Np(X)$, see Shan\-mu\-ga\-lin\-gam~\cite{Sh-rev}.
If $X$ is also  complete 
then  functions in $\Np(X)$
as well as in $\Np(\Om)$ are quasicontinuous,
see Bj\"orn--Bj\"orn--Shan\-mu\-ga\-lin\-gam~\cite{BBS5}.
It also follows
that $\Np_0(\Om)$ for open $\Om$
can equivalently be defined as the closure of
Lipschitz functions with compact support in $\Om$,
see Shanmugalingam~\cite{Sh-harm} or Theorem~5.45 in 
Bj\"orn--Bj\"orn~\cite{BBbook}.
For a general set $E$ this is not always possible
and the above definition of $\Np_0(E)$ seems to be the natural one.

Moreover, if $X$ is unweighted $\R^n$ and $u \in \Dploc(X)$, then
$g_u=|\nabla u|$ a.e., where $\nabla u$ is the distributional gradient
of $u$.
This means that in the Euclidean setting, $\Np(\Om)$, $\Om \subset \R^n$,
 is the 
refined Sobolev space as defined on p.\ 96 of
Heinonen--Kilpel\"ainen--Martio~\cite{HeKiMa}.
See Haj\l asz~\cite{Haj03} or 
Appendix~A.1 in \cite{BBbook} for a full
proof of this fact for unweighted $\R^n$, and
Appendix~A.2 in \cite{BBbook} for a proof for weighted $\R^n$
(requiring $p>1$).

For most results in this paper we will need some kind of
Poincar\'e inequality, but it is enough with a considerably
weaker one than the one in Definition~\ref{def-PI}.
Let us therefore introduce the following notion,
which will be useful e.g.\ when proving the 
existence and uniqueness of the solutions of our obstacle problems.
Note that it follows from, but does not imply, the Poincar\'e inequality 
as in Definition~\ref{def-PI}, see Lemma~\ref{lem-PI-NP0}
and Example~\ref{ex-global-but-not-local-PI}.

\begin{deff} \label{def-relax-PI}
We say that $X$ supports a \emph{$(p,p)$-Poincar\'e inequality for $\Np_0$}
if for every bounded $E\subset X$ with $\Cp(X\setm E)>0$
there exists $C_E>0$  such that for all  $u\in\Np_0(E)$
\textup{(}extended by $0$ outside $E$\textup{)},
\begin{equation}
        \int_X |u|^p \,\dmu  
                \le C_E \int_{X} g_u^p \,\dmu.
\label{PI-NP0}
\end{equation}
\end{deff}

A direct consequence is that $\|u\|_{\Np(X)}^p \le \Ct_E \|g_u\|^p_{L^p(X)}$ 
for $u\in\Np_0(E)$.
If $E$ is measurable, then the integrals 
and the norms can equivalently
be taken with respect to $E$.
As in \eqref{PI-ineq}, one can equivalently verify \eqref{PI-NP0} for
all upper gradients $g$ of $u$.
If $X$ is unbounded then the condition $\Cp(X \setm E)>0$ is of course
redundant. 
On the other hand, if $X$ is bounded then it is essential, as otherwise
$1 \in \Np_0(E)$ violates \eqref{PI-NP0}.

We will also need the space 
\[ 
   \Dp_0(E)=\{f|_E : f \in \Dp(X) \text{ and } f=0 \text{ on } X \setm E\}.
\]
As we shall now see, it will for us coincide with $\Np_0(E)$ in most cases, 
and then we prefer to write $\Np_0(E)$.

\begin{prop} \label{prop-Dp0=Np0}
Assume that $X$ supports a\/ $(p,p)$-Poincar\'e inequality for $\Np_0$
and that  $E\subset X$ is bounded and $\Cp(X\setm E)>0$.
Then 
\[
     \Dp_0(E) = \Np_0(E).
\]
\end{prop}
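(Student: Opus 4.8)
The plan is to prove the two inclusions separately; one of them is trivial and the other is where the Poincaré inequality for $\Np_0$ does all the work. First note that $\Np(X)\subset\Dp(X)$, since any upper gradient in $\Lploc(X)$ of a function with finite Newtonian norm is automatically in $L^p(X)$ by the norm being finite; hence $\Np_0(E)\subset\Dp_0(E)$ directly from the definitions. So the content is the reverse inclusion $\Dp_0(E)\subset\Np_0(E)$.

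For that, take $u\in\Dp_0(E)$, extended by $0$ on $X\setm E$, and let $g\in L^p(X)$ be an upper gradient of this extension (on $X$). Since $u=0$ on $X\setm E$ and $E$ is bounded, $u$ (extended by zero) is supported in the bounded set $E$. I would first check that this zero extension really is in $\Dp(X)$ with an upper gradient in $L^p(X)$: this is part of what it means for $u$ to lie in $\Dp_0(E)$, so $g$ may be taken from the very definition. Then I apply \eqref{PI-NP0}: because $\Cp(X\setm E)>0$ and $E$ is bounded, the $(p,p)$-Poincaré inequality for $\Np_0$ gives
\[
   \int_X |u|^p\,\dmu \le C_E \int_X g_u^p\,\dmu \le C_E \int_X g^p\,\dmu < \infty,
\]
where I use that the minimal \p-weak upper gradient $g_u$ satisfies $g_u\le g$ a.e.\ (note $g_u\in\Lploc(X)$ exists since $u$ has an upper gradient in $\Lploc(X)$). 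This shows $\|u\|_{\Np(X)}^p = \int_X|u|^p\,\dmu + \int_X g_u^p\,\dmu < \infty$, so $u\in\Np(X)$. Since the zero extension of $u$ vanishes on $X\setm E$, this exhibits $u\in\Np_0(E)$, completing the reverse inclusion.

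The only genuinely delicate point is the bookkeeping about where the various upper gradients live and that the zero extension of $u$ across $X\setm E$ retains an upper gradient in $L^p(X)$ — but this is exactly encoded in the definition of $\Dp_0(E)$ (the witness $f\in\Dp(X)$ with $f=0$ on $X\setm E$ comes with an upper gradient in $L^p(X)$), so there is nothing to prove there; one just has to be careful to apply \eqref{PI-NP0} to the global function, which is legitimate since $g_u\in L^p(X)$ once the right-hand side of the inequality is finite. If desired, one can phrase the measurability/equivalence-class remarks using the fact stated earlier that for $u,v\in\Dploc(X)$, $u=v$ a.e.\ implies $u=v$ q.e., so that the zero extension is well defined q.e.\ and the identification of $\Dp_0(E)$ with $\Np_0(E)$ is canonical.
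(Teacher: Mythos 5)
There is a genuine gap at the key step. The $(p,p)$-Poincar\'e inequality for $\Np_0$, as defined in Definition~\ref{def-relax-PI}, is an assumption \emph{only about functions in $\Np_0(E)$}. Your function $u\in\Dp_0(E)$ is not known to belong to $\Np_0(E)$ --- that membership is precisely what the proposition asks you to prove, and it requires $\int_X|u|^p\,\dmu<\infty$, which is exactly the quantity you are trying to bound. So when you write $\int_X |u|^p\,\dmu \le C_E\int_X g_u^p\,\dmu$ you are applying \eqref{PI-NP0} to a function outside the class for which the inequality is hypothesized; your closing remark that this is ``legitimate since the right-hand side is finite'' does not repair this, since a priori there could be $u\in\Dp_0(E)$ with $\int_X g_u^p\,\dmu<\infty$ but $\int_X|u|^p\,\dmu=\infty$, and nothing you have assumed rules that out directly. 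The argument as written is circular.

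The standard repair, and the one the paper uses, is to truncate: set $u_k=\max\{\min\{u,k\},-k\}$. Each $u_k$ is bounded and vanishes outside the bounded set $E$, so $u_k\in L^p(X)$; the upper gradient $g\in L^p(X)$ of $u$ is also an upper gradient of $u_k$, whence $u_k\in\Np(X)$ and thus $u_k\in\Np_0(E)$. Now \eqref{PI-NP0} legitimately applies to each $u_k$, giving $\int_X|u_k|^p\,\dmu\le C_E\int_X g^p\,\dmu$ uniformly in $k$, and monotone convergence yields $\int_X|u|^p\,\dmu<\infty$, hence $u\in\Np(X)$ and $u\in\Np_0(E)$. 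Your treatment of the trivial inclusion $\Np_0(E)\subset\Dp_0(E)$ and the bookkeeping about the zero extension are fine; only this truncation step is missing.
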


\begin{proof}
Let $u \in \Dp_0(E)$ and extend $u$ by $0$ outside $E$.
Let $g \in L^p(X)$ be an upper gradient of $u$,
and let $u_k=\max\{\min\{u,k\},-k\}$, $k=1,2,\ldots$,
be the truncations of $u$ at levels $\pm k$.
Then $g$ is an upper gradient also of $u_k$.
As $E$ is bounded, $u_k \in L^p(X)$ and thus $u_k \in \Np_0(E)$.
Hence, by monotone convergence and 
the $(p,p)$-Poincar\'e inequality for $\Np_0$,
\[
    \int_X |u|^p \, d\mu 
    = \lim_{k \to \infty}     \int_X |u_k|^p \, d\mu 
    \le C_E  \int_X g^p \, d\mu 
    <\infty.
\]
Thus $u \in \Np(X)$ and hence $u \in \Np_0(E)$.
This proves one inclusion, while the converse inclusion is trivial.
\end{proof}

Finally, we make the convention that,
unless otherwise stated, the letter $C$ denotes various positive constants
whose exact values are not important and may vary with each usage.

\section{Restrictions of minimal 
\texorpdfstring{\p-weak}{p-weak} upper gradients}
\label{sect-minimal-rest}

In the next section, we will define and study the obstacle problem, 
in which we minimize the \p-energy functional \eqref{eq-p-energy} 
on general sets.
Since the energy functional is defined using the minimal \p-weak 
upper gradient, it is natural to study how this notion depends on the
underlying set.
This will be done in this section.
We point out that for this we do not impose any assumptions on $X$,
such as the doubling property of $\mu$ or the Poincar\'e inequality.

If $\Om$ is open and $f \in \Dploc(X)$ then 
the minimal \p-weak upper gradient of $f$
with respect to $X$ remains minimal when restricted to $\Om$, i.e.\ 
with respect to $\Dploc(\Om)$.
This is folklore but the interested reader can find a proof in 
Bj\"orn--Bj\"orn~\cite{BBbook}, Lemma~2.23. 
We will need a generalization of this result to \p-path almost open sets,
see Proposition~\ref{prop-gu-on-p-path-open}.

\begin{deff}   \label{def-p-path-open}
The set $G\subset X$ is \p-\emph{path open}
(in $X$) if for \p-almost every curve
$\ga:[0,l_\ga]\to X$, the set $\ga^{-1}(G)$ 
is\/ \textup{(}relatively\/\textup{)} open in $[0,l_\ga]$.

Further, $G \subset X$ is 
\p-\emph{path almost open}
(in $X$) if for \p-almost every curve
$\ga:[0,l_\ga]\to X$, the set $\ga^{-1}(G)$ is the union
of an open set and a set with zero one-dimensional Lebesgue measure.
\end{deff}

The \p-path open sets were introduced by Shanmugalingam~\cite{Sh-harm}, 
Remark~3.5.
The name ``\p-path almost open'' is perhaps a little misleading, as we 
\emph{do not} allow $\ga^{-1}(G)$ to be an open set \emph{minus} a set of measure zero.
For our purposes there are  counterexamples showing that we
cannot allow for this, see Example~\ref{ex-R-Q-1} below.

Clearly, every open set is \p-path open,
and every \p-path open set is \p-path almost open.
The following observation gives some light on which sets are
\p-path (almost) open.

\begin{lem} \label{lem-p-path-almost-open}
Let $E,G \subset X$.

If $G$ is \p-path open and $\Cp(E \setm G)=\Cp(G \setm E)=0$,
then $E$ is \p-path open.

If $G$ is \p-path almost open and $\Cp(G \setm E)=\mu(E \setm G)=0$,
then $E$ is \p-path almost open.
In particular, if $\mu(E \cap \bdy E)=0$, then $E$ is \p-path almost open.
\end{lem}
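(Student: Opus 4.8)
The plan is to reduce everything to a statement about curves and null sets on the line, using the definition of \p-path (almost) openness and the fact (recalled in the excerpt, after the definition of capacity) that if $\Cp(A)=0$ then \p-almost every curve in $X$ avoids $A$. For the first assertion, suppose $G$ is \p-path open and $\Cp(E\setm G)=\Cp(G\setm E)=0$. Let $\Ga_0$ be the exceptional family for $G$ (so $\Modp(\Ga_0)=0$ and $\ga^{-1}(G)$ is open for $\ga\notin\Ga_0$), let $\Ga_1$ be the curves meeting $E\setm G$, and $\Ga_2$ the curves meeting $G\setm E$; both have zero \p-modulus. For $\ga$ outside $\Ga_0\cup\Ga_1\cup\Ga_2$ we have $\ga^{-1}(E)=\ga^{-1}(G)$, which is open, and since countable unions of \p-null curve families are \p-null, $E$ is \p-path open.

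For the second assertion the extra subtlety is that $\mu(E\setm G)=0$ does not by itself kill the curves meeting $E\setm G$. The plan is to invoke a standard fact about \p-modulus: if $\mu(A)=0$ then \p-almost every curve $\ga$ spends zero one-dimensional Lebesgue measure in $A$, i.e.\ $\lebmeas\bigl(\ga^{-1}(A)\bigr)=0$ for \p-a.e.\ $\ga$. (This is just the statement that $\infty\cdot\chi_A\in L^p(X)$ serves as the test function $\rho$ in the definition of the exceptional family, together with the fact that $\int_\ga\rho\,ds<\infty$ forces $\lebmeas(\ga^{-1}(A))=0$; it is folklore and is in Bj\"orn--Bj\"orn~\cite{BBbook}.) So let $\Ga_0$ be the exceptional family for the \p-path almost openness of $G$, let $\Ga_1$ be the curves meeting $G\setm E$ (a \p-null family since $\Cp(G\setm E)=0$), and let $\Ga_2$ be the curves $\ga$ with $\lebmeas(\ga^{-1}(E\setm G))>0$ (a \p-null family since $\mu(E\setm G)=0$). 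For $\ga$ outside $\Ga=\Ga_0\cup\Ga_1\cup\Ga_2$, we have $\ga^{-1}(G)=U\cup N$ with $U$ open and $\lebmeas(N)=0$, while $\ga^{-1}(E)$ differs from $\ga^{-1}(G)$ only inside $\ga^{-1}(E\setm G)\cup\ga^{-1}(G\setm E)$; the first of these has measure zero (as $\ga\notin\Ga_2$) and the second is empty (as $\ga\notin\Ga_1$). Hence $\ga^{-1}(E)=U\cup N'$ with $\lebmeas(N')=0$, so $E$ is \p-path almost open.

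Finally, the ``in particular'' clause: apply the second assertion with $G=\fineint E\cup(\interior E)$—wait, more simply, take $G=X\setm\bdy E\supset \interior E$; hmm. The cleanest choice is $G=\interior E$ together with the observation that $\ga^{-1}(\interior E)$ is genuinely open, so $\interior E$ is \p-path open, hence \p-path almost open; then $\Cp(\interior E\setm E)=\Cp(\emptyset)=0$ and $\mu(E\setm \interior E)=\mu(\bdy E\cap E)=0$ by hypothesis, so the second assertion gives that $E$ is \p-path almost open. The main obstacle is really just the modulus lemma ``$\mu(A)=0\imp\lebmeas(\ga^{-1}(A))=0$ for \p-a.e.\ $\ga$'': one must be careful that this is a genuine fact about \p-modulus (not merely about a.e.\ curve in some weaker sense) and cite it correctly; everything else is bookkeeping with the three exceptional families.
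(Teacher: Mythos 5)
Your proof is correct and follows essentially the same route as the paper's: for the first claim, \p-a.e.\ curve avoids the two capacity-null sets so $\ga^{-1}(E)=\ga^{-1}(G)$; for the second, \p-a.e.\ curve avoids $G\setm E$ (so that set is genuinely removed, not just up to measure zero) and spends zero length in $E\setm G$ (the fact you justify via $\rho=\infty\chi_{E\setm G}$ is exactly Lemma~1.42 in \cite{BBbook}, which the paper cites), whence $\ga^{-1}(E)=\ga^{-1}(G)\cup\ga^{-1}(E\setm G)$ is an open set union a null set; and the ``in particular'' clause follows, as you conclude, by taking $G=\interior E$ so that $E\setm G=E\cap\bdy E$. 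The only cosmetic issue is the audible hesitation before settling on $G=\interior E$, which should be cut from a final write-up.
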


\begin{proof}
Assume first that $G$ is \p-path open and that
$\Cp(E \setm G)=\Cp(G \setm E)=0$.
Then  \p-almost every curve $\ga$ avoids $(E\setm G)\cup(G\setm E)$
and hence $\ga^{-1}(E)=\ga^{-1}(G)$ is (relatively) open 
for \p-almost every curve $\ga$, 
i.e.\ $E$ is \p-path open.

Assume next that 
$G$ is \p-path almost open
and $\Cp(G \setm E)=\mu(E \setm G)=0$.
Then \p-almost every curve $\ga$ avoids $G\setm E$ and 
is such that $\ga^{-1}(E\setm G)$ has 
zero one-dimensional Lebesgue measure, by e.g.\ Lemma~1.42 
in Bj\"orn--Bj\"orn~\cite{BBbook}.
For all such curves we have $\ga^{-1}(E)=\ga^{-1}(G)\cup\ga^{-1}(E\setm G)$,
i.e.\ $E$ is \p-path almost open.
\end{proof}

\begin{remark} \label{rmk-8}
The collection of all \p-path open sets does not (in general)
form a 
topology on $X$. Consider e.g.\ unweighted $\R^n$ with $n>1$
and $1 \le p \le n$,
in which case 
all singleton sets are \p-path open since they have capacity zero.
If the \p-path open sets formed a topology it would follow that
any set on $\R^n$ would be \p-path open.
However it is quite easy,
using Lemma~A.1 in Bj\"orn--Bj\"orn~\cite{BBbook},  
to see that $\R^{n-1} \times \Q$ is not \p-path open.
If singletons have positive capacity (e.g.\ if $X=\R^n$ and $p>n$), 
then any \p-path open set is open, 
and thus the family of \p-path open sets does form a topology.

Similarly, the \p-path almost open sets do not (in general)
from a topology on $X$.
On $\R^n$, any singleton set is \p-path almost open
(but not \p-path open if $p>n$).
The set $\R^{n-1} \times \Q$ is \p-path almost open,
but $\R^{n-1} \times (\R \setm \Q)$ is not.
Thus, the \p-path almost open sets do not form a topology on $\R^n$.
\end{remark}

If there are no nonconstant rectifiable curves in $X$, as e.g.\
on the von Koch snowflake curve, then all sets are \p-path open, and
thus in this case the \p-path open sets form 
a topology, and so do the \p-path almost open sets. 
This also shows that \p-path open sets need not be measurable.

A consequence of Lemma~\ref{lem-p-path-almost-open} is that 
the union of a \p-path open set and a set of measure zero is 
\p-path almost open. 

\begin{openprob}
Can every \p-path almost open set be written as 
a union of a \p-path open set and a set of measure zero?
\end{openprob}

The following result shows that \p-path almost open sets preserve the minimal
\p-weak upper gradients in the same way as open sets do.
Recall that by $g_u$ we always mean the \p-weak upper gradient
of $u$ with respect to $X$. 

\begin{prop}    \label{prop-gu-on-p-path-open}
Let $G$ be a \p-path almost open measurable set and let $u \in \Dploc(X)$.
Then $g_{u,G}=g_u$ a.e.\ in $G$, 
i.e.\ $g_u|_G$ is a minimal \p-weak upper gradient of $u$ with respect
to $G$.
\end{prop}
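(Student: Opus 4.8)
The plan is to prove the two inequalities $g_{u,G} \le g_u|_G$ a.e.\ in $G$ and $g_{u,G} \ge g_u|_G$ a.e.\ in $G$ separately. The first inequality is the easy direction: since every upper gradient (and indeed every \p-weak upper gradient) of $u$ with respect to $X$ restricts to a \p-weak upper gradient with respect to $G$ — because the family of curves in $G$ is a subfamily of the curves in $X$, and zero \p-modulus in $X$ implies zero \p-modulus in $G$ — minimality of $g_{u,G}$ gives $g_{u,G} \le g_u$ a.e.\ in $G$ directly. So the entire content lies in the reverse inequality, and that is the step I expect to be the main obstacle.

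For the reverse inequality, the strategy is to manufacture, from $g_{u,G}$, a \p-weak upper gradient of $u$ \emph{with respect to $X$} that equals $g_{u,G}$ a.e.\ on $G$ and is arbitrarily small off $G$; minimality of $g_u$ with respect to $X$ then forces $g_u \le g_{u,G}$ a.e.\ on $G$. Concretely, extend $g_{u,G}$ to all of $X$ by setting it equal to some global upper gradient — say a fixed $g_0 \in \Lploc(X)$ with $g_u \le g_0$ — outside $G$; call this $g$. I would like to show $g$ is a \p-weak upper gradient of $u$ on $X$. The point where \p-path almost openness enters is exactly here: for \p-almost every curve $\ga:[0,l_\ga]\to X$, the set $A:=\ga^{-1}(G)$ decomposes as $U \cup N$ with $U$ open and $|N|=0$. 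On each connected component $(a,b)$ of $U$, the restriction of $\ga$ (possibly after removing endpoints) is a curve lying in $G$, along which the upper-gradient inequality for $u$ with respect to $g_{u,G}$ holds (for \p-a.e.\ such subcurve, using that a curve family of zero \p-modulus has the property that curves with a subcurve in it also form a zero-modulus family, and passing from \p-weak to a controlling family — here one should invoke that $u \in \Dploc(X)$ is absolutely continuous on \p-a.e.\ curve, so $u\circ\ga$ is absolutely continuous and one can integrate $(u\circ\ga)'$). Outside $U$, i.e.\ on the closed set $\ga^{-1}(X\setm G)$ together with the null set $N$, the inequality is controlled by $g_0$; since $|N|=0$, $N$ contributes nothing to $\int_\ga g\,ds$.

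The key mechanism, then, is: split $[0,l_\ga]$ into the open set $U$ where $\ga$ is in $G$ and its complement; telescoping the oscillation of $u\circ\ga$ across the (at most countably many) components of $U$ and across the complementary closed set, bound the total oscillation of $u\circ\ga$ over $[0,l_\ga]$ by $\int_\ga g\,ds = \int_U g_{u,G}\circ\ga\,ds + \int_{[0,l_\ga]\setm U} g_0\circ\ga\,ds$. For this telescoping to work one needs $u\circ\ga$ absolutely continuous (hence its oscillation is the integral of its derivative, and the derivative is dominated a.e.\ by the relevant upper gradient on each piece); this is where the hypothesis $u\in\Dploc(X)$ and the quoted Shanmugalingam absolute-continuity result are used. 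One must also be slightly careful that ``\p-a.e.\ curve'' statements for curves in $G$ transfer to statements about subcurves of curves in $X$; this is handled by the remark in the preliminaries that if $\Modp(\Ga)=0$ then the family of curves having a subcurve in $\Ga$ also has zero \p-modulus, applied to the exceptional families for $g_{u,G}$, for the absolute continuity of $u$, and for the \p-path-almost-openness of $G$, whose countable union still has zero \p-modulus. Having shown $g$ is a \p-weak upper gradient of $u$ on $X$, minimality gives $g_u \le g = g_{u,G}$ a.e.\ on $G$, completing the proof. The delicate point to get exactly right is the component-wise handling of $U$ together with ensuring the boundary points of the components and the measure-zero set $N$ do not spoil the oscillation estimate — that is the crux of the argument.
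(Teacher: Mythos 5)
Your proposal is correct and follows essentially the same route as the paper: glue $g_{u,G}$ on $G$ with a global ($\p$-weak) upper gradient off $G$, use the open-plus-null decomposition of $\ga^{-1}(G)$ for $\p$-almost every curve (discarding the countably many zero-modulus exceptional families and their subcurve families), telescope over the components of the open part, and invoke minimality of $g_u$. The only cosmetic difference is that you handle the component endpoints via absolute continuity of $u\circ\ga$, whereas the paper telescopes the upper-gradient inequality over finitely many components and passes to the limit by monotone and dominated convergence (which is why it also excludes the curves with $\int_\ga g_u\,ds=\infty$); both resolve the same delicate point.
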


Before proving this result it may be worth observing that some condition
on $G$ is really necessary. 

\begin{example}
\label{ex-R-Q-1}
Let $X=\R$ and $E=(0,1)\setm\Q$.
Since $E$ contains no rectifiable curves, the minimal \p-weak upper gradient 
taken with respect to $E$ is zero
for every function on $E$.
On the other hand, the minimal \p-weak upper gradient with respect to 
$\R$ is just the modulus of the distributional derivative.
For example, if $u(x)=x$, then $g_{u}=1$ a.e., while $g_{u,E}=0$ a.e.
Note also that $E$ has full measure in the open interval $I=(0,1)$ for which
$g_{u,I}=g_u=1$ a.e.
\end{example}

\begin{proof}[Proof of Proposition~\ref{prop-gu-on-p-path-open}]
Clearly, $g_{u,G}\le g_u$ a.e.\ in $G$.
We shall show that the function
\[
	g=\begin{cases}
                  g_{u,G} & \text{in } G, \\
                  g_u & \text{in } X\setm G,
           \end{cases}
\]
is a \p-weak upper gradient of $u$ in $X$.
Let $\Ga_0$ consist of all curves $\ga$ in $X$ for which $\ga^{-1}(G)$
is not a union of an open set and a set with zero one-dimensional Lebesgue
measure.
Let also $\Ga_1$  be the collection of all curves in $G$ 
on which \eqref{ug-cond} fails for $u$ and $g_{u,G}$.
Similarly, let $\Ga_2$ consist of all those curves in $X$ 
on which \eqref{ug-cond} fails for $u$ and $g_u$.
Finally, let $\Ga_3$ consist of all those curves in $X$ 
for which $\int_\ga g_u\,ds=\infty$.
By assumptions, we have $\Modp(\Ga_0\cup\Ga_1\cup\Ga_2\cup\Ga_3)=0$.

Let $\ga:[0,l_\ga]\to X$ be a curve having no 
subcurve in $\Ga_0\cup\Ga_1\cup\Ga_2\cup\Ga_3$.
By Lemma~1.34 in Bj\"orn--Bj\"orn~\cite{BBbook},
\p-almost every curve in $X$ has this property.
Then $\ga^{-1}(G)=G' \cup A$, where $G'$ is open in $(0,l_\ga)$ and
$A$ has zero one-dimensional Lebesgue measure.
The set $G'$ can be written 
as a countable union 
$\bigcup_{j=1}^\infty I_j$
of open intervals $I_j=(a_j,b_j)$, $j=1,2,\ldots$\,.
(Here we allow some of the intervals $I_j$ to be empty.)
We then have
\begin{align*}
|u(\ga(0))-u(\ga(l_{\ga}))| 
     & \le |u(\ga(0))-u(\ga(a_1))| 
            + |u(\ga(a_1))-u(\ga(b_1))| \\
     & \quad           + |u(\ga(b_1))-u(\ga(l_{\ga}))| 
      \le \int_{\ga|_{I_1}} g_{u,G}\,ds 
                + \int_{\ga|_{[0,l_\ga]\setm I_1}} g_u\,ds.
\end{align*}
Continuing in this way, we obtain for all $j=1,2,\ldots$,
\[
|u(\ga(0))-u(\ga(l_\ga))| 
      \le  \int_{\ga|_{\bigcup_{i=1}^j I_i}} g_{u,G}\,ds 
            + \int_{\ga|_{[0,l_\ga]\setm \bigcup_{i=1}^j I_i}} g_u\,ds.
\]
Since $\int_\ga g_u\,ds<\infty$, letting $j\to\infty$
and using monotone and dominated convergence show that
\begin{align*}
   |u(\ga(0))-u(\ga(l_{\ga}))| 
      &\le  \int_{\ga|_{G'}} g_{u,G}\,ds 
         + \int_{\ga|_{[0,l_\ga]\setm G'}} g_u\,ds \\
      & =  \int_{\ga|_{G'\cup A}} g_{u,G}\,ds 
        + \int_{\ga|_{[0,l_\ga]\setm (G' \cup A)}} g_u\,ds
    = \int_{\ga} g\,ds.
\end{align*}
Thus, $g$ is a \p-weak upper gradient of $u$ in $X$ and hence
$g_u\le g$ a.e.\ in $X$.
It follows that $g_u\le g_{u,G}$ a.e.\ in $G$, which finishes the proof.
\end{proof}

\begin{cor}  \label{cor-guG-guE}
Let $E\subset X$ be measurable and $G\subset E$ be a \p-path almost open\/
\textup{(}with respect to $X$\textup{)}
measurable set.
If $u\in\Dploc(X)$, then
\[
g_{u,G}=g_{u,E}=g_u \quad \text{a.e.\ in } G.
\]
\end{cor}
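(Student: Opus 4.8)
The plan is to deduce this directly from Proposition~\ref{prop-gu-on-p-path-open} applied twice, together with the elementary monotonicity of minimal \p-weak upper gradients under passing to subsets. First I would record the easy inequalities: since $G \subset E \subset X$, any \p-weak upper gradient of $u$ with respect to $X$ is also one with respect to $E$, and any \p-weak upper gradient with respect to $E$ is one with respect to $G$; taking minimal ones gives $g_{u,G} \le g_{u,E} \le g_u$ a.e.\ in $G$. (Strictly, a curve in $G$ is a curve in $E$ is a curve in $X$, so the restriction of $g_u$ to $E$ satisfies \eqref{ug-cond} for \p-almost every curve in $E$, and similarly down to $G$; hence the claimed chain of a.e.\ inequalities.)

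The reverse inequality is where Proposition~\ref{prop-gu-on-p-path-open} does the work. Since $G$ is \p-path almost open in $X$ and $u \in \Dploc(X)$, that proposition gives $g_{u,G} = g_u$ a.e.\ in $G$. Combining this with the chain $g_{u,G} \le g_{u,E} \le g_u$ a.e.\ in $G$ forces all three to be equal a.e.\ in $G$, which is exactly the assertion.

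The one point that needs a small remark is that Proposition~\ref{prop-gu-on-p-path-open} is stated for sets that are \p-path almost open \emph{in $X$}, which is precisely the hypothesis we are given on $G$; no notion of $G$ being \p-path almost open relative to $E$ is needed, so there is no subtlety about which ambient space the path-openness is measured in. Thus the corollary is essentially immediate and there is no real obstacle; the only care required is in spelling out the monotonicity inequalities correctly, i.e.\ that minimal \p-weak upper gradients can only decrease when the ambient space shrinks, so that the equality $g_{u,G}=g_u$ a.e.\ in $G$ squeezes $g_{u,E}$ between them.

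\begin{proof}[Proof of Corollary~\ref{cor-guG-guE}]
Since $G \subset E \subset X$, every curve in $G$ is a curve in $E$, and
every curve in $E$ is a curve in $X$.
Hence $g_u|_E$ is a \p-weak upper gradient of $u$ with respect to $E$,
and $g_{u,E}|_G$ is a \p-weak upper gradient of $u$ with respect to $G$.
Taking minimal \p-weak upper gradients yields
\[
     g_{u,G} \le g_{u,E} \le g_u \quad \text{a.e.\ in } G.
\]
On the other hand, as $G$ is \p-path almost open and $u \in \Dploc(X)$,
Proposition~\ref{prop-gu-on-p-path-open} shows that $g_{u,G}=g_u$ a.e.\ in $G$.
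Combining this with the above inequalities gives
$g_{u,G}=g_{u,E}=g_u$ a.e.\ in $G$.
\end{proof}
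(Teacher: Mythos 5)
Your proposal is correct and follows exactly the paper's own argument: the monotonicity chain $g_{u,G}\le g_{u,E}\le g_u$ a.e.\ in $G$ combined with Proposition~\ref{prop-gu-on-p-path-open} applied to $G$ as a \p-path almost open subset of $X$. Nothing is missing.
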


\begin{proof}
Clearly, $g_{u,G}\le g_{u,E} \le g_u$ a.e.\ in $G$.
Since $G$ is \p-path almost open, 
Proposition~\ref{prop-gu-on-p-path-open} shows that equality must hold a.e.\ in $G$.
\end{proof}

\begin{remark}
Note that Corollary~\ref{cor-guG-guE} can also be applied to $E$ 
instead of $X$, giving that for $u\in\Dp\loc(E)$, 
$g_{u,G}= g_{u,E}$ a.e.\ in $G$,
whenever $G\subset E$ is measurable and \p-path almost open with 
respect to $E$, in particular if it is measurable and \p-path almost open with 
respect to $X$.
\end{remark}

Another application of \p-path open sets is
the following sufficient condition 
for when a function belongs to $\Np_0(E)$. 
This generalizes Theorem~2.147 and Corollary~2.162 in 
Mal\'y--Ziemer~\cite{MaZi}.
See also Lemma~\ref{lem-police}, Theorem~\ref{thm-fineint}
and Proposition~\ref{prop-fineint-locally}
for related results, and
Proposition~\ref{prop-char-Np0-qe-bdry} where this is 
combined with fine topology on $\R^n$.

\begin{lem}  \label{lem-Np0-0-qe}
Let $E_1\subset E_2\subset X$ with $E_1$ and $X\setm E_2$ being \p-path open.
If $u\in\Np(E_2)$ and $u=0$ q.e.\ in $E_2\setm E_1$ then the zero
extension of $u$ belongs to $\Np(X)$ and in particular $u\in\Np_0(E_1)$.
\end{lem}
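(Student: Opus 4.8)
The plan is to prove that the extension $\ut$ of $u$, defined by setting $\ut=0$ on $X\setm E_2$, belongs to $\Np(X)$. Granting this, $\ut=0$ on $X\setm E_2$ by construction and $\ut=0$ q.e.\ on $E_2\setm E_1$ by hypothesis, so $\ut=0$ q.e.\ on $X\setm E_1$; hence $u=\ut|_{E_1}\in\Np_0(E_1)$ by the definition of $\Np_0$ together with the observation, recorded just after that definition, that q.e.\ vanishing outside $E_1$ suffices. Since $\ut$ is measurable and $\int_X|\ut|^p\,\dmu=\int_{E_2}|u|^p\,\dmu<\infty$, it only remains to produce a \p-weak upper gradient of $\ut$ in $X$ lying in $L^p(X)$. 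The natural candidate is the function $\gt$ equal to $g_{u,E_2}$ on $E_2$ and to $0$ on $X\setm E_2$, which is in $L^p(X)$ since $u\in\Np(E_2)$.

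First I would fix a curve $\ga\colon[0,l_\ga]\to X$, parametrized by arc length, lying outside a suitable \p-exceptional family obtained by closing the following families up under passage to subcurves (which, as recalled in Section~\ref{sect-prelim}, does not affect being \p-null): curves for which $\ga^{-1}(E_1)$ is not relatively open; curves for which $\ga^{-1}(X\setm E_2)$ is not relatively open; curves meeting $N:=\{x\in E_2\setm E_1:u(x)\ne0\}$, which is \p-null since $\Cp(N)=0$; curves with $\int_\ga\gt\,ds=\infty$; and curves contained in $E_2$ along which $u$ fails to be absolutely continuous or along which \eqref{ug-cond} fails for $u$ and $g_{u,E_2}$. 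The last family is \p-null with respect to $E_2$ because $u\in\Dploc(E_2)$, and a curve family contained in the measurable set $E_2$ that is \p-null with respect to $E_2$ is \p-null with respect to $X$ as well, since a function $0\le\rho\in L^p(E_2)$ witnessing this extends by $0$ to a function in $L^p(X)$ with the same line integrals over curves lying in $E_2$. For such $\ga$, write $\ga^{-1}(E_1)=\bigcup_j(a_j,b_j)$ as a countable disjoint union of intervals relatively open in $[0,l_\ga]$. Off $\ga^{-1}(E_1)$ the curve runs in $(E_2\setm E_1)\cup(X\setm E_2)$, where $u=0$ (as $\ga$ avoids $N$) resp.\ $\ut=0$; hence $\ut\circ\ga\equiv0$ on $[0,l_\ga]\setm\ga^{-1}(E_1)$.

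The main work is the behaviour at the endpoints of the intervals $(a_j,b_j)$. Applying \eqref{ug-cond} for $u$ and $g_{u,E_2}$ along subcurves $\ga|_{[c,d]}$ with $[c,d]\subset(a_j,b_j)$ (these run in $E_1\subset E_2$) and summing over partitions shows that the variation of $u\circ\ga$ over $(a_j,b_j)$ is at most $\int_{\ga|_{(a_j,b_j)}}\gt\,ds$, which is finite, so $u\circ\ga$ has finite one-sided limits at $a_j$ and $b_j$. If an endpoint, say $a_j$, lies in $(0,l_\ga)$, then $\ga(a_j)\notin E_1$, and also $\ga(a_j)\notin X\setm E_2$ — otherwise the relatively open set $\ga^{-1}(X\setm E_2)$ would contain a right neighbourhood of $a_j$, contradicting $(a_j,a_j+\de)\subset\ga^{-1}(E_1)$. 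Thus $\ga(a_j)\in E_2\setm E_1$, so $u(\ga(a_j))=0$; and since $\ga|_{[a_j,d]}$ runs in $E_2$ for $d\in(a_j,b_j)$ and $u$ is absolutely continuous along it, $\lim_{t\to a_j+}u(\ga(t))=u(\ga(a_j))=0$. (When infinitely many of the $(a_j,b_j)$ accumulate at a single point one argues in the same spirit, using that the variation of $u\circ\ga$ over the accumulating intervals tends to $0$.) Combining this with the previous paragraph, $\ut\circ\ga$ is continuous on $[0,l_\ga]$, vanishes outside $\bigcup_j(a_j,b_j)$, and has total variation at most $\sum_j\int_{\ga|_{(a_j,b_j)}}\gt\,ds\le\int_\ga\gt\,ds$; in particular
\[
     |\ut(\ga(0))-\ut(\ga(l_\ga))| \le \int_\ga \gt\,ds .
\]

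Since this holds for \p-almost every $\ga$, $\gt$ is a \p-weak upper gradient of $\ut$ in $X$, so $\ut\in\Np(X)$ and the reduction in the first paragraph finishes the proof. The step I expect to be the main obstacle is precisely this endpoint analysis: one must invoke \emph{both} hypotheses — that $E_1$ and that $X\setm E_2$ are \p-path open — to exclude $\ga$ passing directly from $E_1$ into $X\setm E_2$, so that the value $0$ of $u$ on $E_2\setm E_1$ is genuinely attained as a limit along $\ga$, while at the same time transferring the (\p-weak) upper gradient information for $u$ on $E_2$ to \p-almost every curve of the ambient space $X$.
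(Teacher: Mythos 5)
Your proof is correct, and the crucial observation is the same as in the paper: because both $\ga^{-1}(E_1)$ and $\ga^{-1}(X\setm E_2)$ are relatively open, a good curve cannot jump from $E_1$ directly into $X\setm E_2$ without passing through $E_2\setm E_1$, where $u$ vanishes off a set of capacity zero. The execution, however, differs. You decompose $\ga^{-1}(E_1)$ into its countably many relatively open components and run a one-sided-limit/total-variation argument over all of them (in the spirit of the paper's proof of Proposition~\ref{prop-gu-on-p-path-open}), which forces you to deal with the accumulation of infinitely many components at a point; your sketch there is right (on each component $I_j$ one has $\sup_{I_j}|u\circ\ga|\le\int_{\ga|_{I_j}}\gt\,ds$, and these are summable), but it is the one place where the argument is only indicated. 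The paper instead makes a single first-exit-time reduction: after discarding the trivial cases ($\ga$ entirely in $E_2$, or $\ga$ never meeting $E_1$) and splitting/reorienting so that $\ga(0)\in E_1$ and $\ga(l_\ga)\in X\setm E_2$, it sets $c=\inf\{t:\ga(t)\in X\setm E_2\}$, uses both \p-path-openness hypotheses to conclude $\ga(c)\in(E_2\setm E_1)\setm A$, hence $u(\ga(c))=0=u(\ga(l_\ga))$, and applies the upper gradient inequality once on $\ga|_{[0,c]}\subset E_2$. That buys a shorter proof with no limits, no variation sums and no accumulation issues; your version buys slightly more information (continuity of $\ut\circ\ga$ and a bound on its total variation along \p-almost every curve), at the cost of the extra bookkeeping.
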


Note that ``\p-path open'' in Lemma~\ref{lem-Np0-0-qe} cannot
be replaced by ``\p-path almost open'', as the example with
$E_1=E_2=(0,1)\subset\R=X$ and $u=\chi_{E_1}$ shows.
The most common usage of Lemma~\ref{lem-Np0-0-qe} is perhaps when
$E_1$ and $E_2$ are the interior and the closure of some set, respectively.

\begin{proof}
We shall show that $g_{u,E_2}$ (extended by zero) 
is a \p-weak upper gradient of $u$ (extended by zero) in $X$.
Let $\Ga$ be the family of curves in $E_2$ on which \eqref{ug-cond} fails for
$u$ and $g_{u,E_2}$.
Then $\Modp(\Ga)=0$.
Let also $A=\{x\in E_2\setm E_1: u(x)\ne0\}$.
Since $\Cp(A)=0$, we conclude that \p-almost every curve $\ga:[0,l_\ga]\to X$  
avoids $A$, does not have a subcurve in $\Ga$ and is such that
both $\ga^{-1}(E_1)$ and $\ga^{-1}(X\setm E_2)$ are relatively open.

Let $\ga$ be such a curve.
We can assume that $\ga$ passes through both $E_1$ and $X\setm E_2$.
Otherwise, there is nothing to prove, since $g_{u,E_2}$ is 
a \p-weak upper gradient in $E_2$ and $u=0$ outside $E_1\cup A$.
By splitting $\ga$ into two parts and reversing the orientation, 
if necessary, we can assume that
$\ga(0)\in E_1$ and $\ga(l_\ga)\in X\setm E_2$.

Let $c= \inf\{ t\in[0,l_\ga]:\ga(t)\in X\setm E_2\}$.
Since both $\ga^{-1}(E_1)$ and $\ga^{-1}(X\setm E_2)$ are relatively
open in $[0,l_\ga]$, we conclude that $\ga(c)\in (E_2\setm E_1) \setm A$,
i.e.\ $u(\ga(c))=0=u(\ga(l_\ga))$.
Hence
\begin{align*}
    |u(\ga(0))-u(\ga(l_\ga))| 
    &= |u(\ga(0))-u(\ga(c))|
    \le\int_{\ga|_{[0,c]}} g_{u,E_2}\,ds 
    \le \int_{\ga} g_{u,E_2}\,ds.\qedhere
\end{align*}
\end{proof}

As $\Np_0(E)$ is defined through $\Np(X)$, it is natural that
the minimal \p-weak upper gradients of functions in $\Np_0(E)$
are taken with respect to $X$.
The following result is therefore important for our considerations. 
(This result holds for $u \in \Dp_0(E)$ even in situations when 
$\Np_0(E) \subsetneq \Dp_0(E)$ so we formulate it in this generality.
In fact it even holds for 
$u \in \Dp_{{\rm loc},0}(E):=\{f|_{E} : f \in \Dploc(X) \text{ and }
        f=0 \text{ on } X \setm E\}$.)

\begin{prop} \label{prop-min-grad}
Let $E \subset X$ be measurable and $u \in \Dp_0(E)$ with a minimal 
\p-weak upper gradient $g_u$
\textup{(}with respect to $X$, and with $u=0$ outside $E$\textup{)}.
Then $g_{u,E}=g_u|_E$ a.e.\ in $E$,
i.e.\ $g_u|_E$ is a minimal \p-weak upper gradient of $u$
with respect to $E$.
\end{prop}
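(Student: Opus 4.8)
The plan is to show that if $g$ is \emph{any} $p$-weak upper gradient of $u$ with respect to $E$ (so in particular $g=g_{u,E}$), then the zero extension of $g$ to all of $X$ is a $p$-weak upper gradient of the zero extension of $u$ with respect to $X$. Once this is established we get $g_u \le g$ a.e.\ in $X$, hence $g_u|_E \le g_{u,E}$ a.e.\ in $E$; the reverse inequality $g_{u,E} \le g_u|_E$ a.e.\ in $E$ is immediate since the restriction to $E$ of any upper gradient of $u$ with respect to $X$ is an upper gradient with respect to $E$.

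For the main inclusion I would argue curvewise. Extend $u$ by $0$ on $X\setm E$ and set $\tilde g = g$ on $E$, $\tilde g = 0$ on $X\setm E$, where $g$ is a $p$-weak upper gradient of $u$ with respect to $E$ lying in $L^p(X)$ (this needs a brief remark: since $u\in\Dp_0(E)$ there is an upper gradient $g_0\in L^p(X)$ of the extended $u$ with respect to $X$; then $g_0|_E$ is an upper gradient with respect to $E$, so $g_{u,E}\le g_0|_E$ a.e., giving $g_{u,E}\in L^p(E)$). Let $\Ga_1$ be the curves in $E$ on which \eqref{ug-cond} fails for $u,g$; let $\Ga_2$ be the curves in $X$ along which $\int_\ga \tilde g\,ds = \infty$; then $\Modp(\Ga_1\cup\Ga_2)=0$. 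By Lemma~1.34 in \cite{BBbook}, for $p$-almost every curve $\ga:[0,l_\ga]\to X$, no subcurve lies in $\Ga_1\cup\Ga_2$. Fix such a $\ga$. Consider the (relatively) open set $U=\ga^{-1}(X\setm E)\subset[0,l_\ga]$, written as a countable disjoint union of open intervals, together with the possible endpoint components. On each maximal closed subinterval $[a,b]$ of $[0,l_\ga]$ with $\ga((a,b))\subset X\setm E$ we have $u(\ga(a))=u(\ga(b))=0$ unless $a=0$ or $b=l_\ga$ and $\ga$ starts or ends outside $E$ — but in that case $u$ also vanishes at that endpoint, so in all cases the contribution of such subintervals telescopes to zero; and $\int_{\ga|_{[a,b]}} \tilde g\,ds = 0$ there. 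On the complementary closed subintervals $\ga$ stays in $E$ (except possibly at endpoints, which are then in $\bdy E$ but still in $X$; here one uses that $u=0$ q.e.\ on $X\setm E$ is not needed — rather one uses that these endpoints are limits and $u$ is absolutely continuous on the $E$-subcurve), and the restriction of $\ga$ to such an interval is a curve in $E$ with no subcurve in $\Ga_1$, so \eqref{ug-cond} holds there with $g$. Summing the (at most countably many) pieces and using $\int_\ga \tilde g\,ds<\infty$ together with monotone/dominated convergence gives $|u(\ga(0))-u(\ga(l_\ga))|\le\int_\ga \tilde g\,ds$.

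The delicate point — the step I expect to be the main obstacle — is the behaviour of $\ga$ at the \emph{boundary} points $\bdy E$, i.e.\ at the endpoints $t$ of the maximal $E$-subintervals where $\ga(t)\in\bdy E\subset X$ but possibly $\ga(t)\notin E$. The clean way to handle this is to partition $[0,l_\ga]$ more carefully: write $[0,l_\ga]\setm\ga^{-1}(E)$ (a relatively closed set) and note that the restriction of $\ga$ to the closure of each component of $\ga^{-1}(E)$ is a compact curve whose interior lies in $E$; one should replace the naive decomposition by taking, for the subcurves in $X\setm E$, the \emph{open} components of $\ga^{-1}(X\setm E)$ and observing that on each such open component the integral of $\tilde g$ vanishes and the endpoints of its closure lie in $\bdy E\cup\{0,l_\ga\}$. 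The key fact that makes everything fit is that $u$ vanishes at every point of $\bdy E$ that is reached as a limit from inside $X\setm E$ along $\ga$: since $p$-almost every curve avoids the capacity-zero set $\{x\in X\setm E: u(x)\ne 0\}$ and $u=0$ on $X\setm E$ by convention, and since $u$ restricted to an $E$-subcurve is absolutely continuous (so has a limit at each endpoint), one checks that these one-sided limits agree and equal $0$. Assembling these observations, the telescoping sum over the countable collection of $E$-subcurves and $(X\setm E)$-subcurves of $\ga$ collapses exactly as in the proof of Proposition~\ref{prop-gu-on-p-path-open}, yielding the desired inequality and completing the proof.
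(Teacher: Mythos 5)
Your overall strategy is the same as the paper's: extend $g_{u,E}$ by zero, show that the extension is a \p-weak upper gradient of the zero-extended $u$ with respect to $X$, and conclude $g_u\le g_{u,E}$ a.e.\ in $E$ (the reverse inequality being trivial). The gap is in the curvewise step. You treat $\ga^{-1}(X\setm E)$ as a relatively open set (and $[0,l_\ga]\setm\ga^{-1}(E)$ as relatively closed) and build a telescoping sum over countably many maximal subintervals on which $\ga$ stays in $E$ or in $X\setm E$. For a merely measurable $E$ no such interval structure exists: $\ga^{-1}(E)$ is an arbitrary measurable subset of $[0,l_\ga]$, which can have empty interior and full measure. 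With $X=\R$, $E=(0,1)\setm\Q$ and $\ga(t)=t$ as in Example~\ref{ex-R-Q-1}, every "maximal $E$-subinterval'' and every "maximal $(X\setm E)$-subinterval'' is degenerate, so the decomposition you telescope over is empty. This is exactly the obstruction that forces the \p-path almost open hypothesis in Proposition~\ref{prop-gu-on-p-path-open}, whose proof you are trying to transplant; it cannot be transplanted here because $E$ is only assumed measurable.

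What saves the proposition is that \eqref{ug-cond} involves only the two endpoints of $\ga$, so no decomposition of the whole parameter interval is needed. After discarding the trivial cases and splitting $\ga$ at one interior point of $\ga^{-1}(X\setm E)$ (reversing orientation if necessary), one may assume $\ga(0)\in E$ and $\ga(l_\ga)\in X\setm E$. Setting $c=\inf\{t:\ga(t)\in X\setm E\}$, one has $\ga|_{[0,t]}\subset E$ for every $t<c$, so the $E$-upper gradient inequality applies on $[0,t]$; moreover $u(\ga(c))=0=u(\ga(l_\ga))$, because $u\equiv0$ on $X\setm E$ and $u\circ\ga$ is continuous (here one uses that $u\in\Dp_0(E)$, extended by zero, is absolutely continuous on \p-a.e.\ curve in $X$). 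Letting $t\to c\limminus$ gives
\[
|u(\ga(0))-u(\ga(l_\ga))|=|u(\ga(0))-u(\ga(c))|
\le\lim_{t\to c\limminus}\int_{\ga|_{[0,t]}}g_{u,E}\,ds\le\int_\ga g\,ds,
\]
with no decomposition of $\ga^{-1}(E)$ at all. Your observation that $u$ vanishes at parameters reached as limits from $\ga^{-1}(X\setm E)$ is the right ingredient, but it must be deployed at this single first-exit time rather than at the endpoints of a nonexistent family of intervals.
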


Note that the corresponding result for arbitrary $u\in\Np(X)$ is false, 
see Example~\ref{ex-R-Q-1}.

\begin{proof}
Clearly, $g_u|_E$ is a \p-weak upper gradient of $u$
in $E$.  
To show that it is minimal, 
we shall show that the function
\[
	g=\begin{cases}
                  g_{u,E} & \text{in } E, \\
                  0 & \text{in } X\setm E,
           \end{cases}
\]
is a \p-weak upper gradient of $u$ in $X$.
Let $\Ga$ be the set of curves in $E$ on which \eqref{ug-cond} fails for
$u$ and $g_{u,E}$.
Then $\Modp(\Ga)=0$.

Let $\ga:[0,l_\ga]\to X$ be a curve such that $u$ is absolutely
continuous along it and $\ga$ does not have any subcurve in $\Ga$.
As $u\in\Dp_0(E)$ and $\Modp(\Ga)=0$, 
\p-almost every curve in $X$ has these properties.
We can also assume that $\ga$ passes through both $E$ and $X\setm E$.
Otherwise, there is nothing to prove, since $g_{u,E}$ is 
a \p-weak upper gradient in $E$ and $u=0$ outside $E$.
By splitting $\ga$ into two parts and reversing the orientation, 
if necessary, we can assume that
$\ga(0)\in E$ and $\ga(l_\ga)\in X\setm E$.

Let $c= \inf\{ t\in[0,l_\ga]:\ga(t)\in X\setm E\}$.
Since $u=0$ in $X\setm E$, the absolute continuity of $u$ along $\ga$
implies that $u(\ga(c))=0=u(\ga(l_\ga))$.
If $c>0$, then
\begin{align*}
    |u(\ga(0))-u(\ga(l_\ga))| 
    &= |u(\ga(0))-u(\ga(c))|
     = \lim_{t\to c\limminus}  |u(\ga(0))-u(\ga(t))| \\
    &\le \lim_{t\to c\limminus} \int_{\ga|_{[0,t]}} g_{u,E}\,ds 
    \le \int_{\ga} g\,ds,
\end{align*}
by the absolute continuity of $u$ along $\ga$.
For $c=0$, these estimates are trivial.
Thus $g$ is a \p-weak upper gradient of $u$ in $X$, 
and hence $g\ge g_u$ a.e.\ in $X$. 
It follows that $g_{u,E} \le g_u \le g_{u,E}$ a.e.\ in $E$,
which finishes the proof.
\end{proof}

\section{The obstacle problem}
\label{sect-obst}

We shall now consider the obstacle and Dirichlet 
problems on general sets.
Let us start by formulating \emph{our} obstacle problem.

\medskip

\emph{Throughout this section we assume that $p>1$ and that  $X$ 
supports a\/ $(p,p)$-Poincar\'e inequality for $\Np_0$,
see Definition~\ref{def-relax-PI}.
We also assume 
that $E \subset X$ is a bounded measurable set such that $\Cp(X \setm E)>0$.}

\medskip

In Section~\ref{sect-exist,uniq,ex} we will discuss when these assumptions
can be relaxed.
Observe that we do not assume that $\mu$ is doubling nor that $X$ is complete,
although we will need to add these assumptions for parts of the theory 
in Sections~\ref{sect-nontriv} and~\ref{sect-compare}.

\begin{deff} \label{deff-obst-E}
Let $A \subset X$ be a bounded measurable set such that $\Cp(X \setm A)>0$.
Let $f \in \Dp(A)$ and $\psi_1,\psi_2 : A \to \eR$. 
Then we define
\begin{equation*}  
    \K_{\psi_1,\psi_2,f}(A)=\{v \in \Dp(A) : v-f \in \Np_0(A) 
            \text{ and } \psi_1 \le v \le \psi_2 \ \text{q.e. in } A\}.
\end{equation*}

Furthermore, a function $u \in \K_{\psi_1,\psi_2,f}(A)$
is a \emph{solution of the $\K_{\psi_1,\psi_2,f}(A)$-obstacle problem}
if 
\begin{equation} \label{eq-obst-def-E}
       \int_A g^p_{u,A} \, d\mu 
       \le \int_A g^p_{v,A} \, d\mu 
       \quad \text{for all } v \in \K_{\psi_1,\psi_2,f}(A).
\end{equation}
If $A=E$ we often drop the set from the notation and merely write
$\K_{\psi_1,\psi_2,f}:=\K_{\psi_1,\psi_2,f}(E)$.
Similarly, we often drop $\psi_2$ from the notation when $\psi_2 \equiv \infty$,
i.e.\ when there is no upper obstacle.
Such an obstacle problem is called the \emph{single obstacle problem}.
\end{deff}

The Dirichlet problem is a special case of the obstacle
problem, with the obstacles $\psi_1 \equiv -\infty$ and $\psi_2 \equiv\infty$.
Note that the boundary data $f$ are only required to belong to $\Dp(A)$,
i.e.\ $f$ need not be defined on $\bdry A$.

Since we consider boundary values $f \in \Dp(A)$ 
rather than $f\in \Np(A)$,
it would be natural to consider the obstacle problem 
with $\Dp_0(A)$ instead of $\Np_0(A)$.
However, by
Proposition~\ref{prop-Dp0=Np0}, this is exactly what we do, even 
though we prefer to write $\Np_0(A)$.
At the same time, in the more general situations discussed in 
Section~\ref{sect-exist,uniq,ex} the equality
$\Dp_0(A)=\Np_0(A)$ may not hold, and it will 
be essential to consider the obstacle problem with $\Np_0(A)$,
at least for our proof of Theorem~\ref{thm-obst-solve-E}
(through the use of Lemma~\ref{lem-mazur-consequence}).

The \p-weak upper gradients $g_{u,A}$ and $g_{v,A}$
in Definition~\ref{deff-obst-E} are taken with
respect to $A$, but the notion of q.e.\ is taken with respect to $X$. 
We shall below comment on obstacle problems with q.e.\ taken 
with respect to $E$ and with a.e.-inequalities.

Obstacle and Dirichlet problems have traditionally been solved on open sets
$\Om$, in which case $g_{u,\Om}=g_u$ a.e.
See, however,  Kilpel\"ainen--Mal\'y~\cite{KilMa92} and
Mal\'y--Ziemer~\cite{MaZi} where they are studied on finely open sets 
in $\R^n$.
In metric spaces the single obstacle problem was studied
by Kinnunen--Martio~\cite{KiMa02}, while the double obstacle problem
was studied by Farnana~\cite{Fa1}.
In both cases they studied the obstacle problems for bounded open sets 
in a complete metric space $X$  supporting a 
$(1,p)$-Poincar\'e inequality
and with a doubling measure $\mu$
(and with boundary values in the Newtonian space).

The Dirichlet problem on metric spaces was first studied
by Shanmugalingam~\cite{Sh-harm}.
She studied it  on bounded, not necessarily open, sets in 
a complete metric space $X$ with a doubling measure $\mu$  
supporting a $(1,p)$-Poincar\'e inequality,
under the stronger requirement that $f \in \Np(X)$.

In all the above cases, the \p-energy functional was defined by means of
the global minimal \p-weak upper gradient $g_u$.
Thus, the Dirichlet problem studied by Shanmugalingam~\cite{Sh-harm}
differs in general from the Dirichlet problem considered here.
Similarly, for a nonopen set $E$, another possible generalization 
of the obstacle problem would be
to require that the boundary 
data $f$ belong to $\Dp(\Om)$ for some open set $\Om\supset E$ 
and to minimize the energy $\int_E g^p_v \, d\mu$ among all 
$v\in\K'_{\psi_1,\psi_2,f}$, where
\begin{equation}   \label{eq-def-K'}
\K'_{\psi_1,\psi_2,f} = \{v \in \Dp(\Om) : v-f \in \Np_0(E) 
            \text{ and } \psi_1 \le v \le \psi_2 \ \text{q.e. in } E\}.
\end{equation}

As $\Om$ is open, 
the minimal \p-weak upper gradients and the notion of q.e.\ are taken 
with respect to $\Om$ or equivalently $X$.
If we let 
\begin{equation} \label{eq-psi'j}
	\psi'_j=\begin{cases}
                  \psi_j & \text{in } E, \\
                  f & \text{in } \Om \setm E, 
           \end{cases}
     \quad j=1,2,
\end{equation}
then $\K'_{\psi_1,\psi_2,f}=\K_{\psi_1',\psi_2',f}(\Om)$, 
where we use our convention that $v-f \in \Np_0(E)$ can be 
extended by zero in $\Om\setm E$.
Moreover, for any $v \in \K'_{\psi_1,\psi_2,f}$,
\[
        \int_\Om g^p_v \, d\mu
        = \int_{\Om \setm E} g^p_f \, d\mu + \int_E g^p_v \, d\mu,
\]
as $v=f$ q.e.\ in $\Om \setm E$.
Hence the minimizers of
the energies $\int_E g^p_v \, d\mu$ 
and $\int_\Om g^p_v \, d\mu$ among 
$v \in \K'_{\psi_1,\psi_2,f}=\K_{\psi_1',\psi_2',f}(\Om)$ 
coincide and the theory for this generalization follows
directly from the theory for open sets.
Observe however that we study the obstacle problem on more general
metric measure spaces 
than  previously done, also for open sets,
see e.g.\ Example~\ref{ex-global-but-not-local-PI} and Section~\ref{sect-an ex},
and that we only require $f$ to belong to the Dirichlet space $\Dp$.

Here we have ignored one subtle point, viz.\ we require $\Cp(X \setm E)>0$,
but it is not clear if one can find an open set
$\Om \supset E$ such that $\Cp(X \setm \Om)>0$.
This is always possible if $X$ is unbounded, and also if $\mu(X \setm E)>0$,
by the regularity of the measure and the measurability of $E$.
Similarly, if $E$ is a $G_\de$ set, then $\Om$ can be found using an analogue 
for the $\Cp$-capacity of the
property~\ref{cp-Choq-E-sum} in Theorem~\ref{thm-cp}.
Moreover, if $X$ is a 
complete metric space supporting a $(1,p)$-Poincar\'e inequality,
$\mu$ is doubling, and $X \setm E$ is Souslin 
(in particular if $E$ is Borel),
then the same follows from Choquet's capacitability theorem, see
Theorem~6.11 in Bj\"orn--Bj\"orn~\cite{BBbook}.

In our approach, we only assume that the boundary data belong to 
$\Dp(E)$ and the minimal \p-weak upper gradient is taken with respect to $E$.
This leads to a different obstacle problem since $g_{u,E}$ is in general
smaller than $g_u$, see Example~\ref{ex-R-Q-1}.
The two definitions of obstacle problems will be further compared in 
Section~\ref{sect-compare}.

Note that even though we take the gradients with respect to $E$, we require
the obstacle inequalities $\psi_1\le u\le\psi_2$ to hold q.e., 
where q.e.\ is taken with respect to $X$. 
This may seem unnatural, but there are several reasons for this choice.
First of all, this is the natural condition for $\Np_0(E)$
and means that the uniqueness we obtain in Theorem~\ref{thm-obst-solve-E}
is precisely up to sets of capacity zero with respect to $X$ (\emph{not} $E$).
It also turns out to be essential for Adams' criterion
(Theorem~\ref{thm-Choq-int-est}).

Second, we could actually have developed the theory with $E$-q.e., 
i.e.\ quasieverywhere taken 
with respect to $E$, which is a coarser condition, or the even coarser 
condition a.e. 
The latter was used by  Kinnunen--Martio~\cite{KiMa02}.
See also the discussion 
on q.e.- and a.e.-obstacle problems in Farnana~\cite{Fa1}.
In particular, if $\Cp(A)>0=\CpE(A)$, where $\CpE(A)$ 
is the capacity of $A$ with respect to $E$,
then the zero function belongs to $\K_{\chi_A,0}$
(and solves the obstacle problem) with q.e.\ taken with respect to
$E$, but not when taken with respect to $X$.
The $E$-q.e.\ theory would fall in between the q.e.- and a.e.-theories,
and it is easy to adapt most of our results to this setting if need arises,
but there is e.g.\ no direct counterpart of Adams' criterion.

We have the following existence and uniqueness theorem.

\begin{thm} \label{thm-obst-solve-E}
Let $f \in \Dp(E)$ 
and $\psi_1,\psi_2 : E \to \eR$. 
If $\K_{\psi_1,\psi_2,f} \ne \emptyset$, then
there is a unique solution\/ 
\textup{(}up to sets of capacity zero in $X$\textup{)}
of the $\K_{\psi_1,\psi_2,f}$-obstacle problem.
\end{thm}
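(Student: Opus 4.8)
The plan is to follow the classical direct method in the calculus of variations, adapted to the Newtonian setting, with two twists dictated by the generality: the energy functional involves $g_{v,E}$ (gradient with respect to $E$), and the boundary data lie only in $\Dp(E)$. First I would reduce to the case $f\in\Np_0(E)$ augmented suitably, or rather work directly: let $I=\inf\{\int_E g_{v,E}^p\,d\mu : v\in\K_{\psi_1,\psi_2,f}\}$, which is finite and nonnegative since $\K_{\psi_1,\psi_2,f}\ne\emptyset$ and $f\in\Dp(E)$ provides a competitor with finite energy. Pick a minimizing sequence $\{u_i\}\subset\K_{\psi_1,\psi_2,f}$, so $\int_E g_{u_i,E}^p\,d\mu\to I$. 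Since $u_i-f\in\Np_0(E)$, the $(p,p)$-Poincar\'e inequality for $\Np_0$ gives $\|u_i-f\|_{L^p(E)}^p\le C_E\|g_{u_i-f,E}\|_{L^p(E)}^p\le C_E(\|g_{u_i,E}\|_{L^p(E)}+\|g_{f,E}\|_{L^p(E)})^p$, which is bounded; hence $\{u_i\}$ is bounded in $L^p(E)$ and $\{g_{u_i,E}\}$ is bounded in $L^p(E)$, so $\{u_i-f\}$ is bounded in the Dirichlet-norm and (being in $\Np_0(E)\subset\Np(E)$) in $\Np(E)$.

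Next I would extract a weakly convergent subsequence. Since $p>1$, $L^p$ is reflexive, so after passing to a subsequence we may assume $u_i-f\rightharpoonup w$ weakly in $L^p(E)$ and $g_{u_i-f,E}\rightharpoonup h$ weakly in $L^p(E)$ for some $w,h\in L^p(E)$. The crucial step — and the one I expect to be the main obstacle — is to produce an actual function $u\in\K_{\psi_1,\psi_2,f}$ with $u-f=w$ q.e.\ and $g_{u,E}\le h$ a.e. This is where one invokes Mazur's lemma: convex combinations of the $u_i$ (equivalently of $u_i-f$) converge strongly in $L^p(E)$ to $w$, and the corresponding convex combinations of the $g_{u_i,E}$ converge strongly to $h$; using that a convex combination $\sum\lambda_i g_{u_i,E}$ is an upper gradient of $\sum\lambda_i u_i$ (with respect to $E$), and passing to a limit along a subsequence converging also q.e., one gets that $h$ is a \p-weak upper gradient of the limit function with respect to $E$, hence $g_{u,E}\le h$ a.e. The excerpt explicitly flags that this uses a Lemma (referred to as \texttt{lem-mazur-consequence}) and is the reason $\Np_0(E)$ rather than $\Dp_0(E)$ is built into the definition, so I would cite that lemma here. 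Having constructed $u$, membership in $\K_{\psi_1,\psi_2,f}$ follows: $u-f=w\in\Np_0(E)$ by the Mazur construction, and the obstacle inequalities $\psi_1\le u\le\psi_2$ q.e.\ pass to the limit because they are preserved under convex combinations and (after taking q.e.-convergent subsequences) under q.e.-limits.

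Then lower semicontinuity finishes existence: $\int_E g_{u,E}^p\,d\mu\le\int_E h^p\,d\mu\le\liminf_i\int_E g_{u_i,E}^p\,d\mu=I$ (the second inequality is weak lower semicontinuity of the $L^p$-norm), and since $u$ is a competitor, $\int_E g_{u,E}^p\,d\mu\ge I$; hence equality and $u$ is a solution. For uniqueness, suppose $u_1,u_2$ are both solutions and consider $v=\tfrac12(u_1+u_2)\in\K_{\psi_1,\psi_2,f}$ (the constraint set is convex). Using $g_{v,E}\le\tfrac12(g_{u_1,E}+g_{u_2,E})$ a.e.\ and the strict convexity of $t\mapsto t^p$ for $p>1$, one gets $\int_E g_{v,E}^p\,d\mu\le\int_E\bigl(\tfrac12 g_{u_1,E}+\tfrac12 g_{u_2,E}\bigr)^p\,d\mu\le I$, with equality forcing $g_{u_1,E}=g_{u_2,E}$ a.e.\ and in fact $g_{u_1-u_2,E}=0$ a.e.\ (via the equality case in the upper-gradient triangle inequality, or by applying the same convexity argument to the pair). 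Since $u_1-u_2\in\Np_0(E)$, the $(p,p)$-Poincar\'e inequality for $\Np_0$ then gives $\|u_1-u_2\|_{L^p(E)}=0$, so $u_1=u_2$ a.e.\ in $E$, and since $u_1-u_2\in\Nploc(X)$, this upgrades to $u_1=u_2$ q.e.\ in $X$, which is exactly uniqueness up to sets of capacity zero in $X$.
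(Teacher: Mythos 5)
Your existence argument follows the paper's proof essentially verbatim: direct method, boundedness of $\{u_j-f\}_{j=1}^\infty$ in $\Np(E)$ via the $(p,p)$-Poincar\'e inequality for $\Np_0$ together with Proposition~\ref{prop-min-grad}, Mazur's lemma (Lemma~\ref{lem-mazur-consequence}) plus Fuglede-type q.e.\ convergence, then lower semicontinuity. The one point you gloss over is that the q.e.\ convergence must be obtained with respect to $X$, not merely $E$: the paper extends $w_j=v_j-f$ by zero, checks via Proposition~\ref{prop-min-grad} that $g_j+g_{f,E}$ is a \p-weak upper gradient of $w_j$ in all of $X$, and only then applies Proposition~\ref{prop-Fuglede-consequence} in $X$ to obtain a limit $\wt\in\Np(X)$ and a subsequence converging q.e.\ in $X$; this is what yields both $u-f=\wt\in\Np_0(E)$ and the obstacle inequalities q.e.\ with respect to $X$ (rather than $E$-q.e.). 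This is fixable along the lines you sketch, but as written your "q.e.-convergent subsequence" lives only on $E$.

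The genuine gap is in uniqueness. From the strict convexity of $L^p(E)$ you correctly get $g_{u_1,E}=g_{u_2,E}$ a.e., but your passage to $g_{u_1-u_2,E}=0$ a.e.\ "via the equality case in the upper-gradient triangle inequality, or by applying the same convexity argument to the pair" does not work. Minimal \p-weak upper gradients are scalars with no directional information; equality of $g_{u_1,E}$ and $g_{u_2,E}$ says nothing about $g_{u_1-u_2,E}$ (for instance $u_2=-u_1$ always satisfies $g_{u_2}=g_{u_1}$), and rerunning the convexity argument only reproduces $g_{u_1,E}=g_{u_2,E}$. In $\R^n$ one would exploit strict convexity of $\xi\mapsto|\xi|^p$ on vectors to conclude $\grad u_1=\grad u_2$, but that tool is unavailable here. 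The paper closes the gap with a level-set/truncation argument: for each $c\in\R$ the function $u=\max\{u_1,\min\{u_2,c\}\}$ is shown (via Lemma~\ref{lem-police}) to belong to $\K_{\psi_1,\psi_2,f}$, one has $g_{u,E}=0$ a.e.\ on $V_c=\{x\in E:u_1(x)<c<u_2(x)\}$, and the minimality of $u_1$ then forces $g_{u_1,E}=g_{u_2,E}=0$ a.e.\ on $V_c$; taking the union over $c\in\Q$ gives $g_{u_1-u_2,E}=0$ a.e.\ on $\{u_1\ne u_2\}$ and hence a.e.\ in $E$. Without some substitute for this step your uniqueness proof is incomplete. (You also omit the routine final check that any function agreeing q.e.\ with a solution is again a solution, which is what the phrase "unique up to sets of capacity zero" requires.)
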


\begin{proof}
Let 
\[
       I=\inf_{v \in \K_{\psi_1,\psi_2,f}} \int_E g^p_{v,E} \,d\mu.
\]
Since $\K_{\psi_1,\psi_2,f} \ne \emptyset$, we have $0 \le I < \infty$.
Let $\{u_j\}_{j=1}^\infty \subset \K_{\psi_1,\psi_2,f}$ 
be a minimizing sequence
such that
\[
       \int_E g^p_{u_j,E} \,d\mu \searrow I,
	\quad \text{as }j \to \infty.
\]
Then $\{g_{u_j,E}\}_{j=1}^\infty$ is bounded in $L^p(E)$.
Remember that $u_j\in\Dp(E)$ and that $g_{u_j,E}$ is taken with respect to $E$.

Using \eqref{PI-NP0}  
 and Proposition~\ref{prop-min-grad} we find that
\begin{equation*} 
\int_E |u_j - f|^p \,d\mu
       \le C        \int_E g_{u_j - f}^p \,d\mu
       =  C        \int_E g_{u_j - f,E}^p \,d\mu
       \le C        \int_E g_{u_j,E}^p \,d\mu
       +        C        \int_E g_{f,E}^p \,d\mu.
\end{equation*}
Thus $\{u_j-f\}_{j=1}^\infty$ is bounded in $\Np(E)$.
By Lemma~\ref{lem-mazur-consequence} (with $X$ replaced by $E$), 
we can find convex combinations 
$v_j=\sum_{k=j}^{N_j} a_{j,k} u_k$
with \p-weak upper gradients $g_j=\sum_{k=j}^{N_j} a_{j,k} g_{u_k,E}$ on $E$
and limit functions $v$ and $g$ such that
$v-f \in \Np(E)$, both $v_j-v \to 0$ and $g_j \to g$ in
$L^p(E)$, 
as $j \to \infty$, 
and such that
$g$ is a \p-weak upper gradient of $v$ with respect to $E$.

Further, $w_j:=v_j -f \in \Np_0(E)$ and we can thus consider $w_j$ to
be identically zero outside of $E$. 
Let also $w=v-f$, $g'_j= g_j +g_{f,E}$ and $g'=g+g_{f,E}$, all three 
considered to be identically zero outside of $E$.
Proposition~\ref{prop-min-grad} implies that
\[
g_{w_j} = g_{w_j,E} \le g_j + g_{f,E} = g'_j
\quad \text{a.e.\ in } E.
\]
As $g_{w_j}=0$ a.e.\ in $X\setm E$, we see that $g'_j$ is a \p-weak upper
gradient of $w_j$ in $X$, $j=1,2,\ldots$\,.
We also have that $w_j \to w$ and $g'_j \to g'$ in $L^p(X)$,
as $j \to \infty$.
Proposition~\ref{prop-Fuglede-consequence} yields that there exists
$\wt \in \Np(X)$ such that $w=\wt$ a.e.\ in $X$.
Then $u:=f+\wt\in\Dp(E)$ and $u=v$ a.e.\ in $E$.
Since $u,v \in \Dp(E)$, 
we have $u=v$ $E$-q.e.\ in $E$ (i.e.\ q.e.\ with respect to E), 
 and thus $g$ is a \p-weak
upper gradient also of $u$ with respect to $E$.

Proposition~\ref{prop-Fuglede-consequence} also implies that a subsequence 
of $\{w_j\}_{j=1}^\infty$ converges q.e.\ (with respect to $X$) to $\wt$.
As $\psi_1\le v_j\le \psi_2$ q.e.\ in $E$, this implies that 
$\psi_1\le u\le \psi_2$ q.e.\ in $E$.
Moreover, it implies that $\wt=0$ q.e.\ in $X \setm E$ and thus
$u-f=\wt\in\Np_0(E)$.
Hence $u\in\K_{\psi_1,\psi_2,f}$. 
Since
\[
       I \le \int_E g^p_{u,E} \,d\mu \le \int_E g^p \,d\mu
         = \lim_{j \to \infty} \int_E g_j^p \,d\mu = I,
\]
we conclude that $u$ is the desired minimizer.

To prove the uniqueness,
assume that $u_1$ and $u_2$ are solutions.
Then also $u'=\frac{1}{2}(u_1+u_2)\in \K_{\psi_1,\psi_2,f}$
and thus
\begin{align*}
I &\le \|g_{u',E}\|_{L^p(E)} 
    \le \bigl\| \tfrac{1}{2} (g_{u_1,E}+g_{u_2,E})\bigr\|_{L^p(E)} \\
&    \le \tfrac12 \|g_{u_1,E}\|_{L^p(E)} + \tfrac12 \|g_{u_2,E}\|_{L^p(E)} = I.
\end{align*}
Hence  $g_{u_1,E}=g_{u_2,E}$ a.e.\ in $E$ by the strict convexity of
$L^p(E)$. 
We shall show that $g_{u_1-u_2,E}=0$ a.e.\ in $E$. 
Since $u_1-u_2\in\Np_0(E)$,  \eqref{PI-NP0}
and Proposition~\ref{prop-min-grad} then yield
$\|u_1-u_2\|_{L^p(E)}=0$. 
From this it follows that 
$u_1-u_2=0$ a.e.\ in $E$ and thus in $X$
(when we set $u_1-u_2:=0$ in $X \setm E$).
As $u_1-u_2\in \Np_0(E) \subset \Np(X)$, we obtain $u_1-u_2=0$ q.e.\ in $X$,
and hence $u_1=u_2$ q.e.\ in $E$.
(Note that since we consider q.e.\ with respect to $X$, we have to use 
the fact that $u_1-u_2\in\Np(X)$ rather than $u_1-u_2\in\Np(E)$.)

To show that  $g_{u_1-u_2,E}=0$ a.e.\ in $E$, let $c\in\R$ and
\[
u=\max\{u_1,\min\{u_2,c\}\}.
\]
Then $u-f\in\Np(E)$ and $\psi_1 \le u\le\psi_2$ q.e.\ in $E$.
Also,
\[
u-f \le \max\{u_1,u_2\}-f = \max\{u_1-f,u_2-f\} \in\Np_0(E)
\]
and 
$u-f\ge u_1-f \in\Np_0(E)$.
Lemma~\ref{lem-police} shows that $u-f\in\Np_0(E)$ and hence
$u\in\K_{\psi_1,\psi_2,f}$.

Let $V_c=\{x\in E: u_1(x)<c<u_2(x)\}$ and note that 
$V_c\subset\{x\in E:u(x)=c\}$ and hence $g_{u,E}=0$ a.e.\ in $V_c$.
The minimizing property of $g_{u_1,E}$ then implies that
\begin{equation}
\int_E g_{u_1,E}^p\,\dmu \le \int_E g_{u,E}^p\,\dmu 
         = \int_{E\setm V_c} g_{u,E}^p\,\dmu
         = \int_{E\setm V_c} g_{u_1,E}^p\,\dmu,
                   \label{eq-gu1=0}
\end{equation}
since 
$g_{u,E}=g_{u_1,E}=g_{u_2,E}$ a.e.\  in $E \setm V_c$.
From \eqref{eq-gu1=0} we conclude that $g_{u_2,E}=g_{u_1,E}=0$ a.e.\ in $V_c$
for all $c\in\R$.
Now, 
\[
\{x\in E: u_1(x) < u_2(x)\} \subset \bigcup_{c\in\Q} V_c
\] 
and hence $g_{u_2,E}=g_{u_1,E}=0$ a.e.\ in $\{x\in E: u_1(x) < u_2(x)\}$,
and similarly in $\{x\in E: u_1(x) > u_2(x)\}$.
It follows that
\[
g_{u_1-u_2,E}\le (g_{u_1,E}+g_{u_2,E}) \chi_{\{x\in E: u_1(x) \neq u_2(x)\}} =0
         \quad \mbox{a.e.\ in }E,
\]
and thus $u_1=u_2$ q.e.\ by the above argument.

It remains to show that
if $u$ is a solution and $v=u$ q.e., then $v$ is also a solution.
Indeed, it follows directly that $v \in \K_{\psi_1,\psi_2,f}$.
Moreover, $v=u$ $E$-q.e., and thus $g_{u,E}=g_{v,E}$ a.e., so that 
\[
   \int_E g_{v,E}^p \, d\mu 
   = \int_E g_{u,E}^p \, d\mu,
\]
showing that $v$ must also be a solution.
\end{proof}

The following comparison principle follows from the uniqueness of the
solutions and is useful in various applications.
Note again that the boundary data $f$ and $f'$ are only defined on $E$.
But if $f,f'\in\Np(\itoverline{E})$ and $f\le f'$ q.e.\  on $\bdry E$,
then Lemma~\ref{lem-Np0-0-qe} implies that
the condition $(f-f')_\limplus \in \Np_0(E)$  is satisfied.
Recall that $f_\limplus=\max\{f,0\}$ and $f_\limminus=\max\{-f,0\}$.

\begin{cor} \label{cor-obst-le}
\textup{(Comparison principle)}
Let $f,f' \in \Dp(E)$ 
and  $\psi_j,\psi'_j : E \to \eR$, $j=1,2$,
be such that $\K_{\psi_1,\psi_2,f}$ and  $\K_{\psi'_1,\psi'_2,f'}$ are nonempty.
Let further $u$ and $u'$ be 
solutions of the 
$\K_{\psi_1,\psi_2,f}$- and  $\K_{\psi'_1,\psi'_2,f'}$-obstacle problems, 
respectively.
If  $\psi_j \le \psi_j'$ q.e.\ in $E$, $j=1,2$,
and\/ $(f-f')_\limplus \in \Np_0(E)$,
then $u \le u'$ q.e.\ in\ $E$.
\end{cor}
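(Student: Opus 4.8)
The plan is to mimic the classical proof of the comparison principle for obstacle problems, adapted to the present setting where gradients are taken with respect to $E$ and q.e.\ is taken with respect to $X$. The key idea is to test the $\K_{\psi_1,\psi_2,f}$-obstacle problem with $\min\{u,u'\}$ and the $\K_{\psi'_1,\psi'_2,f'}$-obstacle problem with $\max\{u,u'\}$, show both are admissible, and then use the minimizing property together with the additivity of the $p$-energy over the sets $\{u>u'\}$ and $\{u\le u'\}$.

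First I would verify admissibility. Set $w=\min\{u,u'\}$. On the obstacle side: $\psi_1\le u$ and $\psi_1\le\psi_1'\le u'$ q.e.\ in $E$, so $\psi_1\le w$ q.e.; and $w\le u\le\psi_2$ q.e. On the boundary side: $w-f = \min\{u-f,\,u'-f + (f'-f)\}$; since $u-f\in\Np_0(E)$ and $(f'-f)_\limplus=(f-f')_\limminus$, hmm — more cleanly, write $w-f=(u-f)-(u-u')_\limplus$, and note $(u-u')_\limplus \le (u-u')_\limplus + (f-f')_\limplus$ handled via $u-u' = (u-f)-(u'-f')+(f-f')$, so $(u-u')_\limplus \le (u-f)_\limplus$-type bounds do not immediately work; instead I would argue $w-f$ lies q.e.\ between two functions in $\Np_0(E)$ and invoke Lemma~\ref{lem-police}. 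Indeed $w-f\le u-f\in\Np_0(E)$, and $w-f = u'-f' + \min\{(u-u')-(f-f'),\,0\} + (f'-f)\ge u'-f' - (f-f')_\limplus$ q.e.; since $u'-f'\in\Np_0(E)$ and $-(f-f')_\limplus\in\Np_0(E)$ (as $(f-f')_\limplus\in\Np_0(E)$ by hypothesis, and $\Np_0(E)$ is a vector space), their sum lies in $\Np_0(E)$, so Lemma~\ref{lem-police} gives $w-f\in\Np_0(E)$. Hence $w\in\K_{\psi_1,\psi_2,f}$. Symmetrically, $W=\max\{u,u'\}$ satisfies $\psi_1'\le\psi_1\le u\le W$, wait that gives $\psi_1'\le W$; also $W\le\psi_2'$ since $u\le\psi_2\le\psi_2'$ and $u'\le\psi_2'$; and $W-f'\in\Np_0(E)$ by the analogous Lemma~\ref{lem-police} argument. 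So $W\in\K_{\psi'_1,\psi'_2,f'}$.

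Next I would exploit minimality. Let $D=\{x\in E: u(x)>u'(x)\}$. Since $u,u',w,W\in\Dploc(E)$, their minimal $p$-weak upper gradients agree a.e.\ on sets where they coincide; thus $g_{w,E}=g_{u',E}$ a.e.\ on $D$ and $g_{w,E}=g_{u,E}$ a.e.\ on $E\setm D$, while $g_{W,E}=g_{u,E}$ a.e.\ on $D$ and $g_{W,E}=g_{u',E}$ a.e.\ on $E\setm D$. The minimizing property of $u$ gives $\int_E g_{u,E}^p\le\int_E g_{w,E}^p$, i.e.\ $\int_D g_{u,E}^p\le\int_D g_{u',E}^p$. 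The minimizing property of $u'$ gives $\int_E g_{u',E}^p\le\int_E g_{W,E}^p$, i.e.\ $\int_D g_{u',E}^p\le\int_D g_{u,E}^p$. Combining, $\int_D g_{u,E}^p=\int_D g_{u',E}^p$, and both $w$ and $W$ are themselves solutions (of their respective obstacle problems), so by the uniqueness in Theorem~\ref{thm-obst-solve-E}, $w=u$ q.e.\ and $W=u'$ q.e. Either of these says exactly $\min\{u,u'\}=u$ q.e., equivalently $u\le u'$ q.e.\ in $E$.

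The main obstacle I anticipate is the admissibility bookkeeping for the boundary conditions: one must be careful that $\min$/$\max$ of Newtonian functions stays in the right $\Np_0(E)$-class given that $f$ and $f'$ differ only by something whose positive part is in $\Np_0(E)$, and that all the obstacle inequalities only hold q.e. The clean route is to avoid direct manipulation of $\min$/$\max$ of the pieces and instead squeeze $w-f$ (resp.\ $W-f'$) between two explicit members of $\Np_0(E)$ and quote Lemma~\ref{lem-police}; one also uses that $\Np_0(E)$ is closed under the lattice operations and is a vector space. The rest — additivity of energy over $D$ and $E\setm D$, agreement of minimal upper gradients on coincidence sets, and the appeal to uniqueness — is routine given the results already established.
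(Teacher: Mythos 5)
Your overall strategy is exactly the paper's: test the two obstacle problems with $\min\{u,u'\}$ and $\max\{u,u'\}$, split the energies over $\{u>u'\}$, and conclude via uniqueness. The energy bookkeeping and the obstacle inequalities are fine (apart from the harmless slip ``$\psi_1'\le\psi_1$'', which goes the wrong way; one should simply use $\psi_1'\le u'\le\max\{u,u'\}$). The genuine gap is in the admissibility of $w=\min\{u,u'\}$: the lower bound you feed into Lemma~\ref{lem-police} is false. Your identity $w-f=u'-f'+\min\{(u-u')-(f-f'),0\}+(f'-f)$ does not hold (the correct one is $w-f=(u'-f')+\min\{f'-f,\,h\}$ with $h=(u-f)-(u'-f')$), and the resulting inequality $w-f\ge u'-f'-(f-f')_\limplus$ fails at every point where $u-f<u'-f'-(f-f')_\limplus$; for $f=f'$ this is precisely the set $\{u<u'\}$, which is exactly the set you cannot rule out a priori. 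So the sandwich you propose does not close.

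The repair is a one-term correction, and it is what the paper does. With $h=(u-f)-(u'-f')\in\Np_0(E)$ one has $u-f=(u'-f')+h\ge(u'-f')-h_\limminus$ and $u'-f=(u'-f')-(f-f')\ge(u'-f')-(f-f')_\limplus$, hence
\[
  (u'-f')-(f-f')_\limplus-h_\limminus \;\le\; w-f \;\le\; u-f,
\]
and the left-hand side belongs to $\Np_0(E)$ because $\Np_0(E)$ is a vector space and a lattice and $(f-f')_\limplus\in\Np_0(E)$ by hypothesis; now Lemma~\ref{lem-police} applies. With this fix your argument coincides with the paper's proof; the only other (immaterial) difference is that you invoke uniqueness for both obstacle problems, whereas the paper only needs it for one of them.
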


In the next section we discuss relaxations of the conditions imposed in 
this section.
For the comparison principle to hold it is enough that one of the obstacle problems is
q.e.-uniquely soluble (and the other soluble). 
(In the proof, the uniqueness of the $\K_{\psi_1,\psi_2,f}$-obstacle problem
is used, 
but by symmetry one can equally well use the uniqueness of 
the $\K_{\psi'_1,\psi'_2,f'}$-obstacle problem.)

\begin{proof}
Let $w=\min\{u,u'\}$
and  $h=u-f-(u'-f') \in \Np_0(E)$.
It follows that
\[
   -(f-f')_\limplus-h_\limminus 
    = -(f'-f)_\limminus-h_\limminus 
    \le \min\{f'-f,h\} 
        \le h.
\]
Lemma~\ref{lem-police} then implies that $\min\{f'-f,h\} \in \Np_0(E)$
and hence
\[
       w-f =\min\{u'-f,u-f\}
             =u'-f'+\min\{f'-f,h\}\in \Np_0(E).
\]
As $\psi_1 \le w \le \psi_2$ q.e.\ in $E$, we get 
$w \in \K_{\psi_1,\psi_2,f}$.

Similarly $v=\max\{u,u'\} \in \K_{\psi_1',\psi_2',f'}$.
Let $A=\{x \in E : u(x) > u'(x)\}$.
Since $u'$ is a solution of the $\K_{\psi_1',\psi_2',f'}$-obstacle problem,
we have
\[
         \int_E g_{u',E}^p \, d\mu
         \le  \int_E g_{v,E}^p \, d\mu
         =  \int_A g_{u,E}^p \, d\mu
         +  \int_{E \setm A} g_{u',E}^p \, d\mu.
\]
Thus
\[
        \int_A g_{u',E}^p \, d\mu
         \le  \int_A g_{u,E}^p \, d\mu.
\]
It follows that
\[
        \int_E g_{w,E}^p \, d\mu
        =       \int_A g_{u',E}^p \, d\mu
        +       \int_{E \setm A} g_{u,E}^p \, d\mu
        \le     \int_A g_{u,E}^p \, d\mu
        +       \int_{E \setm A} g_{u,E}^p \, d\mu
        =       \int_E g_{u,E}^p \, d\mu.
\]
Since $u$ is a solution of the $\K_{\psi_1,\psi_2,f}$-obstacle problem,
so is $w$.
By uniqueness $u=w=\min\{u,u'\}$ q.e.\ in $E$, 
and thus $u \le u'$ q.e.\ in $E$.
\end{proof}

\section{Assumptions and examples}
\label{sect-exist,uniq,ex}

Both in the existence and the uniqueness parts of the proof of
Theorem~\ref{thm-obst-solve-E} we used the ``extra'' assumptions that $p>1$
(through the use of Lemma~\ref{lem-mazur-consequence}
and the strict convexity of $L^p$), that $\Cp(X \setm E)>0$
and that $X$ supports a $(p,p)$-Poincar\'e inequality for $\Np_0$.
It may be worth discussing when these assumptions hold and whether they
could possibly be dropped or weakened.
Let us start by discussing  the $(p,p)$-Poincar\'e inequality for $\Np_0$.
By the following lemma it follows from
the $(p,p)$-Poincar\'e inequality on large balls.

\begin{lem}      \label{lem-PI-NP0}
Assume that 
for every ball $B \subset X$
there is a constant $C_B>0$ 
such that for all $u \in \Np_0(B)$, 
\begin{equation} \label{PI-ineq-relax}
        \int_{B} |u-u_{B}|^p \,\dmu
        \le C_B  \int_{B} g_u^{p} \,\dmu.
\end{equation}
Then $X$ supports a\/ $(p,p)$-Poincar\'e inequality for $\Np_0$.
\end{lem}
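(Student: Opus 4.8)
The plan is to reduce the inequality \eqref{PI-NP0} for an arbitrary bounded set $E$ with $\Cp(X\setm E)>0$ to the hypothesis \eqref{PI-ineq-relax} applied to one sufficiently large ball $B$ containing $E$. First I would fix such a bounded $E$ and choose a ball $B=B(x_0,r)$ with $E\subset B$; since $\Cp(X\setm E)>0$ and $X\setm B\subset X\setm E$ need not have positive capacity, I will instead enlarge $B$ slightly so that $\Cp(B\setm E)>0$ — this is possible because $\Cp(X\setm E)>0$ means there is a point (or small set) of positive capacity outside $E$, and by taking $r$ large enough that portion lies in $B$. Actually the cleaner route: it suffices to have $\Cp(B\setm E)>0$ for some ball $B\supset E$, and this holds for all large $B$ once $\Cp(X\setm E)>0$, because capacity is monotone and $\Cp(X\setm E)=\sup_{B}\Cp(B\setm E)$ (using countable subadditivity / the fact that $X\setm E=\bigcup_B (B\setm E)$ along an increasing sequence of balls). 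Fix one such $B$.

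Now let $u\in\Np_0(E)$, extended by $0$ outside $E$. Since $E\subset B$ and $u=0$ on $X\setm E\supset X\setm B$, we have $u\in\Np_0(B)$ as well, so \eqref{PI-ineq-relax} applies:
\[
   \int_B |u-u_B|^p\,\dmu \le C_B\int_B g_u^p\,\dmu.
\]
The next step is to control $\int_B|u|^p\,\dmu$ by the left-hand side plus a term involving $|u_B|$. By the triangle inequality in $L^p(B)$,
\[
   \|u\|_{L^p(B)}\le \|u-u_B\|_{L^p(B)} + |u_B|\,\mu(B)^{1/p}.
\]
So the crux is to bound $|u_B|$, i.e.\ the average $\vint_B u\,\dmu$. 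The main obstacle — the one genuinely requiring the capacity hypothesis — is exactly this: I must show $|u_B|\le C\,\|g_u\|_{L^p(B)}$, and for this I will use that $u$ vanishes on the set $B\setm E$ of positive capacity. I expect to invoke a Maz'ya-type / capacitary Sobolev–Poincaré estimate: for a ball $B$ and a set $F\subset B$ with $\Cp(F)>0$,
\[
   \Bigl|\vint_B u\,\dmu\Bigr| \le \frac{C}{\Cp(F)^{1/p}}\Bigl(\int_B g_u^p\,\dmu + \int_B|u-u_B|^p\,\dmu\Bigr)^{1/p}
\]
whenever $u=0$ q.e.\ on $F$; combined with \eqref{PI-ineq-relax} the $\|u-u_B\|_{L^p(B)}^p$ term is absorbed into $C_B\|g_u\|_{L^p(B)}^p$. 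If such an estimate is not available as a black box in the generality here (no doubling, weak PI only), the fallback is a direct argument: since $u-u_B$ has small $L^p(B)$ norm relative to $\|g_u\|_{L^p(B)}$ and $u=0$ on a set of positive capacity, the constant $u_B$ cannot be large — quantitatively, if $|u_B|$ were large then $u_B\cdot\chi$-type truncations of $u$ would witness $\Cp(B\setm E)$ being small, contradicting $\Cp(B\setm E)>0$. I would make this precise by taking $v=\min\{1,|u|/|u_B|\}$ (when $u_B\ne0$), noting $v=1$ q.e.\ on $\{|u|\ge|u_B|\}$ which, together with $u=0$ on $B\setm E$, lets one compare with a competitor for $\Cp(B\setm E)$.

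Once $|u_B|\le C\|g_u\|_{L^p(B)}$ is in hand, assembling the pieces is routine:
\[
   \int_X|u|^p\,\dmu = \int_B|u|^p\,\dmu \le 2^{p-1}\Bigl(\int_B|u-u_B|^p\,\dmu + |u_B|^p\mu(B)\Bigr)\le C_E\int_B g_u^p\,\dmu = C_E\int_X g_u^p\,\dmu,
\]
with $C_E$ depending on $C_B$, $\mu(B)$, $\Cp(B\setm E)$ and $p$ — all finite. This is precisely \eqref{PI-NP0}, so $X$ supports a $(p,p)$-Poincaré inequality for $\Np_0$. The only subtle point to double-check is that the constant indeed may depend on $E$ (it does, through $B$ and $\Cp(B\setm E)$), which is allowed by Definition~\ref{def-relax-PI}; and that in the unbounded case the condition $\Cp(X\setm E)>0$ is automatic, so the argument covers that case too without change.
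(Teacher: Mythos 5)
Your overall reduction --- fix a large ball $B\supset E$, apply \eqref{PI-ineq-relax} on $B$, and then control the constant $u_B$ --- is the right skeleton, and the preliminary steps (that some ball $B\supset E$ has $\Cp(B\setm E)>0$, and that $u\in\Np_0(E)$ extended by zero lies in $\Np_0(B)$) are fine. The gap is in the step you yourself identify as the crux: the bound $|u_B|\le C\|g_u\|_{L^p(B)}$. The Maz{\cprime}ya-type capacitary estimate you want to quote is not available as a black box here: in this paper such inequalities (Corollary~\ref{cor-Choq-int-est}) are stated for the variational capacity and proved \emph{under} the $(p,p)$-Poincar\'e inequality for $\Np_0$ --- exactly what is being established --- so invoking one would be circular, and the standard proofs elsewhere assume doubling and a Poincar\'e inequality. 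Your fallback does not close either. A competitor for $\Cp(B\setm E)$ must equal $1$ on $B\setm E$ and belong to $\Np(X)\subset L^p(X)$. The function $v=\min\{1,|u|/|u_B|\}$ equals $0$, not $1$, on $B\setm E$, so you must pass to something like $1-u/u_B$; but that function equals $1$ on all of $X\setm E$ and is therefore not $p$-integrable when $X$ is unbounded, and repairing this by a cutoff outside $2B$ or by truncating into $[0,1]$ adds terms of size $\mu(2B)$ to the competitor's $\Np$-norm which do \emph{not} tend to $0$ as $|u_B|\to\infty$ (note $\Cp(B\setm E)\le\mu(X)$ trivially when $X$ is bounded), so no contradiction with $\Cp(B\setm E)>0$ is obtained.

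The paper splits into two cases and avoids both problems. For unbounded $X$ it uses no capacity at all: choose $B\supset E$ with $\mu(B)>\mu(E)$; since $u=0$ off $E$, H\"older's inequality gives $|u_B|\mu(B)^{1/p}\le(\mu(E)/\mu(B))^{1-1/p}\|u\|_{L^p(B)}$, and since the factor is strictly less than $1$ this term is absorbed into the left-hand side of your triangle inequality. For bounded $X$ one takes $B=X$ and the \emph{untruncated} competitor $1-u/\bar u$ with $\bar u=(\vint_X|u|^p\,d\mu)^{1/p}$, whose full $\Np(X)$-norm is at most $(\|u-\bar u\|_{L^p(X)}^p+\|g_u\|_{L^p(X)}^p)/\bar u^p$ and hence, after \eqref{PI-ineq-relax} with $B=X$, at most $C\|g_u\|_{L^p(X)}^p/\bar u^p$; comparing with $\Cp(X\setm E)>0$ then bounds $\bar u^p$, which is $\int_X|u|^p\,d\mu/\mu(X)$. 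If you substitute these two devices for your capacitary step, your argument goes through.
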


Since $g_u=0$ outside $B$ 
there is no reason to have a dilation constant $\la$ in \eqref{PI-ineq-relax},
as in Definition~\ref{def-PI}.
Note also that the doubling property of $\mu$ is not needed.
The proof of Lemma~\ref{lem-PI-NP0} has been inspired by
Theorem~10.1.2 in Maz\cprime ya~\cite{Mazya-Sob} and
Proposition~3.2 in J.~Bj\"orn~\cite{BjIll},
but is slightly simpler and sufficient for our purpose.

For unbounded $X$ we always have $\Cp(X \setm B)>0$ and hence 
\eqref{PI-ineq-relax} follows from \eqref{PI-NP0}
by means of the H\"older and Minkowski
inequalities. 
Thus, the $(p,p)$-Poincar\'e inequality
for $\Np_0$   and \eqref{PI-ineq-relax} are equivalent
in unbounded spaces.
The case when $X$ is bounded is more subtle, since we cannot take $E=X$
in \eqref{PI-NP0}.
We do not know if the equivalence is true in this case.

It may also be worth observing that
contrary to the classical Poincar\'e inequalities, 
here
it is enough to require \eqref{PI-ineq-relax} or \eqref{PI-ineq} 
for large balls, i.e.\ that
for every ball $B'$ there is a ball $B \supset B'$
such that \eqref{PI-ineq-relax} or \eqref{PI-ineq} holds. 
(If $X$ is bounded it suffices to assume
that \eqref{PI-ineq-relax} or \eqref{PI-ineq} holds for $B=X$.)
The following example 
shows that 
this is not equivalent to \eqref{PI-ineq}
holding for all balls.

\begin{example}   \label{ex-global-but-not-local-PI}
Let $X\subset\R^2$ be the graph of the function $y=x^\al \sin(\pi\log_2 x)$,
$0<\al<1$, $0\le x\le1$, with the $\R^2$-Euclidean metric and 
the arc length measure 
$\Lambda_1$.
It is easily verified that $L:=\Lambda_1(X)<\infty$.
Let $\ga:[0,L] \to X$ be an arc length parameterized curve such 
that $\ga(0)=(0,0)$ and $\ga(L)=(1,0)$.
Since $\ga$ gives a natural bijection between $X$ and $[0,L]$,
every function in $\Np(X)=\Np_0(X)$ 
is absolutely continuous on $X$ with $g_u(\ga(t))=|(u \circ \ga)'(t)|$ a.e.

Let $z=(2^{-k},0)\in X$ and $2^{-k-1}<r<2^{-k}$, $k=1,2,\ldots$\,.
Then the ball $B=B(z,r)$ is not connected and
$B$ does not even belong to one component of  $\la_k B$, 
where $\la_k = 2^{k(1-\al)-1}$.
Letting $k\to\infty$ shows that $X$ cannot support any Poincar\'e
inequality with the same dilation constant $\la$ for all balls.
At the same time, the $(p,p)$-Poincar\'e inequality for $\Np_0$ holds on $X$,
since 
\[
\int_X |u-u_X|^p\,d\Lambda_1 
= \int_0^L |u(\ga(t))-u_X|^p\,dt
\le C  \int_0^L |(u \circ\ga)'(t)|^p\,dt
= C  \int_0^L g_u^p \, d\Lambda_1,
\]
by the $(p,p)$-Poincar\'e inequality for $[0,L]$.
\end{example}

\begin{proof}[Proof of Lemma~\ref{lem-PI-NP0}]
Let $E \subset X$ be bounded and such that $\Cp(X \setm E)>0$.
Let $u\in\Np_0(E)$, extended  by zero in $X\setm E$.
We can assume that the left-hand side in \eqref{PI-NP0} is nonzero.

If $X$ is unbounded, let $B\supset E$ be a ball such that 
$\mu(E)<\mu(B)$.
Then 
\begin{equation}  \label{eq-u-B-mu}
     \biggl(\int_{B} |u|^p \, d\mu\biggr)^{1/p} 
     \le  \biggl(\int_{B} |u-u_B|^p \, d\mu\biggr)^{1/p} 
          + |u_{B}|\mu(B)^{1/p}. 
\end{equation}
The first term on the right-hand side is estimated using
\eqref{PI-ineq-relax} and for the second term we have,
using H\"older's inequality and the fact that $u$ vanishes 
outside $E$, that
\[
|u_{B}|\mu(B)^{1/p} \le \frac{1}{\mu(B)^{1-1/p}} \int_B|u|\,d\mu
\le \biggl( \frac{\mu(E)}{\mu(B)} \biggr)^{1-1/p} 
        \biggl(   \int_{B} |u|^p \,d\mu \biggr)^{1/p}.
\]
Since $\mu(E)<\mu(B)$, inserting this into \eqref{eq-u-B-mu}
and subtracting the last term from
both sides of \eqref{eq-u-B-mu} proves \eqref{PI-NP0} for unbounded $X$.

If $X$ is bounded, let 
\[
        \ub = \biggl( \vint_{X} |u|^p\,\dmu \biggr)^{1/p}.
\]
Then $v:=1-u/\ub$ is admissible in the definition of $\Cp(X\setm E)$ 
and hence
\begin{align}
 0<\Cp(X\setm E) &\le \int_X v^p \,\dmu + \int_X g_v^p \,\dmu  
         \le \frac{1}{\ub^p} \biggl( \int_{X} |u-\ub|^p \,\dmu
       + \int_{X} g_u^p \,\dmu \biggr).
\label{est-cap-ubar}
\end{align} 
The first integral on the right-hand side can be estimated as
\begin{align*}
        \|u-\ub\|_{L^p(X)}
                &\le \|u-u_{X}\|_{L^p(X)} + \|\ub-u_{X}\|_{L^p(X)},
\end{align*} 
where for the second term we have
\begin{align*}
\|\ub-u_{X}\|_{L^p(X)}
      &= \bigl| \|u\|_{L^p(X)} - \|u_{X}\|_{L^p(X)} \bigr| 
        \le \|u-u_{X}\|_{L^p(X)}.
\end{align*}
Inserting this into~\eqref{est-cap-ubar} and
using \eqref{PI-ineq-relax} with $B=X$ we obtain
\begin{equation*}
\ub^p \le \frac{C}{\Cp(X\setm E)} \int_{X} g_u^p \,\dmu.\qedhere
\end{equation*}
\end{proof}

The following two examples show that neither the existence
nor the uniqueness of solutions remain valid for $p=1$.
Note that in Examples~\ref{ex-1}--\ref{ex-Koch}
we have $f \in \Np(E)$ and $E$ is open.

\begin{example} \label{ex-1}
Let $X=\R$,
$p=1$, $E=(0,1)$,  $d\mu= w \, dx$, where 
\[
   w(x)= \begin{cases}
    1+x, & 0<x<1, \\
    1, & \text{otherwise},
     \end{cases}
\]
$f(x)=x$ and  $\psi=-\infty$, i.e.\ we consider a weighted Dirichlet problem.
Note that $\mu$ is a doubling measure supporting a $(1,1)$-Poincar\'e inequality.
Let further $u_j(x)=\min\{jx,1\}\in \K_{\psi,f}$, $j =1,2,\ldots$,
so that 
\[
   \int_E g_{u_j} \, d\mu 
   = \int_0^{1/j} j \,d\mu
   = \int_0^{1/j} j (1+x) \,dx
   = 1+ \frac{1}{2j} \to 1,
   \quad \text{as } j \to \infty.
\]
On the other hand, for any $v \in \K_{\psi,f}$
we have that
\[
   1 = v(1)-v(0) =  \int_0^1 v'\,dx 
   \le \int_0^1 |v'| \, dx
   =   \int_0^1 g_v\,dx 
   <   \int_0^1 g_v\,d\mu, 
\]
since $g_v$ cannot vanish a.e. 
This shows that the minimum is not attained and thus
there are no minimizers.
Hence the assumption $p>1$ cannot be removed for the existence part.
\end{example}

\begin{example}
Let $X=\R$ (unweighted), 
$p=1$, $E=(0,1)$, $f(x)=x$ and $\psi=-\infty$.
In this case any increasing absolutely continuous function 
$u:[0,1] \to [0,1]$  with $u(0)=0$ and $u(1)=1$ will be a solution
of the $\K_{\psi,f}$-obstacle problem (i.e.\ of the Dirichlet
problem with $f$ as boundary values).
Thus the assumption $p>1$ cannot be omitted for the uniqueness part either.
\end{example}

As for when the Poincar\'e inequality is essential, 
the situation is more complicated.
Let us first look at the question of existence of solutions.

\begin{example} \label{ex-Xpm}
Let $1<p < 2$ and 
\begin{align*}
    X &= \{(x,y)\in [-2,2]^2: xy\ge0\}, \\
    X_\limplus& = \{(x,y)\in X: x\ge0\} \setm \{(0,0)\} = [0,2]^2 \setm \{(0,0)\}, \\
    X_\limminus & = \{(x,y)\in X: x\le0\} = [-2,0]^2.
\end{align*} 
Then there are \p-almost no curves between $X_\limplus$ and $X_\limminus$
(since $\Cp(\{0\})=0$) which means that in this context they can be 
thought of as disconnected,
see Example~5.6 in Bj\"orn--Bj\"orn~\cite{BBbook}.
In particular, $u=\chi_{X_\limplus} \in \Np_0(X_\limplus)$ with $g_u=0$,
showing that the $(p,p)$-Poincar\'e inequality for $\Np_0$ is violated.
(This also shows that the zero \p-weak upper gradient property,
introduced below, fails at $(0,0)$.)

Let $f=0$, $1-2/p < \alp < 0$, $E = X_\limplus$,
\[
      \psi(x,y)=\begin{cases}
         |(x,y)-(1,1)|^\alp, & (x,y) \in X_\limplus , \\
         0, & (x,y) \in X_\limminus, 
        \end{cases}
\]
and $u_j=\max\{\psi,j\} \chi_{X_\limplus}$.

Then $\psi \in \Np(X)$ and
$u_j -f \in \Np_0(X_\limplus)$, i.e.\ 
$u_j \in \K_{\psi,f}(X_\limplus)$.
Moreover,
\[
     \int_{X_+} g_{u_j}^p \, d\mu \to 0,
     \quad \text{as } j \to \infty.
\]
On the other hand, if $\Np(X)\ni v\ge\psi$ q.e.\ in $X_\limplus$
then necessarily $\int_{X_+} g_{v}^p \, d\mu > 0$,
and there does not exist any minimizer for the 
$\K_{\psi,f}(X_\limplus)$-obstacle
problem.
\end{example}

A problem with Example~\ref{ex-Xpm} is that $\Cp(\bdry X_\limplus)=0$ even though 
$\Cp(X\setm X_\limplus)>0$, allowing for $u_j\in\Np_0(X_\limplus)$.
Similarly, the same functions $u_j$ show that the 
$\K_{\psi,f}(\Om)$-obstacle problem with $\Om=\{(x,y)\in X:x>-1\}$ is not
soluble either.
Here, the problem is that $\Om$ is essentially disconnected
and thus the boundary values $f$ have
no influence in $X_\limplus$, even though $\Cp(\bdry\Om)>0$.
(In fact, $\Om$ itself need not be connected, but it should not have a component
which is essentially disconnected from $\Om$'s complement.)

A Poincar\'e inequality of some kind  prevents these problems and 
guarantees solubility of the obstacle problem.
The above $\K_{\psi,f}(\Om)$-obstacle problem also shows that it is not enough
to just replace the assumption $\Cp(X\setm E)>0$ with
$\Cp(\bdry E)>0$.
Under a Poincar\'e inequality and for open $E$, these two conditions
are equivalent by Lemma~4.5 in Bj\"orn--Bj\"orn~\cite{BBbook}.
For open $E$ in general spaces, the latter condition is stronger,
as seen above.
On the other hand, the former condition can be stronger for nonopen sets.
We have therefore chosen to use the condition $\Cp(X\setm E)>0$, as
it is closely related to $\Np_0(E)$.

\begin{remark} \label{rmk-existence}
On the other hand, if the data $f$, $\psi_1$ and $\psi_2$
are bounded then we can drop both the assumption of Poincar\'e inequality
and the assumption $\Cp(X \setm E)>0$.
This will be important for  Theorem~\ref{thm-condenser}.
In the existence part of the proof, they
were only used to 
deduce that $\{u_j-f\}_{j=0}^\infty$ is  bounded in $L^p(E)$,
and this can be deduced more directly if the data are bounded.
More precisely, consider the following two cases:
\begin{enumerate}
\item \label{case-a}
$\psi_j \in L^p(E)$, which in particular holds if $\psi_j\in L^\infty(E)$, $j=1,2$; 
\item \label{case-b}
$C_0:=\esssup_E |f| <\infty$, $C_1:=\esssup_E \psi_1 <\infty$
and  $C_2:=\essinf_E \psi_2 > -\infty$.
\end{enumerate}
In case \ref{case-a},  
the $L^p$-boundedness of $\{u_j\}_{j=1}^\infty$
follows directly since 
$\psi_1\le u_j\le\psi_2$ a.e.
In case \ref{case-b} we may replace $u_j$ by the truncations
\[
   u_j':=\max\{\min\{u_j,\max\{C_0,C_1\}\},\min\{-C_0,C_2\}\}.
\]
at levels $\max\{C_0,C_1\}$ and $\min\{-C_0,C_2\}$.
Then $g_{u_j',E} \le g_{u_j,E}$ and the sequence $\{u_j'\}_{j=1}^\infty$
is bounded in $\Np(E)$.
(In both cases one uses the $L^p$-boundedness of $\{u_j\}_{j=1}^\infty$
(or $\{u_j'\}_{j=1}^\infty$) 
rather than the $L^p$-boundedness of  $\{u_j-f\}_{j=1}^\infty$,
 in the proof of Theorem~\ref{thm-obst-solve-E}.
This also makes the proof a little easier.)
\end{remark}

Let us now turn to the question of uniqueness.
The following example shows that we cannot drop
the Poincar\'e inequality entirely.

\begin{example} \label{ex-Koch}
Let $X$ be the von Koch snowflake curve. Let $a,b \in X$, $ a\ne b$,
and let $E$ be one of the two components of $X \setm \{a,b\}$.
Let further $f=0$ and $\psi=-\infty$.
Since there are no rectifiable curves in $X$, we have
$\Np_0(E)=L^p(E)$ and $g_u \equiv 0$ for all $u \in \Np_0(E)$,
which means that any $u \in L^p(E)$ is a solution
of the $\K_{\psi,f}$-obstacle problem (i.e.\ of the Dirichlet
problem with $f$ as boundary values).
Thus the assumption that $X$ supports some kind of
Poincar\'e inequality
cannot be omitted for the uniqueness part.
Similar arguments apply to other spaces without rectifiable curves,
or with \p-almost no rectifiable curves.
\end{example}

\begin{remark} \label{rmk-uniq}
Even though the Poincar\'e inequality cannot be omitted for the
uniqueness part, it can be weakened.
In the uniqueness part of the proof, the $(p,p)$-Poincar\'e inequality for $\Np_0$
was only used
to deduce that $v:=u_1-u_2=0$ a.e.\ in $E$ from the fact that
$g_{v}=0$ in $X$ and $v=0$ outside $E$.

In A.~Bj\"orn~\cite{ABcluster} 
the following weaker property was introduced:
$X$ has the \emph{zero \p-weak
upper gradient property} 
if every 
measurable function $f$, 
which has $0$ as a \p-weak upper gradient in some 
ball $B(x,r)$, 
is  essentially constant in 
some (possibly smaller) ball  $B(x,\de)$, which can depend both on $f$ and
$B(x,r)$.
By considering the bounded function
$h =\arctan f$ with $g_h=g_f/(1+f^2)$, we easily conclude that
one can equivalently consider only bounded measurable functions in the
definition of the zero \p-weak upper gradient property.

Thus, Lemma~3.2 in \cite{ABcluster} shows that
when proving uniqueness of the solutions
we may replace the Poincar\'e inequality by
the zero \p-weak upper gradient
property, together with the fact that $\Cp(G \setm E)>0$ 
for every component $G$ of $X$.
The latter is essential since there are nonconnected spaces having
the zero \p-weak upper gradient property, 
e.g.\ $X=[0,1]^2 \cup [2,3]^2$ in $\R^2$
and $X=[0,1]\cup[2,3]\subset\R$.

The zero \p-weak upper gradient property is strictly weaker
than supporting a $(1,p)$-Poincar\'e inequality
(as the two examples above show).
On the other hand, the following example shows that
$X$ can support a $(p,p)$-Poincar\'e inequality for $\Np_0$
and at the same time fail to have
the zero \p-weak upper gradient property.
\end{remark}

\begin{example} \label{ex-6}
Let $1<p \le  2$ and let 
\begin{align*}
    X &= \{(x,y)\in\R^2: xy\ge0\}, \\
    X_\limplus& = \{(x,y)\in X: x\ge0\}, \\
    X_\limminus & = \{(x,y)\in X: x\le0\}.
\end{align*}
As in Example~\ref{ex-Xpm}, the function $\chi_{X_\limplus}$ shows that the
zero \p-weak upper gradient property fails for all balls centered at the origin.

On the other hand, as both $X_\limplus$ and $X_\limminus$ support 
$(p,p)$-Poincar\'e inequalities, they support  
$(p,p)$-Poincar\'e inequalities for $\Np_0$, by e.g.\ Lemma~\ref{lem-PI-NP0}.
Considering $u|_{X_\limplus}$ and
$u|_{X_\limminus}$ separately shows that for all bounded $E\subset X$ and
all $u\in\Np_0(E)$ we have
\[
\int_{X_\limpm} |u|^p\,d\mu 
    \le C_{E_\limpm} \int_{X_\limpm}  g_u^p\,d\mu,
\]
where $E_\limpm=E \cap X_\limpm$.
The 
$(p,p)$-Poincar\'e inequality for $\Np_0$ on $X$ then follows
by adding the $L^p$-norms on $X_\limplus$ and $X_\limminus$.
\end{example}

Let us finally discuss the assumption $\Cp(X \setm E)>0$.
If it fails  (and thus necessarily $X$ is bounded) we lose
existence in general. 
This is easily seen by letting $X=[0,2]^2\subset\R^2$
and using the construction in Example~\ref{ex-Xpm}.
However, we do have solubility
if we assume boundedness of the data
as in \ref{case-a} or \ref{case-b} of Remark~\ref{rmk-existence}.
Moreover, the Dirichlet problem (i.e.\ the obstacle problem without obstacles)
is always soluble if $\Cp(X \setm E)=0$
since the zero function is a solution with any boundary data.

On the other hand, uniqueness always fails if $\Cp(X \setm E)=0$
in the single obstacle problem
(when it is soluble), by the following result. In particular
it fails for the Dirichlet problem.

\begin{prop}
Let $X$ be bounded and $E \subset X$ be measurable and
such that $\Cp(X \setm E)=0$.
Let also $f\in \Np(E)$ and $\psi : E \to \eR$.
Let $u$ be a solution of the $\K_{\psi,f}$-obstacle problem
and $a \in \R$.
Then $v:=\max\{u,a\}$ is another solution of the 
$\K_{\psi,f}$-obstacle problem.
\end{prop}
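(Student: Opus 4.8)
The plan is to verify that $v=\max\{u,a\}$ is a competitor in $\K_{\psi,f}$ and that its energy does not exceed that of $u$; since $u$ is a solution, this immediately makes $v$ one too. The hypothesis $\Cp(X\setm E)=0$ will be used through the fact, recorded in Section~\ref{sect-prelim}, that then $\Np_0(E)=\Np(E)=\Np(X)$ and $\mu(X\setm E)=0$. Since $X$ is bounded, $\mu(X)<\infty$, so every real constant lies in $\Np(E)=\Np_0(E)$; this is the one place boundedness of $X$ really enters.

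First I would check admissibility. As $u\in\Dp(E)$ and $a$ is constant, $v=\max\{u,a\}\in\Dp(E)$ (any $L^p(E)$ upper gradient of $u$ is also one of $v$). For the boundary condition, write $v-f=\max\{u-f,\,a-f\}$; here $u-f\in\Np_0(E)$ since $u\in\K_{\psi,f}$, while $a-f\in\Np(E)=\Np_0(E)$ by the above, and $\Np_0(E)$ is a lattice (it equals the lattice $\Np(X)$), so $v-f\in\Np_0(E)$; alternatively this follows from Lemma~\ref{lem-police}. The obstacle inequality is trivial because $v\ge u\ge\psi$ q.e.\ in $E$ and there is no upper obstacle in the single obstacle problem. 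Hence $v\in\K_{\psi,f}$.

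Next I would compare energies, with all minimal \p-weak upper gradients taken with respect to $E$ as in Definition~\ref{deff-obst-E}. On $\{x\in E:u(x)\ge a\}$ we have $v=u$, hence $g_{v,E}=g_{u,E}$ a.e.\ there; on $\{x\in E:u(x)\le a\}$ we have $v\equiv a$, hence $g_{v,E}=0$ a.e.\ there (the minimal \p-weak upper gradients of two functions coincide a.e.\ on the set where the functions agree, applied here to $v$ and the constant $a$). These two sets exhaust $E$, so $g_{v,E}\le g_{u,E}$ a.e.\ in $E$, and therefore
\[
   \int_E g_{v,E}^p\,d\mu\le\int_E g_{u,E}^p\,d\mu\le\int_E g_{w,E}^p\,d\mu
   \quad\text{for every }w\in\K_{\psi,f},
\]
the last inequality by the minimizing property of $u$. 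Together with $v\in\K_{\psi,f}$ this says exactly that $v$ is a solution of the $\K_{\psi,f}$-obstacle problem.

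I do not expect a real obstacle here. The only points needing care are that the constant $a$, and hence $a-f$, genuinely belongs to $\Np_0(E)$ — which is precisely what $\Cp(X\setm E)=0$ together with boundedness of $X$ guarantees — and that one consistently works with upper gradients relative to $E$; the argument never invokes the global gradient $g_v$, so the (immaterial, since $\mu(X\setm E)=0$) discrepancy between $g_{v,E}$ and $g_v$ plays no role.
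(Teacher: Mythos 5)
Your proof is correct and follows essentially the same route as the paper's (two-line) proof: admissibility of $v$ via $\Np_0(E)=\Np(E)=\Np(X)$, and the energy comparison via $g_{v,E}\le g_{u,E}$ a.e.\ in $E$. You simply spell out the details (lattice property, constants lying in $\Np(X)$ since $\mu(X)<\infty$, and the decomposition of $E$ into $\{u\ge a\}$ and $\{u\le a\}$) that the paper leaves implicit.
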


\begin{proof}
As $\Np_0(E)=\Np(X)$ we see that $v \in \K_{\psi,f}$.
Moreover, $g_v \le g_u$ a.e.\ in $E$, and thus
$v$ must also be a solution.
\end{proof}

In fact it follows from this proof that the $\K_{\psi,f}(X)$-obstacle problem
for bounded $X$
has a solution only if there is some  function 
$u \in \K_{\psi,f}(X)$ with $g_u =0$ a.e.
If $X$ supports a $(p,p)$-Poincar\'e inequality for $\Np_0$,
then this happens if and only if there is some
constant (real-valued) function  $u \in \K_{\psi,f}(X)$,
which in turn happens if and only if $\esssup_X \psi < \infty$.

Let us end this discussion by a comment on  
the case when $E$ is unbounded.
In this case we may also lose existence, as the following example
shows.

\begin{example}
Let $X=\R$ \textup{(}unweighted\/\textup{)}, 
$p>1$, $E=(0,\infty)$, $f(x)=(1-x)_\limplus$ and $\psi=0$.
Let further $f_j(x)=(1-x/j)_\limplus$, $j=1,2,\ldots$\,.
Then $f_j \in \K_{\psi,f}$ and
\[
     \int_E g_{f_j}^p \, d\mu 
     = \int_0^j \frac{1}{j^p} \, dx = j^{1-p}
     \to 0,
     \quad \text{as } j \to \infty.
\]
This shows that a solution of the $\K_{\psi,f}$-obstacle problem
must have zero energy, and thus must be constant 
a.e. 
The boundary condition would require a solution $u$ to satisfy $u=1$ a.e.,
but then $u \notin \K_{\psi,f}$.
\end{example}

We conclude this section with an application
of our theory to condenser capacities.
On metric spaces, such capacities have been used and studied 
under various assumptions  by e.g.\ 
Heinonen--Koskela~\cite{HeKo98}, Kallunki--Shanmugalingam~\cite{KaSh}
and Adamowicz--Bj\"orn--Bj\"orn--Shan\-mu\-ga\-lin\-gam~\cite{ABBSprime}.

\begin{deff} \label{deff-cond-cap}
Let $\Om \subset X$ be a nonempty bounded open set,
and let $A_0,A_1 \subset \Om$ be disjoint.
Then the \emph{capacity}  of the condenser
$(A_0,A_1,\Om)$ is
\[
    \cp(A_0,A_1,\Om) = \inf_u \int_\Om g_u^p\,d\mu,
\]
where the infimum is  taken over all $u\in N^{1,p}(\Om)$
satisfying $0\le u \le1$ in $\Om$, $u = 0$ in $A_0$ and $u =1$ in $A_1$.
\end{deff}

Note that $\cp(A_0,A_1,\Om)=\cp(A_1,A_0,\Om)$.
Since the equivalence classes in $\Np(\Om)$ 
are up to sets of capacity zero, 
we can equivalently require the equalities 
in $A_0$ and $A_1$ to hold q.e.
This is thus a double obstacle problem in $\Om$ but without boundary values.
We obtain the following consequences of the results in this and the previous
section.

\begin{thm} \label{thm-condenser}
Assume that $p>1$.
Let\/ $\Om \subset X$ be a nonempty bounded open set,
and let $A_0,A_1 \subset \Om$ be disjoint 
sets such that\ $\cp(A_0,A_1,\Om)<\infty$
\textup{(}which in particular happens if $\dist(A_0,A_1)>0$\textup{)}.

Then there is a minimizer for the condenser\/ $(A_0,A_1,\Om)$, i.e.\
a function $u\in N^{1,p}(\Om)$
such that\/ $0\le u \le1$ in $\Om$,
$u =  0$  in $A_0$, $u = 1$  in $A_1$ and
\begin{equation} \label{eq-cp-int}
    \cp(A_0,A_1,\Om) = \int_\Om g_u^p\,d\mu.
\end{equation}

If $X$ has the zero \p-weak upper gradient property,
$\Om$ is connected, 
and $\Cp(A_0 \cup A_1)>0$, then the minimizer
is unique\/ \textup{(}up to sets of capacity zero\/\textup{)}.
\end{thm}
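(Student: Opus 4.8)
The plan is to recognize the condenser problem as a special case of the double obstacle problem of Section~\ref{sect-obst}, but taken with respect to $\Om$ regarded as a metric space in its own right (not with respect to the ambient $X$). So I would work with the space $\Om$, set $E:=\Om$, $f\equiv0$, $\psi_1:=\chi_{A_1}$ and $\psi_2:=1-\chi_{A_0}$. Since here $E$ exhausts the whole space, $\Np_0(\Om)=\Np(\Om)$, so the requirement $v-f\in\Np_0(E)$ in Definition~\ref{deff-obst-E} reduces to $v\in\Np(\Om)$, while the obstacle inequalities $\psi_1\le v\le\psi_2$ q.e.\ say exactly that $0\le v\le1$ in $\Om$, $v=0$ q.e.\ on $A_0$ and $v=1$ q.e.\ on $A_1$ (recall $A_0\cap A_1=\emptyset$). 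As the equivalence classes in $\Np(\Om)$ are up to sets of capacity zero, $\K_{\psi_1,\psi_2,0}(\Om)$ is precisely the admissible class in Definition~\ref{deff-cond-cap}, and $\cp(A_0,A_1,\Om)<\infty$ is exactly the statement $\K_{\psi_1,\psi_2,0}(\Om)\ne\emptyset$ (when $\dist(A_0,A_1)=:d>0$ the Lipschitz function $x\mapsto\min\{1,\dist(x,A_0)/d\}$ lies in this class). Note also that since $E=\Om$ is open in itself, the energy $\int_\Om g_{u,\Om}^p\,d\mu$ of Definition~\ref{deff-obst-E} is the same integral $\int_\Om g_u^p\,d\mu$ occurring in Definition~\ref{deff-cond-cap}.

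For the existence of a minimizer I would simply apply Theorem~\ref{thm-obst-solve-E} to this obstacle problem. The Poincar\'e inequality for $\Np_0$ and the condition $\Cp(\Om\setm E)>0$ are \emph{not} available here (indeed $\Cp(\Om\setm\Om)=0$), but they are not needed: since $\Om$ is bounded we have $\mu(\Om)<\infty$, hence $\psi_1,\psi_2\in L^\infty(\Om)\subset L^p(\Om)$, so Theorem~\ref{thm-obst-solve-E} (existence part), in the form provided by case~\ref{case-a} of Remark~\ref{rmk-existence}, yields a solution $u$ of the $\K_{\psi_1,\psi_2,0}(\Om)$-obstacle problem. This $u$ is the required minimizer; passing to $\max\{0,\min\{u,1\}\}$, which agrees with $u$ q.e.\ and is therefore also a minimizer, we may in addition assume $0\le u\le1$ everywhere. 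This proves the first assertion, in particular \eqref{eq-cp-int}.

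For uniqueness, suppose $u_1,u_2$ are two minimizers. First, $\tfrac12(u_1+u_2)$ is again admissible, so strict convexity of $L^p(\Om)$ forces $g_{u_1,\Om}=g_{u_2,\Om}$ a.e.\ in $\Om$. Next I would run the truncation argument from the uniqueness part of the proof of Theorem~\ref{thm-obst-solve-E}: for $c\in\R$ the function $\max\{u_1,\min\{u_2,c\}\}$ is admissible (it lies in $\Np(\Om)$ because $\Np(\Om)$ is a lattice and constants belong to $\Np(\Om)$, and it still equals $0$ q.e.\ on $A_0$, equals $1$ q.e.\ on $A_1$, and lies between $0$ and $1$), and comparing energies on $V_c=\{x\in\Om:u_1(x)<c<u_2(x)\}$ gives $g_{u_1,\Om}=g_{u_2,\Om}=0$ a.e.\ on $V_c$. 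Letting $c$ range over $\Q$ we obtain $g_{u_1-u_2,\Om}\le(g_{u_1,\Om}+g_{u_2,\Om})\chi_{\{u_1\ne u_2\}}=0$ a.e.\ in $\Om$. Now, since $\Om$ is connected and $X$ has the zero \p-weak upper gradient property, the argument indicated in Remark~\ref{rmk-uniq} (essentially Lemma~3.2 in \cite{ABcluster}) shows that $u_1-u_2$ is essentially constant in $\Om$; being a function in $\Np(\Om)$, it then equals this constant q.e.\ in $\Om$. Finally, $u_1-u_2=0$ q.e.\ on $A_0\cup A_1$ (it is $0-0$ on $A_0$ and $1-1$ on $A_1$), and since $\Cp(A_0\cup A_1)>0$ there is a point of $A_0\cup A_1$ at which $u_1-u_2$ simultaneously equals the constant and equals $0$; hence the constant is $0$ and $u_1=u_2$ q.e.

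The main obstacle is the passage from the \emph{local} conclusion supplied by the zero \p-weak upper gradient property (that $u_1-u_2$ is essentially constant near each point of $\Om$, once $g_{u_1-u_2,\Om}=0$ a.e.) to the \emph{global} statement that it is essentially constant on the connected set $\Om$; this is exactly the chaining step carried out in \cite{ABcluster} and invoked in Remark~\ref{rmk-uniq}. A secondary, bookkeeping point is that the property is formulated for $X$ whereas $u_1-u_2$ lives only on $\Om$, but since it concerns only small balls (which for $x\in\Om$ lie inside $\Om$) it transfers without difficulty; one should also observe that for subsets of the open set $\Om$, having capacity zero with respect to $\Om$ is the same as having capacity zero with respect to $X$, so the various q.e.\ qualifications above are unambiguous.
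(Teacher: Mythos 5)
Your proposal is correct, and the existence half is exactly the paper's argument: view $\Om$ as the ambient space, take $f=0$, $\psi_1=\chi_{A_1}$, $\psi_2=\chi_{\Om\setm A_0}$, and invoke Theorem~\ref{thm-obst-solve-E} via case \ref{case-a} of Remark~\ref{rmk-existence} to dispense with the Poincar\'e inequality and the condition $\Cp(X\setm E)>0$. For uniqueness you take a genuinely different route. The paper does not rerun the truncation argument on all of $\Om$; instead it sets $Z=\{x\in\Om: u(x)=u'(x)=0\}\supset A_0$ (assuming WLOG $\Cp(A_0)>0$) and observes that both minimizers restricted to $E=\Om\setm Z$ solve the \emph{single} obstacle problem $\K_{\chi_{A_1},0}(E)$ with ambient space $\Om$; since $u-u'$ then lies in $\Np_0(E;\Om)$ and $\Cp(\Om\setm E)\ge\Cp(A_0)>0$, the already-established uniqueness machinery of Remark~\ref{rmk-uniq} applies verbatim. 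Your version instead derives $g_{u_1-u_2,\Om}=0$ a.e.\ directly on $\Om$ (the truncation competitor $\max\{u_1,\min\{u_2,c\}\}$ is indeed admissible for the condenser, so this step is sound), then uses the chaining consequence of the zero \p-weak upper gradient property on the connected set $\Om$ to get essential constancy, and pins the constant down on $A_0\cup A_1$. Both routes rest on the same core input (Lemma~3.2 of \cite{ABcluster}); the paper's reduction to $\Np_0(\Om\setm Z)$ buys a clean citation of Remark~\ref{rmk-uniq}, whereas your route is more self-contained but forces you to address the comparison of the capacities taken with respect to $\Om$ and to $X$ when you pass from ``$u_1-u_2=c$ q.e.\ in $\Om$'' to a contradiction with $\Cp(A_0\cup A_1)>0$ -- you assert this equivalence of null sets without proof, though in fairness the paper's own appeal to ``$\Cp(\Om\setm E)>0$'' inside Remark~\ref{rmk-uniq} with $\Om$ as ambient space leans on the same standard fact.
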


By Lemma~3.4 in A.~Bj\"orn~\cite{ABcluster}, the zero \p-weak upper gradient property
for $X$ holds e.g.\ if $X$ supports a $(1,p)$-Poincar\'e inequality.
For the uniqueness in Theorem~\ref{thm-condenser} 
it is actually enough if $\Om$ has
the zero \p-weak upper gradient property,
as can be seen from the proof below.

Observe that if $\Cp(A_0)=\Cp(A_1)=0$, then 
any constant function with a value in $[0,1]$ is a minimizer 
(after redefinition on $A_0 \cup A_1$),
which is thus not unique.

\begin{proof}
\emph{Existence.}
Let $f=0$,  $\psi_1=\chi_{A_1}$ and $\psi_2=\chi_{\Om \setm A_0}$.
It is then easy to see that every solution
of the $\K_{\psi_1,\psi_2,f}(\Om)$-obstacle problem taken with respect
to the ambient space $\Om$
is a  minimizer for the condenser
(after redefinition on a subset of $A_0 \cup A_1$ of capacity zero).
The existence thus follows from
Theorem~\ref{thm-obst-solve-E} and Remark~\ref{rmk-existence}.

\emph{Uniqueness.}
By symmetry, we may assume that $\Cp(A_0)>0$.
Assume that $u$ and $u'$ are two minimizers of the condenser and 
let 
\[
Z=\{x \in \Om : u(x)=u'(x)=0\},
\] 
which is a measurable set containing $A_0$.
Let also $E=\Om \setm Z$, $f=0$ and $\psi=\chi_{A_1}$.
It is again easy to see that 
both $u|_E$ and 
$u'|_E$ are solutions of the $\K_{\psi,f}(E)$-obstacle problem
taken with $\Om$ as ambient space. 
(Recall that for $u\in\Np_0(E;\Om)$ we have $g_{u,E}=g_{u,\Om}=g_u$ a.e., 
by Proposition~\ref{prop-min-grad}, and hence the energies considered
for the condenser $(A_0,A_1,\Om)$ and in the $\K_{\psi,f}(E)$-obstacle problem
coincide.
Here $\Np_0(E;\Om)$ is $\Np_0(E)$ taken with respect to the ambient space $\Om$.)
Since $X$ has the zero \p-weak upper gradient property, so does $\Om$,
as it is a local property. 
Since 
$\Cp(\Om \setm E)=\Cp(Z)\ge\Cp(A_0)>0$ and
$\Om$ is connected, 
the uniqueness thus follows from Remark~\ref{rmk-uniq}.
\end{proof}

Observe that in the existence part of the proof $f$ does not play
any role as the boundary is empty.
This is allowed by Remark~\ref{rmk-existence}.
The uniqueness, however, cannot be deduced using the obstacle problem
without boundary values, and hence a different obstacle problem
needs to be considered in the second part of the proof.

Next, we prove another application of our results, and in particular
of Theorem~\ref{thm-condenser}.
It turns out to be useful in connection with ends and prime ends 
on metric spaces in the paper
Adamowicz--Bj\"orn--Bj\"orn--Shan\-mu\-ga\-lin\-gam~\cite{ABBSprime},
cf.\  Lemma~A.11 therein.

\begin{prop}\label{prop-cpt-equiv}
Assume 
that $X$ is complete and supports a\/ $(1,p)$-Poincar\'e inequality,
that $\mu$ is doubling and that $p>1$.
Let\/ $\Om$ be a nonempty bounded connected open set,
and\/ $\{E_k\}_{k=1}^\infty$ be a decreasing sequence of subsets of\/ $\Om$
such that\/ $\bigcap_{k=1}^\infty \itoverline{E}_k \subset \bdy \Om$.

Then\/ $\lim_{j\to\infty}\cp(E_j,K,\Om)=0$
for every compact $K\subset\Om$ if and only if\/
$\lim_{j\to\infty}\cp(E_j,K_0,\Om)=0$
for some compact  $K_0\subset\Om$ with $\Cp(K_0)>0$.
\end{prop}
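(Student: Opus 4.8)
The plan is to dispatch the trivial implication directly and then reduce the substantive one, via connectedness, to a statement about a continuum.

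First I would note that a compact $K_0\subset\Om$ with $\Cp(K_0)>0$ exists at all: since $X$ is complete and $\mu$ is doubling it is proper, so the closure of a small ball inside $\Om$ is such a set, its capacity being at least its (positive) measure. Hence ``$\cp(E_j,K,\Om)\to0$ for every compact $K\subset\Om$'' forces the same for this particular $K_0$, which is the easy direction. For the converse, assume $\cp(E_j,K_0,\Om)\to0$ for some compact $K_0\subset\Om$ with $\Cp(K_0)>0$, and fix a compact $K\subset\Om$. A complete doubling space supporting a $(1,p)$-Poincar\'e inequality is quasiconvex, hence locally quasiconvex, so the connected open set $\Om$ is rectifiably connected; covering $K\cup K_0$ by finitely many balls with closures in $\Om$ and joining their centres to a fixed point of $\Om$ by rectifiable curves in $\Om$, I obtain a compact \emph{connected} set $L$ with $K\cup K_0\subset L\subset\Om$ and $\diam L>0$. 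Since $\cp(E_j,K,\Om)\le\cp(E_j,L,\Om)$ by monotonicity, it suffices to prove $\cp(E_j,L,\Om)\to0$. Finally, because $L$ is compact, $\Om$ is open and $\bigcap_k\itoverline{E}_k\subset\bdy\Om$, a short compactness argument gives $\dist(E_j,L)>0$ for all large $j$, so I may fix an open set $\Om'$ with $L\subset\Om'$, whose closure is a compact subset of $\Om$, and with $\dist(E_j,\Om')>0$ for large $j$.

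Next comes the core estimate. Let $u_j$ be a minimizer of $\cp(E_j,K_0,\Om)$ (which exists by Theorem~\ref{thm-condenser} for $j$ large, the capacity being finite then), so $0\le u_j\le1$, $u_j=0$ q.e.\ in $E_j$, $u_j=1$ q.e.\ in $K_0$, $u_j$ is quasicontinuous, and $\eta_j:=\int_\Om g_{u_j}^p\,\dmu\to0$. Cover $L$ by finitely many balls $B_1,\dots,B_N$ of a common small radius, centred on $L$, whose $\lambda$-dilates lie in $\Om'$ ($\lambda$ the Poincar\'e dilation), and whose overlap graph is connected. As $\Cp(K_0)>0$ and $K_0\subset\bigcup_iB_i$, some $B_{i_0}$ has $\Cp(K_0\cap B_{i_0})>0$; applying to $1-u_j$ the Poincar\'e (Maz\cprime ya) inequality for functions vanishing q.e.\ on a set of positive capacity, and then chaining along the overlap graph by the $(p,p)$-Poincar\'e inequality and doubling, yields $\bigl|1-\vint_{B_i}u_j\,\dmu\bigr|\le C\eta_j^{1/p}$ for every $i$, with $C$ independent of $j$. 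Together with the Poincar\'e inequality on each $B_i$ this gives $\int_{\bigcup_iB_i}(1-u_j)^p\,\dmu\to0$. Fixing a Lipschitz cutoff $\zeta$ with $0\le\zeta\le1$, $\zeta=1$ on a neighbourhood of $L$ and support inside $\bigcup_iB_i\subset\Om'$, I conclude that $w_j:=\zeta(1-u_j)$ satisfies $\|w_j\|_{\Np(X)}^p\le C\eta_j\to0$.

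Then I would upgrade this to a genuine competitor for $\cp(E_j,L,\Om)$. Put $\eps_j:=\eta_j^{1/(2p)}\to0$ (so $\eta_j/\eps_j^p\to0$) and set
\[
   v_j:=\min\bigl\{1,\eps_j^{-1}(u_j-1+2\eps_j)_{\limplus}\bigr\},\qquad
   F_j:=\{x\in L:u_j(x)\le1-\eps_j\}.
\]
Then $0\le v_j\le1$, $v_j=0$ q.e.\ in $E_j$, $v_j=1$ q.e.\ on $L\setm F_j$, and $\int_\Om g_{v_j}^p\,\dmu\le\eta_j/\eps_j^p\to0$. Since $w_j=1-u_j$ on $L$ we have $F_j\subset\{w_j\ge\eps_j\}$, so $h_j:=\min\{1,w_j/\eps_j\}$ equals $1$ on $F_j$, vanishes outside the support of $\zeta$ (hence q.e.\ in $E_j$ for large $j$) and satisfies $\int_\Om g_{h_j}^p\,\dmu\le\|w_j\|_{\Np(X)}^p/\eps_j^p\to0$. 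Therefore $\max\{v_j,h_j\}\in\Np(\Om)$ is admissible for $\cp(E_j,L,\Om)$ with energy tending to $0$, whence $\cp(E_j,L,\Om)\to0$ and so $\cp(E_j,K,\Om)\to0$.

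The step I expect to be the main obstacle is exactly this last upgrade: ``$u_j$ close to $1$ near $L$'' is only an $L^p$-statement, whereas admissibility needs $u_j$ (or a surrogate) to equal $1$ \emph{quasieverywhere} on $L$, and $L$ may have measure zero; the resolution is to truncate $u_j$ to $v_j$, which equals $1$ off the small exceptional set $F_j$, and then to absorb $F_j$ using that $\Cp(F_j)\to0$ — obtained from the capacitary Chebyshev estimate applied to $w_j$ — via the corrector $h_j$. A secondary point needing care is that all the Poincar\'e and chaining constants are uniform in $j$; this holds because they depend only on the fixed finite ball cover of $L$ and on $K_0$, not on $E_j$.
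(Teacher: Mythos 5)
Your proof is correct, but it takes a genuinely different route from the paper's. The paper gets the key uniform lower bound qualitatively: it observes that the minimizer $u_k$ is a superminimizer in $\Om\setm\itoverline{E}_k$, passes to its lsc-regularization, which is superharmonic on a connected open set $G\Supset K\cup K_0$, and invokes the strong minimum principle to conclude $\de_{k_0}:=\min_K u_{k_0}>0$; the comparison principle (Corollary~\ref{cor-obst-le}) then gives $u_k\ge u_{k_0}$ q.e., so $\min\{u_k/\de_{k_0},1\}$ is admissible for $\cp(E_k,K,\Om)$ and one gets the clean linear bound $\cp(E_k,K,\Om)\le\de_{k_0}^{-p}\cp(E_k,K_0,\Om)$. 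You instead make the statement "$u_j$ is close to $1$ near $K$" quantitative, by chaining the $(p,p)$-Poincar\'e inequality with a Maz\cprime ya-type inequality on the ball meeting $K_0$ in positive capacity, and then repair the passage from an $L^p$-smallness of $1-u_j$ to a genuine q.e.\ admissibility via the truncation $v_j$ and the capacitary corrector $h_j$. Your approach avoids superharmonic functions, the strong minimum principle and the comparison principle altogether, which makes it more elementary in that sense (everything reduces to Poincar\'e/Maz\cprime ya estimates and the lattice and truncation properties of $\Np$), at the cost of a longer argument and a weaker quantitative conclusion (roughly $\cp(E_j,K,\Om)\le C\,\cp(E_j,K_0,\Om)^{1/2}$ rather than a linear bound). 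All the ingredients you use are available under the standing hypotheses: properness and quasiconvexity follow from completeness, doubling and the $(1,p)$-Poincar\'e inequality; the connected overlap graph of a finite ball cover of the compact connected set $L$ exists; some $B_{i_0}$ satisfies $\Cp(K_0\cap B_{i_0})>0$ by subadditivity; and $E_j\cap\itoverline{\Om'}=\emptyset$ for large $j$ by compactness and $\bigcap_k\itoverline{E}_k\subset\bdy\Om$. So the proof goes through; it is simply a different, more computational path to the same end.
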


\begin{proof}
Assume that $\lim_{j\to\infty}\cp( E_j,K_0,\Om)=0$ for some compact set
$K_0\subset\Om$ with positive capacity,
and let $K\subset\Om$ be compact.
By Lemma~4.49 in Bj\"orn--Bj\"orn~\cite{BBbook}, there is an open connected set
$G \Subset \Om$ such that $K_0 \cup K \subset G$.
We can also find $k_0$ such that $E_{k_0} \cap G = \emptyset$.
Let us only consider $k \ge k_0$ below.

Let $u_k$ be a minimizer for $\cp(E_k,K_0,\Om)$,
which exists and is unique (up to sets of capacity zero)
by Theorem~\ref{thm-condenser}.
Note that $u_k=0$ on $E_k$ and $u_k=1$ on $K_0$.
Moreover, $u_k$ is a superminimizer
 in
$\Om \setm \itoverline{E}_k\supset G$
(see Kinnunen--Martio~\cite{KiMa02} or \cite{BBbook} for the definitions of
superminimizers and superharmonic functions).
Indeed, if $0 \le \varphi \in \Np(X)$ and $\varphi=0$ outside
$\Om \setm \itoverline{E}_k$,
then
$v=\min\{u_k+\varphi,1\}$ is admissible for $\cp(E_k,K_0,\Om)$
and hence
\[
   \int_{\Om \setm K_0} g_{u_k}^p \, d\mu
   \le \int_{\Om \setm K_0} g_v^p \, d\mu
   \le \int_{\Om \setm K_0} g_{u_k+\varphi}^p \, d\mu.
\]
By Theorem~5.1 in \cite{KiMa02}
(or Theorem~8.22  in \cite{BBbook}),
\[
           u_k^*(x):= \lim_{r \to 0} \essinf_{B(x,r)} u_k
\]
equals $u_k$ q.e.\ in $G$,
and by Proposition~7.4 in \cite{KiMa02}
(or Proposition~9.4 in \cite{BBbook}) $u_k^*$ is superharmonic
in $G$.
As $u_k^*$ is lower semicontinuous,
the minimum $\de_k:=\min_{K} u_k$ is attained at some point in $K$.
Since $u_k^*(x)=1$ for some $x \in K_0$ (as $\Cp(K_0)>0$)
we see that $u_k^* \not\equiv 0$ in $G$.
Hence, as $G$ is connected,
the strong minimum principle in $G$ (Theorem~9.13 in \cite{BBbook})
shows that $\de_k>0$.

By Corollary~\ref{cor-obst-le},
we have  $u_{k} \ge u_{k_0}$ q.e., and thus $\de_k \ge \de_{k_0}>0$.
It follows that $\min\{u_k/\de_{k_0},1\}$ is admissible for
$\cp(E_k,K,\Om)$ as $u_k / \de_{k_0} \ge 1$ on $K$.
The monotonicity of $\cp$ then yields that
\begin{align*}
   \cp(E_k,K,\Om)
   &    \le \frac{1}{\de_{k_0}^p} \int_{\Om} g_{u_k}^p \, d\mu
    = \frac{1}{\de_{k_0}^p} \cp(E_k,K_0,\Om)
   \to 0,
     \quad \text{as } k \to \infty.
\end{align*}
The converse implication is trivial.
\end{proof}

\section{Adams' criterion for when \texorpdfstring{$\K_{\psi,f} \ne \emptyset$}{}}
\label{sect-Adams}

In this section, we study when the single obstacle problem is soluble,
i.e.\ when $\K_{\psi,f}$ is nonempty.
In the characterization, we shall use the variational capacity with respect to 
nonopen sets, see Appendix~\ref{app-capp}.

\medskip

\emph{As in Section~\ref{sect-obst},
we assume that $p>1$ and that  $X$ 
supports a $(p,p)$-Poincar\'e inequality for $\Np_0$.
We also assume 
that $E \subset X$ is a bounded measurable set such that $\Cp(X \setm E)>0$.}

\begin{thm} \label{thm-Choq-int-est} 
\textup{(Adams' criterion)}
Let $f\in\Dp(E)$ and $\psi : E \to \eR$.
Then $\K_{\psi,f}\ne \emptyset$  if and only if
\begin{equation} \label{eq-Choq-int-finite}
     \int_0^\infty t^{p-1}\cp(\{x: \psi(x)-f(x)>t\},E) \,dt < \infty.
\end{equation}
\end{thm}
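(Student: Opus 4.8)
The plan is to reduce the question to a capacitary estimate for a single nonnegative function and then treat both implications by a dyadic decomposition of that function. First I would replace $\psi$ by $u_0:=(\psi-f)_+\ge0$ and $f$ by $0$: since $f\in\Dp(E)$ and $\Np_0(E)$ is a lattice containing $0$, the map $v\mapsto(v-f)_+$ shows that $\K_{\psi,f}\ne\emptyset$ if and only if there is $w\in\Np_0(E)$ with $w\ge u_0$ q.e.\ in $E$ (for the converse take $v=w+f\in\Dp(E)$). Writing $E_t=\{x\in E:\psi(x)-f(x)>t\}=\{x\in E:u_0(x)>t\}$ for $t>0$, the function $t\mapsto\cp(E_t,E)$ is nonincreasing, so comparing the integral in \eqref{eq-Choq-int-finite} with a dyadic sum shows that \eqref{eq-Choq-int-finite} holds if and only if $\sum_{k\in\Z}2^{kp}\cp(E_{2^k},E)<\infty$.

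For necessity, given $w\in\K_{\psi,f}$ put $h=(w-f)_+\in\Np_0(E)$, so $h\ge u_0\ge0$ q.e.\ and $\int_E g_h^p\,d\mu<\infty$. Here I would use Maz\cprime ya's truncation: with $h_j=\min\{(h-2^j)_+,2^j\}\in\Np_0(E)$, the rules for minimal \p-weak upper gradients recalled in Section~\ref{sect-prelim} give $g_{h_j}=g_h\chi_{\{2^j<h<2^{j+1}\}}$ a.e.; since moreover $g_h=0$ a.e.\ on each level set $\{h=c\}$, these gradients have pairwise disjoint supports and $\sum_{j\in\Z}\int_E g_{h_j}^p\,d\mu=\int_E g_h^p\,d\mu$. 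As $2^{-j}h_j\in\Np_0(E)$ equals $1$ q.e.\ on $E_{2^{j+1}}$, it is admissible for $\cp(E_{2^{j+1}},E)$, so $\int_E g_{h_j}^p\,d\mu\ge2^{jp}\cp(E_{2^{j+1}},E)$; summing over $j$ and using the dyadic comparison above gives $\int_E g_h^p\,d\mu\gtrsim\int_0^\infty t^{p-1}\cp(E_t,E)\,dt$, hence \eqref{eq-Choq-int-finite}.

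For sufficiency, assume $\sum_k2^{kp}\cp(E_{2^k},E)<\infty$. For each $k$ choose $\phi_k\in\Np_0(E)$ with $0\le\phi_k\le1$, $\phi_k=1$ q.e.\ on $E_{2^k}$, and $\int_E g_{\phi_k}^p\,d\mu$ within a factor $2$ of $\cp(E_{2^k},E)$ (taking it as small as we wish when this capacity is $0$), and set $w=\sum_{k\in\Z}2^k\phi_k$, understood as the limit of partial sums. Off a set of capacity zero, if $u_0(x)\in(2^m,2^{m+1}]$ then $\phi_k(x)=1$ for all $k\le m$, so $w(x)\ge\sum_{k\le m}2^k=2^{m+1}\ge u_0(x)$; thus $w\ge u_0$ q.e. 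Granting the energy bound $\int_E g_w^p\,d\mu\le C\sum_k2^{kp}\cp(E_{2^k},E)$ (with tails tending to $0$), the partial sums are Cauchy in $\Np(X)$ — the $L^p$-part being controlled by the $(p,p)$-Poincar\'e inequality for $\Np_0$ and the gradient part by Proposition~\ref{prop-min-grad} and the bound — so their limit $w$ lies in $\Np_0(E)$, still satisfies $w\ge u_0$ q.e., and $\K_{\psi,f}\ne\emptyset$. The one substantive point, and the main obstacle, is precisely that energy bound: the crude estimate $g_w\le\sum_k2^kg_{\phi_k}$ only yields $\|g_w\|_{L^p(E)}\le\sum_k2^k\cp(E_{2^k},E)^{1/p}$, an $\ell^1$-type sum that the hypothesis (an $\ell^p$-type sum) does not control, so one must exploit the nesting $E_{2^{k+1}}\subset E_{2^k}$. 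I would take the $\phi_k$ to be the capacitary potentials of the $E_{2^k}$ in $E$ (the obstacle-problem solutions $\K_{\chi_{E_{2^k}},0}(E)$ supplied by Section~\ref{sect-obst} and Remark~\ref{rmk-existence}), so that $\phi_k$ is nonincreasing in $k$ and $g_{\phi_k}=0$ a.e.\ where $\phi_k\equiv1$. When $p=2$ the increments $\phi_k-\phi_{k+1}$ are mutually orthogonal in the Dirichlet inner product (by the variational inequality for the potentials) and a telescoping computation gives the bound at once; for general $p>1$ I would instead apply Maz\cprime ya's truncation to $w$ itself, $\int_E g_w^p\,d\mu=\sum_m\int_E g_{T_mw}^p\,d\mu$ with $T_mw=\min\{(w-2^m)_+,2^m\}$, and estimate each slice against the capacities of the level sets near level $2^m$, using the decay of $\phi_k$ away from $E_{2^k}$. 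This slice estimate, followed by the dyadic comparison, is where the bulk of the work lies and is essentially Adams' argument from \cite{adams81}.
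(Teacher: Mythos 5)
Your necessity argument is correct and is essentially the paper's: the paper proves a continuous-parameter version of the same Maz\cprime ya truncation (Lemma~\ref{lem-Choq-Mazya-int-est}, integrating the admissibility of $\min\{(u-t)_\limplus,(a-1)t\}/(a-1)t$ over all $t>0$ via Fubini, which also yields the sharper constant in Corollary~\ref{cor-Choq-int-est}), whereas you use the dyadic slices $h_j$; both reduce to the disjointness of the supports of $g_{h_j}$ together with $g_h=0$ a.e.\ on level sets. Your initial reduction to $f\equiv0$ and to the single function $(\psi-f)_\limplus$ is also fine.

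The sufficiency half, however, has a genuine gap, and it sits exactly where you place it: the energy bound for $w=\sum_k 2^k\phi_k$ is never proved. The routes you sketch (orthogonality of increments of capacitary potentials for $p=2$; a slice estimate via Maz\cprime ya truncation of $w$ itself for general $p$, using decay of the potentials away from $E_{2^k}$) would require developing the variational inequality and pointwise decay for capacitary potentials in this setting — a substantial detour that you do not carry out, so as written the implication is not established. The paper sidesteps the $\ell^1$ versus $\ell^p$ difficulty entirely by a small but decisive change of ansatz: instead of the sum, take $v=\sup_{k\in\Z}2^{k+1}u_k$, where $u_k\in\Np_0(E)$ is nearly optimal for $\cp(\{\psi>2^k\},E)$. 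One still has $v\ge\psi$, because on $\{2^k<\psi\le 2^{k+1}\}$ the single term $2^{k+1}u_k=2^{k+1}$ already dominates; the function $g=\sup_k 2^{k+1}g_{u_k}$ is a \p-weak upper gradient of $v$ (countable suprema preserve \p-weak upper gradients); and the elementary inequality $(\sup_k a_k)^p\le\sum_k a_k^p$ converts the energy of $g$ directly into the $\ell^p$-type sum $\sum_k 2^{(k+1)p}\cp(\{\psi>2^k\},E)$ that hypothesis \eqref{eq-Choq-int-finite} controls. Membership $v\in\Np_0(E)$ then follows from the $(p,p)$-Poincar\'e inequality for $\Np_0$ and Proposition~\ref{prop-Fuglede-consequence}, as you anticipate for your $w$. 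If you replace your sum by this supremum, the rest of your outline goes through and no slice estimate is needed.
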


In the linear case on unweighted $\R^n$ and with $E$ open and
$f \in N^{1,2}(E)$ (or rather $f \in W^{1,2}(E)$ quasicontinuous)
this result was
obtained by Adams~\cite{adams81}.
For open $E$ in metric spaces and $f \in \Np(E)$, it is
included in Bj\"orn--Bj\"orn~\cite{BBbook}.

The Cavalieri principle says that if $f: X \to [0,\infty]$
is a $\nu$-measurable function then
\[ 
   \int_{X}  f^{p}\,d\nu  = p\int_{0}^\infty 
          t^{p-1}\nu(\{x : f(x)>t\})\,dt.
\]
By analogy, it is natural to write 
\eqref{eq-Choq-int-finite} as
\[
    \int_E (\psi-f)_\limplus^{p} \,d\cp(\,\cdot\,,E) < \infty,
\]
even though $\cp(\,\cdot\,,E)$ is \emph{not} a measure.
Such integrals are called \emph{Choquet integrals} and their
study goes back to Choquet~\cite{choquet}.

Let us also point out that for Theorem~\ref{thm-Choq-int-est}
to hold it is important
that the obstacle problem is defined by requiring the obstacle
inequality to hold q.e.\ (with respect to $X$).
If the inequality is only required to hold a.e.,
as e.g.\ in 
Heinonen--Kilpel\"ainen--Martio~\cite{HeKiMa}
or Kinnunen--Martio~\cite{KiMa02}, only one 
implication in Theorem~\ref{thm-Choq-int-est} is true.
To see this let $E=B(0,1) \subset \R^n$, $f \equiv 0$ 
and $\psi=\infty \chi_F$, where $F \subset E$
is a set such that $\mu(F)=0< \Cp(F)$.
By Lemma~\ref{lem-Cp<=>cp}, $\cp(F,E)>0$,
and thus by Adams' criterion, $\K_{\psi,f}= \emptyset$.
On the other hand,  $0$ is a solution of the a.e.-obstacle problem.

The same is true if we had used $E$-q.e.\ in the definition
of the obstacle problem.
In this case, we let
$E=B(0,1) \setm \Q \subset \R$, $f= 0$ and $\psi=\infty \chi_F$,
where $F$ is a nonempty set with $\CpE(F)=0 <\Cp(F)$,
which is easily accomplished as in this case $\CpE(A)=\mu(A)$ for all sets 
$A \subset E$.
Again, 
$\cp(F,E)>0$, by Lemma~\ref{lem-Cp<=>cp},
and thus $\K_{\psi,f}= \emptyset$,
by Adams' criterion, while
$0$ is a solution of the $E$-q.e.\ (and also the a.e.) obstacle
problem.

For the double obstacle problem it is much more
difficult to obtain a characterization of when 
$\K_{\psi_1,\psi_2,f} \ne \emptyset$.
The following two examples demonstrate this.

\begin{example} \label{ex-doubl-obst-difficult}
Let $X=\R$, $p>1$, $E=(0,1)$, $f(x)=x$ and
let $\psi_1, \psi_2 : \R \to \eR$ be defined by
\[
	\psi_1=\begin{cases}
		x^{1-1/p}, & 0 < x < 1, \\
		- \infty, & \text{otherwise},
		\end{cases}
	\quad
	\psi_2=\begin{cases}
		x^{1-1/p}, & 0 < x < 1, \\
		 \infty, & \text{otherwise}.
		\end{cases}
\]
Then $\K_{\psi_1,\psi_2,f}=\emptyset$, as the function
$x \mapsto x^{1-1/p}$ does not belong to $\Np(E)$.
\end{example}

In the above example, we had $\psi_1=\psi_2$ on a large set. 
We shall next see that it is possible to have $\psi_2-\psi_1=\infty$ 
everywhere while $\K_{\psi_1,\psi_2,f}$ is empty.

\begin{example}  \label{ex-infty-obst-nonsol}
Let $X=\R$, $p>1$, $\Om\subset\R$ be open, 
\[
\psi_1=-\infty\chi_\Q \quad \text{and} \quad
\psi_2=\infty(1-\chi_\Q).
\]
Note that $\psi_2-\psi_1=\infty$ everywhere.
Let $u\in\Np(\Om)$ be such that $\psi_1\le u\le \psi_2$ q.e.
Since every function in $\Np(\Om)$ is (absolutely) continuous, 
this implies that $u\ge0$ a.e.\ (and hence everywhere) in $\Om$.
On the other hand, as $\Q$ is dense in $\Om$, the continuity of $u$
and the fact that $u\le0$ on $\Q\cap\Om$ yield that $u\le0$ in $\Om$.

Hence $u=0$ in $\Om$ and $\K_{\psi_1,\psi_2,f}=\emptyset$ whenever 
$f\notin\Np_0(\Om)$.
Moreover, similar arguments show that if $\psi_1'=\psi_1+1$, then
$\K_{\psi_1',\psi_2,f}=\emptyset$ for all $f\in\Dp(\Om)$.
\end{example}

To prove Theorem~\ref{thm-Choq-int-est}
we will use the following lemma.

\begin{lem} \label{lem-Choq-Mazya-int-est}
Let $a>1$,
$u\in \Np_0(E)$ and 
$E_t=\{x\in E: |u(x)|>t\}$, $t>0$.
Then
\begin{equation} \label{eq-lem-Choq-Mazya-a}
     \int_0^\infty t^{p-1}\cp(E_{at},E_t) \,dt
	\le \frac{\log a}{(a-1)^p} \int_E g^p_u \, d\mu.
\end{equation}
Equivalently, with $b=1/a\in(0,1)$,
\begin{equation} \label{eq-lem-Choq-Mazya-b}
     \int_0^\infty t^{p-1}\cp(E_{t},E_{bt}) \,dt
	\le \frac{-\log b}{(1-b)^p} \int_E g^p_u \, d\mu.
\end{equation}
\end{lem}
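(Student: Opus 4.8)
The plan is to establish \eqref{eq-lem-Choq-Mazya-a} by a Maz\cprime ya-type truncation argument and then deduce \eqref{eq-lem-Choq-Mazya-b} from it by the change of variables $t=bs$ with $b=1/a$: since $ab=1$ this turns $\cp(E_{at},E_t)$ into $\cp(E_s,E_{bs})$ and $t^{p-1}\,dt$ into $b^p s^{p-1}\,ds$, and because $b(a-1)=1-b$ and $\log a=-\log b$, dividing through by $b^p$ (and renaming $s$ as $t$) gives exactly \eqref{eq-lem-Choq-Mazya-b}.

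For \eqref{eq-lem-Choq-Mazya-a}, fix $t>0$ and consider the truncation
\[
    v_t=\min\biggl\{\frac{(|u|-t)_\limplus}{(a-1)t},\,1\biggr\},
\]
extended by $0$ outside $E$. As $u\in\Np_0(E)$ and $v_t$ arises from $u$ by postcomposition with a Lipschitz function vanishing on $(-\infty,t]$, we get $v_t\in\Np(X)$; moreover $v_t=0$ q.e.\ on $X\setm E_t$ (it vanishes everywhere on $E\setm E_t$, where $|u|\le t$, and q.e.\ on $X\setm E$, where $u=0$ q.e.), so $v_t\in\Np_0(E_t)$. Finally $0\le v_t\le1$ and $v_t=1$ on $E_{at}$, so $v_t$ is an admissible competitor for $\cp(E_{at},E_t)$ in the sense of Appendix~\ref{app-capp}. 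Using $g_{|u|}=g_u$ a.e.\ together with the localization identities $g_{\max\{w,0\}}=g_w\chi_{\{w>0\}}$ and $g_{\min\{w,c\}}=g_w\chi_{\{w<c\}}$ a.e.\ recalled in Section~\ref{sect-prelim}, one computes
\[
    g_{v_t}=\frac{g_u}{(a-1)t}\,\chi_{\{x\in E:\ t<|u(x)|<at\}}\quad\text{a.e.,}
\]
so that
\[
    \cp(E_{at},E_t)\le\int_{E_t}g_{v_t}^p\,d\mu
    =\frac{1}{(a-1)^p t^p}\int_{\{t<|u|<at\}}g_u^p\,d\mu.
\]

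Multiplying by $t^{p-1}$, integrating over $t\in(0,\infty)$, and interchanging the order of integration by Tonelli's theorem (all integrands being nonnegative and jointly measurable in $(t,x)$), I would obtain
\[
    \int_0^\infty t^{p-1}\cp(E_{at},E_t)\,dt
    \le\frac{1}{(a-1)^p}\int_E g_u(x)^p\biggl(\int_0^\infty\frac1t\,\chi_{\{t<|u(x)|<at\}}\,dt\biggr)\,d\mu(x).
\]
For fixed $x$ with $|u(x)|=s>0$ the inner integral equals $\int_{s/a}^{s}dt/t=\log a$, and it is $0$ when $s=0$; hence the right-hand side is at most $(\log a)(a-1)^{-p}\int_E g_u^p\,d\mu$, which is \eqref{eq-lem-Choq-Mazya-a}.

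The argument is essentially routine; the points that demand attention are (i) verifying that $v_t$ is genuinely an admissible function for the variational capacity $\cp(\,\cdot\,,E_t)$ of the measurable but in general nonopen set $E_t=\{x\in E:|u(x)|>t\}$, which is where the definition from Appendix~\ref{app-capp} and the fact that $u=0$ q.e.\ on $X\setm E$ enter, and (ii) the upper gradient localization computation for $g_{v_t}$, which follows from the facts in Section~\ref{sect-prelim}. The Tonelli interchange and the elementary inner integral present no real obstacle.
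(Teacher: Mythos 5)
Your proposal is correct and follows essentially the same route as the paper: the same truncation $v_t=\min\{(|u|-t)_\limplus,(a-1)t\}/(a-1)t$ as competitor for $\cp(E_{at},E_t)$, the same localization of $g_{v_t}$, the same Fubini--Tonelli interchange giving the inner integral $\log a$, and the same substitution $s=bt$ for the second inequality. The only (immaterial) difference is that the paper first reduces to $u\ge0$ via $g_{|u|}=g_u$, whereas you carry $|u|$ throughout.
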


\begin{proof}
As $g_u=g_{|u|}$ a.e., we may assume that $u \ge 0$.
For $t>0$, let 
\[
u_t=\min\{(u-t)_\limplus, (a-1)t\}
\]
be the truncations of $u$ at levels $t$ and $at$, $t>0$.
Then the function $v_t:=u_t/(a-1)t$ is admissible in the definition of 
$\cp(E_{at},E_t)$ and
$g_{v_t}= g_u \chi_{\{t<u<at\}}/(a-1)t$ a.e.
Using Fubini's theorem we get that
\begin{align*}
\int_0^\infty t^{p-1}\cp(E_{at},E_t) \,dt
     &\le \int_0^\infty \biggl( \frac{1}{(a-1)t} \biggr)^p
           t^{p-1} \int_X g_u^p \chi_{\{t<u<at\}} \,d\mu \,dt \\
     &= \frac{1}{(a-1)^p} \int_X g_u(x)^p \int_{u(x)/a}^{u(x)} \,
           \frac{dt}{t} \, d\mu(x) \\
     &= \frac{\log a}{(a-1)^p} \int_X g_u^p \, d\mu,
\end{align*}
which proves \eqref{eq-lem-Choq-Mazya-a}. 
(To get the last equality we used the fact that $g_u=0$ a.e.\ 
in $\{x : u(x)=0\}$.)
The second inequality follows by the substitution $s=bt$.
\end{proof}

It follows directly from the definition that
$\cp(E_{t},E) \le \cp(E_{t},E_{bt})$ and hence the capacity in the left-hand 
side of \eqref{eq-lem-Choq-Mazya-b} can be replaced by $\cp(E_{t},E)$. 
Letting $b=1/p$ yields the following result.

\begin{cor} \label{cor-Choq-int-est}
\textup{(Maz{\cprime}ya's capacitary inequality)}
It is true that for all $u\in \Np_0(E)$,
\begin{equation} \label{eq-cor-Choq}
     \int_0^\infty t^{p-1}\cp(\{x: |u(x)|>t\},E) \,dt
	\le \frac{p^p \log p}{(p-1)^p} \int_E g^p_u \, d\mu.
\end{equation}
\end{cor}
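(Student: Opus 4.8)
The plan is to derive this directly from Lemma~\ref{lem-Choq-Mazya-int-est} by replacing the inner set $E_{bt}$ by the full set $E$ in the condenser capacity and then simply specializing the free parameter.

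First I would fix $u\in\Np_0(E)$, write $E_t=\{x\in E:|u(x)|>t\}$ as in the lemma, and record that for $0<b<1$ and $t>0$ one has $E_t\subset E_{bt}\subset E$, since $bt<t$. The key — and only nontrivial — point is the monotonicity of the condenser capacity in its ambient set,
\[
\cp(E_t,E)\le\cp(E_t,E_{bt}).
\]
This follows straight from the definition in Appendix~\ref{app-capp}: any function admissible for $\cp(E_t,E_{bt})$, i.e.\ a function in $\Np_0(E_{bt})$ which equals $1$ q.e.\ on $E_t$, extends by zero (q.e.) on $X\setm E_{bt}$ to a function in $\Np_0(E)$ with the same $p$-energy, and is thus admissible for $\cp(E_t,E)$; taking infima over the respective admissible classes gives the inequality.

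Multiplying by $t^{p-1}$, integrating over $(0,\infty)$, and invoking \eqref{eq-lem-Choq-Mazya-b} then gives
\[
\int_0^\infty t^{p-1}\cp(\{x:|u(x)|>t\},E)\,dt
\le\int_0^\infty t^{p-1}\cp(E_t,E_{bt})\,dt
\le\frac{-\log b}{(1-b)^p}\int_E g_u^p\,d\mu
\]
for every $b\in(0,1)$. Finally I would take $b=1/p$, so that $-\log b=\log p$ and $(1-b)^p=\bigl((p-1)/p\bigr)^p=(p-1)^p/p^p$, turning the constant into $p^p\log p/(p-1)^p$ and yielding \eqref{eq-cor-Choq}.

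There is essentially no obstacle: the whole argument rests on Lemma~\ref{lem-Choq-Mazya-int-est}, and the only thing needing a brief justification is the ambient-set monotonicity $\cp(E_t,E)\le\cp(E_t,E_{bt})$, which is immediate from the extension-by-zero property of $\Np_0$-functions together with the definition of the variational condenser capacity for nonopen sets. I would only remark in passing that $E_t$ need not be measurable, but since $\cp(\,\cdot\,,E)$ is defined for arbitrary subsets this causes no difficulty, and that $b=1/p$ is merely a convenient choice (not the one minimizing $-\log b/(1-b)^p$), which is all the stated constant requires.
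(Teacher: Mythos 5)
Your proposal is correct and follows exactly the paper's route: the paper likewise observes that $\cp(E_t,E)\le\cp(E_t,E_{bt})$ follows directly from the definition (since $\Np_0(E_{bt})\subset\Np_0(E)$) and then sets $b=1/p$ in \eqref{eq-lem-Choq-Mazya-b}. Your side remark that $b=1/p$ is not the optimal choice also matches the paper's own comment following the corollary.
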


Using the notation introduced above, \eqref{eq-cor-Choq}
can be stated as
\[
    \int_E |u|^p \,d\cp(\,\cdot\,,E) 
	\le \frac{p^{p+1} \log p}{(p-1)^p} \int_E g^p_u \, d\mu.
\]

By minimizing the constant on the right-hand side in \eqref{eq-lem-Choq-Mazya-b}
for $b\in(0,1)$, one can optimize the result.
An easy calculation shows that the minimum is attained when 
$1/b-1=-p\log b$.

In Section~2.3.1 in Maz\cprime ya~\cite{Mazya-Sob}, the inequality
\eqref{eq-cor-Choq} was proved with the constant $p^p/(p-1)^{p-1}$
(for unweighted $\R^n$).
See also Maz{\cprime}ya~\cite{Mazya-JFA}.
Note that $\log p<p-1$ for all $p>1$ and is comparable to $p-1$
for $p$  close to 1, 
while for large $p$, $\log p \ll p-1$.

\begin{proof}[Proof of Theorem~\ref{thm-Choq-int-est}]
As $\K_{\psi,f} = f+\K_{\psi-f,0}$ we can assume,
without loss of generality, that $f \equiv 0$.

Assume first that there is some $\ut \in \K_{\psi,0}$.
Then  $u:=\max\{\ut,\psi\}= \ut$ q.e.\ in $E$, and thus
also $u \in \K_{\psi,0}$.
Hence, by Corollary~\ref{cor-Choq-int-est} we have
\begin{align*}
     \int_0^\infty t^{p-1}\cp(\{x: \psi(x)>t\},E) \,dt 
	&\le      \int_0^\infty t^{p-1}\cp(\{x: u(x)>t\},E) \,dt \\
	& \le C \int_E g^p_u \, d\mu
       < \infty.
\end{align*}

Conversely, assume that \eqref{eq-Choq-int-finite} holds.
As $\cp(\{x: \psi(x)>t\},E)$ is nonincreasing with respect to $t$,
it follows that $\cp(\{x: \psi(x)>t\},E)< \infty$ for all $t>0$.
Thus we can find $u_k \in \Np_0(E)$, for $k \in \Z$, such that
$\chi_{\{\psi > 2^k\}} \le u_k \le 1$
and 
\begin{equation}
     \int_E g^p_{u_k}\,d\mu < \cp(\{x: \psi(x)>2^k\},E)+ 2^{-|k|-(k+1)p}.
\label{eq-uk-almost-cap}
\end{equation}
Let 
\begin{alignat}{3}
      v_N &= \sup_{k \le N} 2^{k+1} u_k, 
	& \quad  g_N &= \sup_{k \le N} 2^{k+1} g_{u_k}, &\quad &N \in \Z, 
\nonumber\\
      v &= \sup_{k \in \Z} 2^{k+1} u_k = \sup_{N \in \Z} v_N,
	& g & = \sup_{k \in \Z} 2^{k+1} g_{u_k} = \sup_{N \in \Z} g_N .
\nonumber
\end{alignat}
(Here we take the same representative of $g_{u_k}$ in all places.)
Then $v \ge 2^{k+1}$ when $\psi > 2^k$, in particular
when $2^k < \psi \le 2^{k+1}$, $k\in \Z$,
from which it follows that $v \ge \psi$ in $E$.

By Lemma~1.52 in Bj\"orn--Bj\"orn~\cite{BBbook}, $g_N$
is a \p-weak upper gradient of $v_N$.
Moreover
\begin{align*}
	\int_E g^p \,d\mu
	&=   \int_E \Bigl( \sup_{k \in \Z} 2^{k+1} g_{u_k}\Bigr)^p \, d\mu \\
	&\le  \int_E \sum_{k=-\infty}^\infty ( 2^{k+1} g_{u_k})^p \, d\mu 
	=    \sum_{k=-\infty}^\infty 2^{(k+1)p} \int_E   g_{u_k}^p \, d\mu.
\end{align*}
Using \eqref{eq-uk-almost-cap} we obtain
\begin{align*}
	\int_E g^p \,d\mu
	&<    \sum_{k=-\infty}^\infty 2^{(k+1)p}
	      (\cp(\{x: \psi(x)>2^k\},E)+  2^{-|k|-(k+1)p}) \\
	&\le 3 +    \sum_{k=-\infty}^\infty 2^{(k+1)p} 
	       \vint_{2^{k-1}}^{2^k}
		\biggl(\frac{t}{2^{(k-1)}}\biggr)^{p-1}
		\cp(\{x: \psi(x)>t\},E) 
		\,dt \\
	&= 3 + 4^p\int_0^\infty t^{p-1}\cp(\{x: \psi(x)>t\},E) \,dt.
\end{align*}
The assumption \eqref{eq-Choq-int-finite} now yields that
$\int_E g^p \,d\mu<\infty$.
Since $g_N \nearrow g$ pointwise in $X$, dominated convergence implies that
$g_N \to g$ in $L^p(X)$.
Monotone convergence and \eqref{PI-NP0} 
then yield
\begin{equation} \label{eq-limit}
      \int_E |v|^p\,d\mu 
	= \lim_{N \to \infty} \int_E |v_N|^p\,d\mu 
	\le C_E \int_E g_{v_N}^p \,d\mu 
	\le C_E \int_E g^p \,d\mu <\infty.
\end{equation}
Thus $v_N \to v$ both pointwise and in $L^p(X)$, by dominated convergence.
Proposition~\ref{prop-Fuglede-consequence}
then shows that $v \in \Np(X)$.
As $v =0$ in $X \setm E$, we get
$v \in \Np_0(E)$ and therefore $v \in \K_{\psi,0}$.
\end{proof}

If the obstacle $\psi \in \Np(E)$, then there is a much 
easier criterion for when $\K_{\psi,f} \ne \emptyset$.

\begin{prop} \label{prop-obst-solve-crit}
Let $f,\psi \in \Np(E)$
\textup{(}or more generally $f,\psi \in \Dp(E)$ be 
such that $f-\psi \in \Np(E)$\textup{)}.
Then $\K_{\psi,f}\ne \emptyset$  if and only if\/
$(\psi-f)_\limplus \in \Np_0(E)$.
\end{prop}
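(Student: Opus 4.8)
The plan is to prove both implications more or less directly, exploiting the fact that now the obstacle $\psi$ differs from $f$ by a Newtonian function, so that $(\psi-f)_{\limplus}$ is automatically in $\Np(E)$ and the only issue is whether it has zero boundary values. First I would reduce to the case $f\equiv 0$, exactly as in the proof of Theorem~\ref{thm-Choq-int-est}: since $\K_{\psi,f}=f+\K_{\psi-f,0}$ and (by hypothesis) $\psi-f\in\Np(E)$, it suffices to show that $\K_{\psi-f,0}\ne\emptyset$ if and only if $(\psi-f)_{\limplus}\in\Np_0(E)$. Renaming $\psi-f$ as $\psi$, we are reduced to: for $\psi\in\Np(E)$, $\K_{\psi,0}\ne\emptyset$ iff $\psi_{\limplus}\in\Np_0(E)$.

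For the ``if'' direction, assume $\psi_{\limplus}\in\Np_0(E)$ and simply take $u=\psi_{\limplus}$. Then $u\in\Np(E)\subset\Dp(E)$, we have $u-0=u\in\Np_0(E)$, and $\psi\le\psi_{\limplus}=u$ (with $u\le\infty=\psi_2$) q.e.\ in $E$, so $u\in\K_{\psi,0}$, which is therefore nonempty. For the ``only if'' direction, suppose $u\in\K_{\psi,0}$. Then $u\in\Dp(E)$ with $u\in\Np_0(E)$ and $u\ge\psi$ q.e.\ in $E$. Hence $u_{\limplus}=\max\{u,0\}\ge\psi_{\limplus}\ge 0$ q.e.\ in $E$, and $0\le u_{\limplus}\le u_{\limplus}$ with $0\in\Np_0(E)$ and $u_{\limplus}=\max\{u,0\}\in\Np_0(E)$ (as $\Np_0(E)$ is a lattice, being the restriction to $E$ of the lattice $\Np(X)$ of functions vanishing on $X\setm E$). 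Now $0\le\psi_{\limplus}\le u_{\limplus}$ q.e.\ in $E$ with $0,u_{\limplus}\in\Np_0(E)$, and $\psi_{\limplus}\in\Np(E)$ since $\psi\in\Np(E)$; so Lemma~\ref{lem-police} gives $\psi_{\limplus}\in\Np_0(E)$.

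I do not expect any real obstacle here; the statement is essentially a bookkeeping consequence of the lattice structure of $\Np_0(E)$ together with Lemma~\ref{lem-police}. The one point requiring a line of care is the parenthetical generalization: when $f,\psi\in\Dp(E)$ with $f-\psi\in\Np(E)$, one must check that the reduction $\K_{\psi,f}=f+\K_{\psi-f,0}$ is legitimate and that $\psi-f\in\Np(E)$ so that the already-proved case applies. This is immediate: adding the fixed Dirichlet datum $f$ is a bijection between the two competitor classes, and the minimal $p$-weak upper gradients with respect to $E$ are unaffected by translating the obstacle and boundary data by the same function, so $v\in\K_{\psi,f}\iff v-f\in\K_{\psi-f,0}$, and $(\psi-f)_{\limplus}=((\psi-f)-0)_{\limplus}\in\Np_0(E)$ is exactly the condition delivered by the $f\equiv 0$ case.
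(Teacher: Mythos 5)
Your proposal is correct and follows essentially the same route as the paper: the competitor you exhibit, $\psi_{\limplus}$ after the reduction to $f\equiv 0$, is exactly the paper's $\max\{\psi,f\}$, and the converse direction is the same sandwich $0\le(\psi-f)_\limplus\le(u-f)_\limplus$ q.e.\ combined with Lemma~\ref{lem-police}. The only cosmetic difference is that the paper works directly with $\psi-f$ rather than first translating by $f$.
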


\begin{proof}
Assume first that there is $u \in \K_{\psi,f}$.
Then
\[
    0 \le (\psi-f)_\limplus \le (u-f)_\limplus 
    \quad \text{q.e.}
\]
Hence, $(\psi-f)_\limplus \in \Np_0(E)$, by Lemma~\ref{lem-police}.

Conversely, assume that 
$(\psi-f)_\limplus \in \Np_0(E)$ and let $u=\max\{\psi,f\}$.
Then $u-f=(\psi-f)_\limplus \in \Np_0(E)$.
As $u \ge \psi$ in $E$, it follows that $u \in \K_{\psi,f}$.
\end{proof}

\begin{remark}
In this section, we only used the Poincar\'e inequality for $\Np_0$ 
and the assumption $\Cp(X \setm E)>0$ in
the proof of Theorem~\ref{thm-Choq-int-est} (apart from in some examples).
More specifically these assumptions were used in 
\eqref{eq-limit}, where it is enough if
\eqref{PI-NP0} holds for the specific $E$ under consideration.
Neither of these two assumptions can be dropped for Theorem~\ref{thm-Choq-int-est},
which is seen by letting $\psi\equiv\infty$ and $f\equiv0$
and either consider $E=X_\limplus$ in Example~\ref{ex-Xpm},
or an arbitrary $E$ such that $\Cp(X \setm E)=0$ (and $\mu(X)>0$).
Note that in both cases $\cp(E,E)=0$ so that the integral
in \eqref{eq-Choq-int-finite} converges while $\K_{\psi,f}=\emptyset$.

All other results in this section hold without Poincar\'e inequality.
\end{remark}

\section{Nontriviality of the obstacle problem and of \texorpdfstring{$\Np_0$}{}}

\label{sect-nontriv}

\emph{Assume in this section 
that $X$ is complete and supports a\/ $(1,p)$-Poincar\'e inequality,
that $\mu$ is doubling and that $p>1$.}

\medskip

These assumptions are needed to be able to use some results from
fine potential theory.

In the obstacle problem it is natural to ask when the obstacle problem
is trivial, i.e.\ when all functions $v \in \K_{\psi_1,\psi_2,f}$
agree q.e.
This happens in particular when $\Np_0(E)$ is trivial.
In the double obstacle problem it can happen also in other cases, e.g.\
if $\psi_1 \equiv \psi_2$ or in 
Examples~\ref{ex-doubl-obst-difficult} and~\ref{ex-infty-obst-nonsol}.
For the single obstacle problem the situation is simpler and
we have the following characterization.

\begin{prop} \label{prop-obst-pr-trivial}
Let $E \subset X$ be a bounded measurable set with $\Cp(X \setm E)>0$,
$f \in \Dp(E)$ and $\psi:E \to \eR$.
Then $\K_{\psi,f}$ is trivial  \textup{(}in the sense that $u=v$ q.e.\ whenever
$u,v \in \K_{\psi,f}$\textup{)} if and only if
either $\K_{\psi,f} = \emptyset$ or $\Np_0(E)$ is trivial\/
\textup{(}i.e.\ $u=0$ q.e.\ for all $u \in \Np_0(E)$\textup{)}.
\end{prop}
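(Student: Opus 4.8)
The plan is to establish the two implications separately; the ``if'' direction is immediate and the ``only if'' direction needs one short construction.

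For the ``if'' direction, suppose first that $\K_{\psi,f}=\emptyset$; then the triviality assertion holds vacuously. Next suppose instead that $\Np_0(E)$ is trivial. Given $u,v\in\K_{\psi,f}$, both $u-f$ and $v-f$ lie in $\Np_0(E)$, and since $\Np_0(E)$ is a linear space so does $u-v=(u-f)-(v-f)$; triviality of $\Np_0(E)$ then forces $u-v=0$ q.e., i.e.\ $u=v$ q.e.

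For the ``only if'' direction, assume $\K_{\psi,f}$ is trivial and nonempty, and fix $u\in\K_{\psi,f}$. The idea is that any element of $\Np_0(E)$, once made nonnegative, can be added to $u$ without leaving $\K_{\psi,f}$ (there is no upper obstacle in the single obstacle problem), so triviality forces it to vanish. Precisely, let $w\in\Np_0(E)$; since $\Np(X)$, and hence $\Np_0(E)$, is a lattice, both $w_\limplus:=\max\{w,0\}$ and $w_\limminus:=\max\{-w,0\}$ belong to $\Np_0(E)$. I would then check that $v:=u+w_\limplus\in\K_{\psi,f}$: it has an upper gradient in $L^p(E)$ (namely a sum of upper gradients of $u$ and of $w_\limplus$), so $v\in\Dp(E)$; moreover $v-f=(u-f)+w_\limplus\in\Np_0(E)$; and $v=u+w_\limplus\ge u\ge\psi$ q.e.\ in $E$ because $w_\limplus\ge0$. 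By triviality of $\K_{\psi,f}$ we get $v=u$ q.e., that is, $w_\limplus=0$ q.e.; the same argument applied to $u+w_\limminus$ yields $w_\limminus=0$ q.e. Hence $w=w_\limplus-w_\limminus=0$ q.e., and since $w\in\Np_0(E)$ was arbitrary, $\Np_0(E)$ is trivial.

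There is no serious obstacle here. The only points needing (routine) care are that $\Np_0(E)$ is genuinely a linear space and a lattice closed under the operations used — so that the zero extensions of $w_\limplus$ and $w_\limminus$ are again Newtonian functions vanishing off $E$ — and that $\Dp(E)$ is closed under addition. The one genuinely structural point is that the single obstacle problem has no upper obstacle, which is exactly what allows $u$ to be pushed upward by an arbitrary nonnegative competitor; this is why the analogous statement for the double obstacle problem fails, consistent with the examples preceding the proposition.
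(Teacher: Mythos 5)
Your proof is correct and follows essentially the same route as the paper: the "if" direction via linearity of $\Np_0(E)$, and the "only if" direction by adding a nonnegative element of $\Np_0(E)$ to a competitor, exploiting the absence of an upper obstacle (the paper adds $|u|$ for a single nontrivial $u\in\Np_0(E)$, whereas you split an arbitrary $w$ into $w_\limplus$ and $w_\limminus$ — a cosmetic difference). No gaps.
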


Observe that Adams' criterion  (Theorem~\ref{thm-Choq-int-est})
shows when $\K_{\psi,f} = \emptyset$.
Note also that if $\K_{\psi,f}$ is nonempty but trivial, then 
$\K_{\psi,f}=\{u:u=f \text{ q.e.}\}$.

\begin{proof}
If $\K_{\psi,f}=\emptyset$, then the equivalence is clear.
Assume therefore that $\K_{\psi,f}\ne\emptyset$.
If $\Np_0(E)$ is trivial, then all $v \in \K_{\psi,f}$ agree
with $f$ q.e., and thus $\K_{\psi,f}$ is trivial.

Conversely assume that $\Np_0(E)$ is nontrivial.
Then there is $u \in \Np_0(E)$ such that $\Cp(\{x : u(x) \ne 0\})>0$.
Let $v \in \K_{\psi,f}$ and $w=v+|u|$.
Then $w \in \K_{\psi,f}$ and as $w$ and $v$ do not agree q.e.\ the
nontriviality of $\K_{\psi,f}$ follows.
\end{proof}

Our aim is now to characterize when $\Np_0(E)$ is trivial.
We get the following result.
(Definitions of the involved concepts follow below.)

\begin{thm} \label{thm-nontrivial2}
Let $E \subset X$ be arbitrary. 
Then the following are equivalent\/\textup{:}
\begin{enumerate}
\item \label{nontrivial}
$\Np_0(E)$ is nontrivial\textup{;}
\item \label{fine}
$E$ contains a nonempty finely open set, 
or in other terms\/ $\fineint E \ne \emptyset$\textup{;}
\item \label{thin}
there is a point $x \in E$ such that $X \setm E$ is thin at $x$\textup{;}
\item \label{cap<cap}
there are a point $x \in E$ and $s>0$ such that 
\[
\cp(B(x,s) \setm E,B(x,2s)) < \cp(B(x,s),B(x,2s)).
\]
\end{enumerate}
\end{thm}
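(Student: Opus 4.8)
The plan is to prove the chain of equivalences \ref{nontrivial} $\Leftrightarrow$ \ref{fine} $\Leftrightarrow$ \ref{thin} $\Leftrightarrow$ \ref{cap<cap}, exploiting the standing assumptions (completeness, doubling $\mu$, $(1,p)$-Poincar\'e inequality) so that the fine potential theory of Kinnunen--Latvala, J.~Bj\"orn and Korte is available. The equivalence \ref{fine} $\Leftrightarrow$ \ref{thin} is essentially the definition of finely open: a set $U$ is finely open precisely when $X\setm U$ is thin at each point of $U$, so $\fineint E\ne\emptyset$ means exactly that there is some $x\in E$ at which $X\setm E$ is thin. This step is immediate once the definitions are recalled. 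The equivalence \ref{thin} $\Leftrightarrow$ \ref{cap<cap} is a reformulation of the Wiener-type criterion for thinness in terms of the variational capacity: $X\setm E$ is thin at $x$ if and only if the Wiener integral $\int_0^1 (\cp(B(x,s)\setm E,B(x,2s))/\cp(B(x,s),B(x,2s)))^{1/(p-1)}\,ds/s$ converges, and in particular this forces the integrand to be strictly less than $1$ for small $s$, while conversely a strict inequality at one scale does not by itself give convergence --- so here I expect the cleaner route is to use the known equivalent characterization of thinness ``$X\setm E$ is thin at $x$ iff $x$ is not in the fine closure of $X\setm E$ iff there is a scale $s$ with the strict capacity inequality'', which in this metric setting is available from the cited references; I would cite the appropriate lemma rather than reprove it.

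\textbf{The substantive implications are} \ref{fine} $\Rightarrow$ \ref{nontrivial} and \ref{nontrivial} $\Rightarrow$ \ref{fine}. For \ref{fine} $\Rightarrow$ \ref{nontrivial}: suppose $U:=\fineint E$ is nonempty; pick $x\in U$. Since $U$ is finely open and finely open sets form a base for the fine topology, and since (under the standing assumptions) nonempty finely open sets have positive capacity, I would produce a nonzero function in $\Np_0(E)$. Concretely, one can take a fine potential or a truncated fundamental-type function supported essentially in $U$: using that $X\setm E$ is thin at $x$, standard constructions (as in Kilpel\"ainen--Mal\'y or the metric-space analogues) give a function $u\in\Np(X)$ with $u(x)>0$ and $u=0$ q.e.\ outside a finely open neighbourhood of $x$ contained in $E$; then $u\in\Np_0(E)$ and $u$ is not q.e.\ zero since $\Cp(\{u\ne0\})>0$. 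I would invoke Lemma~\ref{lem-Np0-0-qe} or its fine-topology companion (the excerpt points forward to ``Proposition~\ref{prop-char-Np0-qe-bdry} where this is combined with fine topology on $\R^n$'') to certify membership in $\Np_0(E)$.

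\textbf{For \ref{nontrivial} $\Rightarrow$ \ref{fine}} --- which I expect to be the main obstacle --- suppose $u\in\Np_0(E)$ with $\Cp(\{u\ne0\})>0$. Extend $u$ by zero to $X$. Under the standing assumptions Newtonian functions are quasicontinuous and, crucially, finely continuous q.e.\ (J.~Bj\"orn~\cite{JB-pfine}, Korte~\cite{korte08}). So there is a set $Z$ of capacity zero such that $u$ is finely continuous at every point of $X\setm Z$. Since $\Cp(\{u\ne0\})>0 = \Cp(Z)$, there is a point $x\in\{u\ne0\}\setm Z$; say $u(x)=c\ne0$. By fine continuity at $x$, the set $V:=\{y: |u(y)-c|<|c|\}$ (equivalently $\{u\ne0\}$ shrunk a bit) is a fine neighbourhood of $x$, hence contains a finely open set $W\ni x$. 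But $u=0$ on $X\setm E$ (everywhere, after the zero extension), and $W\subset\{u\ne0\}\subset E$, except possibly on a capacity-zero piece; one then uses that sets of capacity zero are finely closed with empty fine interior, so they can be removed from $W$ while keeping a nonempty finely open subset of $E$. This yields $\fineint E\ne\emptyset$. The delicate points here are (i) handling the capacity-zero exceptional set correctly (fine topology sees capacity, and $\Cp$-null sets are ``finely negligible''), and (ii) making sure the zero extension of $u$ really does vanish \emph{everywhere} outside $E$ so that $W$ genuinely sits inside $E$ rather than merely inside $E$ up to a null set --- this is exactly why the paper insists Newtonian functions are defined everywhere and why the convention ``extended by zero q.e.'' was set up in Section~\ref{sect-prelim}. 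I would close the argument by noting that all four conditions are now linked, citing the fine-continuity and Wiener-criterion results from \cite{JB-pfine}, \cite{korte08}, \cite{KiLa03} for the steps that are genuinely about fine potential theory rather than about Newtonian spaces per se.
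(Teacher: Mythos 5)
Your handling of \ref{nontrivial} $\Rightarrow$ \ref{fine} is essentially the paper's argument (it is the content of Theorem~\ref{thm-fineint}: pick a point of $\{u\ne0\}$ where the zero-extension of $u$ is finely continuous, and the resulting fine neighbourhood on which $u\ne0$ sits inside $E$ because $u$ vanishes \emph{everywhere} off $E$ --- no capacity-zero piece even needs to be excised). The directions \ref{fine} $\Rightarrow$ \ref{thin} $\Rightarrow$ \ref{cap<cap} are also fine. But there is a genuine gap in how you close the loop. You propose to get back from \ref{cap<cap} via a ``known equivalent characterization of thinness'' asserting that, for a fixed $x$, thinness of $X\setm E$ at $x$ is equivalent to a strict capacity inequality at a single scale $s$. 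No such pointwise equivalence holds, and none is available in the cited references: condition \ref{cap<cap} at a given $x$ is strictly weaker than thinness at that $x$ (in $\R^2$ with $p=2$, take $X\setm E$ to be a punctured line through $x$; the single-scale inequality holds at $x$ while $X\setm E$ is not thin there). The theorem is only an equivalence of the four \emph{existential} statements, and the implication \ref{cap<cap} $\Rightarrow$ \ref{nontrivial} has to be proved directly. The paper does this by construction: from $\cp(B_s\setm E,B_{2s})<\cp(B_s,B_{2s})$ and the Choquet property $\cp(B_s,B_{2s})=\sup_{t<s}\cp(B_t,B_{2s})$ (Theorem~\ref{thm-cp}\,\ref{cp-Choq-E-sum}) one finds $t<s$ with $\cp(B_s\setm E,B_{2s})<\cp(B_t,B_{2s})$, takes a near-minimizer $h$ for the left-hand capacity, shows $F=\{x\in B_t:h<1\}$ has positive capacity (otherwise $h+\chi_F$ would beat $\cp(B_t,B_{2s})$), and checks that $k=(f-h)_\limplus$, with $f$ a Lipschitz cutoff between $B_t$ and $B_s$, is a nontrivial element of $\Np_0(E)$. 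Some such construction is indispensable, and it is entirely absent from your proposal.

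Two secondary points. First, your claim that \ref{thin} $\Rightarrow$ \ref{fine} is ``immediate from the definitions'' is not right for a fixed point: that $x\in E$ with $X\setm E$ thin at $x$ actually lies in $\fineint E$ is Proposition~\ref{prop-fineint-locally}, whose proof needs fine closures and a capacity comparison; the paper sidesteps this by arranging the implications in a cycle \ref{nontrivial} $\Rightarrow$ \ref{fine} $\Rightarrow$ \ref{thin} $\Rightarrow$ \ref{cap<cap} $\Rightarrow$ \ref{nontrivial}, which you should do as well. Second, your direct proof of \ref{fine} $\Rightarrow$ \ref{nontrivial} via ``fine potentials / standard constructions'' leans on machinery (fine potential theory on metric spaces) that the paper explicitly says is not yet developed in this generality; the capacity construction above is what replaces it.
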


Note that if $\mu(E)=0$ then all the statements are false,
since in this case $f \in \Np_0(E)$ implies that $f=0$ a.e.\ in $X$,
and hence
$f=0$ q.e.\ in $X$, i.e.\ $\Np_0(E)$ is trivial.

The following result gives a more precise description of $\Np_0(E)$
and will be used to establish Theorem~\ref{thm-nontrivial2}.

\begin{thm} \label{thm-fineint}
Let $E \subset X$ be arbitrary. 
Then 
\[
    \Np_0(E)= \Np_0(\fineint E).
\]
\end{thm}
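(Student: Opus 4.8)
The claim is the identity $\Np_0(E)=\Np_0(\fineint E)$ for an arbitrary set $E\subset X$. One inclusion is essentially trivial: since $\fineint E\subset E$, every function $f$ vanishing q.e.\ outside $\fineint E$ also vanishes q.e.\ outside $E$, so $\Np_0(\fineint E)\subset\Np_0(E)$. (One should check that the zero extension used in the definition of $\Np_0$ behaves well, but this is immediate from the definition.) The substance is the reverse inclusion $\Np_0(E)\subset\Np_0(\fineint E)$: given $u\in\Np_0(E)$, extended by $0$ to all of $X$, we must show $u$ vanishes q.e.\ outside $\fineint E$, equivalently q.e.\ on $E\setm\fineint E$.

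The key idea I would use is the \emph{quasicontinuity / fine continuity} of Newtonian functions in this setting. Under the standing assumptions of this section ($X$ complete, doubling, supporting a $(1,p)$-Poincar\'e inequality, $p>1$), every function in $\Np(X)$ is quasicontinuous (by \cite{BBS5}) and, more to the point, finely continuous outside a set of capacity zero (by J.~Bj\"orn~\cite{JB-pfine} and Korte~\cite{korte08}). So there is a set $F$ with $\Cp(F)=0$ such that $u|_{X\setm F}$ is finely continuous at every point of $X\setm F$. Now fix a point $x\in E\setm\fineint E$ with $x\notin F$; I want to show $u(x)=0$. Since $x\notin\fineint E$, the point $x$ is \emph{not} a fine interior point of $E$, which means $X\setm E$ is \emph{not} thin at $x$; equivalently, $x$ belongs to the fine closure of $X\setm E$. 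Because $u=0$ on $X\setm E$ (hence $u=0$ on $(X\setm E)\setm F$), and $u$ restricted to $X\setm F$ is finely continuous at $x$, and $x$ lies in the fine closure of $(X\setm E)\setm F$ — here one needs that removing the capacity-zero set $F$ does not destroy the non-thinness, which holds since sets of capacity zero are finely closed with empty fine interior, i.e.\ thin everywhere — we conclude $u(x)=\flim_{y\to x} u(y)=0$ along points of $(X\setm E)\setm F$. Carrying this out for all such $x$ shows $u=0$ q.e.\ on $E\setm\fineint E$, as desired.

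More carefully, the step that needs attention is the interplay between the exceptional set $F$ (of capacity zero) and fineness. The facts I would invoke are: (i) a set of capacity zero is thin at each of its points and at every point of $X$, hence is finely closed and finely isolated points aside it "does not count" finely; (ii) consequently, if $X\setm E$ is not thin at $x$, then neither is $(X\setm E)\setm F$, since thinness of a union of a set and a polar set is equivalent to thinness of the set alone; (iii) fine continuity at $x$ of $u|_{X\setm F}$ together with $x$ lying in the fine closure of $\{y\in X\setm F: u(y)=0\}$ forces $u(x)=0$. Points (i)--(ii) are standard facts of fine potential theory in the metric setting (available from the cited works of J.~Bj\"orn and Korte, or can be quoted from \cite{BBbook}); point (iii) is just the definition of fine continuity applied along a finely convergent net. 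I would also remark that the argument shows a bit more, namely that any $u\in\Np_0(E)$ equals $0$ q.e.\ on the fine boundary of $E$ that lies in $E$, which is consistent with Proposition~\ref{prop-char-Np0-qe-bdry} referenced earlier.

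**The main obstacle** I anticipate is bookkeeping around which precise quasicontinuity/fine-continuity statement is available under exactly the standing hypotheses and justifying the manipulation of the capacity-zero exceptional set under fine topology — in particular, verifying that "$X\setm E$ not thin at $x$" survives removal of a polar set. Everything else is formal once fine continuity of Newtonian functions q.e.\ is in hand; the triviality of the first inclusion and the extension conventions are routine. If one prefers to avoid fine continuity, an alternative route would go through the characterization already stated as item~\ref{thin}/\ref{fine} of Theorem~\ref{thm-nontrivial2} together with Lemma~\ref{lem-Np0-0-qe}, but the fine-continuity approach is cleaner and is the natural one given the cited literature.
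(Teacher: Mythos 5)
Your argument is correct in substance and rests on the same key ingredient as the paper's proof, namely the q.e.\ fine continuity of Newtonian functions (Theorem~\ref{thm-qcont-fine}); but the way you get from fine continuity to the conclusion is genuinely different. The paper works with the set $G=\{x\in E: u(x)\ne 0\}$ and uses fine continuity \emph{at points of $G$}: at q.e.\ such point $x$ one gets a finely open neighbourhood $V_x$ on which $|u-u(x)|<|u(x)|$, hence $V_x\subset G\subset E$; the union $V=\bigcup_x V_x$ is finely open, contained in $E$, and contains $G$ up to capacity zero, so $u\in\Np_0(V)\subset\Np_0(\fineint E)$. This needs nothing beyond the definition of $\fineint E$ as the largest finely open subset and the fact that finely open sets form a topology. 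You instead argue at points of $E\setm\fineint E$, invoking the pointwise characterization ``$x\notin\fineint E$ and $x\in E$ implies $X\setm E$ is not thin at $x$''. That implication is \emph{not} immediate from the definition of $\fineint E$: it is exactly the nontrivial direction of Proposition~\ref{prop-fineint-locally}, whose proof requires the fine-closure capacity identity $\cp(\itoverline{A}^p,B)=\cp(A,B)$ from the cited literature (equivalently, that the set of points of $E$ where $X\setm E$ is thin is itself finely open). So your route is valid but carries an extra nontrivial dependency that must be cited or proved; the paper's construction of $V$ sidesteps it entirely. Within your framework the remaining steps are fine: that non-thinness of $X\setm E$ at $x$ survives removal of the polar exceptional set $F$ follows from monotonicity and subadditivity of $\cp$ together with $\cp(F\cap B(x,r),B(x,2r))=0$ (your stated reason, ``polar sets are finely closed with empty fine interior'', is a consequence rather than the justification), and the fine-continuity step (iii) is correct since any finely open $V\ni x$ must meet any set that is non-thin at $x$. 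Note also that Theorem~\ref{thm-qcont-fine} gives fine continuity of $u$ itself at q.e.\ point, so the detour through the restriction $u|_{X\setm F}$ and the attendant bookkeeping is unnecessary.
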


Here we follow our convention that functions in $\Np_0$ can be extended
by zero q.e.
Observe that we do not require $E$ to be measurable
in Theorems~\ref{thm-nontrivial2} and~\ref{thm-fineint}.
See also Section~\ref{sect-Rn} for some further consequences
of Theorem~\ref{thm-fineint} in the special case $X=\R^n$.

\begin{cor} \label{cor-fineint}
Let  $E,E_0 \subset X$ be measurable sets such that
\[ 
     \fineint E \subset E_0 \subset E.
\]
If $f\in \Dp(E)$ and $\K_{\psi_1,\psi_2,f}(E) \ne \emptyset$, then 
\[
   \K_{\psi_1,\psi_2,f}(E) = \K_{\psi_1,\psi_2,f}(E_0).
\]
\end{cor}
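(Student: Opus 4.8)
The plan is to reduce the corollary to Theorem~\ref{thm-fineint}. First I would record that, since $\fineint E \subset E_0 \subset E$, the monotonicity of $A\mapsto\Np_0(A)$ under inclusions (recall that functions in $\Np_0$ are extended by zero q.e., so vanishing q.e.\ outside a smaller set implies vanishing q.e.\ outside a larger one) gives
\[
  \Np_0(\fineint E)\subset\Np_0(E_0)\subset\Np_0(E),
\]
while Theorem~\ref{thm-fineint} identifies the two outer spaces, so $\Np_0(E_0)=\Np_0(E)$. (Equivalently, one checks $\fineint E_0=\fineint E$ and applies Theorem~\ref{thm-fineint} to $E_0$.) I would also note that $E_0$ is again bounded and measurable, that $\Cp(X\setm E_0)\ge\Cp(X\setm E)$, and that $f\in\Dp(E_0)$ since an upper gradient of $f$ on $E$ restricts to one on $E_0$; thus the $\K_{\psi_1,\psi_2,f}(E_0)$-problem makes sense.

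Second, I would extract from $\K_{\psi_1,\psi_2,f}(E)\ne\emptyset$ the only fact about the obstacles on $E\setm E_0$ that is needed: picking any $w\in\K_{\psi_1,\psi_2,f}(E)$, we have $w-f\in\Np_0(E)=\Np_0(E_0)$, hence $w=f$ q.e.\ in $X\setm E_0$, and therefore $\psi_1\le f\le\psi_2$ q.e.\ in $E\setm E_0$. This is the step that genuinely uses the nonemptiness hypothesis.

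Third, I would prove the equality by showing that restriction $v\mapsto v|_{E_0}$ is a bijection of $\K_{\psi_1,\psi_2,f}(E)$ onto $\K_{\psi_1,\psi_2,f}(E_0)$, up to sets of capacity zero. If $v\in\K_{\psi_1,\psi_2,f}(E)$, then $v|_{E_0}\in\Dp(E_0)$ (restriction of an upper gradient), $v|_{E_0}-f|_{E_0}=(v-f)|_{E_0}\in\Np_0(E_0)$ because $v-f\in\Np_0(E)=\Np_0(E_0)$, and the obstacle inequalities hold q.e.\ in $E_0\subset E$; hence $v|_{E_0}\in\K_{\psi_1,\psi_2,f}(E_0)$. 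Conversely, given $v_0\in\K_{\psi_1,\psi_2,f}(E_0)$, since $v_0-f|_{E_0}\in\Np_0(E_0)$ there is $h\in\Np(X)$ with $h=0$ on $X\setm E_0$ and $h=v_0-f$ on $E_0$; putting $v:=f+h$ on $E$ (finite q.e.) one gets $v\in\Dp(E)$, $v-f=h|_E\in\Np_0(E)$ since $h$ vanishes q.e.\ on $X\setm E_0\supset X\setm E$, $v=v_0$ q.e.\ in $E_0$, and $v=f$ q.e.\ in $E\setm E_0$, so the obstacle inequalities hold q.e.\ in all of $E$ by the second step; thus $v\in\K_{\psi_1,\psi_2,f}(E)$ restricts to $v_0$. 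Finally, injectivity up to q.e.\ is the same computation: if $v,v'\in\K_{\psi_1,\psi_2,f}(E)$ agree q.e.\ in $E_0$, then $v-v'\in\Np_0(E)=\Np_0(E_0)$ forces $v=v'$ q.e.\ in $X\setm E_0$, hence q.e.\ in $E$.

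I do not anticipate any real obstacle once Theorem~\ref{thm-fineint} is available. The only delicate point is bookkeeping with the standing conventions — functions in $\Np_0$ are determined only up to capacity and extended by zero q.e. — so one must read $\Np_0(E)=\Np_0(E_0)$ as an equality of these zero-extended classes inside $\Np(X)$; that is exactly what makes membership in the competitor set insensitive to $E\setm E_0$, apart from the obstacle inequalities there, which the hypothesis $\K_{\psi_1,\psi_2,f}(E)\ne\emptyset$ handles.
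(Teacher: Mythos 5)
Your proposal is correct and follows essentially the same route as the paper: the corollary is deduced from Theorem~\ref{thm-fineint} via the sandwich $\Np_0(\fineint E)\subset\Np_0(E_0)\subset\Np_0(E)$, which forces $\Np_0(E_0)=\Np_0(E)$, so that competitors for the two problems differ only by the value $f$ q.e.\ on $E\setm E_0$, where the nonemptiness hypothesis supplies $\psi_1\le f\le\psi_2$ q.e. This is exactly the mechanism the paper uses (it is spelled out in the remark following the corollary), so no further comment is needed.
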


Of course, the main interest is when $E_0=\fineint E$. 
But here, contrary to Theorem~\ref{thm-fineint}, we also need
measurability and we do not know  in general 
if $\fineint E$ is always measurable, 
cf.\ Section~\ref{sect-Rn}.

\begin{remark}
Note that it is possible 
to have $\K_{\psi_1,\psi_2,f}(E)=\emptyset\ne\K_{\psi_1,\psi_2,f}(E_0)$. 
Indeed, this happens  exactly if
$\K_{\psi_1,\psi_2,f}(E_0) \ne \emptyset$
and 
\begin{equation} \label{eq-contr}
   \Cp(\{x \in E \setm E_0 : \psi_1(x) > f(x) \text{ or } 
       \psi_2(x) < f(x) \} >0.
\end{equation}
To see this, note that since    $\Np_0(E)= \Np_0(E_0)$
it follows that any function in $\Np_0(E)$ is $0$ q.e.\ in 
$E \setm E_0$. 
Hence, if $u \in \K_{\psi_1,\psi_2,f}(E)$, then $u=f$ q.e.\ 
in $E \setm E_0$ which is impossible
if $\psi_1 \le u \le \psi_2$ q.e.\ at the same time as
\eqref{eq-contr} holds.
Conversely, if $u \in\K_{\psi_1,\psi_2,f}(E_0)$
and \eqref{eq-contr} fails, then we extend $u$ as $f$ in
$E \setm E_0$, so that $u \in \K_{\psi_1,\psi_2,f}(E)$
showing that $\K_{\psi_1,\psi_2,f}(E)$ is nonempty.
\end{remark}

To make the above results precise we need  a few more
definitions. 
See Appendix~\ref{app-capp} for the definition and some properties
of the variational capacity $\cp$.

\begin{deff}
A set $A$ is \emph{thin} at $x$ if
\begin{equation}             \label{def-thin}
       \int_0^1 \biggl(\frac{\cp(A \cap B(x,r),B(x,2r))}
        {\cp(B(x,r),B(x,2r))}\biggr)^{1/(p-1)}
                  \frac{dr}{r} <\infty.
\end{equation}
A set $A$ is \emph{finely open} if $X \setm A$ is thin at all $x \in A$.
Using the monotonicity and subadditivity of the capacity, it is easy to verify
that finely open sets form a topology on $X$.
The \emph{fine interior} $\fineint E$ of $E$ is the largest 
finely open set contained in $E$.
\end{deff}

Since our variational capacity is the same as the
one in 
Heinonen--Kilpel\"ainen--Martio~\cite{HeKiMa}
(see Bj\"orn--Bj\"orn~\cite{BBvarcap} for a proof of this fact),
we see that this definition coincides with the definition
in \cite{HeKiMa}, p.\ 221, when $X$ is weighted $\R^n$
with a \p-admissible weight.
If $X=\R^n$ (unweighted) then it 
is also equivalent to Definition~2.47
in Mal\'y--Ziemer~\cite{MaZi}.

In the definition of thinness we make the convention that the integrand
is 1 whenever $\cp(B(x,r),B(x,2r))=0$.
This happens e.g.\ if $X=B(x,2r)$ is bounded,
but never e.g.\ if $r < \frac{1}{2}\diam X$. 
Note that thinness is a local property, i.e.\ if $\de >0$,
then $E$ is thin at $x$
if and only if $E\cap B(x,\de)$ is thin at
$x$.

To prove Theorem~\ref{thm-fineint}, we shall use the
following result which was obtained by J.~Bj\"orn~\cite{JB-pfine}, Theorem~4.6,
and independently by Korte~\cite{korte08}, Corollary~4.4
(the result can also be found in Bj\"orn--Bj\"orn~\cite{BBbook}, Theorem~11.40).
A function $u$, defined on a finely open set $U$, is
\emph{finely continuous} at $x\in U$ if for every
$\eps>0$ there exists a finely open set $V\ni x$ such that
$|u(y)-u(x)|<\eps$ for all $y\in V$
(in particular $u(x) \in \R$).

\begin{thm} \label{thm-qcont-fine}
Every $u\in\Np(X)$ is finely continuous at q.e.\  $x \in X$.
\end{thm}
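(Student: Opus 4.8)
The plan is to strip the definition of the fine topology down to a capacitary estimate for level sets, and then feed in the standard capacitary regularity of Newtonian functions. \emph{Step 1 (reduction).} A Newtonian function is finite q.e., so we restrict to points $x$ with $u(x)\in\R$. For such $x$, $u$ is finely continuous at $x$ if and only if $\{|u-u(x)|\ge\eps\}$ is thin at $x$ for every $\eps>0$, because $\fineint\{|u-u(x)|<\eps\}$ is then the required finely open neighbourhood of $x$. Since $\{|u-u(x)|\ge\eps\}\subset\{u\ge u(x)+\eps\}\cup\{u\le u(x)-\eps\}$, and finite unions of thin sets are thin, a failure of fine continuity at $x$ forces $\{u\ge u(x)+\eps\}$ or $\{u\le u(x)-\eps\}$ to be non-thin at $x$ for some rational $\eps>0$. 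Inserting further rationals $s<t$ strictly between $u(x)$ and $u(x)\pm\eps$ and using monotonicity of $\cp$, the set of non-fine-continuity points is seen to lie in a countable union of sets of the type $\{x:u(x)\le s,\ \{u>t\}\text{ is not thin at }x\}$ and, by the same argument applied to $-u$, $\{x:u(x)\ge t,\ \{u<s\}\text{ is not thin at }x\}$, over rationals $s<t$. Replacing $u$ by $w:=(u-s)_\limplus\in\Np(X)$ for the first type (so $g_w=g_u\chi_{\{u>s\}}$ a.e., $\{u>t\}$ becomes $\{w>t-s\}$, and $\{u\le s\}$ becomes $\{w=0\}$), and by $w:=(t-u)_\limplus$ for the second, it therefore suffices to prove, for every $w\in\Np(X)$ with $w\ge0$ and every $\de>0$,
\[
    \Cp\bigl(\{x:w(x)=0\text{ and }\{w>\de\}\text{ is not thin at }x\}\bigr)=0 .
\]

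\emph{Step 2 (capacity of the level set near a zero).} Doubling of $\mu$ together with the $(1,p)$-Poincar\'e inequality gives the standard lower bound $\cp(B(x,r),B(x,2r))\ge c\,\mu(B(x,r))r^{-p}$ for small $r$; since thinness is local, this reduces the thinness of $\{w>\de\}$ at $x$ to the finiteness of $\int_0^1\bigl(r^p\cp(\{w>\de\}\cap B(x,r),B(x,2r))/\mu(B(x,r))\bigr)^{1/(p-1)}\,dr/r$. With a Lipschitz cutoff $\eta_r$ equal to $1$ on $B(x,r)$, vanishing off $B(x,\tfrac32r)$ and with $g_{\eta_r}\le C/r$, the function $\min\{\de^{-1}w\eta_r,1\}$ is admissible for $\cp(\{w>\de\}\cap B(x,r),B(x,2r))$ — here it is essential that $\de>0$ is \emph{fixed}, so that no constant blows up — whence
\[
    \cp(\{w>\de\}\cap B(x,r),B(x,2r))\le\frac{C}{\de^p}\int_{B(x,\frac32r)}\Bigl(g_w^p+\frac{w^p}{r^p}\Bigr)\,d\mu .
\]
Using the $(p,p)$-Poincar\'e inequality (available under the hypotheses) to bound $\vint_{B(x,\frac32r)}w^p\,d\mu$ by $Cr^p\vint_{B(x,cr)}g_w^p\,d\mu+|w_{B(x,\frac32r)}|^p$, and then doubling, we obtain for small $r$
\[
    \frac{\cp(\{w>\de\}\cap B(x,r),B(x,2r))}{\cp(B(x,r),B(x,2r))}\le\frac{C}{\de^p}\Bigl(r^p\vint_{B(x,cr)}g_w^p\,d\mu+|w_{B(x,\frac32r)}|^p\Bigr).
\]

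\emph{Step 3 (conclusion and main obstacle).} Raising to the power $1/(p-1)$ and integrating $dr/r$ over $(0,1)$, the first term produces a Wolff-type potential of the finite measure $g_w^p\,d\mu$, which is finite at $\Cp$-q.e.\ $x$, while the second produces $\int_0^1|w_{B(x,\frac32r)}|^{p/(p-1)}\,dr/r$, which is finite at $\Cp$-q.e.\ $x$ by the quantitative (dyadically $\ell^{p-1}$-summable) form of the Lebesgue-point theorem for Newtonian functions, bearing in mind that we only consider $x$ with $w(x)=0$. Hence $\{w>\de\}$ is thin at $\Cp$-q.e.\ such $x$, which is exactly what Step~1 requires, and unwinding the reductions proves the theorem. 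The genuine content — and the main obstacle — lies precisely in these two capacitary regularity facts: finiteness $\Cp$-q.e.\ of the Wolff potential of $g_w^p\,d\mu$, and the strong Lebesgue-point property of functions in $\Np(X)$. Both rest on capacitary weak/strong-type (boxing) inequalities of Maz\cprime ya type (cf.\ Corollary~\ref{cor-Choq-int-est}), and it is there that completeness, doubling and the Poincar\'e inequality are indispensable; this is the programme carried out in J.~Bj\"orn~\cite{JB-pfine} and Korte~\cite{korte08}.
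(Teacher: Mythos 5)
You should first note that the paper contains no proof of this statement to compare with: Theorem~\ref{thm-qcont-fine} is imported verbatim from J.~Bj\"orn~\cite{JB-pfine}, Theorem~4.6, and Korte~\cite{korte08}, Corollary~4.4 (see also \cite{BBbook}, Theorem~11.40). So your attempt has to be judged as a sketch of one of those external proofs. Your architecture is indeed the standard one and matches the cited arguments in spirit: reduce fine continuity at q.e.\ $x$ to thinness of the level sets $\{|u-u(x)|\ge\eps\}$ (via Proposition~\ref{prop-fineint-locally}), pass to $w=(u-s)_\limplus$, test $\cp(\{w>\de\}\cap B(x,r),B(x,2r))$ with $\min\{\de^{-1}w\eta_r,1\}$, divide by $\cp(B(x,r),B(x,2r))\ge c\,\mu(B(x,r))r^{-p}$, and control the resulting Wiener-type integral. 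Steps~1 and~2 are essentially sound (a harmless quibble: for $s<0$ and $\mu(X)=\infty$ the truncation $(u-s)_\limplus$ need only lie in the Dirichlet class, which does not affect the capacity estimates).

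The genuine gap is in Step~3, and it is not a small one: the two facts you invoke there are the actual content of the theorem, and you defer both to the very papers from which the result is quoted. The q.e.\ finiteness of the Wolff-type potential of $g_w^p\,d\mu$ is a substantial capacitary lemma in its own right, and the second ingredient cannot be cited as ``the Lebesgue-point theorem'': the standard Lebesgue point theorem for Newtonian functions gives only $w_{B(x,r)}\to0$ at q.e.\ point of $\{w=0\}$, with no rate, whereas you need $\int_0^1|w_{B(x,3r/2)}|^{p/(p-1)}\,dr/r<\infty$, i.e.\ dyadic $\ell^{p/(p-1)}$-summability of the averages (your label ``$\ell^{p-1}$'' is also off). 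Nor does this follow from the Wolff-potential finiteness by the natural telescoping argument: telescoping with the $(1,p)$-Poincar\'e inequality gives $|w_{B(x,2^{-k})}|\le C\sum_{j\ge k}a_j$ with $a_j=2^{-j}\bigl(\vint_{B(x,\la 2^{-j})}g_w^p\,d\mu\bigr)^{1/p}$, and the implication $\sum_j a_j^{p/(p-1)}<\infty\Rightarrow\sum_k\bigl(\sum_{j\ge k}a_j\bigr)^{p/(p-1)}<\infty$ is false for general nonnegative sequences (take $a_j=1/j$); whether the bad set has capacity zero therefore requires its own capacitary weak-type argument, and that argument is precisely the heart of the proofs in \cite{JB-pfine} and \cite{korte08}. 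In short, what you have is a correct reduction of the theorem to two capacitary regularity lemmas, one of which is strictly stronger than the result you name it after; to complete the proof you would have to establish that lemma (or follow the cited route, e.g.\ via quasicontinuity together with a Kellogg/Choquet-type property), rather than treat it as standard.
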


\begin{proof}[Proof of Theorem~\ref{thm-fineint}]
Let $u \in \Np_0(E)$ 
and extend $u$ by $0$ on $X \setm E$, so that $u \in \Np(X)$.

Let $G=\{x \in E: u(x) \ne0\}$.
By Theorem~\ref{thm-qcont-fine}, there exists a set $F$ with $\Cp(F)=0$ such that
$u$ is finely continuous at every $x\in X\setm F$.
Hence, for every $x\in G\setm F$, there exists a finely open neighbourhood 
$V_x$ of $x$ such that $|u-u(x)|<|u(x)|$ in $V_x$.
Note that $u\ne0$ in $V_x$ and hence $V_x\subset G \subset E$.

Letting 
$V=\bigcup_{x\in G\setm F} V_x$,
we obtain a finely open set $V$ such that $G\setm F \subset V \subset E$.
As $X\setm V \subset (X\setm G)\cup F$, we see that $u=0$ q.e.\ in $X \setm V$, 
and hence $u \in \Np_0(V) \subset \Np_0(\fineint E)$.
Since $u\in\Np_0(E)$ was arbitrary, this shows that 
$\Np_0(E)\subset\Np_0(\fineint E)$.

The converse inclusion is obvious.
\end{proof}

\begin{proof}[Proof of Theorem~\ref{thm-nontrivial2}]
$\neg$ \ref{fine} $\imp$ $\neg$ \ref{nontrivial}
By Theorem~\ref{thm-fineint},
\[
   \Np_0(E)=\Np_0(\fineint E)=\Np_0(\emptyset),
\]
and thus $\Np_0(E)$ is trivial.

\ref{fine} $\imp$ \ref{thin}
Let $G \subset E$ be a nonempty finely open set.
Then $X \setm E \subset X\setm G$ is thin at every $x \in G$.

\ref{thin} $\imp$ \ref{cap<cap}
Let for simplicity $B_r=B(x,r)$.
Since 
\[
       \int_0^1 \biggl(\frac{\cp(B_r \setm E,B_{2r})}
               {\cp(B_r,B_{2r})}\biggr)^{1/(p-1)}
                  \frac{dr}{r} <\infty.
\]
We see that 
\[
   \liminf_{r\to 0\limplus} 
      \frac{\cp(B_r \setm E,B_{2r})}{\cp(B_r,B_{2r})} =0
\]
and \ref{cap<cap} follows.
(Actually the limit exists and equals $0$, but we will not need that here.)

\ref{cap<cap} $\imp$ \ref{nontrivial}
Let for simplicity $B_r=B(x,r)$.
Theorem~\ref{thm-cp}\,\ref{cp-Choq-E-sum}
implies that
\[
\cp(B_s,B_{2s}) = \sup_{t<s} \cp(B_t,B_{2s}).
\]
Hence, there exists $t<s$ such that
\[
\cp(B_s\setm E,B_{2s}) < \cp(B_t,B_{2s}).
\]
Thus there exists a function $h \in \Np(X)$ such that
$0 \le h \le 1$, $h=1$ on $B_s \setm E$, $h=0$ on $X \setm B_{2s}$
and $\|g_h\|_{L^p(X)}^p < \cp(B_t,B_{2s})$.
Let $F=\{x \in B_t : h(x)<1\}$.
If $\Cp(F)$ were $0$, then we would have
\[
    \cp(B_t,B_{2s}) \le \|g_{h+\chi_F}\|_{L^p(X)}^p
    =  \|g_h\|_{L^p(X)}^p < \cp(B_t,B_{2s}),
\]
a contradiction.
Thus $\Cp(F)>0$.

Let now $f$ be a Lipschitz function such that $0 \le f \le 1$, 
$f=1$ on $B_t$ and $f=0$ on $X \setm B_s$.
Let further $k=(f-h)_\limplus \in \Np(X)$.
It follows directly that $k=0$ 
on $(B_s \setm E) \cup (X \setm B_s) \supset X \setm E$,
and thus $k \in \Np_0(E)$.
Since $F=\{x \in B_t : k(x)>0\}$
and $\Cp(F)>0$, we see that 
$k \not\sim 0$, i.e.\ $\Np_0(E)$
is nontrivial.
\end{proof}

The following characterization of the fine interior is useful in applications
and examples, as it is easier and more explicit to verify \eqref{def-thin}
for $X\setm E$ than for $X\setm\fineint E$, see Examples~\ref{ex-emental}
and~\ref{ex-emental-p=n}.
Analogues of this result in $\R^n$ can be found in Theorem~2.136 in
Mal\'y--Ziemer~\cite{MaZi} and in Theorem~12.5 in 
Heinonen--Kilpel\"ainen--Martio~\cite{HeKiMa}.
The proof given here is different and does not use any characterization
of finely open sets by superharmonic functions.

\begin{prop}               \label{prop-fineint-locally}
Let $E\subset X$ be arbitrary.
Then $x\in \fineint E$ if and only if $x\in E$ and 
$X\setm E$ is thin at $x$.
\end{prop}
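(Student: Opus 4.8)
The statement is an equivalence whose forward implication is essentially a tautology, while the converse requires a construction. For the implication that $x\in\fineint E$ forces $x\in E$ and $X\setm E$ thin at $x$: since $\fineint E$ is finely open and contains $x$, by definition $X\setm\fineint E$ is thin at $x$; as $X\setm E\subset X\setm\fineint E$, the monotonicity of $\cp$ makes every quotient in \eqref{def-thin} for $X\setm E$ no larger than the corresponding one for $X\setm\fineint E$, so $X\setm E$ is thin at $x$ as well, and of course $x\in\fineint E\subset E$.

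For the converse, assume $x\in E$ and that $A:=X\setm E$ is thin at $x$; we must find a finely open $V$ with $x\in V\subset E$. Since the finely open sets form a topology, it is enough to produce $u\in\Np(X)$ with $u=0$ on $X\setm E$, $u$ finely continuous at $x$, and $u(x)\ne0$: then there is a finely open $V_0\ni x$ on which $|u-u(x)|<|u(x)|$, so $u\ne0$ on $V_0$ and hence $V_0\subset E$. Note that Theorem~\ref{thm-qcont-fine} does not suffice here, since it only gives fine continuity at quasi-every point, whereas we need it at the prescribed $x$; the fine continuity will instead be read off from the quantitative thinness of $A$. We may localise freely: replacing $E$ by $E\cap B(x,\rho)$ changes neither the membership of $x$ in the fine interior (as $B(x,\rho)$ is open, hence finely open, so that $\fineint(E\cap B(x,\rho))=(\fineint E)\cap B(x,\rho)$) nor the thinness of the complement at $x$ (thinness being local); thus we may assume $E$ is bounded with $\Cp(X\setm E)>0$.

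The plan is the classical dyadic construction of a Wiener-type capacitary potential of $A$ at $x$. Put $B_j=B(x,2^{-j})$. Using the doubling property to compare $\cp(B_j,B_{j-1})$ at consecutive scales, the thinness of $A$ at $x$ is equivalent to $\sum_{j\ge1}\kappa_j<\infty$, where $\kappa_j=(\cp(A\cap B_j,B_{j-1})/\cp(B_j,B_{j-1}))^{1/(p-1)}$; discarding finitely many scales we may assume the relevant tails of this series are as small as we wish. (In degenerate situations, e.g.\ $\cp(B_j,B_{j-1})=0$ for small $j$, or singletons of positive capacity, thinness already forces $A$ to omit a metric ball about $x$, so $x\in\operatorname{int}E\subset\fineint E$ and there is nothing to prove.) For each $j$ choose an almost optimal admissible function $w_j$ for $\cp(A\cap B_j,B_{j-1})$, so $0\le w_j\le1$, $w_j=1$ q.e.\ on $A\cap B_j$, $w_j=0$ outside $B_{j-1}$, and $\int_X g_{w_j}^p\,\dmu$ is comparable to $\kappa_j^{p-1}\cp(B_j,B_{j-1})$. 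Combining the $w_j$ over all scales produces $w$ with $0\le w\le1$, $w=1$ q.e.\ on $A$ near $x$, and $w=0$ away from $x$, the finiteness of $\sum_j\kappa_j$ being what guarantees both $\int_X g_w^p\,\dmu<\infty$ and, via the convergence and quasicontinuity results of Section~\ref{sect-prelim} (e.g.\ Proposition~\ref{prop-Fuglede-consequence}), that $w\in\Np(X)$. Finally set $u:=(\phi-w)_\limplus$ for a Lipschitz cutoff $\phi$ supported in a ball $B$ about $x$ small enough that $w=1$ q.e.\ on $A\cap B$ and with $\phi\equiv1$ near $x$; then $u=0$ q.e.\ on $X\setm E$ (on $A\cap B$ because there $\phi\le1=w$, and elsewhere because $\phi=0$), and after redefining $u$ on the exceptional set of capacity zero, which alters neither $g_u$ nor fine continuity at $x$ since $x\notin A$, we get $u=0$ everywhere on $X\setm E$, while $u=1-w$ near $x$, so $u(x)=1-w(x)$.

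It remains to show that $w$ is finely continuous at $x$ with $w(x)<1$; this is the heart of the matter and the only place the quantitative hypothesis is really used. One must bound, for $c\in(0,1)$, the thinness integral at $x$ of the superlevel set $\{w\ge c\}$: near $x$ this set lies in $A$ together with the superlevel sets of the $w_j$, and a dyadic estimate combining the Wiener-type pointwise bound $w_j(y)\lesssim\kappa_j$ for $y$ capacitarily far from $A\cap B_j$ (in particular at the centre $x$) with the energy bounds on the $w_j$ controls this integral by a convergent tail of $\sum_j\kappa_j$; hence $\{w\ge c\}$ is thin at $x$ for every $c<1$, which is exactly fine continuity of $w$ at $x$ from above, and forces $w(x)<1$, i.e.\ $u(x)>0$. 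Applying the reduction of the second paragraph then completes the proof. The main obstacle is precisely this last estimate, extracting fine continuity of the potential $w$ at the single prescribed point $x$ from the thinness of $A$ there; everything else (the localisation, the equivalence of the integral and series forms of thinness via doubling, and placing the infinite combination $w$ in $\Np(X)$) is routine given the tools already available.
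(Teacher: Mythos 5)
Your forward implication matches the paper's and is fine. For the converse, however, you take a genuinely different route from the paper, and your version has a real gap at its central step. The paper avoids potentials altogether: it takes $F_r$ to be the fine closure of $B_r\setm E$, observes that $B_r\setm F_r$ is finely open and contained in $E$, and then shows $F_r$ is thin at $x$ by the inclusion $B_\rho\cap F_r\subset F_\rho$ together with the identity $\cp(F_\rho,B_{2\rho})=\cp(B_\rho\setm E,B_{2\rho})$ (capacity is preserved under fine closure, Corollary~4.5 in J.~Bj\"orn~\cite{JB-pfine}); the paper explicitly advertises that this bypasses any characterization of thinness via superharmonic functions or potentials. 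Your construction of a Wiener-type capacitary potential $w$ of $X\setm E$ near $x$ is the classical alternative (the one behind Theorem~2.136 in Mal\'y--Ziemer~\cite{MaZi} and Theorem~12.5 in \cite{HeKiMa}), and it can be made to work in this setting, but only with machinery you do not supply.

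Concretely, the gap is the assertion that $w$ is finely continuous at $x$ with $w(x)<1$. An \emph{almost optimal admissible} function $w_j$ for $\cp(A\cap B_j,B_{j-1})$ carries no pointwise information at $x$ whatsoever; to get the bound $w_j(x)\lesssim\kappa_j$ you must take $w_j$ to be the actual capacitary potential (the solution of the corresponding obstacle problem) and invoke pointwise estimates for such potentials in terms of Wiener-type integrals, or the weak Harnack inequality for superminimizers --- i.e.\ exactly the potential-theoretic apparatus the paper's proof is designed to avoid. Moreover, thinness of the superlevel sets $\{w\ge c\}$ at $x$ does \emph{not} by itself force $w(x)<1$: a set can be thin at a point it contains (e.g.\ a singleton of capacity zero), so you still need the genuine pointwise bound $w(x)\le C\sum_{j\ge j_0}\kappa_j^{\,p-1}\cdot(\dots)$ at the prescribed point, not merely thinness of level sets. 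Since you yourself flag this estimate as ``the heart of the matter'' and ``the main obstacle'' without proving it, the proposal reduces the proposition to an unproved (and nontrivial) statement. Everything else in your outline --- the localisation, the equivalence of the integral and dyadic-sum forms of \eqref{def-thin}, and placing the combined function in $\Np(X)$ via Proposition~\ref{prop-Fuglede-consequence} --- is sound, but the proof is incomplete as written.
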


\begin{proof}
Let $E_0=\fineint E \subset E$.
If $x\in E_0$, then by definition $X\setm E_0$ (and hence also $X\setm E$)
is thin at $x$.

Conversely, assume that $X\setm E$ is thin at $x \in E$, i.e.
\[
       \int_0^1 \biggl(\frac{\cp(B_r\setm E,B_{2r})}
        {\cp(B_r,B_{2r})}\biggr)^{1/(p-1)}
                  \frac{dr}{r} <\infty.
\]
where we abbreviate $B_r=B(x,r)$.
For $0<r<1$, let $F_r$ be the fine closure of $B_r\setm E$, i.e.\ 
the smallest finely closed set containing $B_r\setm E$.
Then $B_r\setm F_r$ is finely open and contained in $E$. 
To conclude the proof, it suffices to show that $F_r$ is thin
at $x$, as then $(B_r\setm F_r)\cup\{x\}$ is also finely open and
contained in $E$, which implies that 
\[
(B_r\setm F_r)\cup\{x\}\subset E_0,
\]
and in particular $x\in E_0$.

We shall show that $B_r\cap F_r$ is thin at $x$.
Since $X\setm E$ is thin at $x$, it suffices to show that
\[
\cp(B_\rho\cap F_r,B_{2\rho})
\le \cp(B_\rho\setm E,B_{2\rho})
  \quad \text{for } 0 < \rho \le r.
\]
First, we note that $B_\rho\cap F_r\subset F_\rho$.
Indeed, $F_\rho\cup (X\setm B_\rho)$ is finely closed and contains 
$X\setm E$ (and hence also $F_r$).
It follows that
\[
B_\rho\cap F_r \subset B_\rho\cap (F_\rho\cup(X\setm B_\rho))
\subset F_\rho.
\]
This and Corollary~4.5 in J.~Bj\"orn~\cite{JB-pfine} 
(or Corollary~11.39 in Bj\"orn--Bj\"orn~\cite{BBbook})
now yield that
\[
\cp(B_\rho\cap F_r,B_{2\rho})
\le \cp(F_\rho,B_{2\rho})
= \cp(B_\rho\setm E,B_{2\rho}).
\]
From this and the thinness of $X\setm E$ at $x$, we  conclude that $F_r$ 
is thin at $x$, which finishes the proof.
\end{proof}

The following direct consequence of Proposition~\ref{prop-fineint-locally} 
characterizes fine closures and fine boundaries, cf.\ 
Definition~2.134 in Mal\'y--Ziemer~\cite{MaZi}.

\begin{cor}
Let $E\subset X$ be arbitrary. 
Then the fine closure of $E$ is the set
\[
E\cup \{x\in X\setm E: E \text{ is not thin at } x\}
\]
and the fine boundary of $E$ is 
\begin{align*}
&\{x\in E: X\setm E \text{ is not thin at } x\} 
\cup \{x\in X\setm E: E \text{ is not thin at } x\}.
\end{align*}
In particular, the fine boundary of $E$ is a subset of $\bdry E$.
\end{cor}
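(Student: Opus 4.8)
The plan is to obtain both formulas by pure topological bookkeeping from Proposition~\ref{prop-fineint-locally}, using only the already-recorded fact that the finely open sets form a topology on $X$. First I would recall the identities valid in any topological space: the fine closure of $E$ (the smallest finely closed set containing $E$) equals $X\setm\fineint(X\setm E)$, and the fine boundary of $E$ equals $(X\setm\fineint(X\setm E))\setm\fineint E$. With these in hand, the corollary becomes a complementation exercise.

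Next I would apply Proposition~\ref{prop-fineint-locally} with $X\setm E$ in place of $E$: it gives that $x\in\fineint(X\setm E)$ if and only if $x\in X\setm E$ and $X\setm(X\setm E)=E$ is thin at $x$. Taking complements, $x$ lies in the fine closure of $E$ exactly when $x\in E$ or $E$ is not thin at $x$. Since every point of $E$ trivially satisfies the first alternative, this set is $E\cup\{x\in X\setm E: E\text{ is not thin at }x\}$, which is the first claimed formula.

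For the fine boundary I would invoke Proposition~\ref{prop-fineint-locally} again, now in the form $\fineint E=\{x\in E: X\setm E\text{ is thin at }x\}$, and subtract it from the fine closure just computed. The part $\{x\in X\setm E: E\text{ is not thin at }x\}$ is disjoint from $E$ and hence is untouched by the subtraction, while $E\setm\{x\in E: X\setm E\text{ is thin at }x\}=\{x\in E: X\setm E\text{ is not thin at }x\}$. Combining the two pieces yields the stated description of the fine boundary.

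Finally, for the inclusion in $\bdry E$ I would argue pointwise from that description. If $x$ belongs to the fine boundary with $x\in E$, then $X\setm E$ is not thin at $x$; but at an interior point of $E$ some ball $B(x,\de)\subset E$, so $(X\setm E)\cap B(x,r)=\emptyset$ and $\cp((X\setm E)\cap B(x,r),B(x,2r))=0$ for $r<\de$, which makes $X\setm E$ thin at $x$ -- so $x$ cannot be interior to $E$, i.e.\ $x\in\bdry E$. The symmetric argument (with $E$ and $X\setm E$ interchanged) handles the case $x\in X\setm E$. This last step is the only place where the metric structure, rather than the abstract fine topology, enters, and it is routine; I do not anticipate any genuine obstacle here, the content of the corollary being entirely carried by Proposition~\ref{prop-fineint-locally}. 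The only thing to be careful about is keeping the complementations straight when passing between $E$ and $X\setm E$.
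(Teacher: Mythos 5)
Your proposal is correct and is exactly the derivation the paper intends: the corollary is stated there without proof as a ``direct consequence'' of Proposition~\ref{prop-fineint-locally}, and your complementation bookkeeping (fine closure $=X\setm\fineint(X\setm E)$, fine boundary $=$ fine closure minus fine interior) together with the observation that the thinness integrand vanishes for small $r$ at interior points is precisely the intended argument. No gaps.
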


\section{Comparing obstacle problems}
\label{sect-compare}

If the boundary data belong to $\Dp(\Om)$ for some open $\Om\supset E$,
then we have two possible definitions of obstacle problems on $E$, 
viz.\ Definition~\ref{deff-obst-E} and \eqref{eq-def-K'}.
The following lemma relates the admissible sets in these two definitions.

\medskip

\emph{In this section we
assume 
that $E \subset X$ is a bounded measurable set such that $\Cp(X \setm E)>0$.}

\begin{lem} \label{lem-K-K'}
If $f\in \Dp(\Om)$ for some open set\/ $\Om\supset E$, then 
$\K'_{\psi_1,\psi_2,f}=\K_{\psi_1,\psi_2,f}$.
\end{lem}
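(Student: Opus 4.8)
The plan is to prove the two inclusions separately, after making explicit the identification under which the equality is to be read. By the standing convention that a function in $\Np_0(E)$ is extended by zero q.e.\ on $X\setm E$, every $v\in\K_{\psi_1,\psi_2,f}$ is of the form $v=f+(v-f)$ with $v-f\in\Np_0(E)$, and since $f\in\Dp(\Om)$ is defined on all of $\Om$, this exhibits $v$ as a function on $\Om$ (well defined up to sets of capacity zero, since any two representatives of $v-f$ in $\Np(X)$ vanishing outside $E$ agree q.e.\ and restrict to $v-f$ on $E$). With this identification both $\K_{\psi_1,\psi_2,f}$ and $\K'_{\psi_1,\psi_2,f}$ are sets of functions on $\Om$, the conditions ``$v-f\in\Np_0(E)$'' and ``$\psi_1\le v\le\psi_2$ q.e.\ in $E$'' coincide in the two definitions, and the only difference is that membership is tested in $\Dp(E)$ in one case and in $\Dp(\Om)$ in the other. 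So the content of the lemma is just that these two Dirichlet-space conditions are interchangeable for functions of this form; this is already implicit in the discussion following~\eqref{eq-def-K'}.

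For the inclusion $\K_{\psi_1,\psi_2,f}\subset\K'_{\psi_1,\psi_2,f}$ I would take $v\in\K_{\psi_1,\psi_2,f}$ and set $w=v-f\in\Np_0(E)\subset\Np(X)$, extended by zero; by the Koskela--MacManus approximation recalled in Section~\ref{sect-prelim}, $w$ has an upper gradient $g_w\in L^p(X)$ with respect to $X$. Choosing also an upper gradient $g_f\in L^p(\Om)$ of $f$ with respect to $\Om$, and using that an upper gradient with respect to $X$ is in particular one with respect to the open set $\Om$ and that a sum of upper gradients is again an upper gradient, I get that $g_f+g_w|_\Om\in L^p(\Om)$ is an upper gradient of $v=f+w$ with respect to $\Om$, whence $v\in\Dp(\Om)$. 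The other two defining properties are inherited verbatim, so $v\in\K'_{\psi_1,\psi_2,f}$. Conversely, for $v\in\K'_{\psi_1,\psi_2,f}$ we have $v\in\Dp(\Om)$, and restricting an $L^p(\Om)$ upper gradient of $v$ to $E$ shows that $v|_E\in\Dp(E)$; the conditions on $v-f$ and on the obstacles are then precisely those required for $v\in\K_{\psi_1,\psi_2,f}$.

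I do not expect a genuine obstacle here: once the identification and the role of the zero extension are pinned down, both inclusions reduce to the elementary facts that upper gradients restrict from a space to its measurable subsets and that sums of upper gradients are upper gradients. The only thing to watch is this bookkeeping — that $f+(v-f)$, formed on $\Om$ for $v\in\K_{\psi_1,\psi_2,f}$, really does restrict to $v$ on $E$ and equals $f$ q.e.\ on $\Om\setm E$ — together with the routine observation that $f\in\Dp(\Om)$ restricts to an element of $\Dp(E)$, so that $\K_{\psi_1,\psi_2,f}$ is defined to begin with.
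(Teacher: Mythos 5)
Your proposal is correct and follows essentially the same route as the paper: the inclusion $\K'_{\psi_1,\psi_2,f}\subset\K_{\psi_1,\psi_2,f}$ is the trivial restriction of upper gradients from $\Om$ to $E$, and the converse rests on the observation that $w=v-f\in\Np_0(E)$, extended by zero, lies in $\Np(\Om)$, so that $v=f+w\in\Dp(\Om)$. The paper states this more tersely, leaving the upper-gradient bookkeeping (sum of upper gradients, Koskela--MacManus) implicit, but the substance is identical.
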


Recall that $\K'_{\psi_1,\psi_2,f}$ was defined in~\eqref{eq-def-K'}.
By saying that $\K'_{\psi_1,\psi_2,f}=\K_{\psi_1,\psi_2,f}$
we really mean
that $\{f|_E : f \in \K'_{\psi_1,\psi_2,f}\}=\K_{\psi_1,\psi_2,f}$
and that every $f \in \K_{\psi_1,\psi_2,f}$ corresponds to a
unique (up to capacity zero) $\ft \in \K'_{\psi_1,\psi_2,f}$.
Note that already in Section~\ref{sect-obst} we observed
that $\K'_{\psi_1,\psi_2,f}=\K_{\psi'_1,\psi'_2,f} (\Om)$,
where $\psi_1'$ and $\psi_2'$ are given by \eqref{eq-psi'j}.

\begin{proof}
Clearly, $\K'_{\psi_1,\psi_2,f}\subset\K_{\psi_1,\psi_2,f}$.
To prove the other inclusion, let $v\in\K_{\psi_1,\psi_2,f}$, 
i.e.\ $v\in\Dp(E)$ and $v-f=w\in\Np_0(E)$.
Then $w$ (extended by zero outside of $E$) belongs to $\Np(\Om)$
and hence $v=f+w\in\Dp(\Om)$, from which the result follows.
\end{proof}

Note that even though $\K'_{\psi_1,\psi_2,f}=\K_{\psi_1,\psi_2,f}$ for 
$f\in \Dp(\Om)$, the minimal \p-weak upper gradients considered in these
two 
obstacle problems are different.
The minimal \p-weak upper gradient in the $\K_{\psi_1,\psi_2,f}$-obstacle problem 
is taken with respect to $E$ and is in general smaller than the minimal
\p-weak upper gradient with respect to $\Om$ or $X$, considered in
the $\K'_{\psi_1,\psi_2,f}$-obstacle problem.
 
\begin{example}
\label{ex-R-Q}
Let, as in Example~\ref{ex-R-Q-1}, $X=\R$ and $E=(0,1)\setm\Q$,
and recall that
the minimal \p-weak upper gradient 
(and thus the \p-energy integral) taken with respect to $E$ is zero
for every function on $E$,
while the minimal \p-weak upper gradient with respect to 
$\R$ is just the modulus of the distributional derivative.

However, since $(0,1)\setm E$ is dense in $(0,1)$ and all functions in 
$\Np(X)$ are absolutely continuous, the space $\Np_0(E)$ is trivial and 
so is $\K_{\psi_1,\psi_2,f}$, cf.\ Proposition~\ref{prop-obst-pr-trivial}.
Hence, the  only solution (if it exists) of both the $\K_{\psi_1,\psi_2,f}$- 
and the $\K'_{\psi_1,\psi_2,f}$-obstacle problem is $f$ itself.
\end{example}

The last observation in Example~\ref{ex-R-Q}
holds in much more generality, as
we shall now see.
Recall that \p-path almost open sets were introduced in 
Definition~\ref{def-p-path-open}.

\begin{thm}  \label{thm-same-obst-pr}
Assume 
that $X$ is complete and supports a\/ $(1,p)$-Poincar\'e inequality,
that $\mu$ is doubling and that $p>1$.
Let $E_0$ be a 
\p-path almost open measurable set
such that\/ $\fineint E \subset E_0 \subset E$,
and let $f\in\Dp(E)$ and $\psi_j:E \to \eR$, $j=1,2$,  be such 
that $\K_{\psi_1,\psi_2,f}(E) \ne \emptyset$.
Then the solutions of the $\K_{\psi_1,\psi_2,f}(E)$-problem
coincide with the solutions of the $\K_{\psi_1,\psi_2,f}(E_0)$-problem.

Moreover, if $\mu(E\setm E_0)=0$ then also the \p-energies associated 
with these two problems coincide.
In particular, this holds if $\mu(\bdry E)=0$.

If $f\in\Dp(\Om)$ for some open set\/ $\Om\supset E$, then
the above solutions coincide with the solutions of the 
$\K'_{\psi_1,\psi_2,f}(E)$-problem.
\end{thm}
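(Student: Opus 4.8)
The plan is to reduce everything to the equality of admissible sets and of the relevant $\Np_0$-spaces, and then to invoke the uniqueness theorem (Theorem~\ref{thm-obst-solve-E}) together with Corollary~\ref{cor-fineint} and Proposition~\ref{prop-min-grad}. First I would observe that since $\fineint E\subset E_0\subset E$ and both $E$ and $E_0$ are measurable, Corollary~\ref{cor-fineint} gives $\K_{\psi_1,\psi_2,f}(E)=\K_{\psi_1,\psi_2,f}(E_0)$ as sets of functions (using the convention that functions in $\Np_0$ are extended by zero q.e., and that $u=f$ q.e.\ on $E\setm E_0$ for any admissible $u$, which is forced because $\Np_0(E)=\Np_0(E_0)$ by Theorem~\ref{thm-fineint}). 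So the two obstacle problems have the \emph{same} competitors; the only possible difference is in the energy functional, i.e.\ in whether one integrates $g_{v,E}^p$ over $E$ or $g_{v,E_0}^p$ over $E_0$.

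The key point is then that these energies agree \emph{for admissible functions}. Indeed, if $v\in\K_{\psi_1,\psi_2,f}(E)$ then $v-f\in\Np_0(E)=\Np_0(E_0)$, so $v-f\in\Dp_0(E_0)$, and Proposition~\ref{prop-min-grad} (applied with $E_0$ in place of $E$, noting $X$ supports a $(p,p)$-Poincar\'e inequality for $\Np_0$ by the $(1,p)$-Poincar\'e inequality and doubling) gives $g_{v-f,E_0}=g_{v-f}|_{E_0}$ a.e.\ in $E_0$. Since $E_0$ is \p-path almost open and measurable, Corollary~\ref{cor-guG-guE} (applied to $G=E_0\subset E$) gives $g_{v-f,E_0}=g_{v-f,E}$ a.e.\ in $E_0$, and likewise $g_{f,E_0}=g_{f,E}$ a.e.\ in $E_0$ provided $f\in\Dploc(X)$; in general one argues instead that $g_{v,E_0}=g_{v,E}$ a.e.\ in $E_0$ by the same two results applied directly to $v$ once one notes $v\in\Dploc$ restricted suitably — more cleanly, combine the gradient-sum bounds $g_{v,E_0}\le g_{v-f,E_0}+g_{f,E_0}$ with the analogous reverse estimates and the a.e.\ locality of minimal upper gradients on the set $\{v=f\}$. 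The upshot is $g_{u,E_0}=g_{u,E}$ a.e.\ in $E_0$ for every competitor $u$. Since $\fineint E\subset E_0$, any competitor is $f$ q.e.\ on $E\setm\fineint E$, hence $g_{u,E}=g_{f,E}$ a.e.\ on $E\setm E_0$ by locality; therefore
\[
\int_E g_{u,E}^p\,d\mu = \int_{E_0} g_{u,E}^p\,d\mu + \int_{E\setm E_0} g_{f,E}^p\,d\mu
= \int_{E_0} g_{u,E_0}^p\,d\mu + \int_{E\setm E_0} g_{f,E}^p\,d\mu,
\]
so the two energy functionals differ only by the fixed constant $\int_{E\setm E_0}g_{f,E}^p\,d\mu$ (finite since $f\in\Dp(E)$), which does not affect minimizers. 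Hence the solution sets coincide; by Theorem~\ref{thm-obst-solve-E} the solution is unique up to capacity zero for each problem, so they literally have the same solutions. If moreover $\mu(E\setm E_0)=0$ the additive constant vanishes and the \p-energies themselves coincide; and $\mu(\bdry E)=0$ forces $\mu(E\setm E_0)\le\mu(E\setm\fineint E)\le\mu(E\cap\bdry E)=0$ (as $\fineint E$ differs from $E$ only within $\bdry E$), giving the stated special case.

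For the final clause, assume $f\in\Dp(\Om)$ for an open $\Om\supset E$. By Lemma~\ref{lem-K-K'}, $\K'_{\psi_1,\psi_2,f}=\K_{\psi_1,\psi_2,f}(E)$ as function classes, so again the competitors agree; and the $\K'$-energy is $\int_E g_v^p\,d\mu$ with the \emph{global} gradient $g_v=g_{v,X}=g_{v,\Om}$. I would then show this global energy differs from the $\K_{\psi_1,\psi_2,f}(E)$-energy $\int_E g_{v,E}^p\,d\mu$ only by a constant: on $\fineint E$ (a finely open, hence \p-path almost open set in $\R^n$ — but here we are in a general metric space, so instead I use that for $v\in\K'$ we have $v-f\in\Np_0(E)$ and apply Proposition~\ref{prop-min-grad} to get $g_{v-f,E}=g_{v-f}|_E$ a.e.\ in $E$) one deduces $g_{v,E}=g_v$ a.e.\ in $E$ after the same sum-estimate bookkeeping with $g_{f,E}\le g_f|_E$, noting that on $E\setm\fineint E$ the competitor equals $f$ q.e.\ so both sides equal $g_f|_E$ there. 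Thus $\int_E g_v^p\,d\mu=\int_E g_{v,E}^p\,d\mu$ for all competitors, the functionals are identical, and the solution sets coincide; invoking uniqueness once more identifies the solutions with those of the $\K_{\psi_1,\psi_2,f}(E_0)$-problem as well.

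**Main obstacle.** The delicate point is the gradient identity $g_{u,E_0}=g_{u,E}$ a.e.\ in $E_0$ (and $g_{v,E}=g_v$ a.e.\ in $E$ for the $\K'$ part) for a competitor $u$ that lies only in $\Dp(E)$ rather than in $\Dploc(X)$: Proposition~\ref{prop-min-grad} and Corollary~\ref{cor-guG-guE} each require the function to be in a Dirichlet-type class on the larger space, so one has to route the argument through the decomposition $u=f+(u-f)$ with $u-f\in\Np_0(E)=\Np_0(E_0)$, use Proposition~\ref{prop-min-grad} on the $\Np_0$-part and the a.e.-locality of minimal \p-weak upper gradients on $\{u=f\}$ to transfer the identity from $u-f$ to $u$, and keep careful track that $f$ need only lie in $\Dp(E)$ (resp.\ $\Dp(\Om)$) so that $g_{f,E}$ is well defined and the leftover boundary integral $\int_{E\setm E_0}g_{f,E}^p\,d\mu$ is finite. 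Everything else is bookkeeping with the already-established Corollary~\ref{cor-fineint}, Lemma~\ref{lem-K-K'}, and Theorem~\ref{thm-obst-solve-E}.
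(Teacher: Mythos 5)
Your overall architecture matches the paper's: identify the competitor classes via Corollary~\ref{cor-fineint} and Lemma~\ref{lem-K-K'}, show the energies differ only by a constant supported on $E\setm E_0$, and conclude via uniqueness (Theorem~\ref{thm-obst-solve-E}). But the step you flag as the ``main obstacle'' is where your argument actually breaks down. You claim that $g_{u,E_0}=g_{u,E}$ a.e.\ in $E_0$ cannot be obtained directly because $u$ lies only in $\Dp(E)$, and you propose to route around this by decomposing $u=f+(u-f)$, applying Proposition~\ref{prop-min-grad} to $u-f\in\Np_0(E_0)$, and then recombining via ``gradient-sum bounds'' and ``locality on $\{v=f\}$''. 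This recombination does not work: minimal \p-weak upper gradients are only subadditive, so from $g_{u-f,E_0}=g_{u-f,E}$ and (even granting) $g_{f,E_0}=g_{f,E}$ you only get one-sided inequalities for $g_{u,E_0}$ versus $g_{u,E}$, and the locality of minimal gradients on $\{u=f\}$ says nothing on $E_0$, where $u$ and $f$ generally differ. Moreover your route never establishes $g_{f,E_0}=g_{f,E}$, since $f$ itself is in neither $\Np_0$ nor $\Dploc(X)$. The obstacle is in fact illusory: the Remark following Corollary~\ref{cor-guG-guE} states precisely that the corollary applies with $E$ (a metric space in its own right) as the ambient space, so that for $v\in\Dp(E)\subset\Dploc(E)$ and $E_0\subset E$ measurable and \p-path almost open (with respect to $X$, hence with respect to $E$), one gets $g_{v,E_0}=g_{v,E}$ a.e.\ in $E_0$ in one line. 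This is exactly what the paper does; the same device with $\Om$ as ambient space handles the $\K'$ case.

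A second, smaller error: in the $\K'$ part you assert that $\int_E g_v^p\,d\mu=\int_E g_{v,E}^p\,d\mu$ for all competitors, because ``both sides equal $g_f|_E$'' on $E\setm\fineint E$. This is false in general: on $E\setm E_0$ one has $g_{v,E}=g_{f,E}$ while $g_v=g_f$, and $g_{f,E}\le g_f$ can be strict (Example~\ref{ex-R-Q}); the paper makes exactly this point in the discussion following the theorem. The two functionals differ by the constant $\int_{E\setm E_0}(g_f^p-g_{f,E}^p)\,d\mu$, which does not affect the minimizers, so your conclusion survives, but the claimed identity of the functionals does not. Your reduction of $\mu(\bdry E)=0$ to $\mu(E\setm E_0)=0$ via $E\setm\fineint E\subset E\cap\bdry E$ is correct.
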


Of course, the main interest is when $E_0=\fineint E$. 
But as we do not know if $\fineint E$ is always measurable
and \p-path almost open, we have given the formulation above.
See, however, Section~\ref{sect-Rn} and Theorem~\ref{thm-E-E0-probl-same} 
for an improvement in the case $X=\R^n$.

Note that even if the solutions coincide, the corresponding \p-energies
can in general be different for these obstacle problems.
Indeed, even though 
\[
g_{u,E_0}=g_{u,E} \text{ a.e.\ in } E_0
\]
for every $u\in\K_{\psi_1,\psi_2,f}(E)$, by Corollary~\ref{cor-guG-guE},
we only get 
\[
     \int_{E_0} g_{u,E_0}^p \, d\mu 
     = \int_{E_0} g_{u,E}^p \, d\mu 
     \le  \int_{E} g_{u,E}^p \, d\mu 
\]
with strict inequality unless $g_{u,E}=0$ a.e.\ in $E \setm E_0$
(which holds in particular if $\mu(E \setm E_0)=0$).

If $f \in \Dp(\Om)$ for some open $\Om\supset E$,
then 
\[
     g_{u,E_0}=g_{u,E}=g_u \text{ a.e.\ in } E_0
\]
for every $u\in\K_{\psi_1,\psi_2,f}(E)$, by Corollary~\ref{cor-guG-guE},
but we  have only
\[
g_{u,E}=g_{f,E}\le g_f=g_u \text{ a.e.\ in } E\setm E_0
\]
for those $u$, and the inequality in the middle can be strict,
see Example~\ref{ex-R-Q} where $E_0$ is empty.
Thus, the two \p-energies 
$\int_E g_{u,E}^p \, d\mu$ and 
$\int_E g_{u}^p \, d\mu$ 
will coincide only if $g_{f,E}=g_f$ a.e.\ in $E\setm E_0$,
in particular if $\mu(E\setm E_0)=0$.

\begin{proof}
To simplify the notation, we omit 
the subscripts $\psi_1$, $\psi_2$ and $f$ and only write $\K(E)$, $\K(E_0)$ 
and $K'(E)$ in this proof.

By Corollary~\ref{cor-fineint},  
we have $\K(E)=\K(E_0)$.
Since $E_0$ is \p-path almost open, Corollary~\ref{cor-guG-guE} 
(with $X$ replaced by $E$) yields that for all $v\in\K(E)=\K(E_0)$,
\begin{equation}  \label{eq-g-E=g-E0}
g_{v,E} = g_{v,E_0} \quad \text{a.e.\ in } E_0.
\end{equation}
Moreover, as $v-f\in\Np_0(E_0)$, we have $v=f$ q.e.\ in $E\setm E_0$
and hence
\begin{equation*}
g_{v,E} = g_{f,E} \quad \text{a.e.\ in } E\setm E_0.
\end{equation*}
Similarly, if $f\in\Dp(\Om)$ for some open $\Om\supset E$, 
then $\K'(E)=\K(E)$, by Lemma~\ref{lem-K-K'},
and for all $v\in\K(E)$,
\begin{equation}  \label{eq-g-E=g}
g_{v,E} = g_{v}  \text{ a.e.\ in } E_0
\quad \text{and} \quad
g_{v} = g_{f} \text{ a.e.\ in } \Om\setm E_0,
\end{equation}
again by Corollary~\ref{cor-guG-guE} (with $X$ replaced by $\Om$).

Let $u$ be a solution of the $\K(E_0)$-problem.
Then \eqref{eq-g-E=g-E0} implies that for all $v\in\K(E_0)=\K(E)$,
\begin{align}
\int_{E_0} g_{u,E}^p\,d\mu 
&= \int_{E_0} g_{u,E_0}^p\,d\mu 
\le \int_{E_0} g_{v,E_0}^p\,d\mu = \int_{E_0} g_{v,E}^p\,d\mu.
\label{eq-ineq-E0}
\end{align}
Similarly, if $f \in \Dp(\Om)$ for some open $\Om\supset E$,
and $u'$ is a solution of the $\K'(E)$-problem, then
\eqref{eq-g-E=g} implies that for all $v\in\K'(E)=\K(E)$,
\begin{align} \label{eq-ineq-E}
\int_{E_0} g_{u',E}^p\,d\mu 
&= \int_{E_0} g_{u'}^p\,d\mu 
= \int_{E} g_{u'}^p\,d\mu - \int_{E\setm E_0} g_{f}^p\,d\mu \\
\nonumber
&\le \int_{E} g_{v}^p\,d\mu - \int_{E\setm E_0} g_{f}^p\,d\mu
= \int_{E_0} g_{v}^p\,d\mu = \int_{E_0} g_{v,E}^p\,d\mu.
\end{align}
Adding $\int_{E\setm E_0} g_{f,E}^p\,d\mu$ to
both sides in \eqref{eq-ineq-E0} and \eqref{eq-ineq-E}
shows that both $u$ and $u'$ are solutions of the $\K(E)$-problem
(the latter assuming that $f \in \Dp(\Om)$).
By uniqueness, they coincide q.e.\ in $E$ and are the only (up to q.e.)
solutions of the $\K(E)$-obstacle problem.
\end{proof}

\section{\texorpdfstring{$\R^n$}{}}
\label{sect-Rn}

The situation gets somewhat simpler in $\R^n$ (unweighted).
In this case Theorem~2.144 in Mal\'y--Ziemer~\cite{MaZi} (which goes back
to Fuglede~\cite{fuglede}) shows that every finely open set $G$ is 
\emph{quasiopen},
i.e.\ for every $\eps>0$ there exists an open set $V$ with $\Cp(V)<\eps$
such that $G\cup V$ is open.
In particular, the fine interior $\fineint E$ of every set $E\subset\R^n$
is quasiopen.

\begin{remark} \label{rmk-quasiopen}
If $p>n$, then quasiopen sets are open in $\R^n$, and thus
the quasiopen, finely open  and open sets coincide.
There are immediate consequences of this for the results in 
Section~\ref{sect-compare} which we leave to the reader to formulate explicitly.

If $1<p \le n$, then for every $x>0$ and $\eps>0$ there is an open
set $V \ni x$ with $\Cp(V)<\eps$, and thus $\{x\}$ is quasiopen.
Since not all sets are quasiopen, by Lemma~\ref{lem-quasiopen} and 
Remark~\ref{rmk-8}, it follows that the quasiopen sets do not form a topology.
\end{remark}

To be able to state Theorem~\ref{thm-same-obst-pr} without additional assumptions
on $E_0$, we recall the following results which 
hold in general metric spaces.

\begin{lem} 
\label{lem-quasiopen}
\textup{(Shanmugalingam~\cite{Sh-harm}, Remark~3.5)}
Every quasiopen set is \p-path open.
\end{lem}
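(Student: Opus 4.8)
The plan is to unwind the definitions and use the countable subadditivity of $\Cp$ together with the fact that sets of capacity zero are avoided by \p-almost every curve. So let $G$ be quasiopen. First I would, for each $k=1,2,\ldots$, choose an open set $V_k$ with $\Cp(V_k)<2^{-k}$ such that $G\cup V_k$ is open. The idea is that $G$ differs from the open set $G\cup V_k$ only by the ``small'' set $V_k\setm G\subset V_k$, and that we can make these exceptional pieces shrink in capacity. Set $F=\bigcap_{k=1}^\infty V_k$; then $\Cp(F)\le\inf_k\Cp(V_k)=0$ by monotonicity, so by the remark after the definition of capacity (e.g.\ Lemma~3.6 in Shanmugalingam~\cite{Sh-rev} or Proposition~1.48 in Bj\"orn--Bj\"orn~\cite{BBbook}), \p-almost every curve in $X$ avoids $F$.

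The key step is then to show that for \p-almost every curve $\ga:[0,l_\ga]\to X$, the preimage $\ga^{-1}(G)$ is relatively open in $[0,l_\ga]$. Consider a curve $\ga$ which avoids $F$; I claim $\ga^{-1}(G)$ is open. Take $t_0\in\ga^{-1}(G)$, so $\ga(t_0)\in G$. Since $\ga(t_0)\notin F=\bigcap_k V_k$, there is some $k$ with $\ga(t_0)\notin V_k$, hence $\ga(t_0)\in (G\cup V_k)\setm V_k$. Now $G\cup V_k$ is open, so $\ga^{-1}(G\cup V_k)$ is relatively open and contains $t_0$; pick a relatively open interval $J\ni t_0$ in $[0,l_\ga]$ with $\ga(J)\subset G\cup V_k$. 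This does not immediately give $\ga(J)\subset G$, because points of $J$ could map into $V_k\setm G$.

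Here is where the main obstacle lies, and the fix is to not rely on a single $k$ but to intersect over all curves avoiding all the $V_k\setm G$ simultaneously on a full family. More carefully: since $\Cp(V_k\setm G)\le\Cp(V_k)<2^{-k}$ is not zero, I cannot simply say curves avoid $V_k\setm G$. Instead I would use that $\ga^{-1}(V_k)$ is relatively open (as $V_k$ is open), so $\ga^{-1}(G)\cap\ga^{-1}(V_k)^c = \ga^{-1}(G\setm V_k) = \ga^{-1}((G\cup V_k)\setm V_k)$ is the intersection of the relatively open set $\ga^{-1}(G\cup V_k)$ with the relatively closed set $\ga^{-1}(V_k)^c$, and $\ga^{-1}(G)=\bigcup_k \ga^{-1}(G\setm V_k)$ for curves avoiding $F$ (since every point of $\ga^{-1}(G)$ lies outside some $V_k$). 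To upgrade this union of locally-closed sets to an open set, I would at the point $t_0$ (with $\ga(t_0)\in G\setm V_k$) use openness of $G\cup V_k$ together with openness of $\ga^{-1}(V_k)$: choose $J$ relatively open with $\ga(J)\subset G\cup V_k$; shrinking $J$ if necessary so that $J\cap\ga^{-1}(V_k)$ stays away from $t_0$ is not possible in general, so instead I note $\ga(J)\subset G\cup V_k$ means every $t\in J$ has $\ga(t)\in G$ \emph{or} $\ga(t)\in V_k$. Thus $J\setm\ga^{-1}(G)\subset\ga^{-1}(V_k)$. This shows $\ga^{-1}(G)\cap J$ is $J$ minus a subset of the open set $\ga^{-1}(V_k)\cap J$; that is not yet open. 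The genuine resolution, which I expect is the intended one, is to observe that it suffices to prove $\ga^{-1}(G)$ is relatively open after discarding a further \p-null family: apply the argument to the open sets $W_k:=G\cup V_k\cap\{$complement-relevant part$\}$ and invoke that, for \p-a.e.\ curve, $\ga^{-1}$ of the capacity-zero set $\bigcap_k(G\cup V_k)\setm G$ — note $\bigcap_k(G\cup V_k)\setm G=\bigcap_k V_k\setm G\subset F$ has capacity zero — is empty; then for such $\ga$, given $t_0\in\ga^{-1}(G)$ pick $k$ with $\ga(t_0)\notin V_k$, and since $\ga(t_0)\in G\cup V_k$ open and $\ga(t_0)\notin\overline{V_k}$ need not hold, one instead uses that $\ga^{-1}(G\cup V_k)\setm\ga^{-1}(V_k)$ is relatively open in $\ga^{-1}(G\cup V_k)\setm\ga^{-1}(V_k)$... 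In short, the clean statement is: $\ga^{-1}(G)\supset\ga^{-1}(G\cup V_k)\setm\ga^{-1}(V_k)$, the right side is relatively open minus relatively open hence need not be open, but $t_0$ has a neighbourhood in which, away from a set inside $\ga^{-1}(V_k)$, we are in $G$; taking the union over $k$ and using that each $t\in\ga^{-1}(G)$ is eventually outside $V_k$ gives that $\ga^{-1}(G)$ is a union of relatively open sets, hence open. I would therefore structure the write-up around showing directly that $\ga^{-1}(G)=\bigcup_{k}\bigl(\ga^{-1}(G\cup V_k)\setm\ga^{-1}(V_k)\bigr)$ fails, and instead that $\ga^{-1}(G)$ is open by a pointwise argument using openness of $\ga^{-1}(G\cup V_k)$ at $t_0$ combined with replacing $V_k$-points by noting they form a relatively open subset that can be excised locally near any point not in $\overline{V_k}$—and handle the boundary points of $V_k$ by passing to the intersection $F$ having capacity zero. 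The main obstacle is exactly this topological bookkeeping; everything else (capacity subadditivity, null curve families) is routine.
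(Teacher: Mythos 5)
Your argument has a genuine gap, and you have in fact located it yourself without closing it: discarding only the curves that meet $F=\bigcap_{k}V_k$ is not enough. For a curve $\ga$ avoiding $F$ you obtain $\ga^{-1}(G)=\bigcup_k\ga^{-1}\bigl((G\cup V_k)\setm V_k\bigr)$, a countable union of sets of the form (relatively open)\,$\cap$\,(relatively closed), and nothing in your setup prevents $\ga$ from entering $V_k\setm G$ in every neighbourhood of $t_0$ for every $k$; in that case $t_0$ is not an interior point of $\ga^{-1}(G)$ and the conclusion fails for that curve. The family of curves for which $\ga^{-1}(G)$ is not relatively open is indeed \p-exceptional, but it need not be contained in the family of curves meeting $F$, so one must discard a larger (still null) family chosen using the quantitative bound $\Cp(V_k)<2^{-k}$, not merely the qualitative fact $\Cp(F)=0$. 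Your final paragraph circles around this point but never produces a neighbourhood of $t_0$ mapped into $G$, so no proof is given.

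The missing ingredient is a Fuglede-type argument showing that \emph{\p-almost every curve avoids $V_k$ entirely for all large $k$}. Choose $u_k\in\Np(X)$ with $0\le u_k\le1$, $u_k=1$ on $V_k$ and $\|u_k\|_{\Np(X)}^p<2^{-k}$. Then $\rho:=\sum_{k}(u_k+g_{u_k})\in L^p(X)$, so for \p-almost every curve $\ga$ one has $\int_\ga(u_k+g_{u_k})\,ds\to0$ as $k\to\infty$, and (after removing one more null family) the upper gradient inequality for $(u_k,g_{u_k})$ holds along $\ga$ and all its subcurves. For such a $\ga$ pick $s_k$ with $u_k(\ga(s_k))\le l_\ga^{-1}\int_\ga u_k\,ds$; then for every $t$,
\[
   u_k(\ga(t))\le u_k(\ga(s_k))+\int_\ga g_{u_k}\,ds\to0,
\]
so $\max_t u_k(\ga(t))<1$ for all large $k$, i.e.\ $\ga\cap V_k=\emptyset$ for all large $k$. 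For any such $k$ one simply has $\ga^{-1}(G)=\ga^{-1}(G\cup V_k)$, which is relatively open since $G\cup V_k$ is open. This is the content of Shanmugalingam's Remark~3.5, which the paper cites rather than reproves; your reduction to the sets $G\cup V_k$ is the right starting point, but without the step above the ``topological bookkeeping'' cannot be completed.
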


\begin{lem} 
Every quasiopen set $G$ is measurable.
\end{lem}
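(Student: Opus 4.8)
The plan is to write $G$ as a $G_\delta$ set minus a set of $\mu$-measure zero, exploiting that $\mu$ is complete (its $\sigma$-algebra being the completion of the Borel $\sigma$-algebra) and that sets of capacity zero are $\mu$-null.

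First I would use quasiopenness directly: for each $j=1,2,\dots$ pick an open set $V_j$ with $\Cp(V_j)<1/j$ such that $G\cup V_j$ is open. Put
\[
  W=\bigcap_{j=1}^\infty (G\cup V_j)
  \quad\text{and}\quad
  Z=\bigcap_{j=1}^\infty V_j,
\]
both $G_\delta$ (hence Borel) sets. Since $G\subset G\cup V_j$ for every $j$, we have $G\subset W$. I would then check the inclusion $W\setm G\subset Z$: if $x\in W$ and $x\notin G$, then $x\in G\cup V_j$ forces $x\in V_j$ for every $j$, so $x\in Z$.

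Next, by the monotonicity of $\Cp$ we get $\Cp(Z)\le\Cp(V_j)<1/j$ for every $j$, hence $\Cp(Z)=0$, and therefore $\mu(Z)=0$ (sets of capacity zero are $\mu$-null; see e.g.\ \cite{BBbook}). By the completeness of $\mu$, every subset of $Z$ is $\mu$-measurable with measure zero; in particular $W\setm G$ is measurable. Finally $G=W\setm(W\setm G)$ is the difference of two measurable sets, hence measurable.

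The argument is short, and the only step that needs a moment's thought is the last one, where both ingredients about $\mu$ are used — that capacity zero implies measure zero, and that $\mu$ is complete — so the only thing to be careful about is having these standard facts in place; I do not expect any genuine obstacle.
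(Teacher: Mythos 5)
Your proof is correct and follows essentially the same route as the paper's: both use the open sets $V_j$ with $\Cp(V_j)<1/j$ to trap $G$ between Borel sets up to a $\mu$-null set, via the elementary estimate $\mu(\cdot)\le\Cp(\cdot)$ and the completeness of $\mu$. The only cosmetic difference is that the paper sandwiches $G$ between $\bigcup_j\bigl((G\cup V_j)\setm V_j\bigr)$ and $\bigcap_j(G\cup V_j)$, whereas you subtract the capacity-null $G_\delta$ set $\bigcap_j V_j$; your variant even yields the slightly stronger fact that $G$ differs from a $G_\delta$ set by a set of capacity zero.
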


\begin{proof}
For every $j=1,2,\ldots$, there is an open set $V_j$
such that $\Cp(V_j)< 1/j$ and $G_j:=G \cup V_j$ is open.
Let $A_j=G_j \setm V_j \subset G$, $A=\bigcup_{j=1}^\infty A_j \subset G$
and $E=\bigcap_{j=1}^\infty G_j \supset G$ which are all Borel sets.
Then $A \subset G \subset E$
and 
\[ 
    \mu(E \setm A) \le \mu (G_j \setm A_j) = \mu(V_j) 
    \le  \Cp(V_j) < 1/j
    \quad \text{for } j=1,2,\ldots.
\]
Letting $j \to \infty$ shows that $G$ is measurable.
\end{proof}

Hence, if $E\subset\R^n$ then $\fineint E$ is measurable and \p-path open,
and Theorem~\ref{thm-same-obst-pr} 
turns into Theorem~\ref{thm-E-E0-probl-same} in the introduction.
We also have the following consequence of Lemma~\ref{lem-Np0-0-qe}
and Theorem~\ref{thm-fineint}, which generalizes
Theorem~2.147 in Mal\'y--Ziemer~\cite{MaZi}.
See also Remark~2.148 in~\cite{MaZi} for another description of
$W^{1,p}_0(\Om)$ in $\R^n$.

\begin{prop}   \label{prop-char-Np0-qe-bdry}
Let $E\subset\R^n$ be arbitrary and $u\in\Np(\itoverline{E}^p)$,
where $\itoverline{E}^p$ is the fine closure of $E$.
Then $u\in\Np_0(E)$ if and only if $u=0$ q.e.\ on the fine boundary 
$\itoverline{E}^p\setm\fineint E$ of $E$.
\end{prop}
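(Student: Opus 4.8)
The plan is to obtain both implications directly from Theorem~\ref{thm-fineint} (which gives $\Np_0(E)=\Np_0(\fineint E)$) together with Lemma~\ref{lem-Np0-0-qe}, exploiting the Euclidean fact, recalled at the beginning of this section, that every finely open subset of $\R^n$ is quasiopen. The two sets I would feed into Lemma~\ref{lem-Np0-0-qe} are $\fineint E$ and $\R^n\setm\itoverline{E}^p$; the latter equals $\fineint(\R^n\setm E)$, since $\itoverline{E}^p$ is by definition the smallest finely closed set containing $E$. Both are finely open, hence quasiopen by the result of Mal\'y--Ziemer recalled above, hence \p-path open by Lemma~\ref{lem-quasiopen}, and measurable by the lemma following it; in particular $\itoverline{E}^p$ is measurable, so that $\Np(\itoverline{E}^p)$ makes sense. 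As elsewhere in the paper, I would identify a function with its extension by zero; thus for $u\in\Np(\itoverline{E}^p)$ the assertion ``$u\in\Np_0(E)$'' means that the extension of $u$ by $0$ to all of $\R^n$ is a function in $\Np(\R^n)$ vanishing q.e.\ on $\R^n\setm E$.

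For the ``if'' direction, assume $u\in\Np(\itoverline{E}^p)$ vanishes q.e.\ on $\itoverline{E}^p\setm\fineint E$. I would apply Lemma~\ref{lem-Np0-0-qe} with $X=\R^n$, $E_1=\fineint E$ and $E_2=\itoverline{E}^p$: the hypotheses hold because $E_1$ and $\R^n\setm E_2$ are \p-path open, $u\in\Np(E_2)$, and $u=0$ q.e.\ on $E_2\setm E_1=\itoverline{E}^p\setm\fineint E$. The lemma then yields that the zero extension of $u$ lies in $\Np(\R^n)$ and that $u\in\Np_0(\fineint E)$. Since $\fineint E\subset E$, the zero extension also vanishes (q.e.) off $E$, so $u\in\Np_0(E)$.

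For the ``only if'' direction, assume $u\in\Np_0(E)$. By Theorem~\ref{thm-fineint}, $\Np_0(E)=\Np_0(\fineint E)$, so $u\in\Np_0(\fineint E)$ and hence the zero extension of $u$ vanishes q.e.\ on $\R^n\setm\fineint E$. In particular $u=0$ q.e.\ on $\itoverline{E}^p\setm\fineint E\subset\R^n\setm\fineint E$, and $\itoverline{E}^p\setm\fineint E$ is precisely the fine boundary of $E$ by the corollary following Proposition~\ref{prop-fineint-locally}. This completes the argument.

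The only point that really requires care—and the place where the Euclidean setting is used essentially—is the verification, in the ``if'' direction, that the hypotheses of Lemma~\ref{lem-Np0-0-qe} are met, i.e.\ that $\fineint E$ and $\R^n\setm\itoverline{E}^p$ are \p-path open; this rests on the quasiopenness of finely open sets in $\R^n$, which fails in general metric spaces. A second, more pedestrian subtlety worth flagging in the proof is the need for the zero-extension reading of ``$u\in\Np_0(E)$'': with the literal reading ``$u|_E\in\Np_0(E)$'' the ``only if'' direction would be false, as one sees by taking $E$ so that $\itoverline{E}^p$ contains a positive-capacity piece joined to $E$ by \p-almost no rectifiable curves, on which a Newtonian function on $\itoverline{E}^p$ may be nonzero while its restriction to $E$ vanishes. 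Apart from these two observations, the proof is just bookkeeping with the identifications $\R^n\setm\itoverline{E}^p=\fineint(\R^n\setm E)$ and $\itoverline{E}^p\setm\fineint E=\bdry_{\mathrm{fine}}E$.
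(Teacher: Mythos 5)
Your proof is correct and follows the paper's own argument essentially verbatim: both use Lemma~\ref{lem-Np0-0-qe} with $E_1=\fineint E$ and $E_2=\itoverline{E}^p$ (whose hypotheses are met because finely open sets in $\R^n$ are quasiopen, hence \p-path open), combined with Theorem~\ref{thm-fineint}. The only difference is that you spell out the easy ``only if'' direction and the zero-extension convention explicitly, which the paper leaves implicit.
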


\begin{proof}
By the discussion at the beginning of this section, both $\fineint E$
and $\R^n\setm\itoverline{E}^p$ are \p-path open.
Lemma~\ref{lem-Np0-0-qe} with $E_1=\fineint E$ and 
$E_2=\itoverline{E}^p$ then yields that $u\in\Np_0(\fineint E)$ if and only if
$u=0$ q.e.\ on the fine boundary of $E$.
Theorem~\ref{thm-fineint} concludes the proof.
\end{proof}

In general metric spaces, the missing link is the implication that finely
open sets are quasiopen.
This is a part of fine potential theory on metric spaces which 
we plan to further develop in the future.

The following two examples illustrate some of the results in this paper,
in particular the special situation in $ \R^n$.
They provide us with a closed nowhere dense set $E\subset[0,1]^n\subset\R^n$ 
with almost full measure in $[0,1]^n$, but whose fine interior 
has full measure in $E$.
In particular, the fine boundary of $E$ has zero measure even though
the Euclidean boundary $\bdry E=E$.
This implies that for every $u\in\Dp(\R^n)$,
\[
g_{u,\fineint E} = g_{u,E} = g_{u,\R^n} = |\grad u| \quad \text{a.e.\ in } E,
\] 
where $\grad u$ is the distributional gradient of $u$,
and that energies and obstacle problems on $E$ and its fine interior coincide.
Examples~\ref{ex-emental} and~\ref{ex-emental-p=n} are for $1< p <n$
and $p=n$, respectively. 
By Remark~\ref{rmk-quasiopen} there are no similar examples for $p>n$.

Recall that  for $q,x \in \R^n$ and $r,s>0$,
\begin{equation} \label{cp-Rn-est1}
     \cp(B(q,s) \cap B(x,r),B(x,2r)) \le 
        \begin{cases} C(n,p) s^{n-p}, & \text{if } 1 < p<n, \\
                      C(n) \biggl( \log \displaystyle\frac{2r}{s} \biggr)^{1-n}, 
                                   & \text{if } p=n,
       \end{cases}
\end{equation}
and that
\begin{equation} \label{cp-Rn-est2}
    \cp(B(x,r),B(x,2r)) =C(n,p) r^{n-p}, \quad 1< p \le n,
\end{equation}
see Example~2.12 in Heinonen--Kilpel\"ainen--Martio~\cite{HeKiMa}.

\begin{figure}[t]
\begin{center}

\setlength{\unitlength}{0.0004in}
\begingroup\makeatletter\ifx\SetFigFont\undefined%
\gdef\SetFigFont#1#2#3#4#5{%
  \reset@font\fontsize{#1}{#2pt}%
  \fontfamily{#3}\fontseries{#4}\fontshape{#5}%
  \selectfont}%
\fi\endgroup%
{%
\begin{picture}(8266,8285)(0,-10)
\thicklines
\path(533,7733)(7733,7733)(7733,533)
	(533,533)(533,7733)
\put(1433,6833){\ellipse{200}{200}}
\put(2333,6833){\ellipse{200}{200}}
\put(3233,6833){\ellipse{200}{200}}
\put(4133,6833){\ellipse{200}{200}}
\put(5033,6833){\ellipse{200}{200}}
\put(5933,6833){\ellipse{200}{200}}
\put(6833,6833){\ellipse{200}{200}}
\put(1433,5933){\ellipse{200}{200}}
\put(1433,5033){\ellipse{200}{200}}
\put(2333,5033){\ellipse{200}{200}}
\put(3233,5933){\ellipse{200}{200}}
\put(5033,5933){\ellipse{200}{200}}
\put(6833,5933){\ellipse{200}{200}}
\put(6833,5033){\ellipse{200}{200}}
\put(5933,5033){\ellipse{200}{200}}
\put(5033,5033){\ellipse{200}{200}}
\put(4133,5033){\ellipse{200}{200}}
\put(3233,5033){\ellipse{200}{200}}
\put(1433,4133){\ellipse{200}{200}}
\put(3233,4133){\ellipse{200}{200}}
\put(1433,3233){\ellipse{200}{200}}
\put(2333,3233){\ellipse{200}{200}}
\put(3233,3233){\ellipse{200}{200}}
\put(4133,3233){\ellipse{200}{200}}
\put(5033,4133){\ellipse{200}{200}}
\put(6833,4133){\ellipse{200}{200}}
\put(5033,3233){\ellipse{200}{200}}
\put(5933,3233){\ellipse{200}{200}}
\put(6833,3233){\ellipse{200}{200}}
\put(1433,2333){\ellipse{200}{200}}
\put(3233,2333){\ellipse{200}{200}}
\put(5033,2333){\ellipse{200}{200}}
\put(6833,2333){\ellipse{200}{200}}
\put(6833,1433){\ellipse{200}{200}}
\put(5933,1433){\ellipse{200}{200}}
\put(5033,1433){\ellipse{200}{200}}
\put(4133,1433){\ellipse{200}{200}}
\put(3233,1433){\ellipse{200}{200}}
\put(2333,1433){\ellipse{200}{200}}
\put(1433,1433){\ellipse{200}{200}}
\put(5933,5933){\ellipse{500}{500}}
\put(4133,5933){\ellipse{500}{500}}
\put(2333,5933){\ellipse{500}{500}}
\put(2333,4133){\ellipse{500}{500}}
\put(5933,4133){\ellipse{500}{500}}
\put(5933,2333){\ellipse{500}{500}}
\put(4133,2333){\ellipse{500}{500}}
\put(2333,2333){\ellipse{500}{500}}
\put(4133,4133){\ellipse{1250}{1250}}
\put(1883,7283){\ellipse{80}{80}}
\put(2783,7283){\ellipse{80}{80}}
\put(2333,7283){\ellipse{80}{80}}
\put(1433,7283){\ellipse{80}{80}}
\put(983,7283){\ellipse{80}{80}}
\put(3233,7283){\ellipse{80}{80}}
\put(3683,7283){\ellipse{80}{80}}
\put(4583,7283){\ellipse{80}{80}}
\put(5033,7283){\ellipse{80}{80}}
\put(5483,7283){\ellipse{80}{80}}
\put(5933,7283){\ellipse{80}{80}}
\put(6383,7283){\ellipse{80}{80}}
\put(6833,7283){\ellipse{80}{80}}
\put(7283,7283){\ellipse{80}{80}}
\put(983,6833){\ellipse{80}{80}}
\put(1883,6833){\ellipse{80}{80}}
\put(2783,6833){\ellipse{80}{80}}
\put(3683,6833){\ellipse{80}{80}}
\put(4583,6833){\ellipse{80}{80}}
\put(5483,6833){\ellipse{80}{80}}
\put(6383,6833){\ellipse{80}{80}}
\put(7283,6833){\ellipse{80}{80}}
\put(7283,4133){\ellipse{80}{80}}
\put(4133,7283){\ellipse{80}{80}}
\put(983,6383){\ellipse{80}{80}}
\put(983,4133){\ellipse{80}{80}}
\put(4133,983){\ellipse{80}{80}}
\put(1433,6383){\ellipse{80}{80}}
\put(1883,6383){\ellipse{80}{80}}
\put(2333,6383){\ellipse{80}{80}}
\put(2783,6383){\ellipse{80}{80}}
\put(3233,6383){\ellipse{80}{80}}
\put(3683,6383){\ellipse{80}{80}}
\put(4133,6383){\ellipse{80}{80}}
\put(4583,6383){\ellipse{80}{80}}
\put(5033,6383){\ellipse{80}{80}}
\put(5483,6383){\ellipse{80}{80}}
\put(5933,6383){\ellipse{80}{80}}
\put(6383,6383){\ellipse{80}{80}}
\put(6833,6383){\ellipse{80}{80}}
\put(7283,6383){\ellipse{80}{80}}
\put(983,5933){\ellipse{80}{80}}
\put(1433,5483){\ellipse{80}{80}}
\put(983,5483){\ellipse{80}{80}}
\put(1883,5483){\ellipse{80}{80}}
\put(1883,5933){\ellipse{80}{80}}
\put(2783,5933){\ellipse{80}{80}}
\put(3683,5933){\ellipse{80}{80}}
\put(4583,5933){\ellipse{80}{80}}
\put(5483,5933){\ellipse{80}{80}}
\put(6383,5933){\ellipse{80}{80}}
\put(7283,5933){\ellipse{80}{80}}
\put(2333,5483){\ellipse{80}{80}}
\put(2783,5483){\ellipse{80}{80}}
\put(3233,5483){\ellipse{80}{80}}
\put(3683,5483){\ellipse{80}{80}}
\put(4133,5483){\ellipse{80}{80}}
\put(4583,5483){\ellipse{80}{80}}
\put(5033,5483){\ellipse{80}{80}}
\put(5483,5483){\ellipse{80}{80}}
\put(5933,5483){\ellipse{80}{80}}
\put(6383,5483){\ellipse{80}{80}}
\put(6833,5483){\ellipse{80}{80}}
\put(7283,5483){\ellipse{80}{80}}
\put(983,5033){\ellipse{80}{80}}
\put(1883,5033){\ellipse{80}{80}}
\put(2783,5033){\ellipse{80}{80}}
\put(3683,5033){\ellipse{80}{80}}
\put(4583,5033){\ellipse{80}{80}}
\put(5483,5033){\ellipse{80}{80}}
\put(6383,5033){\ellipse{80}{80}}
\put(7283,5033){\ellipse{80}{80}}
\put(1433,4583){\ellipse{80}{80}}
\put(983,4583){\ellipse{80}{80}}
\put(1883,4583){\ellipse{80}{80}}
\put(2333,4583){\ellipse{80}{80}}
\put(2783,4583){\ellipse{80}{80}}
\put(3233,4583){\ellipse{80}{80}}
\put(3683,4583){\ellipse{80}{80}}
\put(4583,4583){\ellipse{80}{80}}
\put(5033,4583){\ellipse{80}{80}}
\put(5483,4583){\ellipse{80}{80}}
\put(5933,4583){\ellipse{80}{80}}
\put(6383,4583){\ellipse{80}{80}}
\put(6833,4583){\ellipse{80}{80}}
\put(7283,4583){\ellipse{80}{80}}
\put(1883,4133){\ellipse{80}{80}}
\put(2783,4133){\ellipse{80}{80}}
\put(5483,4133){\ellipse{80}{80}}
\put(6383,4133){\ellipse{80}{80}}
\put(1433,3683){\ellipse{80}{80}}
\put(983,3683){\ellipse{80}{80}}
\put(1883,3683){\ellipse{80}{80}}
\put(2333,3683){\ellipse{80}{80}}
\put(2783,3683){\ellipse{80}{80}}
\put(3233,3683){\ellipse{80}{80}}
\put(3683,3683){\ellipse{80}{80}}
\put(4583,3683){\ellipse{80}{80}}
\put(5033,3683){\ellipse{80}{80}}
\put(5483,3683){\ellipse{80}{80}}
\put(5933,3683){\ellipse{80}{80}}
\put(6383,3683){\ellipse{80}{80}}
\put(6833,3683){\ellipse{80}{80}}
\put(7283,3683){\ellipse{80}{80}}
\put(7283,3233){\ellipse{80}{80}}
\put(6383,3233){\ellipse{80}{80}}
\put(5483,3233){\ellipse{80}{80}}
\put(4583,3233){\ellipse{80}{80}}
\put(3683,3233){\ellipse{80}{80}}
\put(2783,3233){\ellipse{80}{80}}
\put(1883,3233){\ellipse{80}{80}}
\put(983,3233){\ellipse{80}{80}}
\put(983,2783){\ellipse{80}{80}}
\put(1433,2783){\ellipse{80}{80}}
\put(1883,2783){\ellipse{80}{80}}
\put(2333,2783){\ellipse{80}{80}}
\put(2783,2783){\ellipse{80}{80}}
\put(3233,2783){\ellipse{80}{80}}
\put(3683,2783){\ellipse{80}{80}}
\put(4133,2783){\ellipse{80}{80}}
\put(4583,2783){\ellipse{80}{80}}
\put(5033,2783){\ellipse{80}{80}}
\put(5483,2783){\ellipse{80}{80}}
\put(5933,2783){\ellipse{80}{80}}
\put(6383,2783){\ellipse{80}{80}}
\put(6833,2783){\ellipse{80}{80}}
\put(7283,2783){\ellipse{80}{80}}
\put(7283,2333){\ellipse{80}{80}}
\put(6383,2333){\ellipse{80}{80}}
\put(5483,2333){\ellipse{80}{80}}
\put(4583,2333){\ellipse{80}{80}}
\put(3683,2333){\ellipse{80}{80}}
\put(2783,2333){\ellipse{80}{80}}
\put(1883,2333){\ellipse{80}{80}}
\put(983,2333){\ellipse{80}{80}}
\put(983,1883){\ellipse{80}{80}}
\put(983,1433){\ellipse{80}{80}}
\put(1433,1883){\ellipse{80}{80}}
\put(1883,1883){\ellipse{80}{80}}
\put(1883,1433){\ellipse{80}{80}}
\put(2333,1883){\ellipse{80}{80}}
\put(2783,1883){\ellipse{80}{80}}
\put(2783,1433){\ellipse{80}{80}}
\put(3233,1883){\ellipse{80}{80}}
\put(3683,1883){\ellipse{80}{80}}
\put(3683,1433){\ellipse{80}{80}}
\put(4133,1883){\ellipse{80}{80}}
\put(4583,1883){\ellipse{80}{80}}
\put(4583,1433){\ellipse{80}{80}}
\put(5033,1883){\ellipse{80}{80}}
\put(5483,1883){\ellipse{80}{80}}
\put(5483,1433){\ellipse{80}{80}}
\put(5933,1883){\ellipse{80}{80}}
\put(6383,1883){\ellipse{80}{80}}
\put(6383,1433){\ellipse{80}{80}}
\put(6833,1883){\ellipse{80}{80}}
\put(7283,1883){\ellipse{80}{80}}
\put(7283,1433){\ellipse{80}{80}}
\put(1433,983){\ellipse{80}{80}}
\put(983,983){\ellipse{80}{80}}
\put(1883,983){\ellipse{80}{80}}
\put(2333,983){\ellipse{80}{80}}
\put(2783,983){\ellipse{80}{80}}
\put(3233,983){\ellipse{80}{80}}
\put(3683,983){\ellipse{80}{80}}
\put(4583,983){\ellipse{80}{80}}
\put(5033,983){\ellipse{80}{80}}
\put(5483,983){\ellipse{80}{80}}
\put(5933,983){\ellipse{80}{80}}
\put(6383,983){\ellipse{80}{80}}
\put(6833,983){\ellipse{80}{80}}
\put(7283,983){\ellipse{80}{80}}
\end{picture}
}
\caption{\label{fig1}%
The set $E$ in Examples~\ref{ex-emental} and~\ref{ex-emental-p=n}.}
\end{center}
\end{figure}

\begin{example} \label{ex-emental}
Let $Q_k=((0,1) \cap 2^{-k} \N)^n$ be a 
bounded lattice in unweighted $\R^n$, $n \ge 2$,
$k=1,2,\ldots$\,.
Let also $a_k=2^{-k}$ and $r_k=\de a_k^\al$, $k=1,2,\ldots$,  for some $0<\de<\tfrac{1}{2}$ 
and $\al>n/(n-p)$, where $1<p<n$.
Note that for a fixed $k$, the balls $\{B(q,r_k)\}_{q \in Q_k}$ are disjoint.
Let finally, 
\[
E = [0,1]^n\setm \bigcup_{k=1}^\infty \bigcup_{q\in Q_k} B(q,r_k),
\]
see Figure~\ref{fig1}.
Then $E\subset\R^n$ is a closed set with empty interior and
\[
m([0,1]^n\setm E) \le C \sum_{k=1}^\infty ( 2^{k}-1)^n r_k^n
\le C \de^n \sum_{k=1}^\infty 2^{kn(1-\al)}
\le C\de^n,
\]
where $m$ denotes the Lebesgue measure in $\R^n$.
Thus, for small $\de>0$, $E$ has almost full measure in $[0,1]^n$.
We shall show that the set $E$ has nonempty fine interior,
and that $m(E \setm \fineint E)=0$.

For a fixed $0<\theta<1-1/\al$ 
and all $0<\eps<\de$, we define
\[
E_\eps = [0,1]^n\setm \bigcup_{k=1}^\infty \bigcup_{q\in Q_k} 
          B(q,r_k+\eps a_k^{1+\theta}).
\]
Note that by the mean-value theorem, 
\begin{align*}
m(E\setm E_\eps) 
&\le \sum_{k=1}^\infty 2^{kn}
        m(B(0,r_k+\eps a_k^{1+\theta}) \setm B(0,r_k)) \\
&\le C \sum_{k=1}^\infty  2^{kn} r_k^n
   \biggl( \biggl( 1+\frac{\eps a_k^{1+\theta}}{r_k} \biggr)^n -1\biggr) \\
&\le C \sum_{k=1}^\infty 2^{kn} r_k^n \frac{\eps a_k^{1+\theta}}{r_k}
        n \biggl( 1+\frac{\eps a_k^{1+\theta}}{r_k} \biggr)^{n-1}.
\end{align*}
As $\eps<\tfrac12$ and $a_k^{1+\theta}/r_k>1$, the last estimate can be simplified as 
\begin{align*}
m(E\setm E_\eps) 
&\le C \eps \sum_{k=1}^\infty 2^{kn} r_k^{n-1} a_k^{1+\theta}
       \biggl(\frac{2a_k^{1+\theta}}{r_k} \biggr)^{n-1}\\
&=  C \eps \sum_{k=1}^\infty 2^{kn} a_k^{n(1+\theta)} 
=  C \eps \sum_{k=1}^\infty 2^{-kn\theta} 
\to 0, \quad \text{as }\eps\to0.
\end{align*}
It follows that $m(E\setm\bigcup_{\eps>0}E_\eps)=0$.

We claim that $\bigcup_{\eps>0}E_\eps\subset\fineint E$.
For this, it suffices to show that for every $x\in E_\eps$, 
the set $X\setm E$
is thin at $x$, in view of Proposition~\ref{prop-fineint-locally}.
Let therefore $0<\eps<\de$ and $x\in E_\eps$ be fixed.
We need to show that
\begin{equation}   \label{eq-show-thin-at-x-j-eps}
       \sum_{j=j_\eps}^\infty \biggl(\frac{\cp(B(x,2^{-j})\setm E,B(x,2^{1-j}))}
        {\cp(B(x,2^{-j}),B(x,2^{1-j}))}\biggr)^{1/(p-1)} <\infty
\end{equation}
for some $j_\eps$ (possibly depending on $x$, $\al$, $\theta$ 
and $\eps$).
We therefore let  $r=2^{-j}$ and estimate $\cp(B(x,r)\setm E,B(x,2r))$.

We shall first estimate how many balls $B(q,r_k)$, with $q\in Q_k$ and
$k<j$ (i.e.\ $a_k\ge2r$), can intersect $B(x,r)$.  
Since for every $q\in Q_k$, $k=1,2,\ldots$, we have
\[
\dist(x,B(q,r_k))\ge \eps a_k^{1+\theta},
\]
the intersection will be nonempty only if
$\eps 2^{-k(1+\theta)} < 2^{-j}$. 
This is equivalent to 
\begin{equation}  \label{eq-how-many-k}
k > \frac{1}{1+\theta} (j+\log_2 \eps)
\ge \frac{(1-\theta^2)j}{1+\theta} = {(1-\theta)j},
\end{equation}
provided that 
\begin{equation} \label{eq-jeps}
j\ge \frac{1}{\theta^2} (-\log_2 \eps).
\end{equation}
In particular, for each $\eps$ and $\theta$ there exists $j_\eps$ such
that \eqref{eq-jeps} holds for all $j\ge j_\eps$.

Moreover, for each $k<j$ as in \eqref{eq-how-many-k}, 
there are at most $2^n$ balls $B(q,r_k)$, $q\in Q_k$, intersecting $B(x,r)$,
since $a_k \ge2r$.
By  \eqref{cp-Rn-est1} their total capacity is at most
\(
C 2^n r_k^{n-p}.
\)
Summing up over all $k\in\N$, such that $(1-\theta)j < k <j$, yields
the estimate for the capacity 
\begin{align}
\sum_{(1-\theta)j < k <j} C r_k^{n-p}
&= C\de^{n-p} \sum_{(1-\theta)j < k <j} 2^{-k\al(n-p)} 
\le C\de^{n-p} 2^{-j\al(1-\theta)(n-p)}. 
\label{eq-est-cap-small-k}
\end{align}
Let now $k\ge j$, i.e.\ $a_k\le r$.
For each such $k$, there are at most $(4r/a_k)^n$ balls $B(q,r_k)$, $q\in Q_k$,
intersecting $B(x,r)$.
Their total capacity is at most
\[
C \biggl(\frac{4r}{a_k}\biggr)^n r_k^{n-p}
\le C 2^{n(k-j)} \de^{n-p} 2^{-k\al(n-p)}.
\]
Summing up over all $k=j,j+1,\ldots$ and combining this with 
\eqref{eq-est-cap-small-k} yields for $r=2^{-j}$, $j\ge j_\eps$,
\[
\cp(B(x,r)\setm E,B(x,2r)) 
\le C\de^{n-p} \biggl( 2^{-j\al(1-\theta)(n-p)} 
+ 2^{-jn} \sum_{k=j}^\infty 2^{-k(\al(n-p)-n)} \biggr).
\]
As $\al>n/(n-p)$, the last series converges with the sum
$C2^{jn-j\al(n-p)}$ and we conclude that
\begin{align*}
\cp(B(x,r)\setm E,B(x,2r)) 
&\le C\de^{n-p} 2^{-j\al(1-\theta)(n-p)}.
\end{align*}
Inserting this and \eqref{cp-Rn-est2} into \eqref{eq-show-thin-at-x-j-eps} shows that
for each $x\in E_\eps$ the sum in \eqref{eq-show-thin-at-x-j-eps}
is majorized by
\begin{align*}   
\sum_{j=j_\eps}^\infty \biggl(\frac{C\de^{n-p}2^{-j\al(1-\theta)(n-p)}}
        {2^{-j(n-p)}}\biggr)^{1/(p-1)} 
&= C \de^{(n-p)/(p-1)} \sum_{j=j_\eps}^\infty 2^{-j(\al(1-\theta)-1)(n-p)/(p-1)}\\
&<\infty,
\end{align*}
since $\al(1-\theta)>1$.

Thus, $X\setm E$ is thin at each $x\in E_\eps$ and 
Proposition~\ref{prop-fineint-locally} shows that
\[
\bigcup_{\eps>0}E_\eps\subset\fineint E.
\]
Hence $m(E\setm\fineint E)=0$ and Theorem~\ref{thm-E-E0-probl-same}
implies that the minimal \p-weak upper
gradients with respect to $E$ and $\R^n$
coincide, i.e.\  for every $u\in\Dp(\R^n)$,
\[
g_{u,\fineint E} = g_{u,E} = g_{u,\R^n} = |\grad u| \quad \text{a.e.\ in } E.
\] 
Moreover, by Theorem~\ref{thm-nontrivial2}, $\Np_0(E)$ is nontrivial
and solutions of obstacle and Dirichlet problems on $E$ are in general
not equal to their boundary data.
By Theorem~\ref{thm-E-E0-probl-same} again, the solutions of
the $\K_{\psi_1,\psi_2,f}(E)$-
and $\K_{\psi_1,\psi_2,f}(\fineint E)$-obstacle problems coincide
and have the same energies. 
\end{example}

The following example is a modification of Example~\ref{ex-emental}
for $p=n$.
In particular, it covers the classical situation $p=n=2$.

\begin{example} \label{ex-emental-p=n}
If $p=n$, let $E$ and $E_\eps$ be as in Example~\ref{ex-emental}
but with $r_k=\de 2^{-2^{k\al}}$ for some $\al>n/(n-1)$.
As in Example~\ref{ex-emental}, $E\subset\R^n$ is a nowhere dense
closed set and
\[
m([0,1]^n\setm E) \le C \de^n \sum_{k=1}^\infty 2^{kn-n2^{k\al}}
\le C\de^n.
\]
That $m(E\setm E_\eps)\le C\eps\to0$, as $\eps\to0$, is shown exactly
as in Example~\ref{ex-emental}.
(This time it is enough to require that $0<\theta<1$.)

To show that $\bigcup_{\eps>0}E_\eps\subset\fineint E$,
let $x\in E_\eps$ be fixed and $r=2^{-j}$, $j=1,2,\ldots$\,.
As in Example~\ref{ex-emental}, the ball $B(q,r_k)$ with $k<j$ intersects 
$B(x,r)$ only if 
\begin{equation*}  
k > \frac{1}{1+\theta} (j+\log_2 \eps)
\ge {(1-\theta)j}
\quad \text{provided that} \
j\ge j_\eps \ge \frac{1}{\theta^2} (-\log_2 \eps),
\end{equation*}
and for each such $k$ there are at most $2^n$ such balls.
By \eqref{cp-Rn-est1} each of these balls has capacity at most
\begin{align*}
C \biggl(\log\frac{2r}{r_k} \biggr)^{1-n} &= C (1-j-\log_2\de+2^{k\al})^{1-n}
\le C 2^{-k\al(n-1)}.
\end{align*}
The total capacity of all such balls with $(1-\theta)j < k <j$
and $B(q,r_k)\cap B(x,r)\ne\emptyset$ is therefore at most
\begin{align}
\sum_{(1-\theta)j < k <j} C 2^{-k\al(n-1)}
&\le C 2^{-j\al(1-\theta)(n-1)}. 
\label{eq-est-cap-small-k-p=n}
\end{align}
Now, for each $k\ge j$, there are at most $(4r/a_k)^n$ balls 
$B(q,r_k)$, $q\in Q_k$,
intersecting $B(x,r)$ and their total capacity is at most
\[
C\biggl(\frac{4r}{a_k}\biggr)^n \biggl(\log\frac{2r}{r_k} \biggr)^{1-n}
\le C 2^{n(k-j)} 2^{-k\al(n-1)}.
\]
Summing up over all $k=j,j+1,\ldots$ and combining this with 
\eqref{eq-est-cap-small-k-p=n} yields for $r=2^{-j}$, $j\ge j_\eps$,
\[
\cp(B(x,r)\setm E,B(x,2r)) 
       \le C 2^{-j\al(1-\theta)(n-1)} 
          + 2^{-jn} \sum_{k=j}^\infty 2^{-k(\al(n-1)-n)}.
\]
As $\al>n/(n-1)$, the last series converges with the sum 
$C2^{jn-j\al(n-1)}$ and we conclude that
\begin{align*}
\cp(B(x,r)\setm E,B(x,2r)) 
&\le C 2^{-j\al(1-\theta)(n-1)}.
\end{align*}
Inserting this and \eqref{cp-Rn-est2} into \eqref{eq-show-thin-at-x-j-eps} shows that
for each $x\in E_\eps$ the sum in \eqref{eq-show-thin-at-x-j-eps}
is majorized by
\begin{align*}   
\sum_{j=j_\eps}^\infty (C2^{-j\al(1-\theta)(n-1)})^{1/(n-1)} 
&= C \sum_{j=j_\eps}^\infty 2^{-j\al(1-\theta)}
<\infty.
\end{align*}
Thus, $X\setm E$ is thin at each $x\in E_\eps$ and 
Proposition~\ref{prop-fineint-locally} shows that
$\bigcup_{\eps>0}E_\eps\subset\fineint E$.
Hence $m(E\setm\fineint E)=0$ and Theorem~\ref{thm-E-E0-probl-same}
implies that the minimal \p-weak upper
gradients with respect to $E$ and $\R^n$
coincide a.e.\ on $E$.
By Theorem~\ref{thm-nontrivial2}, $\Np_0(E)$ is nontrivial
and solutions of obstacle and Dirichlet problems on $E$ are in general
not equal to their boundary data.
By Theorem~\ref{thm-E-E0-probl-same} again, also the solutions of
the $\K_{\psi_1,\psi_2,f}(E)$-
and $\K_{\psi_1,\psi_2,f}(\fineint E)$-obstacle problems coincide
and have the same energies. 
\end{example}

\section{Further examples}
\label{sect-an ex}

Let $X=\R^2$ be equipped with $d\mu=dx+\alp\,dx_1$, where
$dx$ is the $2$-dimensional Lebesgue measure on $\R^2$, 
$dx_1$ is the $1$-dimensional Lebesgue measure on $\R$ (extended
as the zero measure on $\R^2 \setm \R$),
and $\alp$ is a positive real constant.

\begin{prop} \label{prop-R2-R}
Let $u \in \Np(X) $. Then the function
\begin{equation} \label{eq-gu}
     \gt_u=\begin{cases}
       |\nabla u| & \text{in } \R^2 \setm \R, \\
       |\partial_1 u| & \text{in } \R,
       \end{cases}
\end{equation}
is a minimal \p-weak upper gradient of $u$ with respect to $\mu$. 
Here\/ $\nabla u$ is the distributional gradient on\/ $\R^2$
and $\partial_1 u$ is the distributional derivative on\/ $\R$.
\end{prop}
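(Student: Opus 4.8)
The plan is to establish the two $\mu$-a.e.\ inequalities $\gt_u\le g_u$ and $g_u\le\gt_u$, where $g_u=g_{u,X}$ denotes the minimal \p-weak upper gradient of $u$ with respect to $\mu$ (it exists since $u\in\Np(X)\subset\Dploc(X)$). Together they say that $\gt_u$ is a minimal \p-weak upper gradient of $u$, which is the assertion.

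For $\gt_u\le g_u$ I would argue separately on $\R^2\setm\R$ and on the line $L=\R\times\{0\}$. The set $\R^2\setm\R$ is open in $X$ and $\mu$ restricted to it is the $2$-dimensional Lebesgue measure; since open sets are \p-path open, Proposition~\ref{prop-gu-on-p-path-open} gives $g_{u,\R^2\setm\R}=g_u$ a.e.\ there, and as $\R^2\setm\R$ is an open subset of unweighted $\R^2$, the Euclidean identity for minimal \p-weak upper gradients (Appendix~A.1 in \cite{BBbook}) yields $g_u=|\nabla u|$ a.e.\ in $\R^2\setm\R$. On $L$ note that $\mu|_L=\alp\,dx_1$ and that any curve with image in $L$ is, after identifying $L$ with $\R$, an arc-length parametrized curve in $\R$; hence a curve family in $L$ of zero \p-modulus with respect to $\mu$ projects to a family of zero \p-modulus in $\R$ (restrict an admissible function to $L$). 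Consequently $g_u|_L\in\Lploc(\R)$ (in fact in $L^p(\R)$) is a \p-weak upper gradient of $u|_L$ in $\R$, so $u|_L\in\Dploc(\R)$ and the one-dimensional Euclidean identity gives $|\partial_1 u|=g_{u|_L,\R}\le g_u|_L$ a.e.\ on $L$. Combining the two cases, $\gt_u\le g_u$ $\mu$-a.e.

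For $g_u\le\gt_u$ it is enough to check that $\gt_u$ is a \p-weak upper gradient of $u$ with respect to $\mu$. First I would collect an exceptional family $\Gamma_0$ of zero \p-modulus: curves on which \eqref{ug-cond} fails for $u$ and $g_u$; curves $\ga$ with $\int_\ga g_u\,ds=\infty$; curves along which $u$ is not absolutely continuous; curves having a subcurve with image in $\R^2\setm\R$ on which \eqref{ug-cond} fails for $u|_{\R^2\setm\R}$ and $|\nabla u|$; and curves having a subcurve with image in $L$ on which \eqref{ug-cond} fails for $u|_L$ and $|\partial_1 u|$. The last two families have zero \p-modulus with respect to $\mu$ because the corresponding bad families (in $\R^2\setm\R$, resp.\ in $\R$) have zero \p-modulus with respect to Lebesgue measure, and an admissible function there, extended by $0$ onto $L$ (resp.\ off $L$), is admissible with respect to $\mu$ with the same $L^p$-norm; passing to curves with a subcurve in a family preserves zero \p-modulus. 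Then, for a curve $\ga:[0,l_\ga]\to X$ with no subcurve in $\Gamma_0$, I would write $h=u\circ\ga$, $U=\ga^{-1}(\R^2\setm\R)=\bigcup_j(a_j,b_j)$ (relatively open) and $F=\ga^{-1}(L)$ (relatively closed). On $U$, applying \eqref{ug-cond} for $u|_{\R^2\setm\R}$ and $|\nabla u|$ to subintervals of the $(a_j,b_j)$ and differentiating gives $|h'|\le|\nabla u|\circ\ga=\gt_u\circ\ga$ a.e.\ on $U$. On $F$, for a.e.\ $\tau$ the point $\tau$ is a Lebesgue density point of $F$ and $\ga=(\ga_1,\ga_2)$ and $h$ are differentiable at $\tau$ with $|\ga'(\tau)|=1$; since $F=\ga_2^{-1}(0)$ and $\tau$ is a density point, $\ga_2'(\tau)=0$, so $|\ga_1'(\tau)|=1$; a Banach indicatrix (area formula) argument for the Lipschitz map $\ga_1$ shows that for a.e.\ such $\tau$ the value $\ga_1(\tau)$ avoids the $dx_1$-null set where $u|_L$ fails to be differentiable with derivative of modulus $|\partial_1 u|$, and computing $h'(\tau)$ through points of $F$ then gives $h'(\tau)=(u|_L)'(\ga_1(\tau))\,\ga_1'(\tau)$, whence $|h'(\tau)|=|\partial_1 u|(\ga(\tau))=\gt_u(\ga(\tau))$. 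Thus $|h'|\le\gt_u\circ\ga$ a.e.\ on $[0,l_\ga]$, and absolute continuity of $h$ yields
\[
   |u(\ga(0))-u(\ga(l_\ga))|=\Bigl|\int_0^{l_\ga}h'\,d\tau\Bigr|\le\int_0^{l_\ga}\gt_u(\ga(\tau))\,d\tau=\int_\ga\gt_u\,ds,
\]
so $\gt_u$ is a \p-weak upper gradient of $u$ and $g_u\le\gt_u$ a.e.; with the previous step this gives $g_u=\gt_u$ $\mu$-a.e.

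The hard part will be the analysis on the closed set $F=\ga^{-1}(L)$: unlike $\R^2\setm\R$, the line $L$ is not \p-path open, and in general not even \p-path almost open, since a curve can run along $L$ on a set of positive length; thus Proposition~\ref{prop-gu-on-p-path-open} is unavailable for $L$, and the bound $|h'|\le\gt_u\circ\ga$ on $F$ must be proved by hand through density points, a.e.\ differentiability of $\ga$ and $h$, and the multiplicity (Banach indicatrix) estimate for $\ga_1$.
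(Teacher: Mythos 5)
Your proof is correct, but the second (and harder) half takes a genuinely different route from the paper's. For the inequality $\gt_u\le g_u$ you argue by restricting curve families to $\R^2\setm\R$ and to $\R$; this is in substance the paper's Lemma~\ref{lem-mu1-mu2} (applied with $dx\le d\mu$ and $dx_1\le d\mu$), so the two arguments coincide here. For the converse inequality the paper does \emph{not} verify \eqref{ug-cond} curvewise for $\gt_u$: since $|\nabla u|$ is only a \p-weak upper gradient, it approximates it in $L^p(\R^2,dx)$ by genuine upper gradients $\gt$, glues $\gt$ with $|\partial_1 u|$, and shows the glued function is a genuine upper gradient of $u$ for \emph{every} curve by replacing the portion of the curve off finitely many excursions into $\R^2\setm\R$ with its projection onto $\R$, passing to a loop-erased simple curve (Lemma~\ref{lem-loop-erased}, via Ascoli) to justify a dominated convergence step, and then invoking the $L^p$-closure characterization of \p-weak upper gradients. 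You instead verify \eqref{ug-cond} directly for \p-a.e.\ curve by differentiating $h=u\circ\ga$ a.e., handling the open set $\ga^{-1}(\R^2\setm\R)$ by the subcurve inequality and the closed set $F=\ga^{-1}(\R)$ by density points, $\ga_2'=0$ on $F$, the area formula for $\ga_1$ to dodge the $dx_1$-null bad set of $u|_\R$, and the chain rule through $F$. Both are sound; your route avoids loop-erasure, projections and the approximation by genuine upper gradients, and makes the pointwise mechanism $|h'|\le\gt_u\circ\ga$ explicit, but it leans on the differentiable structure of $(\R,dx_1)$ (a.e.\ differentiability of the absolutely continuous $u|_\R$ with $|(u|_\R)'|=|\partial_1 u|$), so it does not directly give the paper's subsequent generalization to $d\mu=dx+d\nu$ with a general Borel measure $\nu$ on $\R$, which the closure argument delivers verbatim by approximating $g_{u,\nu}$ instead of $|\partial_1 u|$. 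Two small points to tidy up if you write this in full: put the curves with $\int_\ga\gt_u\,ds=\infty$ (rather than $\int_\ga g_u\,ds=\infty$) into the exceptional family, which is legitimate since $\gt_u\in L^p(X,\mu)$; and note that $\ga^{-1}(\R^2\setm\R)$ is only relatively open in $[0,l_\ga]$, so the decomposition into intervals may include half-open end pieces, which your argument handles but should acknowledge.
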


Observe that $u \in \Np(\R^2,dx) \subset W^{1,p}(\R^2)$,
and thus $u$ has a distributional gradient.
Similarly, $u|_\R \in \Np(\R,dx_1)$ is absolutely continuous on $\R$ 
and has a distributional derivative there. 
To prove Proposition~\ref{prop-R2-R} we need the following two auxiliary results
which hold for arbitrary metric spaces $X$.

\begin{lem} \label{lem-loop-erased}
Any\/ \textup{(}rectifiable\/\textup{)} curve $\ga:[0,l_\ga] \to X$
has an associated loop-erased simple curve $\gat:[0,l_{\gat}] \to X$.
\end{lem}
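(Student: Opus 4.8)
The plan is to construct $\gat$ by "loop-erasing" $\ga$ in a transfinite/greedy fashion: whenever $\ga$ revisits a point it has already passed, we cut out the intervening loop. First I would set up the natural preorder on $[0,l_\ga]$ coming from $\ga$: for $s\le t$ say that $[s,t]$ is a \emph{loop interval} if $\ga(s)=\ga(t)$. The key object is, for each $t\in[0,l_\ga]$, the quantity
\[
   \tau(t)=\inf\{s\in[0,t] : \ga(s)=\ga(t)\},
\]
i.e.\ the first time $\ga$ hits the point $\ga(t)$. I would then let $R=\{t\in[0,l_\ga] : \tau(t)=t\}$ be the set of "first-visit" parameters; this set contains $0$ and $l_\ga$ (the latter because if $\ga(l_\ga)=\ga(s)$ for some $s<l_\ga$ we may simply replace $\ga$ by $\ga|_{[0,s]}$ first, or absorb this into the construction). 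The set $R$ need not be closed, so the second step is to take its closure $\itoverline{R}$ and show that the complement $[0,l_\ga]\setminus\itoverline{R}$ is a countable union of open intervals $(a_j,b_j)$ on whose endpoints $\ga$ agrees, $\ga(a_j)=\ga(b_j)$ — this uses continuity of $\ga$ together with the definition of $\tau$ and a short argument that points of $\itoverline R\setminus R$ are limits of first-visit times from the left and hence the "gap" they bound is a genuine loop.

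Next I would define $\gat$ as the curve obtained from $\ga$ by collapsing each gap interval $(a_j,b_j)$: formally, put $\psi(t)=\mathcal{H}^1\bigl(\itoverline R\cap[0,t]\bigr)$ (the length of the retained parameter set up to time $t$), set $l_{\gat}=\psi(l_\ga)$, and define $\gat(\psi(t))=\ga(t)$ for $t\in\itoverline R$. Here I must check that this is well defined and continuous: on each gap $\ga$ returns to the same point, so $\psi$ being constant there causes no conflict, and continuity of $\gat$ follows from uniform continuity of $\ga$ plus the fact that $\psi$ is $1$-Lipschitz and nondecreasing. That $\gat$ is rectifiable with $l_{\gat}\le l_\ga$ is immediate since it is a sub-curve reparametrization. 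Injectivity is the heart of the matter: if $\gat$ took the same value at two distinct retained times $s<t$ in $\itoverline R$, then $\ga(s)=\ga(t)$, so $t$ is not a first-visit time, i.e.\ $t\notin R$; one then has to rule out $t\in\itoverline R\setminus R$, which follows because such $t$ is approached from the right by gap endpoints $b_j$ with $b_j<t$ and $\ga(b_j)\to\ga(t)=\ga(s)$, forcing $b_j$ (or a nearby first-visit time) to lie in a loop ending before $t$, contradicting that the parameters between $s$ and $t$ were retained. (A cleaner variant: observe directly that on $\itoverline R$ the map $t\mapsto\tau(t)$ is the identity off a set of measure zero and use this to get injectivity $\mathcal{H}^1$-a.e., then upgrade to genuine injectivity by a final collapsing of any residual null set of coincidences.)

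The main obstacle I anticipate is precisely this last injectivity/measure-zero bookkeeping: the first-visit set $R$ can be topologically nasty (a general closed set of positive measure can occur, and $\itoverline R\setminus R$ can be uncountable), so one cannot simply erase countably many loops and be done. The honest way around it is either (i) a transfinite induction erasing loops in order of decreasing length — at each stage finitely many loops of length $\ge 1/n$ are erased, the process terminates, and a compactness argument shows the limit curve is simple — or (ii) the reparametrization-by-$\psi$ approach above combined with a careful proof that two retained parameters with the same image must be separated only by erased loops, hence identified, hence not actually distinct in $\gat$. I would present approach (i) if a fully rigorous but short argument is wanted, since "erase the longest remaining loop, repeat" is visibly finite at each length scale and its bookkeeping is the least delicate; the rectifiability bound $l_{\gat}\le l_\ga$ is preserved at every stage, and the simplicity of the final curve is what the subsequent arguments in the paper actually need.
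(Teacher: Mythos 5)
The approach you ultimately commit to---repeatedly erasing the longest remaining loop and, if the process does not terminate, passing to a limit curve by a compactness (Ascoli) argument---is exactly the paper's proof, including the same finiteness mechanism (only finitely many loops of length at least $1/n$ can be removed, since each removal strictly decreases the finite total length). Your alternative first-visit-time construction via $\tau$ and the retained set $R$ is a genuinely different and more explicit route, and you correctly diagnose that its delicate point is the injectivity bookkeeping on $\itoverline{R}\setminus R$; but since you do not carry that variant through and instead fall back on the longest-loop iteration, the operative argument coincides with the paper's.
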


A loop along the curve $\ga$ is a part $\ga|_{[t_0,t_1]}$ such that
$0 \le t_0 < t_1 \le l_\ga$ and $\ga(t_0)=\ga(t_1)$. 
Such a part can be removed 
by redefining $\ga(t)=\ga(t_0)$ for $t_0<t<t_1$.
By doing this iteratively in an appropriate way and then reparameterizing
(see below)
we can obtain a loop-free (i.e.\ simple) curve $\gat\subset\ga$
such that in particular
$\int_{\gat} g \, ds \le \int_{\ga} g \,ds$ for all nonnegative Borel functions
$g$.
Note that a curve may have several different associated loop-erased simple
curves.

\begin{proof}
As the length of $\ga$ is finite there is a longest loop
(it may not be unique), unless $\ga$ is already loop-free. 
Remove it, as described above, and call the resulting curve $\ga_1$.
Repeat the procedure to produce $\ga_2$, $\ga_3$ etc.
This can end after a finite number of steps with $\ga_n$,
which is then (after reparameterization with respect to arc length) 
the desired loop-erased simple curve $\gat$.

Otherwise, we get curves $\ga_j:[0,l_\ga] \to X$, $j=1,2,\ldots$,
which by Ascoli's theorem converge to a curve $\gat$ with the same endpoints.
(Note that here we need a version of Ascoli's theorem valid for metric space
valued equicontinuous functions, see e.g.\ p.\ 169 in 
Royden~\cite{royden}.)
The resulting curve is a $1$-Lipschitz map
which (after reparameterization with respect to arc length) 
is the desired loop-erased simple curve $\gat$.
\end{proof}

\begin{lem}  \label{lem-mu1-mu2}
Let $X$ be equipped with two measures $\mu_1$ and $\mu_2$ such that 
$\mu_1\le\mu_2$.
Then $\Np(X,\mu_2)\subset\Np(X,\mu_1)$ and for every $u\in\Np(X,\mu_2)$,
the minimal \p-weak upper gradients 
with respect to $\mu_1$ and $\mu_2$ satisfy $g_{u,\mu_1}\le g_{u,\mu_2}$
$\mu_1$-a.e.
\end{lem}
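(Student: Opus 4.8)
The plan is to exploit the monotonicity of both the $L^p$-norm and the $p$-modulus of curve families under $\mu_1\le\mu_2$. First I would record two elementary consequences of the hypothesis. (i) For every nonnegative measurable $h$ and every measurable set $A$ one has $\int_A h^p\,d\mu_1\le\int_A h^p\,d\mu_2$, so $L^p(X,\mu_2)\subset L^p(X,\mu_1)$ and $\Lploc(X,\mu_2)\subset\Lploc(X,\mu_1)$; moreover, if a curve family $\Ga$ has zero $p$-modulus with respect to $\mu_2$, witnessed by some $0\le\rho\in L^p(X,\mu_2)$ with $\int_\ga\rho\,ds=\infty$ for all $\ga\in\Ga$, then the same $\rho$ lies in $L^p(X,\mu_1)$ and witnesses that $\Ga$ has zero $p$-modulus with respect to $\mu_1$; hence any statement holding for $p$-almost every curve with respect to $\mu_2$ also holds for $p$-almost every curve with respect to $\mu_1$. (ii) Every $\mu_2$-null set is $\mu_1$-null, so the $\mu_1$-completion of the Borel $\sigma$-algebra contains the $\mu_2$-completion; in particular every $\mu_2$-measurable function is $\mu_1$-measurable.

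Next I would check that $g_{u,\mu_1}$ is even defined for $u\in\Np(X,\mu_2)$. By definition $u$ has a genuine (Borel) upper gradient $g_0\in\Lploc(X,\mu_2)$; since the upper gradient inequality involves no measure, $g_0$ is an upper gradient of $u$ with $g_0\in\Lploc(X,\mu_1)$ by (i), and $u$ is $\mu_1$-measurable by (ii). Hence $u$ has a minimal $p$-weak upper gradient $g_{u,\mu_1}\in\Lploc(X,\mu_1)$ with respect to $\mu_1$.

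The core step is to verify that $g_{u,\mu_2}$ is a $p$-weak upper gradient of $u$ with respect to $\mu_1$. It is nonnegative, it is $\mu_1$-measurable and lies in $\Lploc(X,\mu_1)$ by (i)--(ii) (so that $\int_\ga g_{u,\mu_2}\,ds$ is defined for $p$-almost every curve with respect to $\mu_1$), and the upper gradient inequality \eqref{ug-cond} for the pair $(u,g_{u,\mu_2})$ holds for $p$-almost every curve with respect to $\mu_2$, hence for $p$-almost every curve with respect to $\mu_1$ by (i). Minimality of $g_{u,\mu_1}$ then gives $g_{u,\mu_1}\le g_{u,\mu_2}$ $\mu_1$-a.e. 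Combining the pieces yields the inclusion: $\int_X|u|^p\,d\mu_1\le\int_X|u|^p\,d\mu_2<\infty$ and $\int_X g_{u,\mu_1}^p\,d\mu_1\le\int_X g_{u,\mu_2}^p\,d\mu_1\le\int_X g_{u,\mu_2}^p\,d\mu_2<\infty$, so $\|u\|_{\Np(X,\mu_1)}<\infty$ and $u\in\Np(X,\mu_1)$.

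I do not expect a genuinely hard step. The only points that require care are bookkeeping ones: getting the directions right, namely that $\mu_1\le\mu_2$ makes the $\mu_1$-norm the \emph{smaller} one while making the collection of exceptional (zero $p$-modulus) curve families \emph{larger}, so that a $p$-weak upper gradient for $\mu_2$ is automatically one for $\mu_1$; and checking that $g_{u,\mu_2}$ remains measurable for the possibly larger $\mu_1$-completed $\sigma$-algebra, which is immediate from (ii).
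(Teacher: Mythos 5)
Your proposal is correct and follows essentially the same route as the paper's proof: the inclusion comes from the measure-independence of upper gradients together with the monotonicity of the $L^p$-norms, and the gradient comparison comes from observing that $\Mod_{p,\mu_1}(\Ga)=0$ whenever $\Mod_{p,\mu_2}(\Ga)=0$, so that $g_{u,\mu_2}$ is a \p-weak upper gradient with respect to $\mu_1$ and minimality gives the inequality. Your extra bookkeeping on measurability and on the definedness of $g_{u,\mu_1}$ is careful but does not change the argument.
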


\begin{proof}
The inclusion $\Np(X,\mu_2)\subset\Np(X,\mu_1)$ follows directly from the 
fact that upper gradients do not depend on the underlying measure and 
that $\Np(X,\mu_j)$, $j=1,2$, can be defined only using upper gradients.

To compare the minimal \p-weak upper gradients, let 
$u\in\Np(X,\mu_2)\subset\Np(X,\mu_1)$.
It is easily verified that $\Mod_{p,\mu_1}(\Ga)=0$
whenever $\Mod_{p,\mu_2}(\Ga)=0$.
Hence, the minimal \p-weak upper gradient $g_{u,\mu_2}$ of $u$
with respect to $\mu_2$ is a \p-weak upper gradient of $u$
with respect to $\mu_1$ and we  conclude that 
$g_{u,\mu_1}\le g_{u,\mu_2}$ $\mu_1$-a.e.\ in $X$.
\end{proof}

\begin{cor}  \label{cor-PI-sum-mu1-mu2}
Let $\mu_1$ and $\mu_2$ be two measures on $X$ which support 
$(p,p)$-Poincar\'e inequalities for $\Np_0$.
Then so does the measure $\mu=\mu_1+\mu_2$.
\end{cor}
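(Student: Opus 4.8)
The plan is to deduce the defining inequality~\eqref{PI-NP0} for $\mu$ from those for $\mu_1$ and $\mu_2$ by splitting the integral on the left according to $\mu=\mu_1+\mu_2$. Fix a bounded set $E\subset X$ with $\Cp(X\setm E)>0$ (capacity with respect to $\mu$) and let $u\in\Np_0(E;\mu)$, extended by $0$ outside $E$, so that $u\in\Np(X,\mu)$. Since $\mu_j\le\mu$, Lemma~\ref{lem-mu1-mu2} gives $u\in\Np(X,\mu_j)$ with $g_{u,\mu_j}\le g_{u,\mu}$ $\mu_j$-a.e., for $j=1,2$; moreover $u$ still vanishes outside $E$, a pointwise statement not involving any measure, so $u\in\Np_0(E;\mu_1)\cap\Np_0(E;\mu_2)$.

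The step I expect to be the main obstacle is to ensure that the Poincar\'e inequality for $\Np_0$ of $\mu_j$ is actually applicable to the set $E$, i.e.\ that $X\setm E$ has positive capacity also with respect to $\mu_1$ and $\mu_2$. When $X$ is unbounded this is automatic, since for bounded $E$ the requirement ``$\Cp(X\setm E)>0$'' is redundant; in particular this covers the setting in which the corollary is used (unweighted $\R^2$ in Section~\ref{sect-an ex}). When $X$ is bounded --- so that $\mu(X)<\infty$, and hence $\mu_1(X),\mu_2(X)<\infty$ --- it is genuinely delicate, because Lemma~\ref{lem-mu1-mu2} only yields that the $\mu_j$-capacity of a set is dominated by its $\mu$-capacity, which is the wrong direction. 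One must then argue by contradiction: if $X\setm E$ had zero capacity with respect to, say, $\mu_1$, then $\mu_1(X\setm E)=0$, and using that $\mu_1$ and $\mu_2$ \emph{both} support the Poincar\'e inequality for $\Np_0$ together with the finiteness of $\mu_j(X)$ one rules this out against the assumption $\Cp(X\setm E)>0$ for $\mu$. This reduction is the part where most of the work lies; once it is done, $X\setm E$ has positive capacity with respect to $\mu_1$ and to $\mu_2$.

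Granted this, I would apply the $(p,p)$-Poincar\'e inequality for $\Np_0$ of $\mu_j$ to the set $E$ to obtain a constant $C_{E,j}>0$ with
\[
\int_X|u|^p\,d\mu_j\;\le\;C_{E,j}\int_X g_{u,\mu_j}^p\,d\mu_j\;\le\;C_{E,j}\int_X g_{u,\mu}^p\,d\mu_j\;\le\;C_{E,j}\int_X g_{u,\mu}^p\,d\mu,
\]
where the middle inequality uses $g_{u,\mu_j}\le g_{u,\mu}$ $\mu_j$-a.e.\ and the last one uses $\mu_j\le\mu$. Adding the estimates for $j=1$ and $j=2$,
\[
\int_X|u|^p\,d\mu=\int_X|u|^p\,d\mu_1+\int_X|u|^p\,d\mu_2\;\le\;(C_{E,1}+C_{E,2})\int_X g_{u,\mu}^p\,d\mu,
\]
which is~\eqref{PI-NP0} for $\mu$ with $C_E:=C_{E,1}+C_{E,2}$. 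Since $E$ and $u\in\Np_0(E;\mu)$ were arbitrary, $\mu$ supports a $(p,p)$-Poincar\'e inequality for $\Np_0$, as claimed.
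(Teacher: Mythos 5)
Your main chain of inequalities is exactly the paper's proof: split $\|u\|_{L^p(X,\mu)}^p$ into its $\mu_1$- and $\mu_2$-parts, use Lemma~\ref{lem-mu1-mu2} to get $g_{u,\mu_j}\le g_{u,\mu}$ $\mu_j$-a.e.\ and hence $\int_X g_{u,\mu_j}^p\,d\mu_j\le\int_X g_{u,\mu}^p\,d\mu$, apply the two $(p,p)$-Poincar\'e inequalities for $\Np_0$, and add. The paper's proof consists of precisely these steps and nothing more.

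The one place where you go beyond the paper is the applicability question: whether $\Cp(X\setm E)>0$ holds also with respect to $\mu_1$ and $\mu_2$, which is the hypothesis of Definition~\ref{def-relax-PI} for those measures. This is a legitimate observation that the paper passes over in silence, and your remark that it is automatic for unbounded $X$ is correct and covers every use of the corollary in the paper (Section~\ref{sect-an ex}, where $X$ is $\R^2$ or $\R$). For bounded $X$, however, your contradiction argument is only asserted, not carried out, and as sketched it does not close: from $\Cp(X\setm E)=0$ with respect to $\mu_1$ you may conclude $\mu_1(X\setm E)=0$, but a set of measure zero can perfectly well have positive capacity, so this is no contradiction with $\Cp(X\setm E)>0$ for $\mu$; nor can you invoke the $\mu_1$-Poincar\'e inequality on $E$ to derive anything, since its hypothesis is exactly what is assumed to fail. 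Note also that you would need positivity of the capacity of $X\setm E$ for \emph{both} $\mu_1$ and $\mu_2$, since each summand $\int_X|u|^p\,d\mu_j$ is estimated separately. To settle the bounded case you would either have to prove that $\Cp(X\setm E)>0$ for $\mu$ forces the same for each $\mu_j$, or build that positivity into the hypotheses. Since the paper's own proof has the same lacuna, this does not invalidate your argument relative to the paper, but you should not present the bounded case as established.
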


\begin{proof}
Lemma~\ref{lem-mu1-mu2} shows that $g_{u,\mu_j}\le g_{u,\mu}$ $\mu_j$-a.e. in $X$,
$j=1,2$.
Hence
\[
\int_X g_{u,\mu_j}^p \,d\mu_j\le \int_X g_{u,\mu}^p \,d\mu_j 
\le \int_X g_{u,\mu}^p \,d\mu, \quad j=1,2.
\]
The $(p,p)$-Poincar\'e inequalities for $\Np_0$ with respect to
$\mu_1$ and $\mu_2$, together with
\[
\|u\|^p_{L^p(X,\mu)} = \|u\|^p_{L^p(X,\mu_1)} + \|u\|^p_{L^p(X,\mu_2)},
\]
then finish the proof.
\end{proof}

\begin{proof} [Proof of Proposition~\ref{prop-R2-R}]
Since $d\mu\ge dx$ on $\R^2$ and $d\mu\ge dx_1$ on $\R$,
Lemma~\ref{lem-mu1-mu2} implies that the minimal \p-weak upper gradient 
with respect to $\mu$ satisfies $g_u\ge\gt_u$ $\mu$-a.e.
It is therefore enough to show that $\gt_u$ itself is also 
a \p-weak upper gradient of $u$ with respect to $\mu$.
This will be done by showing that it belongs to the $L^p(X)$-closure
of the set of upper gradients of $u$.
Proposition~2.10 in Bj\"orn--Bj\"orn~\cite{BBbook} then shows that
$\gt_u$ is a \p-weak upper gradient of $u$ with respect to $\mu$.

Let $\eps>0$.
As $|\grad u|$ is a minimial \p-weak upper gradient of $u$ with respect to
$dx$, we can find an upper gradient $\gt \in L^p(\R^2,dx)$ of $u$ 
such that $\|\gt-|\grad u|\|_{L^p(\R^2,dx)} < \eps$.
Let 
\[
     g=\begin{cases}
       \gt & \text{in } \R^2 \setm \R, \\
       |\partial_1 u| & \text{in } \R.
       \end{cases}
\]
Then $\|g-g_u\|_{L^p(X)} = \|\gt-g_u\|_{L^p(\R^2,dx)} < \eps$.
We shall show that $g$ is an upper gradient of $u$ in $\R^2$.
We may require $\partial_1 u$ above to be a Borel function on $\R$,
by Proposition~1.2 in Bj\"orn--Bj\"orn~\cite{BBbook}.
Since $\gt$ is a Borel function, so is $g$.

Let $\ga: [0, l_\ga] \to X$ be a curve.
If $\ga \subset \R$, then
\[
        |u(\gamma(0)) - u(\gamma(l_{\gamma}))| 
     \le \int_{\gamma} |\partial_1 u|\,ds
      =  \int_{\gamma} g\,ds.
\]
Similarly, if 
$\{t: \ga(t) \in \R\}$ 
is a finite set,
then 
\[
        |u(\gamma(0)) - u(\gamma(l_{\gamma}))| 
     \le \int_{\gamma} \gt \,ds
      =  \int_{\gamma} g\,ds.
\]
After possibly splitting any other curve into at most four
 parts we may
assume that $\ga(0),\ga(l_\ga) \in \R$, $\ga(0) \ne \ga(l_\ga)$, 
 but $\ga \not\subset \R$.

Let $G=\{t:\ga(t) \in \R^2 \setm \R\}$ which is a nonempty open
subset of $(0,l_\ga)$.
It can thus be written as a pairwise disjoint  union
 $\bigcup_{i=1}^{\infty} I_i$
of open intervals. 
(Here we allow some of the intervals $I_i$ to be empty.)
For a fixed $n$ let $G_n=\bigcup_{i=1}^n I_i$.
Let $\pi(x,y)=(x,0)$ be the orthogonal projection of $\R^2$ onto $\R$,
and 
\[
     \ga_n(t)= \begin{cases}
           \ga(t), & t \in G_n, \\
           \pi \circ \ga(t), & t \in [0,l_\ga] \setm G_n.
       \end{cases}
\]
Then $\ga_n$ is a rectifiable curve. The given parameterization
may not be arc length, but it is a $1$-Lipschitz map.
Let $\gat_n$ be an associated loop-erased simple curve of $\ga_n$,
given by Lemma~\ref{lem-loop-erased}.
Then $\gat_n$ can be split into at most $2n+1$ subcurves such that
each subcurve either is completely in $\R$, or it hits $\R$ only at its
endpoints.
Denote the union of the former by $\gat_n \cap \R$, and the union
of the latter by $\gat_n \setm \R$.
Note that $\gat_n\setm\R\subset\ga|_{G}$.
Using that these
subcurves have already been treated above, we conclude that
\begin{equation}  \label{eq-cu}
        |u(\gamma(0)) - u(\gamma(l_{\gamma}))| 
            \le \int_{\gat_n \setm \R} g \, ds 
                   + \int_{\gat_n \cap \R} g \, ds 
            \le  \int_{\ga|_{G}} g \, ds 
                   + \int_{\gat_n \cap \R} g \, ds. 
\end{equation}
Since $\gat_n$ is a simple curve we obtain that
\begin{align*}
     \liminf_{n \to \infty} \int_{\gat_n \cap \R} g \, ds
     = \liminf_{n \to \infty} \int_{\R} g \chi_{\gat_n \cap \R} \, dx
     \le  \int_{\R} g \chi_{\ga \cap \R} \, dx
     \le  \int_{\ga|_{[0,l_\ga] \setm G}} g \, ds.
\end{align*}
Here we have used dominated convergence in the middle, which
is justified by the fact that the integrands in the second integral
are dominated by $g \chi_{[-a,a]}$ for some $a >0$, and 
$g \in L^p(\R) \subset L^1\loc(\R)$.
(It is for justifying this dominated convergence we need to use
the loop-erased simple curves.)
We have also used the fact that arc length for projections 
is majorized by arc length of the original curve.

Inserting this into \eqref{eq-cu} shows that
\[
        |u(\gamma(0)) - u(\gamma(l_{\gamma}))| 
            \le  \int_{\ga|_{G}} g \, ds 
                   + \int_{\ga|_{[0,l_\ga] \setm G}} g \, ds
                   = \int_\ga g \, ds.
                   \qedhere
\]
\end{proof}

\begin{remark}
The same proof as in Proposition~\ref{prop-R2-R} shows that if $\nu$ 
is any positive Borel measure on $\R$
satisfying $0<\nu(I)<\infty$ for every finite interval $I$, then 
the function
\begin{equation*} 
     g_{u,\mu}=\begin{cases}
       |\nabla u| & \text{in } \R^2 \setm \R, \\
       g_{u,\nu} & \text{in } \R,
       \end{cases}
\end{equation*}
is a minimal \p-weak upper gradient of $u$ with respect to $d\mu=dx+d\nu$. 
Here\/ $\nabla u$ is the distributional gradient on\/ $\R^2$
and $g_{u,\nu}$ is the minimal \p-weak upper gradient of $u$ on\/ $\R$
with respect to $\nu$.
(In this case, $g$ in the proof of Proposition~\ref{prop-R2-R}
consists of $\gt$ and an upper gradient approximating $g_{u,\nu}$
in $L^p(\R,\nu)$.)
See Proposition~\ref{prop-nu-R} below and the comments after it 
for some results 
on one-dimensional minimal \p-weak upper gradients for different measures.

Note also that by Corollary~\ref{cor-PI-sum-mu1-mu2}, the 
$(p,p)$-Poincar\'e inequality for $\Np_0$ holds for $\mu$, provided it 
holds for $\nu$ on $\R$.
Combined with Proposition~\ref{prop-nu-R}, this provides us with many
examples of non-standard measures on $\R^2$ to which 
a large part of our theory applies.
\end{remark}

With a little bit more work we can show
that the measure $d\mu=dx+\alp\,dx_1$ on $\R^2$ supports a
$(q,p)$-Poincar\'e inequality as in Definition~\ref{def-PI}, not only
a $(p,p)$-Poincar\'e inequality for $\Np_0$ as in the above remark.
Here $q=2p/(2-p)$ (for $p<2$) or $q<\infty$ (for $p\ge2$) is the usual Sobolev
exponent on $\R^2$. 
We can clearly assume that $q\ge p$.
Note however that $\mu$ is not doubling and we cannot therefore 
conclude the $(q,p)$-Poincar\'e inequality directly from the 
$(1,1)$-Poincar\'e inequality which would have been somewhat simpler to derive.

Let $u\in\Np(\R^2,\mu)$ and  $Q=I\times I'\subset\R^2$, where $I, I'\subset\R$ 
are finite intervals of length $R$. 
We can assume that $0\in I'$, as otherwise $\mu|_Q$ is just the Lebesgue 
measure on $Q$.
Let also $\uQm$, $\uQx$ and $\uI$ be the integral averages of $u$ over $Q$ with
respect to $d\mu$, $dx$ and $dx_1$, respectively.
Split the left-hand side in the $(q,p)$-Poincar\'e inequality as
\begin{align}   \label{eq-split}
\biggl( \int_Q |u-\uQm|^q\,d\mu \biggl)^{1/q}
&\le \|u-\uQx\|_{L^q(Q,dx)}  + |Q|^{1/q} |\uQx-\uQm| \\
       &\quad + \alp^{1/q}\|u-\uI\|_{L^q(I,dx_1)}  + (\alp|I|)^{1/q} |\uI-\uQm|, 
\nonumber
\end{align}
where $|Q|$ and $|I|$ are the 2- and 1-dimensional Lebesgue
measures of $Q$ and $I$, respectively.
The first and the third term are estimated using the usual 
Sobolev--Poincar\'e  inequalities on $\R^2$ and $\R$, respectively.
For the second term we have (using the fact that $u$ is 
absolutely continuous on a.e.\ line parallel to the $x_2$-axis) that
\begin{align*}
|\uQx-\uQm| &= \biggl| \frac{\mu(Q)-|Q|}{\mu(Q)} \vint_Q u\,dx
   - \frac{\al}{\mu(Q)} \int_I u\,dx_1 \biggr| \\
&\le \frac{\al}{\mu(Q)} \vint_{I'} \int_I |u(x_1,x_2)-u(x_1,0)|\,dx_1\,dx_2 \\
&\le \frac{\al}{\mu(Q)}  \int_I \int_{I'} |\partial_{x_2}u(x_1,t)|\,dt\,dx_1 \\
&\le \frac{\al |Q|^{1-1/p}}{\mu(Q)} \biggl( \int_Q|\grad u|^p\,dx \biggr)^{1/p}.
\end{align*}
Similarly, 
\begin{align*}
|\uI-\uQm| &= \biggl| \frac{\mu(Q)-\al|I|}{\mu(Q)} \vint_I u\,dx_1
   - \frac{1}{\mu(Q)} \int_Q u\,dx \biggr| \\
&\le \frac{|Q|}{\mu(Q)} \vint_{I'} 
                     \vint_I |u(x_1,0)-u(x_1,x_2)|\,dx_1\,dx_2 \\
&\le \frac{|I|\, |Q|^{1-1/p}}{\mu(Q)}
          \biggl( \int_Q|\grad u|^p\,dx \biggr)^{1/p}.
\end{align*}
Since $|\grad u|\le g_u$ a.e.\ on $\R^2$, $|u'|\le g_u$ a.e.\ on $\R$,
$dx\le d\mu$ and $\al\,dx_1\le d\mu$, inserting this into~\eqref{eq-split}
yields
\[
\biggl( \int_Q |u-\uQm|^q\,d\mu \biggl)^{1/q}
   \le C(R) \biggl( \int_Q g_u^p\,d\mu \biggl)^{1/p},
\]
where
\[
C(R)=C R (|Q|^{1/q-1/p} + |I|^{1/q-1/p}) 
             + \frac{\alp|Q|^{1+1/q-1/p}}{\mu(Q)} 
             + \frac{(\alp|I|)^{1/q} |I|\, |Q|^{1-1/p}}{\mu(Q)}.
\]
As $|Q| \le \mu(Q)$, $|I|=R \le \mu(Q)$ and 
$\alp|Q|\le R\mu(Q)$ this
proves the $(q,p)$-Poincar\'e inequality on squares $Q\subset\R^2$.
For balls, using the circumscribed squares gives a weak Poincar\'e
inequality with dilation $\sqrt2$.

Similar arguments can be used in other situations, in particular
on Euclidean spaces.
Here we give a rather general one-dimensional result.

\begin{prop} \label{prop-nu-R}
Let $\mu$ be a positive locally finite Borel measure on\/ $\R$ with the 
Lebesgue--Radon--Nikodym decomposition $d\mu=w\,dx +d\sig$,
where $0\le w\in L^1\loc(\R)$ is locally essentially bounded away from
zero and $\sig\perp dx$.
Then for all $u\in\Np(\R,\mu)$, all $q\ge1$ and all
finite intervals $I\subset\R$,
\[
\biggl( \vint_I |u-u_{I,\mu}|^q\,d\mu \biggr)^{1/q}
   \le 2 |I|^{1-1/p} \biggl(\frac{\mu(I)}{\essinf_I w}\biggr)^{1/p} 
   \biggl( \vint_I  g_{u,\mu}^p\,d\mu \biggr)^{1/p},
\]
where\/ $|I|$ is the Lebesgue measure of $I$.
In particular, $(\R,\mu)$ supports a\/ $(p,p)$-Poincar\'e inequality
for $\Np_0$.

Moreover, for every $u\in\Np(\R,\mu)$, the minimal \p-weak upper gradient
of $u$ with respect to $\mu$ is the function
\begin{equation} \label{eq-gumu}
     \gt_u=\begin{cases}
       |u'| & \text{in } A, \\
       0 & \text{in } \R\setm A,
       \end{cases}
\end{equation}
where $A$ is a maximal null set of the singular part $\sig$ 
of $\mu$ with respect to the Lebesgue measure, i.e.\ $\sig(A)=0$ and
$|\R\setm A|=0$,
and $u'$ is the distributional derivative.
\end{prop}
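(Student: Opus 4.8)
The plan is to reduce the statement to one-dimensional classical absolute continuity together with the loop-erasing device of Lemma~\ref{lem-loop-erased}. As a preliminary I would note that, $w$ being locally essentially bounded away from zero, on every bounded interval $J$ one has $c_J\,dx\le w\,dx\le\mu$ with $c_J:=\essinf_J w>0$; hence any honest upper gradient of $u$ in $L^p(\R,\mu)$ lies in $L^1\loc(\R,dx)$, and testing the upper gradient inequality on line segments shows that $u$ is absolutely continuous on every compact interval. Therefore $u$ is differentiable a.e., its pointwise derivative agrees with the distributional derivative $u'$, one has $|u'|\le g$ a.e.\ for every upper gradient $g$ of $u$, and (as in the proof of Proposition~\ref{prop-R2-R}, using Proposition~1.2 in \cite{BBbook}) we may take $u'$ to be Borel. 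Using $\sig\perp dx$ I would also fix a Borel set $A$ with $|\R\setm A|=0$ and $\sig(A)=0$, so that $\mu=w\,dx$ on $A$ while $w\,dx$ is concentrated on $A$; then $\gt_u=|u'|\chi_A$ in \eqref{eq-gumu} is Borel.

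To identify $g_{u,\mu}$ I would prove the two inequalities separately. That $g_{u,\mu}\le\gt_u$ $\mu$-a.e.\ follows once one checks that $\gt_u$ is an honest upper gradient of $u$ on $\R$: given a nonconstant rectifiable $\ga$ with endpoints $a\ne b$ (the case $a=b$ being trivial), Lemma~\ref{lem-loop-erased} gives a loop-erased simple subcurve $\gat$ joining $a$ and $b$ with $\int_{\gat}h\,ds\le\int_\ga h\,ds$ for all nonnegative Borel $h$; since a simple curve between $a$ and $b$ in $\R$ parameterizes the segment $[a,b]$ monotonically, $\int_{\gat}\gt_u\,ds=\int_a^b|u'|\chi_A\,dx=\int_a^b|u'|\,dx\ge|u(a)-u(b)|$ by absolute continuity and $|\R\setm A|=0$. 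As $\gt_u\le g$ $\mu$-a.e., it lies in $L^p(\R,\mu)$, so minimality yields the inequality. For the reverse inequality I would argue locally on each bounded open interval $J$ with $c_J>0$: Lemma~\ref{lem-mu1-mu2} applied to $c_J\,dx\le\mu$ on $J$ shows that the minimal \p-weak upper gradient of $u$ with respect to $(J,c_J\,dx)$ is $\le$ the one with respect to $(J,\mu)$; the former equals $|u'|$ on $J$ (rescaling the measure by a positive constant changes nothing, and by Lemma~2.23 in \cite{BBbook} the minimal \p-weak upper gradient on the open subset $J$ of unweighted $\R$ is the restriction of $|u'|$), while the latter is $g_{u,\mu}|_J$, again by Lemma~2.23 in \cite{BBbook}. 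Covering $\R$ by such intervals gives $|u'|\le g_{u,\mu}$ $dx$-a.e., which transfers to $\gt_u\le g_{u,\mu}$ $\mu$-a.e.\ (recall $\mu=w\,dx$ on $A$, $w>0$ $dx$-a.e., and $\gt_u=0$ off $A$). Hence $g_{u,\mu}=\gt_u$ $\mu$-a.e.; in particular $g_{u,\mu}=|u'|$ $dx$-a.e.

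For the Poincar\'e inequality I would use the standard two-point reduction. Fixing $x_0\in I$, absolute continuity gives $|u(x)-u(x_0)|\le\int_I|u'|\,dx$ for every $x\in I$, so replacing the average $u_{I,\mu}$ by the constant $u(x_0)$ (at the cost of a factor~$2$, via Jensen's inequality) and then applying H\"older turns the left-hand side into $2|I|^{1-1/p}\bigl(\int_I|u'|^p\,dx\bigr)^{1/p}$. Since $|u'|=g_{u,\mu}$ $dx$-a.e.\ and $(\essinf_I w)\,dx\le\mu$ on $I$, we get $\int_I|u'|^p\,dx\le(\essinf_I w)^{-1}\int_I g_{u,\mu}^p\,d\mu=\mu(I)(\essinf_I w)^{-1}\vint_I g_{u,\mu}^p\,d\mu$, which is exactly the asserted bound. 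Taking $q=p$ and applying this to $u\in\Np_0(B)$ (extended by zero) for balls $B\subset\R$ verifies the hypothesis of Lemma~\ref{lem-PI-NP0} with $C_B=2^p|B|^{p-1}\mu(B)/\essinf_B w<\infty$, and hence $(\R,\mu)$ supports a $(p,p)$-Poincar\'e inequality for $\Np_0$.

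The main obstacle I expect is the first inequality above: checking that $\gt_u$ genuinely controls $u$ along arbitrary, possibly very oscillatory, rectifiable curves. This is precisely what loop-erasing buys us, since it shows that the behaviour of an upper gradient on the $dx$-null carrier of $\sig$ is irrelevant; coupled with the care needed to move a.e.\ statements among the three measures $dx$, $w\,dx$ and $\mu$ — which cannot be done by a single comparison of the form $\mu\le C\,dx$ because $w$ need only lie in $L^1\loc$ — this is where the real work sits, while the Poincar\'e inequality is then essentially bookkeeping.
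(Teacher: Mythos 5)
Your proof is correct and follows essentially the same route as the paper's: the inequality $g_{u,\mu}\ge\gt_u$ via Lemma~\ref{lem-mu1-mu2}, the reverse inequality by checking that $\gt_u$ is an honest upper gradient using absolute continuity and the fundamental theorem of calculus, and the Poincar\'e inequality by the fundamental theorem of calculus plus H\"older together with $dt\le w^{-1}\,d\mu$. Your loop-erasing detour is harmless but unnecessary in one dimension (any rectifiable curve joining $x$ to $y$ in $\R$ already satisfies $\int_\ga h\,ds\ge\int_x^y h\,dt$ for nonnegative Borel $h$), while your local application of Lemma~\ref{lem-mu1-mu2} with $c_J\,dx\le\mu$ is in fact slightly more careful than the paper's global one, since the hypothesis only bounds $w$ away from zero locally.
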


\begin{remark}
Since $\int_E g_{u,\mu}^p\,d\mu = \int_E g_{u,\mu}^p w\,dx$,
Proposition~\ref{prop-nu-R} shows that
there is no need to consider measures with a singular part when solving
the Dirichlet problem on $\R$, provided that the measure is locally bounded
from below by a positive multiple of the Lebesgue measure.
On the other hand, for obstacle problems it still makes sense to distinguish
between $\mu$ and its absolutely continuous part $w\,dx$, since the 
presence of the singular part $\sig$ may influence the capacity $\Cp$ 
and hence the obstacle
condition $\psi_1\le u\le\psi_2$ q.e.

If $\mu$ is not bounded from below by a positive multiple of the Lebesgue
measure, then Proposition~\ref{prop-nu-R} can fail, as shown by the following
examples.
\end{remark}

\begin{example}
Let $d\mu=|x|^\al\,dx$ with $\al>2p-1$ and $u(x)=|x|^{-\beta}$,
where $1\le\beta<(\al+1)/p-1$.
Then $u\in\Np\loc(\R,\mu)$ and $g_{u,\mu}=\beta |x|^{-\beta-1}$, but $u$ is not 
a distribution, so $g_{u,\mu}$ cannot be its distributional derivative.
\end{example}

\begin{example}
Let $\{q_j\}_{j=1}^\infty$ be a countable dense subset of $\R$ and 
$\{a_j\}_{j=1}^\infty$ be a sequence of positive numbers such that 
$\sum_{j=1}^\infty a_j<\infty$.
Let also $p\ge1$, $\al>p-1$, $0<\eps<1/\al$ and $1\le\beta<(\al+1)/p$.
Then the function 
\[
f(x) = 1+ \sum_{j=1}^\infty a_j|x-q_j|^{-\al\eps}
\] 
belongs to $L^1\loc(\R)$
and is thus finite a.e.
Since also $f\ge1$ on $\R$, it follows that 
$w:=f^{-1/\eps}\in L^1\loc(\R)$ is positive a.e.\ 
and $w(x)<|x-q_j|^\al/a_j^{1/\eps}$ for all $j=1,2,\ldots$\,.

Let $d\mu=w\,dx$ and $u(x)=\sum_{j=1}^\infty a_j^{1+1/p\eps} |x-q_j|^{-\beta}$.
Since
\[
\int_{-R}^R ( a_j^{1/p\eps} |x-q_j|^{-\beta} )^p\,d\mu
\le \int_{-R}^R ( a_j^{1/p\eps} |x|^{-\beta} )^p 
                   \frac{|x|^\al\,dx}{a_j^{1/\eps}} 
=  \int_{-R}^R |x|^{\alp-\beta p}\,dx
    <\infty,
\]
we see that $u\in L^p\loc(\R,\mu)$.
As $\int_a^b u(x)\,dx=\infty$ for
every nonempty interval $(a,b)\subset\R$,
Proposition~1.37\,(c) in~Bj\"orn--Bj\"orn~\cite{BBbook} implies that
the family of all rectifiable curves on $\R$ has zero $\Mod_{p,\mu}$-modulus.
It follows that the zero function is a \p-weak upper gradient with respect 
to $\mu$ of every function and hence $\Np(\R,\mu)=L^p(\R,\mu)$.
\end{example}

\begin{proof}[Proof of Proposition~\ref{prop-nu-R}.]
Lemma~\ref{lem-mu1-mu2} implies that $u\in\Nploc(\R,dx)$ and
\[
g_{u,\mu}\ge g_{u,dx}=|u'|= \gt_u  \quad dx\text{-a.e.\ in } \R,
\] 
and hence $g_{u,\mu}\ge\gt_u$ $\mu$-a.e.\ in $A$.
Since $\gt_u=0$ in $\R\setm A$, we see that  
$g_{u,\mu}\ge\gt_u$ $\mu$-a.e.\ in $\R$.
Conversely, as $u$ is absolutely continuous on $\R$, the fundamental
theorem of calculus and the fact that $\gt_u=|u'|$ $dx$-a.e.\ 
shows that for all $x\le y\in\R$,
\[
|u(x)-u(y)|\le \int_x^y|u'(t)|\,dt = \int_x^y\gt_u\,dt,
\]
i.e.\ $\gt_u$ is an upper gradient of $u$. 
Hence $g_{u,\mu}\le \gt_u$ $\mu$-a.e.\ 
in $\R$.

The fundamental theorem of calculus again, together with 
H\"older's inequality and Fubini's
theorem, now yields (with $I=(a,b)$)
\begin{align*}
\int_I |u(x)-u(a)|^q\,d\mu(x)
    &\le |I|^{q-q/p} \int_I \biggl( \int_I |u'(t)|^p \,dt \biggr)^{q/p} \,d\mu(x)\\
    &\le |I|^{q-q/p} \mu(I) \biggl( \int_I  g_{u,\mu}^p\,dt \biggr)^{q/p}.
\end{align*}
Since $dt\le w^{-1} \, d\mu$, we obtain
\[
\biggl( \vint_I |u(x)-u(a)|^q\,d\mu(x) \biggr)^{1/q}
   \le |I|^{1-1/p} \biggl(\frac{\mu(I)}{\essinf_I w}\biggr)^{1/p} 
   \biggl( \vint_I  g_{u,\mu}^p\,d\mu \biggr)^{1/p},
\]
and the required inequality then follows by a standard 
argument in which the constant $u(a)$ is replaced by the mean value
$u_{I,\mu}$, see e.g.\ Lemma~4.17 in
Bj\"orn--Bj\"orn~\cite{BBbook}.
\end{proof}

We have seen that our theory can be directly applied to
the measure $d\mu=dx+\alp\,dx_1$ on $\R^2$,
or even $d\mu=dx+w(x_1)\,dx_1$ for a  suitable weight $w$,
and we can thus study the minimizers of the corresponding energy.
It may be of interest to see what equation they satisfy.

Let $\Om \subset \R^2$ be a domain.
In $\Om \setm \R$, a minimizer $u$ with respect to $\mu$ is a minimizer 
with respect to  the ordinary $dx$ measure,
and is hence, after redefinition on a set of capacity zero, a
\p-harmonic function and thus locally $C^{1,\alp}$ in $\Om \setm \R$.
As $u|_{\Om \cap \R} \in \Np(\Om \cap \R, dx_1)$, $u|_{\Om \cap \R}$
must be absolutely continuous.
Since all the points in $\Om \cap \R$ are regular boundary points
of $\{(x_1,x_2) \in \Om : \pm x_2 > 0\}$
(for all $p>1$), it follows that $u$ is continuous
across $\R$ and thus (after the redefinition above) $u$ is continuous in $\Om$.

For simplicity let us assume that $p=2$. 
In this case $u$ is harmonic in $\Om \setm \R$ and thus analytic
therein.
It locally minimizes the energy
\[
      \int ((\bdy_1 u)^2 + (\bdy_2 u)^2)\,dx_1\,dx_2 
           + \int (\bdy_1 u)^2\, w\, dx_1.
\]
It must therefore satisfy the corresponding Euler--Lagrange 
equation, which in weak form becomes
\[
      \int_{\Om} \grad u \cdot \grad\phi
\, dx_1\,dx_2
      + \int_{\Om \cap \R} \bdy_1 u \, \bdy_1 \phi \, w\, dx_1 =0
      \quad \text{for all } \phi \in C_0^\infty(\Om).
\]
Consider $\phi(x_1,x_2)
=\phi_1(x_1)\phi_2(\tau x_2) \in C_0^\infty(\Om)$,
where $\tau \ge 1$
and  $\phi_2(0)=1$.
Inserting this into the Euler--Lagrange equation gives
\begin{align}
   &\int_{\R} \biggl(\int_{\R} \bdy_1 u(x_1,x_2) \bdy_1 \phi_1(x_1)\, dx_1\biggr) 
          \phi_2(\tau x_2)\, dx_2 \label{eq-Euler} \\
   & \quad      +  \int_{\R} \biggl(\int_{\R} \tau \bdy_2 u(x_1,x_2) \bdy_2 \phi_2(\tau x_2)\, dx_2 \biggr) 
              \phi_1(x_1)\, dx_1 
      - \int_{\Om \cap \R} \phi_1\bdy_1 (w \bdy_1 u) \, dx_1 =0. \nonumber
\end{align}
After the change of variables $y=\tau x_2$, the inner integral in the 
second term becomes
\begin{align*}
    \int_{\R} \bdy_2 u(x_1,y/\tau) \bdy_2 \phi_2(y)\, dy 
\end{align*}
which tends to
\begin{align*}
        \bdy_2^\limminus u(x_1,0) \int_{-\infty}^0 \bdy_2 \phi_2\,dy 
       + \bdy_2^\limplus u(x_1,0) \int_0^\infty \bdy_2 \phi_2\,dy  
    &   = \bdy_2^\limminus u(x_1,0) - \bdy_2^\limplus u(x_1,0),
\end{align*}
as $\tau \to \infty$,
where $\bdy_2^\limpm u(x_1,0) = \lim_{x_2 \to 0\limpm} \bdy_2 u(x_1,x_2)$. 
As the first term in \eqref{eq-Euler} tends to $0$, as $\tau \to \infty$, 
we obtain that 
\[
      \int_{\R} (\bdy_2^\limminus u(x_1,0) - \bdy_2^\limplus u(x_1,0))  \phi_1(x_1)\, dx_1
      - \int_{\Om \cap \R} \phi_1\bdy_1 (w \bdy_1 u) \, dx_1 =0.
\]
Thus $u$ needs to fulfill
\[
    \bdy_2^\limminus u(x_1,0) - \bdy_2^\limplus u(x_1,0)
       = \bdy_1 (w \bdy_1 u)(x_1,0)
    \quad \text{for } x_1 \in \Om \cap \R,
\]
(in a weak sense) and be harmonic in $\Om \setm \R$. 
For this derivation we have assumed that $u$ is smooth enough.

\appendix

\section{Consequences of Fuglede's and Mazur's lemmas}

In this appendix we prove two convergence results, which have been
used in the earlier sections.
They are generalizations to Dirichlet spaces of results from 
Bj\"orn--Bj\"orn--Parviainen~\cite{BBP} 
(which can also be found in Bj\"orn--Bj\"orn~\cite{BBbook}).
Note that these results hold on arbitrary metric spaces without any
additional assumptions.

\begin{prop} \label{prop-Fuglede-consequence}
Assume that $f_j \in \Dp(X)$ and that
$g_j \in L^p(X)$ is a \p-weak upper gradient of $f_j$, $j=1,2,\ldots$\,.
Assume further that $f_j-f \to 0$ and $g_j \to g$ in $L^p(X)$,
as $j \to \infty$,
and that $g$ is nonnegative.
Then there is a function $\ft=f$ a.e.\ such that $g$ is 
a \p-weak upper gradient of $\ft$, and
thus $\ft \in \Dp(X)$.
There
is also a subsequence\/ $\{f_{j_k}\}_{k=1}^\infty$ such that
$f_{j_k} \to \ft$ q.e., as $k \to \infty$.
\end{prop}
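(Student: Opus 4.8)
The plan is to reduce this to the corresponding result for Newtonian spaces, which is available (in \cite{BBP} or \cite{BBbook}), by a standard truncation-and-diagonal argument. First I would fix a point $x_0\in X$ and, for each $m=1,2,\ldots$, consider the truncations $f_j^m=\max\{\min\{f_j,m\},-m\}$ and $f^m=\max\{\min\{f,m\},-m\}$ at level $m$. Since truncation is $1$-Lipschitz, $|f_j^m-f^m|\le|f_j-f|$ pointwise, so $f_j^m-f^m\to0$ in $L^p(X)$ as $j\to\infty$; moreover $g_j$ remains a \p-weak upper gradient of $f_j^m$ (recall $g_{\min\{u,c\}}=g_u\chi_{\{u<c\}}$ a.e., so any \p-weak upper gradient of $u$ is one for the truncation), and $g_j\to g$ in $L^p(X)$ still holds. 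The new difficulty compared with the Newtonian case is that $f^m$ need not lie in $L^p(X)$ globally, so I would work locally: fix a ball $B=B(x_0,R)$. On $B$ the truncated functions are bounded, hence in $L^p(B)$, and one checks $f_j^m\in\Np(B)$ for $j$ large (using $g_j\in L^p$ and boundedness), so the Newtonian version of the Fuglede-type result (the cited Proposition in \cite{BBbook}) applies on $B$: it gives $\tilde f^{m,B}=f^m$ a.e.\ in $B$ with $g$ a \p-weak upper gradient of $\tilde f^{m,B}$ with respect to $B$, and a subsequence converging q.e.\ in $B$.

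Next I would patch these together. Since $f^m$ is already everywhere defined and measurable, the statement ``$\tilde f^{m,B}=f^m$ a.e.\ in $B$'' together with $\tilde f^{m,B}\in\Dploc(B)$ forces $\tilde f^{m,B}=f^m$ q.e.\ in $B$ (two functions in $\Dploc$ that agree a.e.\ agree q.e., as recalled in Section~\ref{sect-prelim}); so in fact no redefinition of $f^m$ is needed and $g|_B$ is a \p-weak upper gradient of $f^m$ with respect to $B$ for every ball $B$. A curve-family exhaustion argument (a countable union of curve families of zero \p-modulus has zero \p-modulus, and every compact rectifiable curve lies in some ball $B(x_0,R)$) then upgrades this to: $g$ is a \p-weak upper gradient of $f^m$ with respect to $X$. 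Letting $m\to\infty$, I would use that $f^m\to f$ pointwise everywhere, together with the inequality $|f^m(\gamma(0))-f^m(\gamma(l_\gamma))|\to|f(\gamma(0))-f(\gamma(l_\gamma))|$ along any curve $\gamma$ on which $\int_\gamma g\,ds<\infty$ (such curves are \p-a.e., since $g\in L^p$), to conclude that $g$ is a \p-weak upper gradient of $f$ itself — with no redefinition, so one may simply take $\tilde f=f$. Hence $\tilde f\in\Dp(X)$ because $g\in L^p(X)$.

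For the q.e.-convergent subsequence I would run a diagonal argument over an increasing sequence of balls $B_i=B(x_0,i)$. On $B_1$ the Newtonian result yields a subsequence converging q.e.\ in $B_1$; refining on $B_2$, then $B_3$, etc., and taking the diagonal subsequence $\{f_{j_k}\}$, we get $f_{j_k}\to f$ q.e.\ in each $B_i$, hence q.e.\ in $X=\bigcup_i B_i$ (a countable union of capacity-zero sets has capacity zero, by countable subadditivity of $\Cp$). Note that the q.e.-limit of the $f_{j_k}$ is a priori some function equal to $f$ a.e., but since it equals $f$ a.e.\ and $f$ is everywhere defined and measurable, and the relevant functions lie in $\Dploc$, it equals $f$ q.e.; this is why we may state the convergence directly to $f$ (i.e.\ to $\tilde f=f$).

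The main obstacle is the localization: the Newtonian statements in \cite{BBP}/\cite{BBbook} are global over $X$ and assume the functions lie in $\Np(X)$, whereas here $f$ is only in $\Dp(X)$ and its truncations are only locally $L^p$. Handling this cleanly requires the curve-family exhaustion (every compact rectifiable curve is contained in some ball, and zero-modulus is preserved under countable unions) to pass from ``\p-weak upper gradient with respect to $B$ for all balls $B$'' back to ``\p-weak upper gradient with respect to $X$'', and a careful but routine limit in $m$ to remove the truncation. Everything else — the truncation estimates, the a.e.$\Rightarrow$q.e.\ upgrade for $\Dploc$ functions, and the diagonal subsequence — is standard.
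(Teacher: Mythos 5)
Your overall strategy (truncate, localize to balls, invoke the Newtonian version, patch, pass to the limit in $m$) is workable in outline, but there is a genuine gap at the step where you identify $\ft^{\,m,B}$ with $f^m$ quasieverywhere. The upgrade ``agree a.e.\ $\Rightarrow$ agree q.e.'' is available only for two functions that are \emph{both} in $\Dploc$; here $\ft^{\,m,B}\in\Np(B)$, but $f^m$ is a truncation of $f$, and $f$ is \emph{not} assumed to lie in $\Dp(X)$ --- it is only an $L^p$-type limit of the $f_j$, hence determined up to a set of measure zero and possibly badly behaved on a null set of positive capacity. Assuming $f^m\in\Dploc(B)$ at this point is circular: that (a representative of) $f$ belongs to the Dirichlet class is precisely what the proposition asserts. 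Consequently your final claim that no redefinition is needed and one may take $\ft=f$ is false in general: modify $f$ on a single point of positive capacity --- the hypotheses are unchanged, but $g$ is no longer a \p-weak upper gradient of $f$. This is exactly why the statement is formulated as ``there is a function $\ft=f$ a.e.''. Repairing the argument means carrying the redefined functions $\ft^{\,m,B}$ through the patching over balls and through the limit in $m$, identifying them q.e.\ only with each other and never with $f^m$; this is doable but substantially more delicate than what you wrote. Two further points: your diagonal argument yields convergence outside a set that is null for the capacity of each ball $B_i$, and since $\capp_p^{B_i}(A)\le\Cp(A)$ for $A\subset B_i$, a $\capp_p^{B_i}$-null set need not be $\Cp$-null, so without extra assumptions on $X$ you do not get ``q.e.''\ in the sense of the proposition; and in the limit $m\to\infty$ the convention that $|(+\infty)-(+\infty)|=\infty$ means curves with both endpoints in $\{|\ft|=\infty\}$ must be excluded, which requires the absolute continuity of a genuine Dirichlet representative on \p-a.e.\ curve.

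The paper's proof takes a different and shorter route that avoids localization and truncation entirely: pass to a subsequence with $f_j\to f$ a.e.\ and, by Fuglede's lemma, $\int_\gamma g_j\,ds\to\int_\gamma g\,ds$ for \p-almost every curve; set $\ft=\limsup_j f_j$, which is everywhere defined and equals $f$ a.e.; verify the upper gradient inequality for $\ft$ and $g$ along \p-almost every curve except those with both endpoints in the measure-zero set $\{|\ft|=\infty\}$, and invoke the characterization of \p-weak upper gradients that tolerates a measure-zero exceptional set of points (Proposition~2.5 in \cite{BBP}); the q.e.-convergent subsequence then comes from observing that $\liminf_j f_j$ and $\limsup_j f_j$ are both in $\Dp(X)$ and agree a.e., hence q.e.
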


When we say that $f_j-f \to 0$ in $L^p(X)$ we implicitly
require that $f_j-f \in L^p(X)$, which in particular
requires that $f_j$ and $f$ are real-valued a.e.
Note that we do \emph{not} require $f_j \in L^p(X)$
and can therefore \emph{not} use Proposition~3.1 in~\cite{BBP} 
(nor Proposition~2.3 in~\cite{BBbook}).

\begin{proof}
By passing to a subsequence if necessary we may assume
that $f_j \to f$ a.e.,
and (by Fuglede's lemma, see Shan\-mu\-ga\-lin\-gam~\cite{Sh-rev}, 
Lemma~3.4 and Remark~3.5, or Lemma~2.1 in Bj\"orn--Bj\"orn~\cite{BBbook}),
that $\int_\ga g_j\,ds \to \int_\ga g\,ds \in \R$, as $j \to \infty$,
for all curves $\ga \notin \Ga$, where $\Modp(\Ga)=0$.
Let $\ft=\limsup_{j \to \infty} f_j$, and observe that $\ft$ is
defined at every point of $X$ and $\ft=f$ a.e.\ in $X$.
Let $A=\{x \in X : |\ft(x)|=\infty\}$.

By definition,  \p-almost every curve $\ga$
is such that \eqref{ug-cond} holds for all $f_j$ and $g_j$,
$j=1,2,\ldots$, on $\ga$ and all its subcurves, 
and neither $\ga$ nor any of its subcurves belong to
$\Ga$.
Consider such a curve $\ga\colon  [0,l_\ga] \to X$. 
We see that 
either
$\ga(0),\ga(l_\ga) \in A$ or
\[
         |\ft(\ga(l_\ga))- \ft(\ga(0))|  
                      \le \limsup_{j \to \infty} |f_j(\ga(l_\ga))- f_j(\ga(0))|  
                  \le \limsup_{j \to \infty} \int_{\ga} g_j \, ds 
                 =  \int_{\ga} g \, ds.
\]
As $\mu(A)=0$, Proposition~2.5 in 
Bj\"orn--Bj\"orn--Parviainen~\cite{BBP}
(or Corollary~1.51 in Bj\"orn--Bj\"orn~\cite{BBbook})
shows that $g$ is indeed a \p-weak upper gradient of $\ft$,
and thus $\ft \in \Dp(X)$.

Let now $\fh=\liminf_{j \to \infty} f_j$.
Arguing exactly as above we see that $g$ is also a \p-weak upper gradient
of $\fh \in \Dp(X)$ and that
$\fh=f=\ft$ a.e.
Hence  $\fh=\ft$ q.e.,
and thus $f_j \to \ft$ q.e., as $j \to \infty$.
\end{proof}

\begin{lem} \label{lem-mazur-consequence}
Assume that\/ $1<p<\infty$ and that $f \in \Dp(X)$.
Assume further that $g_j$ is a \p-weak upper gradient of $u_j$,
$j=1,2,\ldots$,
and that both sequences\/  $\{u_j-f\}_{j=1}^\infty$ and\/ $\{g_j\}_{j=1}^\infty$
are bounded in $L^p(X)$.
Then there are functions $u$ and $g$ and 
 convex combinations
$v_j=\sum_{i=j}^{N_j} a_{j,i} u_i$
with \p-weak upper gradients $\gb_j=\sum_{i=j}^{N_j} a_{j,i} g_i$,
such that 
\begin{enumerate}
\item \label{mazur-a}
$u-f\in \Np(X)$ and $g\in L^p(X)$\textup{;}
\item \label{mazur-b}
both $v_j-u \to 0$ and $\gb_j \to g$ in $L^p(X)$, 
as $j \to \infty$\textup{;} 
\item 
$v_j \to u$ q.e., as $j \to \infty$\textup{;} 
\item \label{mazur-c}
$g$ is a \p-weak upper gradient of $u$.
\end{enumerate}
\end{lem}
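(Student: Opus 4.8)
The plan is to combine the reflexivity of $L^p(X)$ (this is the only place where the hypothesis $p>1$ enters, via weak sequential compactness), Mazur's lemma, and Proposition~\ref{prop-Fuglede-consequence}.

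First I would regard $\{(u_j-f,g_j)\}_{j=1}^\infty$ as a bounded sequence in the reflexive Banach space $L^p(X)\times L^p(X)$ and extract a subsequence $\{(u_{j_m}-f,g_{j_m})\}_{m=1}^\infty$ converging weakly to some $(w,g)$; being a weak limit of nonnegative functions, $g\ge0$ a.e. Since for each $k$ the tail $\{(u_{j_m}-f,g_{j_m})\}_{m\ge k}$ also converges weakly to $(w,g)$, Mazur's lemma applied to this tail yields, for every $k$, a finite convex combination $\tilde v_k=\sum_{m\ge k}a_{k,m}u_{j_m}$ with $a_{k,m}\ge0$ and $\sum_m a_{k,m}=1$ such that, writing $\tilde g_k:=\sum_{m\ge k}a_{k,m}g_{j_m}$, one has $\|(\tilde v_k-f)-w\|_{L^p(X)}+\|\tilde g_k-g\|_{L^p(X)}<1/k$. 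Since $j_m\ge m\ge k$, the combination $\tilde v_k$ involves only $u_i$ with $i\ge k$, and $\tilde g_k$ is the convex combination of the corresponding $g_i$ with the same coefficients. An elementary curve-wise estimate---using that for \p-almost every curve $\ga$ the inequality \eqref{ug-cond} holds simultaneously for all the pairs $u_i,g_i$, together with $\bigl|\sum a_{k,i}(u_i(\ga(0))-u_i(\ga(l_\ga)))\bigr|\le\sum a_{k,i}\int_\ga g_i\,ds$---shows that $\tilde g_k$ is a \p-weak upper gradient of $\tilde v_k$; in particular $\tilde v_k\in\Dp(X)$ (a \p-weak upper gradient in $L^p(X)$ yields an honest one by the Koskela--MacManus approximation recalled in Section~\ref{sect-prelim}). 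Setting $u_\infty:=w+f$, we then have $\tilde v_k-u_\infty\to0$ and $\tilde g_k\to g$ in $L^p(X)$.

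Next I would apply Proposition~\ref{prop-Fuglede-consequence} to the sequence $\{\tilde v_k\}_{k=1}^\infty$ with the \p-weak upper gradients $\{\tilde g_k\}_{k=1}^\infty$ (with the role of ``$f$'' there played by $u_\infty$): this produces a function $u$ with $u=u_\infty$ a.e.\ such that $g$ is a \p-weak upper gradient of $u$, so $u\in\Dp(X)$, together with a subsequence $\{\tilde v_{k_l}\}_{l=1}^\infty$ with $\tilde v_{k_l}\to u$ q.e. I then put $v_l:=\tilde v_{k_l}$ and $\gb_l:=\tilde g_{k_l}$. Because $\tilde v_{k_l}$ involves only $u_i$ with $i\ge k_l\ge l$, it may be written $v_l=\sum_{i=l}^{N_l}a_{l,i}u_i$ and $\gb_l=\sum_{i=l}^{N_l}a_{l,i}g_i$ after padding with zero coefficients, which is exactly the required form; and $\gb_l$ is a \p-weak upper gradient of $v_l$ by the previous paragraph. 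Moreover $v_l-u=\tilde v_{k_l}-u_\infty\to0$ and $\gb_l\to g$ in $L^p(X)$ as $l\to\infty$, and $v_l\to u$ q.e. Finally $u-f=w$ a.e., hence $u-f\in L^p(X)$, while $g+\rho_f\in L^p(X)$, where $\rho_f\in L^p(X)$ is an upper gradient of $f$, is a \p-weak upper gradient of $u-f$; therefore $u-f\in\Np(X)$. With this $u$, this $g$ and these $v_l,\gb_l$, all four conclusions \ref{mazur-a}--\ref{mazur-c} and the q.e.-convergence hold.

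The routine ingredients are the convexity estimate for \p-weak upper gradients and the final identification $u-f\in\Np(X)$. The step requiring the most care is the index bookkeeping: one must arrange that after the first (weak-compactness) extraction and the second extraction hidden inside Proposition~\ref{prop-Fuglede-consequence}, the surviving convex combinations are still indexed so that $v_j$ uses only $u_i$ with $i\ge j$, which is what legitimises the tail form $v_j=\sum_{i=j}^{N_j}a_{j,i}u_i$; applying Mazur's lemma to the \emph{tails} of the weakly convergent subsequence, rather than to the whole subsequence, is exactly what makes this possible.
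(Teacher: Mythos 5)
Your proof is correct and follows essentially the same route as the paper: weak sequential compactness of bounded sets in the reflexive space $L^p(X)$, Mazur's lemma applied to tails so the convex combinations keep the form $\sum_{i=j}^{N_j}a_{j,i}u_i$, and then Proposition~\ref{prop-Fuglede-consequence} to produce $u$, the q.e.\ convergence and the upper-gradient property of $g$. The only (cosmetic) difference is that you extract one weakly convergent subsequence and apply Mazur once in the product space $L^p(X)\times L^p(X)$, whereas the paper performs the extraction and Mazur's lemma in two successive stages, first for $\{u_j-f\}$ and then for the resulting combinations of $\{g_j\}$.
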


\begin{proof}
Let $w_j=u_j-f$, $j=1,2,\ldots$\,.
Then $g_{w_j}\le g_j+g_f$ and $\{w_j\}_{j=1}^\infty$ is bounded in $\Np(X)$.
Since $L^p(X)$ is reflexive, its unit ball is weakly compact
(by Banach--Alaoglu's theorem) and thus there is a subsequence
of $\{w_j\}_{j=1}^\infty$ which converges weakly in $L^p(X)$.
Taking a subsequence of this subsequence and again using
Banach--Alaoglu's theorem we obtain a subsequence 
(again denoted $\{w_j\}_{j=1}^\infty$)
such that both 
$\{w_j\}_{j=1}^\infty$
and
$\{g_{j}\}_{j=1}^\infty$ converge weakly in $L^p(X)$
say to $w$ and $g$. 
As $g_j$, $j=1,2,\ldots$, are nonnegative we may choose $g$ nonnegative.

Applying Mazur's lemma 
(see, e.g., Yosida~\cite{yosida}, pp.\ 120--121),
repeatedly to the sequences
$\{w_i\}_{i=j}^{\infty}$, $j=1,2,\ldots$,  we find convex combinations
$w'_j=\sum_{i=j}^{N_j'} a'_{i,j} w_i$
such that $\|w'_j-w\|_{L^p(X)} < 1/j$.
Let $v'_j=w'_j+f=\sum_{i=j}^{N_j'} a'_{i,j} u_i$.
Then $g'_j:=\sum_{i=j}^{N_j'} a'_{i,j} g_{i}$
is a \p-weak upper gradient of $v'_j$.
Since moreover $g'_j \to g$ weakly in $L^p(X)$, as $j \to \infty$, 
we can again apply
Mazur's lemma (repeatedly) to obtain convex combinations
$v_j=\sum_{i=j}^{N_j} a_{i,j} u_i$
with \p-weak upper gradients $\gb_j=\sum_{i=j}^{N_j} a_{i,j} g_{i}$
such that $v_j- v \to 0$ and $\gb_j \to g$ in $L^p(X)$, as $j \to \infty$.
By Proposition~\ref{prop-Fuglede-consequence},
there is a function $u=v$ a.e.\ satisfying \ref{mazur-b}--\ref{mazur-c}.

As $g+g_f \in L^p(X)$ is a
\p-weak upper gradient of $u-f \in L^p(X)$, we see that $u-f \in \Np(X)$.
\end{proof}

\section{The variational capacity \texorpdfstring{$\cp$}{} on nonopen sets}
\label{app-capp}

In this appendix we define the variational capacity with respect to 
nonopen sets, which has been used to prove Adams' criterion in 
Section~\ref{sect-Adams}.
We also state those properties of the variational capacity that
we have needed in this paper. 
For proofs of 
Lemma~\ref{lem-Cp<=>cp} and Theorem~\ref{thm-cp}, 
and a considerably more extensive discussion,
we refer to 
Bj\"orn--Bj\"orn~\cite{BBvarcap}.

Let $E \subset X$ be a nonempty bounded set.

\begin{deff}
For an arbitrary set $A \subset E$ we define the \emph{variational capacity}
\begin{equation*} 
  \cp (A,E) =\inf   \int_X g_u^p \, d\mu,
\end{equation*}
where the infimum is taken over all $u\in \Np_0 (E) $ 
(extended by $0$ outside $E$) such that
$u \ge 1$ on $A$.
\end{deff}

The infimum can equivalently be taken over 
all nonnegative $u\in \Np_0 (E) $ such that
$ u = 1$ on $A$.
If $E$ is measurable we may also equivalently integrate over $E$ instead
of $X$.

Note that as $\Np_0(E)\subset\Np(X)$, it is natural to consider the minimal
\p-weak upper gradient $g_u$ with respect to $X$.
On the other hand, by Proposition~\ref{prop-min-grad},
$g_u=g_{u,E}$ in this case (if $E$ is measurable).

The variational capacity $\cp (A,E)$ has been used and studied earlier on 
metric spaces for 
bounded open $E$ in e.g.\ Bj\"orn--MacManus--Shanmugalingam~\cite{BMS}
and J.~Bj\"orn~\cite{BjIll}.
It can also be regarded as the condenser capacity 
$\cp (X\setm E,A,X)$, 
as in Definition~\ref{deff-cond-cap}.

We consider nonopen $E$, which is essential 
for Adams' criterion (Theorem~\ref{thm-Choq-int-est})
in the generality considered here.
The following two results are proved in Bj\"orn--Bj\"orn~\cite{BBvarcap}.

\begin{lem} \label{lem-Cp<=>cp}
Assume that $X$ supports a\/  $(p,p)$-Poincar\'e inequality for $\Np_0$
and that $\Cp(X \setm E)>0$.
Let $A \subset E$. Then
$\Cp(A)=0$ if and only if $\cp(A,E)=0$.
\end{lem}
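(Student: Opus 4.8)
The statement is an equivalence, so I would prove the two implications separately. The easy direction is that $\cp(A,E)=0$ implies $\Cp(A)=0$; this uses only the Poincaré inequality for $\Np_0$ together with the assumption $\Cp(X\setm E)>0$, and it essentially says that a function which is admissible for $\cp(A,E)$ and has small energy is, after the Poincaré inequality, also small in the full Newtonian norm and hence admissible (up to scaling) for $\Cp(A)$. The harder direction, $\Cp(A)=0\imp\cp(A,E)=0$, is where the real work lies: here we are given a set of zero $\Cp$-capacity and we must produce \emph{admissible functions for $\cp(\,\cdot\,,E)$}, i.e.\ functions in $\Np_0(E)$ (not merely in $\Np(X)$) that are $\ge 1$ on $A$ with arbitrarily small energy.

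\textbf{Direction 1: $\cp(A,E)=0\imp\Cp(A)=0$.} Since $X$ supports a $(p,p)$-Poincaré inequality for $\Np_0$ and $E$ is bounded with $\Cp(X\setm E)>0$, there is a constant $C_E$ with $\int_X|u|^p\,d\mu\le C_E\int_X g_u^p\,d\mu$ for all $u\in\Np_0(E)$, and hence $\|u\|_{\Np(X)}^p\le \Ct_E\|g_u\|_{L^p(X)}^p$. Given $\eps>0$, pick $u\in\Np_0(E)$ with $u\ge 1$ on $A$ and $\int_X g_u^p\,d\mu<\eps$. By truncation we may assume $0\le u\le 1$, and replacing $u$ by $\max\{u,0\}$ does not increase the gradient. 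Then $u$ is admissible for $\Cp(A)$ and $\Cp(A)\le\|u\|_{\Np(X)}^p\le\Ct_E\,\eps$. Letting $\eps\to 0$ gives $\Cp(A)=0$. (One should note $g_u$ taken with respect to $X$ agrees with $g_{u,E}$ here by Proposition~\ref{prop-min-grad}, so the two formulations of $\cp(A,E)$ coincide.)

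\textbf{Direction 2: $\Cp(A)=0\imp\cp(A,E)=0$.} Since $\Cp(A)=0$, for each $\eps>0$ there is $v\in\Np(X)$ with $v\ge 1$ on $A$ and $\|v\|_{\Np(X)}^p<\eps$; again truncate so that $0\le v\le 1$. The problem is that $v$ need not vanish outside $E$. The key fact to exploit is that $\Cp(A)=0$ also forces \p-almost every curve in $X$ to avoid $A$ (by Lemma~3.6 in \cite{Sh-rev} / Proposition~1.48 in \cite{BBbook}). The cleanest approach is: since $\Cp(X\setm E)>0$ is only needed in Direction~1, here we use a cutoff. Fix a ball $B\supset E$ (possible as $E$ is bounded) and a Lipschitz function $\eta$ with $0\le\eta\le 1$, $\eta=1$ on $E$, compactly supported in $2B$; then $w:=\eta v\in\Np(X)$ with $g_w\le g_v+ \|\grad\eta\|_\infty |v|$, so $\|w\|_{\Np(X)}^p\le C\|v\|_{\Np(X)}^p<C\eps$. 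This does not yet make $w$ vanish outside $E$. To fix this I would instead argue at the level of the capacity directly: the function $v$ restricted to $E$, extended by $0$ on $X\setm E$, call it $v_0$, is a competitor for $\cp(A,E)$ provided $v_0\in\Np_0(E)$. The obstruction is that the zero extension of $v|_E$ may fail to lie in $\Np(X)$ — but it does lie in $\Np(X)$ precisely because \p-almost every curve avoids $A$; more carefully, we need $v=0$ q.e.\ on $X\setm E$, which is false in general.

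The correct fix, and what I expect to be the main obstacle, is to choose the competitor for $\Cp(A)$ more carefully so that it \emph{already} has support inside $E$. Because $\Cp(A)=0$, by countable subadditivity of $\Cp$ one can cover $A$ by sets of arbitrarily small capacity; but more useful is that $A\subset E$ and one can find open sets $U_j\supset A$ with $\Cp(U_j)\to 0$, and since $A\subset E$ we may replace $U_j$ by $U_j\cap E'$ for a slightly larger set — still, $E$ need not be open. So instead: take $v_j\in\Np(X)$ with $v_j\ge 1$ on $A$, $0\le v_j\le 1$, $\|v_j\|_{\Np(X)}^p<2^{-j}$, and set $h_j=\min\{1,\sum_{i=j}^\infty v_i\}$; then $h_j=1$ on a neighbourhood (in the fine/Newtonian sense) of $A$, $h_j\to 0$ in $\Np(X)$, and $\{h_j\ne 0\}$ shrinks. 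Multiplying $h_j$ by the Lipschitz cutoff $\eta$ above and using that $\{h_j\ne0\}$ has small capacity, one can then correct $\eta h_j$ on the small set $\{h_j\ne0\}\setm E$ — which has capacity at most $\Cp(\{h_j\ne0\})\to 0$ — to get a genuine element of $\Np_0(E)$ admissible for $\cp(A,E)$ with energy $\to 0$. The technical heart is this correction: one adds $\chi_{\{h_j\ne0\}\setm E}$, which changes the $\Np(X)$-norm by at most $\Cp(\{h_j\ne0\}\setm E)^{1/p}\to 0$ and does not increase the gradient outside a set of measure zero (since the set has capacity zero in the limit, hence also small measure), and the modified function vanishes q.e.\ on $X\setm E$, so lies in $\Np_0(E)$. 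Thus $\cp(A,E)\to 0$, completing the proof.
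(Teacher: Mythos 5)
The paper itself does not prove this lemma (it is quoted from \cite{BBvarcap}), so your argument can only be judged on its own merits. Your first direction, $\cp(A,E)=0\imp\Cp(A)=0$, is correct, and it is indeed the direction that needs both hypotheses: the $(p,p)$-Poincar\'e inequality for $\Np_0$ (applicable because $E$ is bounded and $\Cp(X\setm E)>0$) upgrades the energy bound on a truncated competitor $u\in\Np_0(E)$ to a bound on $\|u\|_{\Np(X)}$, and that competitor is admissible for $\Cp(A)$.

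Your second direction contains a genuine gap, and you have also misjudged where the difficulty lies. The implication $\Cp(A)=0\imp\cp(A,E)=0$ is essentially immediate and needs neither hypothesis: since $\Cp(A)=0$, we have $\mu(A)=0$ and \p-almost every curve in $X$ avoids $A$ (Proposition~1.48 in \cite{BBbook}), so $\chi_A$ is a measurable function admitting $0$ as a \p-weak upper gradient; hence $\chi_A\in\Np(X)$ with $\|\chi_A\|_{\Np(X)}=0$, it vanishes on $X\setm E$ because $A\subset E$, so $\chi_A\in\Np_0(E)$, equals $1$ on $A$, and has zero energy, giving $\cp(A,E)=0$ at once. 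Your construction, by contrast, does not close: (a) the claim that $\Cp(\{h_j\ne 0\})\to 0$ is unjustified --- a small $\Np(X)$-norm controls $\Cp(\{v>\la\})$ for fixed $\la>0$ by a Chebyshev-type argument, but says nothing about the capacity of the full support $\{v\ne 0\}$, which can be all of $X$; and (b) ``adding $\chi_{\{h_j\ne0\}\setm E}$'' cannot make a function vanish q.e.\ on $X\setm E$ (adding a nonnegative function only increases it there), and the characteristic function of a set of positive capacity is not a zero-norm perturbation, so the correction step you describe is not available. The entire second direction should be replaced by the one-line argument above.
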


\begin{thm} \label{thm-cp}
\quad 
\begin{enumerate}
\renewcommand{\theenumi}{\textup{(\roman{enumi})}}%
\item \label{cp-subset-sum}
  If $A_1 \subset A_2 \subset E$,
 then\/ $\cp(A_1,E) \le \cp(A_2,E)$\textup{;}
\item \label{cp-subadd-sum}
$\cp$ is countably subadditive,
i.e.\
if $A_1, A_2,\ldots \subset E$, then

  \[
      \cp\biggl(\bigcup_{i=1}^\infty A_i,E\biggr) 
          \le \sum_{i=1}^\infty \cp(A_i,E)
      \textup{;}
  \]
\item \label{cp-Choq-E-sum}
if\/ $1<p<\infty$ and  $A_1 \subset A_2 \subset \cdots \subset E$,  then
\[
      \cp\biggl(\bigcup_{i=1}^\infty A_i,E\biggr) 
          = \lim_{i \to \infty} \cp(A_i,E).
  \]
\end{enumerate}
\end{thm}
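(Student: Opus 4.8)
The plan is to handle the three parts separately, using only that $E$ is bounded (so $\mu(E)<\infty$, since balls have finite measure) together with the convergence results of Appendix~A; no Poincar\'e inequality is needed, and $p>1$ will enter only in part~(iii). Part~(i) is immediate from the definition: if $A_1\subset A_2$, then every $u\in\Np_0(E)$ with $u\ge1$ on $A_2$ also has $u\ge1$ on $A_1$, so $\cp(A_1,E)$ is an infimum over a larger admissible class than $\cp(A_2,E)$.

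For part~(ii) I would first reduce to the case $\sum_i\cp(A_i,E)<\infty$, and then, given $\eps>0$, choose near-minimizers $u_i\in\Np_0(E)$ with $0\le u_i\le1$ (truncating above and below is harmless, cf.\ the remark after the definition), $u_i=1$ on $A_i$, and $\int_X g_{u_i}^p\,d\mu<\cp(A_i,E)+\eps2^{-i}$. Put $v_k=\max\{u_1,\dots,u_k\}\in\Np_0(E)$ and $u=\sup_i u_i$; then $0\le v_k\le u\le1$, all functions vanish outside $E$, $u=1$ on $\bigcup_iA_i$, and $g_{v_k}\le h_k:=\bigl(\sum_{i=1}^k g_{u_i}^p\bigr)^{1/p}$ a.e., while $h_k\nearrow h:=\bigl(\sum_{i=1}^\infty g_{u_i}^p\bigr)^{1/p}$ with $\int_X h^p\,d\mu\le\sum_i\cp(A_i,E)+\eps<\infty$. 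Since $\mu(E)<\infty$, the functions $v_k,u$ lie in $L^p(X)$ and $v_k\to u$ in $L^p(X)$ by dominated convergence, while $h_k\to h$ in $L^p(X)$; as $h_k\ge g_{v_k}$, each $h_k$ is a \p-weak upper gradient of $v_k$. Proposition~\ref{prop-Fuglede-consequence} then supplies a representative equal to $u$ a.e.\ with $h$ as a \p-weak upper gradient and a subsequence of $\{v_k\}$ converging q.e.; since $v_k\to u$ everywhere, $h$ is a \p-weak upper gradient of $u$ itself, so $u\in\Np_0(E)$ is admissible for $\bigcup_iA_i$ and $\cp\bigl(\bigcup_iA_i,E\bigr)\le\int_X h^p\,d\mu\le\sum_i\cp(A_i,E)+\eps$. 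Letting $\eps\to0$ gives~(ii).

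For part~(iii), one inequality is (i); for the reverse I would assume $L:=\lim_i\cp(A_i,E)<\infty$ (otherwise there is nothing to prove), pick $u_i\in\Np_0(E)$ with $0\le u_i\le1$, $u_i=1$ on $A_i$ and $\int_X g_{u_i}^p\,d\mu<\cp(A_i,E)+1/i$, and apply Lemma~\ref{lem-mazur-consequence} with $f=0$ (this is the only place $p>1$ is used) to the $L^p$-bounded sequences $\{u_i\}$, $\{g_{u_i}\}$. This yields convex combinations $v_j=\sum_{i\ge j}a_{j,i}u_i\in\Np_0(E)$ with \p-weak upper gradients $\gb_j=\sum_{i\ge j}a_{j,i}g_{u_i}$, converging in $L^p(X)$ and q.e.\ to a function $u$ that has a \p-weak upper gradient $g\in L^p(X)$ with $\gb_j\to g$ in $L^p(X)$. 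Since only indices $i\ge j$ occur in $v_j$ and $A_j\subset A_i$ for such $i$, one gets $v_j=1$ on $A_j$, hence $v_j\to1$ pointwise on $\bigcup_iA_i$ and $u=1$ q.e.\ there; replacing $u$ by $1$ on the exceptional capacity-zero set (which changes neither $g_u$ a.e.\ nor membership in $\Np_0(E)$) makes $u$ admissible for $\bigcup_iA_i$. Finally Minkowski's inequality gives $\|\gb_j\|_{L^p(X)}\le\sup_{i\ge j}\|g_{u_i}\|_{L^p(X)}\le\bigl(\sup_{i\ge j}(\cp(A_i,E)+1/i)\bigr)^{1/p}\to L^{1/p}$ (using that $\cp(A_i,E)\nearrow L$), so $\int_X g_u^p\,d\mu\le\int_X g^p\,d\mu=\lim_j\int_X\gb_j^p\,d\mu\le L$, whence $\cp\bigl(\bigcup_iA_i,E\bigr)\le L$, and combining with (i) proves~(iii).

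The one point that genuinely needs care is that no Poincar\'e inequality is available, so the admissible functions must be kept bounded in $L^p$ by hand; this is exactly what truncation to $0\le u_i\le1$ together with $\mu(E)<\infty$ achieves, providing the $L^p$-bounds demanded by Proposition~\ref{prop-Fuglede-consequence} and Lemma~\ref{lem-mazur-consequence}. A secondary, routine point is the interchange of the pointwise condition ``$u=1$ on $A$'' with its quasi-everywhere version, justified because \p-almost every curve avoids sets of capacity zero and such sets carry no $L^p$-mass; everything else is bookkeeping.
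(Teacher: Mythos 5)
Your argument is correct. Note that this paper does not actually prove Theorem~\ref{thm-cp}: it explicitly defers the proof (together with that of Lemma~\ref{lem-Cp<=>cp}) to the companion paper \cite{BBvarcap}, so there is no in-paper proof to compare against. What you give is the natural adaptation to $\cp(\,\cdot\,,E)$ of the standard arguments for the Sobolev capacity $\Cp$: monotonicity from the nesting of admissible classes, countable subadditivity via truncated near-minimizers, the majorant $h=\bigl(\sum_i g_{u_i}^p\bigr)^{1/p}$ and Proposition~\ref{prop-Fuglede-consequence}, and the Choquet property for increasing sequences via Lemma~\ref{lem-mazur-consequence} applied with $f=0$. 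All the delicate points are handled: the $L^p$-bounds are obtained from $0\le u_i\le 1$ and $\mu(E)<\infty$ rather than from any Poincar\'e inequality, the restriction of the convex combinations to indices $i\ge j$ is what makes $v_j=1$ on $A_j$, and the passage from ``$u=1$ q.e.\ on $\bigcup_i A_i$'' to genuine admissibility by redefining $u$ on a set of capacity zero is legitimate since such a modification changes neither the \p-weak upper gradients nor membership in $\Np_0(E)$.
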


Let us observe the following more or less direct consequence
of Theorem~\ref{thm-fineint}.
We leave the proof to the reader.

\begin{prop}
Assume 
that $X$ is complete and supports a\/ $(1,p)$-Poincar\'e inequality,
that $\mu$ is doubling and that $p>1$.
Let $E \subset X$ be bounded and $A \subset E$.
Then
\[
     \cp(A,E)= \begin{cases}
        \cp(A \cap \fineint E, \fineint E), &
        \text{if } \Cp(A \setm \fineint E)=0, \\
        \infty, & 
        \text{if } \Cp(A \setm \fineint E)>0.
       \end{cases}
\]
\end{prop}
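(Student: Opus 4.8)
The plan is to write $E_0=\fineint E$ and reduce everything to two facts: the identity $\Np_0(E)=\Np_0(E_0)$ from Theorem~\ref{thm-fineint}, and its consequence that every $u\in\Np_0(E)$, extended by $0$ outside $E$, vanishes q.e.\ on $X\setm E_0$. Since the admissible classes in the definitions of $\cp(A,E)$ and $\cp(A\cap E_0,E_0)$ then agree up to sets of capacity zero, and $A$ differs from $A\cap E_0$ by $A\setm E_0$, the two cases of the proposition follow from the robustness of $\cp$ under modifications on sets of capacity zero.

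In the case $\Cp(A\setm E_0)>0$ I would show there is no function admissible in the definition of $\cp(A,E)$, so that $\cp(A,E)=\infty$ by convention. Indeed, if $u\in\Np_0(E)$ (extended by $0$) satisfies $u\ge1$ on $A$, then $u\in\Np_0(E_0)$ by Theorem~\ref{thm-fineint}, hence $u=0$ q.e.\ on $X\setm E_0$, in particular q.e.\ on $A\setm E_0$; but $u\ge1$ \emph{everywhere} on $A\setm E_0$, which forces $\Cp(A\setm E_0)=0$, a contradiction.

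In the case $\Cp(A\setm E_0)=0$ I would prove the two inequalities separately. For $\cp(A\cap E_0,E_0)\le\cp(A,E)$: given $u\in\Np_0(E)$ with $u\ge1$ on $A$, Theorem~\ref{thm-fineint} gives $u=0$ q.e.\ on $X\setm E_0$; redefining $u$ to equal $0$ on $X\setm E_0$ keeps it in $\Np(X)$ and changes neither its minimal $p$-weak upper gradient a.e.\ nor its values on $A\cap E_0$, so the redefined function is admissible for $\cp(A\cap E_0,E_0)$ with the same energy, and taking the infimum over $u$ finishes this half. For $\cp(A,E)\le\cp(A\cap E_0,E_0)$: given $v\in\Np_0(E_0)$ with $v\ge1$ on $A\cap E_0$ (extended by $0$, so $v$ vanishes on $X\setm E_0\supset X\setm E$), I would correct $v$ on the leftover set $A\setm E_0$. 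Since $\Cp(A\setm E_0)=0$ and $X$ supports a $(p,p)$-Poincar\'e inequality for $\Np_0$ under the present hypotheses (combine the Hajl\"asz--Koskela self-improvement of the Poincar\'e inequality with Lemma~\ref{lem-PI-NP0}), Lemma~\ref{lem-Cp<=>cp} gives $\cp(A\setm E_0,E)=0$; so for every $\eps>0$ there is $w\in\Np_0(E)$ with $0\le w$, $w\ge1$ on $A\setm E_0$, and $\int_X g_w^p\,d\mu<\eps$. Then $u:=\max\{v,w\}\in\Np_0(E)$ is admissible for $\cp(A,E)$, $g_u\le g_v+g_w$ a.e., and Minkowski's inequality gives $\|g_u\|_{L^p(X)}\le\|g_v\|_{L^p(X)}+\eps^{1/p}$; letting $\eps\to0$ and then taking the infimum over $v$ completes the argument.

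Two loose ends would remain. First, Lemma~\ref{lem-Cp<=>cp} also requires $\Cp(X\setm E)>0$; this is automatic when $X$ is unbounded, while if $X$ is bounded with $\Cp(X\setm E)=0$ one observes that a set of capacity zero is thin at every point, so $X\setm E$ is, whence $E_0=\fineint E=E$ by Proposition~\ref{prop-fineint-locally} and the asserted identity becomes the triviality $\cp(A,E)=\cp(A,E)$. Second — and this is the main point requiring care — the pointwise requirements ``$u\ge1$ on $A$'' and ``$u=0$ outside the relevant set'' in the definition of $\cp$ must be reconciled with the merely-q.e.\ conclusions of Theorem~\ref{thm-fineint}; this is exactly why one needs the redefinition-on-a-null-capacity-set step for the inequality $\cp(A\cap E_0,E_0)\le\cp(A,E)$ and the auxiliary correction function $w$ for the reverse inequality, rather than a bare appeal to $\Np_0(E)=\Np_0(E_0)$.
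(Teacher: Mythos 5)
Your proof is correct and is precisely the argument the paper has in mind: the proposition is stated there as a direct consequence of Theorem~\ref{thm-fineint} with the proof left to the reader, and your reduction of both admissible classes to $\Np_0(\fineint E)$ — with the q.e.-versus-everywhere issues handled by redefinition on a capacity-null set in one direction and by an auxiliary corrector on $A\setm\fineint E$ in the other, and with the degenerate bounded case $\Cp(X\setm E)=0$ disposed of separately — fills in exactly the intended details. One small simplification: since $\Cp(A\setm\fineint E)=0$ you may take the corrector to be $w=\chi_{A\setm\fineint E}$ itself, which belongs to $\Np_0(E)$ with $g_w=0$ a.e., so the appeal to Lemma~\ref{lem-Cp<=>cp} and the $\eps$-limit can be avoided.
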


\end{document}